\newtheorem{theorem}{Theorem}[section]
\newtheorem{lemma}[theorem]{Lemma}
\newtheorem{prop}[theorem]{Proposition}
\newtheorem{assumption}[theorem]{Assumption}
\newtheorem{corollary}[theorem]{Corollary}
\theoremstyle{definition}
\theoremstyle{remark}
\newtheorem{remark}[theorem]{Remark}
\numberwithin{equation}{section}
\newcommand{\IND}{\mathbbm{1}}
\newcommand{\capa}{\mathrm{Cap}}
\newcommand{\supp}{\mathrm{supp}}
\newcommand{\De}{\mathrm{d}}
\newcommand{\cA}{\ensuremath{\mathcal A}}
\newcommand{\cD}{\ensuremath{\mathcal D}}
\newcommand{\cE}{\ensuremath{\mathcal E}}
\newcommand{\cG}{\ensuremath{\mathcal G}}
\newcommand{\cH}{\ensuremath{\mathcal H}}
\newcommand{\cI}{\ensuremath{\mathcal I}}
\newcommand{\cL}{\ensuremath{\mathcal L}}
\newcommand{\cQ}{\ensuremath{\mathcal Q}}
\newcommand{\cR}{\ensuremath{\mathcal R}}
\newcommand{\cV}{\ensuremath{\mathcal V}}
\newcommand{\cW}{\ensuremath{\mathcal W}}
\newcommand{\bbE}{\ensuremath{\mathbb E}}
\newcommand{\bbN}{\ensuremath{\mathbb N}}
\newcommand{\bbP}{\ensuremath{\mathbb P}}
\newcommand{\bbR}{\ensuremath{\mathbb R}}
\newcommand{\bbZ}{\ensuremath{\mathbb Z}}
\newcommand{\frr}{\ensuremath{\mathfrak r}}
\begin{document}
\newcounter{cnstcnt}
\newcommand{\newconstant}{%
\refstepcounter{cnstcnt}%
\ensuremath{c_\thecnstcnt}}
\newcommand{\constant}[1]{\ensuremath{c_{\ref{#1}}}}

\title[]{Lower bounds for Bulk deviations for the simple random walk on $\bbZ^d$, $d\geq 3$}


\author{Alberto Chiarini}
\address{Universit\`a degli Studi di Padova}
\curraddr{Department of Mathematics ``Tullio Levi-Civita'', via Trieste 63, 35121, Padova}
\email{chiarini@math.unipd.it}
\thanks{}


\author{Maximilian Nitzschner}
\address{Department of Mathematics, The Hong Kong University of Science and Technology}
\curraddr{Clear Water Bay, Kowloon, Hong Kong}
\email{mnitzschner@ust.hk}
\thanks{}

\begin{abstract}
This article investigates the behavior of the continuous-time simple random walk on $\mathbb{Z}^d$, $d \geq 3$. We derive an asymptotic lower bound on the principal exponential rate of decay for the probability that the average value over a large box of some non-decreasing local function of the field of occupation times of the walk exceeds a given positive value. This bound matches at leading order the corresponding upper bound derived by Sznitman in~\cite{sznitman2019bulk}, and is given in terms of a certain constrained minimum of the Dirichlet energy of functions on $\mathbb{R}^d$ decaying at infinity. Our proof utilizes a version of tilted random walks, a model originally constructed by Li in~\cite{li2017lower} to derive lower bounds on the probability of the event that the trace of a simple random walk disconnects a macroscopic set from an enclosing box. 
\end{abstract}

\subjclass[2010]{}
\keywords{}
\dedicatory{}
\maketitle

\section{Introduction}
\label{sec:introduction}

In this article, we study large-deviation type asymptotics related to the occupation-time field of a continuous-time simple random walk on $\mathbb{Z}^d$, $d \geq 3$. As a main result, we derive a lower bound on the principal exponential rate of decay for the probability that the average in a large box of a certain non-decreasing local function of the occupation-time field of the simple random walk exceeds a strictly positive fraction $\nu$. These lower bounds on the rate match the corresponding upper bounds found in~\cite{sznitman2019bulk},~\cite{sznitman2021excess}, and~\cite{sznitman2023cost}. As one particular example, our result gives insight into the question of how costly it is for a simple random walk to cover a macroscopic fraction of a given box, and provides a near-optimal strategy to obtain such a largely deviant behavior for the walk. \smallskip

The investigation of similar large-deviation type events for random walks, and the related model of \textit{random interlacements}, has received much attention recently, see in particular~\cite{chiarini2020entropic,duminil2023characterization,li2017lower,li2014lowerBound,li2015large,li2022large,nitzschner2017solidification,
sznitman2017disconnection,
sznitman2018macroscopic}, as well as the aforementioned works~\cite{sznitman2021excess,sznitman2019bulk,
sznitman2023cost}. 
The central task in deriving asymptotic lower bounds is the implementation of a suitable \textit{strategy} to enforce the largely deviant event under consideration, which in our set-up brings into play a modified version of \textit{tilted random walks}. The latter were introduced in~\cite{li2017lower} in the context of proving asymptotic lower bounds on the probability that the simple random walk disconnects a (regular) macroscopic set from an enclosing box, and used in~\cite{sznitman2018macroscopic} to produce similar lower bounds concerning macroscopic holes in the connected component of the vacant set of a simple random walk in a large box. The lower bounds on the exponential rates of decay for the bulk-deviation events studied here are given in terms of certain constrained minima for the Dirichlet energy of functions on $\mathbb{R}^d$ decaying at infinity. A remarkable feature is that these rates intrinsically involve the expectation of a local function of random interlacements. The underlying mechanism for this is a \textit{local coupling} of the tilted random walk in mesoscopic boxes with a sequence of excursions having essentially the same law as those generated by random interlacements at a (locally) constant level, which varies in space in a way governed by the square of the minimizer of the corresponding Dirichlet problem. Our findings also relate to the ``Swiss cheese'' picture proposed in~\cite{van2001moderate} to study moderate deviations of the volume of the Wiener sausage, see also~\cite{phetpradap2011intersections} for the adaptation to random walks, and may moreover be compared to the (non-asymptotic) bounds obtained for similar questions concerning the deviant behavior of the range of random walks, see~\cite{asselah2017moderate,asselah2020extracting,asselah2020nature,
asselah2021two,erhard2023uniqueness}. We refer to Remark~\ref{rem:SwissCheese} in Subsection~\ref{subsec:Applications} for a more thorough discussion of this viewpoint.  \smallskip

We will now describe the set-up and our results in a more detailed fashion. We consider the continuous-time simple random walk $(X_t)_{t \geq 0}$ on the integer lattice $\mathbb{Z}^d$, $d \geq 3$, started at a point $x \in \mathbb{Z}^d$, and we denote the governing canonical law by $P_x$. The occupation-time field of the walk corresponds to the total time spent by the random walk in each point of the lattice, and is denoted by $(L_x)_{x \in \mathbb{Z}^d}$. Moreover, we will also need the occupation-time field of continuous-time random interlacements $\mathcal{I}^u$ at level $u \geq 0$, denoted by $(\mathcal{L}^u_x)_{x \in \mathbb{Z}^d}$, and we let $\mathbb{P}$ stand for the governing probability measure and $\mathbb{E}$ for the corresponding expectation. The model of random interlacements was introduced in~\cite{sznitman2010vacant}, and we refer to Section~\ref{sec:preliminaries} for details on the construction relevant to our investigation, as well as to~\cite{cerny2012random,drewitz2014introduction} for a thorough introduction to this model. \smallskip

Similarly as in~\cite{sznitman2019bulk}, we are interested in the behavior of \textit{local functions} of the occupation-time field. Here, a local function is defined as a map $F: [0,\infty)^{B(0,\mathfrak{r})} \rightarrow [0,\infty)$ with some non-negative integer $\mathfrak{r}$ (and $B(0,\mathfrak{r})$ denoting the closed sup-norm ball of radius $\mathfrak{r}$ around the origin) satisfying certain regularity conditions, see Assumption~\ref{eq:RegularityCondF}. Essentially, these conditions require $F$ to be non-decreasing, non-constant, to fulfill $F(0) = 0$, and to admit sub-linear growth. We then consider a set $D \subseteq \mathbb{R}^d$ such that $D$ is either the closure of a smooth, bounded domain containing the origin, or the closure of an open sup-norm ball in $\mathbb{R}^d$ containing the origin, and let 
\begin{equation}
\label{eq:DiscreteBlowup}
D_N = (ND) \cap \mathbb{Z}^d, \qquad N \geq 1
\end{equation}
stand for its discrete blow-up. The set $D$ will act as a model shape, and may for the purposes of this introduction be fixed to $D = [-1,1]^d$. Our primary interest lies in a class of \textit{excess-type events}, defined for fixed $\nu > 0$ and a given local function $F$ by
\begin{equation}
\label{eq:ExcessTypeEvent}
\mathcal{A}^\nu_N(F) = \Big\{\sum_{x \in D_N} F\Big((L_{x+y})_{y \in B(0,\mathfrak{r})}\Big) > \nu |D_N| \Big\}, \qquad N \geq 1.
\end{equation} 
To give some pertinent examples that our result applies to, one may consider 
\begin{equation}
\label{eq:F_examples_introduction}
\begin{split}
F_1(\ell) & = \ell \text{ (with $\mathfrak{r} = 0$)}, \\
F_2(\ell) & = \mathbbm{1}_{\{\ell > 0 \} }\text{ (with $\mathfrak{r} = 0$)},   \\
F_3(\ell) & = \mathbbm{1}_{\{ \text{any path in $B(0,\mathfrak{r})$ from $0$ to $S(0,\mathfrak{r})$ meets some $y$ with $\ell_y > 0$}  \} }\text{ (with $\mathfrak{r} \geq 1$)},
\end{split}
\end{equation}
where $S(0,\mathfrak{r})$ is the sphere of sites at sup-norm distance $\mathfrak{r}$ from the origin. With these choices, the corresponding events $\mathcal{A}^\nu_N(F_1)$, $\mathcal{A}^\nu_N(F_2)$, and $\mathcal{A}^\nu_N(F_3)$ respectively stand for the occurence of an excessive (volume-like) occupation time of the walk within $D_N$, the occurence of an excessive presence of the range within $D_N$, or the occurence of an excessive number of points disconnected from the boundary of enclosing boxes by the trace of the walk within $D_N$. \smallskip

Our main result gives asymptotic lower bounds on the principal decay rate of the probability of the event $\mathcal{A}^\nu_N(F)$ in~\eqref{eq:ExcessTypeEvent}. To state these bounds requires another definition, which crucially brings into play the occupation-time field of random interlacements, namely the function 
\begin{equation}\label{eq:theta}
 \vartheta : [0,\infty) \rightarrow [0,\infty), \qquad u \mapsto \mathbb{E}\Big[F\Big((\mathcal{L}^u_{y})_{y \in B(0,\mathfrak{r})}\Big) \Big]
\end{equation}
(where the requirements made in Assumption~\ref{eq:RegularityCondF} on $F$ ensure that $\vartheta$ is finite, non-decreasing, fulfills $\vartheta(0) = 0$, and is Lipschitz continuous, see Lemma~\ref{lem:PropertiesTheta}). Incidentally, note that if $F  = F_3$ in~\eqref{eq:F_examples_introduction}, $\vartheta(u)$ can be viewed as a finite-range approximation of the probability that the origin is not in the infinite connected component of the vacant set of random interlacements at level $u$ (see~\eqref{eq:theta-0} and below). In general, $\vartheta$ is not necessarily related to the percolation probability. \smallskip
  
As our main result, we will prove in Theorem~\ref{thm:MainThm} that for any local function $F$ fulfilling Assumption~\ref{eq:RegularityCondF} and $\nu \in (0,\vartheta_\infty)$, where $\vartheta_\infty = \lim_{u \rightarrow \infty}\vartheta(u)$, and any $y \in \bbZ^d$, it holds that
\begin{equation}
\label{eq:IntroMainResultLower}
\begin{split}
\liminf_{N \rightarrow \infty} & \frac{1}{N^{d-2}} \log P_y[\cA^\nu_N(F)] \\
&\geq - \inf\bigg\{\frac{1}{2d}\int_{\bbR^d} |\nabla \phi|^2\,\De x\, : \, \phi\in C_0^\infty(\bbR^d)\,,\fint_D \vartheta(\phi^2)\,\De x > \nu\bigg\} \\
& = - \min\bigg\{\frac{1}{2d}\int_{\bbR^d} |\nabla \phi|^2\,\De x\, : \,\phi\in D^{1,2}(\bbR^d)\,,\fint_D \vartheta(\phi^2)\,\De x = \nu\bigg\}
\end{split}
\end{equation}
(where $\fint_D (...) \De x$ stands for the integral $\frac{1}{|D|} \int (...) \De x$, $|D|$ is the Lebesgue measure of $D$, and $D^{1,2}(\bbR^d)$ is the space of locally integrable functions $\phi$ such that $\nabla \phi$, interpreted in a distributional sense, is square-integrable and $\phi$ is vanishing at infinity, as in Chapter 8 $\S$2 of~\cite{lieb2001analysis}). Let us also point out that when $d \geq 5$, one may replace $D^{1,2}(\bbR^d)$ above by the traditional Sobolev space $H^1(\bbR^d)$, see Remark 5.10 in~\cite{sznitman2019bulk}. \smallskip

This result complements a corresponding asymptotic upper bound on the principal rate of decay of $P_y[\cA^\nu_N(F)]$, which was obtained in Corollary 5.11 of~\cite{sznitman2019bulk} when $F$ is bounded, viewing the simple random walk as the ``singular limit as $u \rightarrow 0$'' of random interlacements $\mathcal{I}^u$ at level $u > 0$ and using a coupling argument. In particular, our main result confirms upon combination with the latter that for $\nu \in (0,\vartheta_\infty)$, for bounded $F$, and any $y \in \bbZ^d$,
\begin{equation}
\label{eq:IntroMainResultEquality}
\begin{split}
\lim_{N \rightarrow \infty} & \frac{1}{N^{d-2}} \log P_y[\cA^\nu_N(F)] \\
&= - \inf\bigg\{\frac{1}{2d}\int_{\bbR^d} |\nabla \phi|^2\,\De x\, : \,\phi\in C_0^\infty(\bbR^d)\,,\fint_D \vartheta(\phi^2)\,\De x > \nu\bigg\} \\
& = - \min\bigg\{\frac{1}{2d}\int_{\bbR^d} |\nabla \phi|^2\,\De x\, : \,\phi\in D^{1,2}(\bbR^d)\,,\fint_D \vartheta(\phi^2)\,\De x = \nu\bigg\},
\end{split}
\end{equation}
meaning that the constrained minimization problems on the right-hand side of~\eqref{eq:IntroMainResultLower} (or~\eqref{eq:IntroMainResultEquality}) indeed give the correct principal decay rate for the probability of $\cA^\nu_N(F)$, as discussed in Remark 5.12 of~\cite{sznitman2019bulk}.  This method of obtaining upper bounds on largely deviant events for the random walk upon using random interlacements already appeared in the context of studying \textit{disconnection events} in~\cite{sznitman2015disconnection} and later in~\cite{nitzschner2017solidification,
sznitman2017disconnection} as well as when studying the appearance of large macroscopic holes in the connected component of the vacant set left by the trace of a random walk in~\cite{sznitman2018macroscopic}. Remarkably, finding corresponding lower bounds for the random walk is often not straightforward from results for random interlacements, and relies instead on introducing a well-chosen ``tilting'' of the walk that typically enforces the event under consideration at an (appropriately low) entropic cost. The construction of these \textit{tilted walks}, which will be recalled in detail in Section~\ref{sec:preliminaries} below and plays a key role in this article, was first given in~\cite{li2017lower}. \smallskip  

To provide a specific application of our main result, we consider the choice $F = F_2$ in~\eqref{eq:F_examples_introduction}. In this case, we see that
\begin{equation}
\vartheta(u) \stackrel{\eqref{eq:theta}}{ =} \mathbb{E}\big[\mathbbm{1}_{\{\cL^u_0 > 0 \}} \big] = \bbP[0 \in \mathcal{I}^u ] = 1- \exp\left(-\frac{u}{g(0,0)} \right), \qquad u \geq 0,
\end{equation}
where $g(0,0)$ is the value of the Green function of the simple random walk at two equal sites (see~\eqref{eq:GreenFunction_Def} below), with the second equality following from the explicit characterization of the law of the interlacement set in finite subsets of $\bbZ^d$ (see~\eqref{eq:InterlacementsDefiningProbab} below). Specializing~\eqref{eq:IntroMainResultEquality} to this case, we see that for every $\nu \in (0,1)$ and any $y \in \bbZ^d$,
\begin{equation}
\label{eq:CoveringResultIntro}
\begin{split}
\lim_{N \rightarrow \infty} & \frac{1}{N^{d-2}} \log P_y\big[|D_N \cap \{X_t \,: \, t \geq 0 \}| > \nu |D_N| \big]  \\
&= - \inf\bigg\{\frac{1}{2d}\int_{\bbR^d} |\nabla \phi|^2\,\De x\, : \,\phi\in C_0^\infty(\bbR^d)\,,\fint_D (1 - e^{-\phi^2/g(0,0)})  \,\De x > \nu\bigg\} \\
& = - \min\bigg\{\frac{1}{2d}\int_{\bbR^d} |\nabla \phi|^2\,\De x\, : \,\phi\in D^{1,2}(\bbR^d)\,,\fint_D (1 - e^{-\phi^2/g(0,0)})\,\De x = \nu\bigg\},
\end{split}
\end{equation} 
where we remark that the upper bound follows from~\cite[Corollary 5.11]{sznitman2019bulk}. In~\eqref{eq:CoveringResultIntro}, by providing a matching lower bound we obtain the precise leading-order decay rate for the probability of the~\textit{covering event} under the probability on the left-hand side of~\eqref{eq:CoveringResultIntro}. Similar questions were studied recently in~\cite{asselah2020extracting}, where the authors obtained non-asymptotic upper bounds on related covering-type probabilities. In essence, the method used in the proof of the lower bound in~\eqref{eq:CoveringResultIntro} yields special significance for the model of tilted walks, and the (near-)minimizer (denoted by $\varphi$) to the variational problem above, see Subsection~\ref{subsec:Applications} for more on this. \smallskip

Let us now briefly comment on the proof of~\eqref{eq:IntroMainResultLower}. The fundamental challenge is to introduce an appropriate family of probability measures $\widetilde{P}_{y,N}$ corresponding to the tilted walks. These walks essentially evolve as recurrent walks with a generator given by $\mathscr{L}_N g(x) = \frac{1}{2d}\sum_{|x'-x| = 1} \frac{\varphi_N(x')}{\varphi_N(x)} (g(x') - g(x))$ (with $| \cdot |$ the Euclidean norm), until some deterministic time $S_N$ of order $O(N^d)$, and are then ``released'' to behave as simple random walks, where $\varphi_N = \varphi(\cdot/N)$ and the choice of the function $\varphi$ corresponds to a (near-)minimizer of the variational problem on the right-hand side of~\eqref{eq:IntroMainResultLower}. Notably, the measure $\varphi^2(\cdot/N)$ appears as a reversible measure of the corresponding recurrent walk described above, see also below~\eqref{eq:TiltedWalk}. One then has to argue that this choice of tilted walks renders $\cA^\nu_N(F)$ a ``typical event'', in the sense that $\widetilde{P}_{y,N}[\cA^\nu_N(F)] \rightarrow 1$ as $N$ tends to infinity, and that this is achieved at a (near) minimal entropic cost. The former is done by devising a chain of ``local couplings'' of the occupation times of the tilted walk in mesoscopic boxes of size $M \approx N^{r_1}$ (with $r_1 < 1$) with the occupation times generated by a Poisson process of independent excursions of simple random walks started at the boundaries of these boxes, with an intensity measure proportional to $\varphi^2(\cdot/N)$. Roughly speaking, the occupation time field of the tilted walk in any mesoscopic box of size $M$ will dominate (up to a sufficiently small error probability) the occupation time field of random interlacements at level $\varphi^2(\cdot/N)$. In previous works using a similar approach, see in particular~\cite{li2017lower} and~\cite{sznitman2018macroscopic}, it was enough to show that the traces of these interlacements ``locally create a fence'' around some macroscopic set. In our case however, we need a finer version of such a local coupling attached to an (essentially arbitrary) non-negative  function $\varphi$ with sufficient regularity properties involving also the occupation times. This can be viewed as the central step in this article, and improves on previous couplings of traces of random walks and random interlacements as in~\cite{belius2013gumbel,cerny2016random,li2017lower,teixeira2011fragmentation}. In essence, the chain of couplings constructed in Section~\ref{sec:coupling} provides a \textit{generic mechanism} to show that the occupation time field of tilted walks advantageously dominates that of random interlacements at a locally constant level. We expand on this point in Remark~\ref{rem:LocalCoupling}. 
The fact that random interlacements capture the behavior of the random walks in regimes of ``high density'' is also studied in the recent preprint~\cite{bouchot2024confined}. \smallskip

This article is organized as follows. In Section~\ref{sec:preliminaries}, we introduce further notation and recall useful facts about the simple random walk and tilted walks, potential theory, random interlacements, the change of probability method, and a classical coupling result. In Section~\ref{sec:Variationalproblem}, we establish important properties of the function $\vartheta$ (defined in~\eqref{eq:theta}) in Lemma~\ref{lem:PropertiesTheta} and derive pivotal regularity properties of a (near-)minimizer $\varphi$ attached to the variational problem on the right hand side of~\eqref{eq:IntroMainResultLower}, see Proposition~\ref{prop:quasi-minimizer}. In Section~\ref{sec:estimates}, we state estimates concerning the quasi-stationary distribution of the tilted random walk in Propositions~\ref{prop:ClosenessToQSD} and~\ref{prop:AnalogProp4.7}, which are roughly analogues of the corresponding Propositions 4.5 and 4.7 in~\cite{li2017lower}, with an extension to a more general framework using the (near-)minimizers from the previous section. In Section~\ref{sec:coupling}, we obtain a pivotal series of local couplings of the occupation times of the tilted walk with that of simple random walk excursions (with an intensity measure corresponding essentially to that of random interlacements at a locally constant level). This is done in Propositions~\ref{prop:CouplingIndependentStartedfromSigme} and~\ref{prop:PoissonizationCoupling} and Theorem~\ref{thm:CrucialCoupling}. Finally, in Section~\ref{sec:lowerbound}, we prove our main result in Subsection~\ref{subsec:Maintheorem}, see Theorem~\ref{thm:MainThm}, and give some applications in Subsection~\ref{subsec:Applications}. \smallskip

We conclude the introduction by stating our convention regarding constants. We denote by $C, c, c', \hdots$ positive constants changing from place to place. Numbered constants $c_1, c_2, \hdots$ will
refer to the value corresponding to their first appearance in the text. Dependence on additional parameters is indicated in the notation. All constants may depend implicitly on the
dimension.

\section{Preliminaries}
\label{sec:preliminaries}

In this section we introduce more notation and state some results concerning (continuous-time) random walks, potential theory, random interlacements, as well as some background on tilted walks. We also state a classical inequality involving relative entropy, which is instrumental in obtaining the lower bound in our main Theorem~\ref{thm:MainThm}, as well as a result concerning coupling that will be helpful in Proposition~\ref{prop:CouplingIndependentStartedfromSigme}. Throughout the article, we will always assume that $d \geq 3$. \smallskip

We start with some more notation. We let $\bbN  = \{0,1,2,...\}$ stand for the set of natural numbers. For real numbers $s, t$, we let $s \wedge t$ and $s \vee t$ stand for the minimum and maximum of $s$ and $t$, respectively, and we denote by $\lfloor s \rfloor$ the integer part of $s$, when $s$ is non-negative. Moreover, we let $| \cdot |$ and $| \cdot |_\infty$ stand for the Euclidean and $\ell^\infty$-norms on $\bbR^d$, respectively. For $x \in \bbZ^d$ and $r \geq 0$, we write $B(x,r) = \{y \in \bbZ^d \, : \, |x-y|_\infty \leq r\} \subseteq \bbZ^d$ for the (closed) $\ell^\infty$-ball of radius $r \geq 0$ and center $x \in \bbZ^d$. We denote by $B_r(x)\subseteq\bbR^d$ (resp.~$\overline{B}_r(x)\subseteq\bbR^d$) the Euclidean open ball (resp.~closed ball) of center $x\in \bbR^d$ and radius $r\geq 0$ and set $B_r = B_r(0)$, $\overline{B}_r = \overline{B}_r(0)$. If $x,y \in \bbZ^d$ fulfill $|x - y| = 1$, we call them neighbors and write $x \sim y$. A function $\gamma : \{ 0, \ldots, N \} \rightarrow \bbZ^d$ is called a nearest-neighbor path (of length $N \geq 1$) if $\gamma(i) \sim \gamma(i+1)$ for all $0 \leq i \leq N -1$. For $K \subseteq \bbZ^d$, we let $|K|$ stand for the cardinality of $K$, and we write $K \subset \subset \bbZ^d$ if $|K| < \infty$. Moreover, we write $\partial K = \{ y \in \bbZ^d \setminus K \, : \, y \sim x \text{ for some } x \in K \}$ for the external boundary of $K$, and $\partial_{\mathrm{int}}K = \{y \in K \, : \, y \sim x \text{ for some } x \in \bbZ^d \setminus K \}$ for the internal boundary of $K$. For non-empty $K,L \subseteq \bbZ^d$, we  let $d_\infty(K,L) = \inf\{|x-y|_\infty \, : \, x\in K,y \in L \} $ stand for the $\ell^\infty$-distance between $K$ and $L$, and we also set $d_\infty(x,L) = d_\infty(\{x\},L)$ for $x \in \bbZ^d$. For a set $D \subseteq \bbR^d$ we denote for $\delta > 0$ by $D^\delta$ the closed $\delta$-neighborhood of $D$ and we denote the closure of $D$ by $\overline{D}$. \smallskip 

For a set $U \subseteq \mathbb{Z}^d$, a measure $\beta : U \rightarrow [0,\infty)$, which we identify with its mass function, and functions $f,g : U \rightarrow \bbR$, we define $\langle f, g \rangle_{\ell^2(U,\beta)} = \sum_{x \in U} f(x)g(x)\beta(x)$, whenever the sum converges absolutely. We also write $\|f\|_{\ell^2(U,\beta)} = \langle f,f \rangle_{\ell^2(U,\beta)}^{1/2} = (\sum_{x \in \bbZ^d} f(x)^2 \beta(x) )^{1/2}$, and we call $\ell^2(U,\beta)$ the set of all such functions on $U$ for which $\|f\|_{\ell^2(U,\beta)} < \infty$. When $\beta$ is the counting measure on $U$, we write $\|f\|_{\ell^2(U)}$ and $\ell^2(U)$ as shorthand notation for $\|f\|_{\ell^2(U,\beta)}$ and $\ell^2(U,\beta)$. We also denote by $\mathrm{supp} f = \{x \in \bbZ^d \, : \, f(x) \neq 0 \}$ the support of $f : \bbZ^d \rightarrow \bbR $, and write $f \in C_0(\bbZ^d)$ if $\mathrm{supp} f  \subset \subset \bbZ^d$. For $\varnothing \neq U \subseteq \bbR^d$ open we denote by $C^k(U)$, $k \geq 1$ (resp. ~$C^\infty(U)$) the space of functions that have continuous partial derivatives up to order $k$ (resp.~the space of smooth functions on $U$). The set $C^\infty_0(U)$ is the space of smooth functions on $U$ that vanish outside of a compact set contained in $U$. We write $C^\infty(\overline{U})$ for the subset of functions in $C^\infty(U)$ such that all partial derivatives can be continuously extended to $\overline{U}$. We will use the notation $\nabla q (x) \in \bbR^d$ (resp.~$\Delta q(x) \in \bbR$) for the standard gradient (resp.~Laplace operator) at $x \in U$, for functions $q\in C^1(U)$ (resp.~$q\in C^2(U)$). We denote by $L^k(U)$, $k \geq 1$, the space of functions $q : U \rightarrow \bbR^d$ for which $|q|^k$ has a finite Lebesgue-integral. \smallskip

We now turn to some definitions of path spaces, the continuous-time simple random walk and its potential theory, and continuous-time random interlacements. We let $\widehat{W}_+$ (resp.~$\widehat{W}$) stand for the space of infinite (resp.~bi-infinite) $\bbZ^d \times (0,\infty)$-valued sequences, in which the sequence of first coordinates forms an infinite (resp.~bi-infinite) nearest-neighbor path which spends a finite number of steps in every finite set, and the sequence of the second coordinates has an infinite sum. We let $\widehat{\cW}_+$ stand for the $\sigma$-algebra generated by the first and second coordinate maps, which we denote by $Z_n$ and $\zeta_n$ (with $n \in \bbN$), respectively (the $\sigma$-algebra $\widehat{\mathcal{W}}$ on $\widehat{W}$ is defined analogously). The measure $P_x$ is the law on $(\widehat{W}_+, \widehat{\cW}_+)$ under which the sequence of first coordinates $(Z_n)_{n \geq 0}$ has the law of a discrete-time simple random walk on $\bbZ^d$, starting from $x\in \bbZ^d$, and the sequence of second coordinates $(\zeta_n)_{n \geq 0}$ consists of i.i.d.~exponentially distributed random variables with parameter $1$, independent of $(Z_n)_{n \geq 0}$. We denote the expectation under $P_x$ by $E_x$. If $\beta$ is a measure on $\mathbb{Z}^d$, we denote by $P_\beta$ and $E_\beta$ the measure $\sum_{x \in \mathbb{Z}^d} \beta(x)P_x$ on $(\widehat{W}_+,\widehat{\cW}_+)$ and its corresponding integral. We associate to $\widehat{w} \in \widehat{W}_+$ a continuous-time trajectory $(X_t(\widehat{w}))_{t \geq 0}$, the continuous-time random walk with constant jump rate $1$, by setting
\begin{equation}
\label{eq:DefOfCTSRWalk}
X_t(\widehat{w}) = Z_n(\widehat{w}), \text{ for } t \geq 0, \qquad \text{when }\sum_{\ell = 0}^{n-1} \zeta_\ell(\widehat{w}) \leq t < \sum_{\ell = 0}^n \zeta_\ell(\widehat{w}),
\end{equation}
(where we understand the sum $\sum_{\ell = 0}^{n-1} \zeta_\ell(\widehat{w})$ as zero, when $n = 0$). Given a subset $K \subseteq \bbZ^d$ and a trajectory $\widehat{w} \in \widehat{W}_+$, we write $H_K(\widehat{w}) = \inf\{t \geq 0 \, : \, X_t(\widehat{w}) \in K\}$, $\widetilde{H}_K(\widehat{w}) = \inf\{t \geq \zeta_0 \, : \, X_t(\widehat{w}) \in K\}$, and $T_K(\widehat{w}) = \inf\{t \geq 0\, : \, X_t(\widehat{w}) \notin K\}$ for the entrance, hitting, and exit times of $K$, respectively (where we understand $\inf \varnothing = \infty$), and write $H_x$, $\widetilde{H}_x$, and $T_x$ as a shorthand notation for $H_{\{x\}}$, $\widetilde{H}_{\{x\}}$ and $T_{\{x\}}$, respectively, when $x \in \bbZ^d$.  We let $\Gamma(K)$ stand for the set of right-continuous, piecewise constant functions from $[0,\infty)$ to $K$ that have finitely many jumps in every compact interval. For $A \subseteq [0,\infty)$, we define $X_A = \{ X_t \, : \, t \in A \}$. The occupation-time field of the random walk is denoted by $(L_x)_{x \in \mathbb{Z}^d}$, where 
\begin{equation}
\label{eq:OccupationTimeField}
L_x(\widehat{w}) = \sum_{n \in \bbN} \zeta_n(\widehat{w}) \mathbbm{1}_{\{Z_n(\widehat{w}) = x\}}, \qquad x \in \mathbb{Z}^d,
\end{equation}
records the total time spent in $x$ by the continuous-time random walk.
 \smallskip

We now recall some facts concerning potential theory associated with the simple random walk. For $ \varnothing \neq K \subset\subset \bbZ^d$, we let
\begin{equation}
\label{eq:eqmeasureDef}
e_K(x) = P_x[\widetilde{H}_K = \infty]\mathbbm{1}_K(x), \qquad x \in \bbZ^d,
\end{equation}
stand for the equilibrium measure of the set $K$, and we call its total mass
\begin{equation}
\label{eq:capacityDef}
\capa(K) = \sum_{x \in K} e_K(x),
\end{equation}
the capacity of $K$. We also define the normalized equilibrium measure of $K$ by
\begin{equation}
\label{eq:normalized_eqmeasureDef}
\widetilde{e}_K(x) = \frac{e_K(x)}{\capa(K)}, \qquad x \in K.
\end{equation}
Both $e_K$ and $\widetilde{e}_K$ are supported on $\partial_{\mathrm{int}} K$. Asymptotics of the capacity and equilibrium measure on boxes are well-known, and we will use the bounds (see, e.g.,~\cite[Section 2.2]{lawler2013intersections})
\begin{equation}
\label{eq:capacityControlsBox}
cN^{d-2} \leq \capa(B(0,N)) \leq C N^{d-2}, \qquad  N \geq 1,
\end{equation}
and 
\begin{equation}
\label{eq:LowerBoundEqMeasure}
e_{B(0,N)}(x) \geq \frac{c}{N}, \qquad x \in \partial_{\mathrm{int}}B(0,N), \qquad N \geq 1.
\end{equation}
Moreover, we denote the Green function of the simple random walk by $g(\cdot,\cdot)$, namely
\begin{equation}
\label{eq:GreenFunction_Def}
g(x,y) = E_x\left[\int_0^\infty \mathbbm{1}_{\{X_t =y \}} \mathrm{d}t \right], \qquad x, y\in \mathbb{Z}^d,
\end{equation}
which is  symmetric, non-negative and finite (due to transience), and we record the classical formula for $\varnothing \neq A \subset\subset \bbZ^d$,
\begin{equation}
\label{eq:LastExitDecomposition}
P_x[H_A < \infty] = \sum_{y \in A} g(x,y) e_A(y), \qquad x \in \bbZ^d;
\end{equation}
see, e.g.,~\cite[Lemma 2.1.1]{lawler2013intersections}. Furthermore, one has the following asymptotic behavior of the Green function,
\begin{equation}
\label{eq:Greenfunction_asymptotics}
g(x,y) \sim \frac{c_g}{|x-y|^{d-2}}, \qquad \text{as }|x-y| \rightarrow \infty,
\end{equation}
with the constant $c_g = \frac{d}{2}\Gamma(d/2-1)\pi^{-d/2}$ (see~\cite[Theorem 1.5.4]{lawler2013intersections}). For a function $h : \bbZ^d \rightarrow \bbR$, we denote its discrete Laplacian at $x \in \bbZ^d$ by
\begin{equation}
\Delta_{\bbZ^d}h(x) = \frac{1}{2d} \sum_{y \, : \, y \sim x} \big(h(y) - h(x)\big).
\end{equation}
For later use, we define the discrete Dirichlet form of $h : \bbZ^d \rightarrow \bbR$ by
\begin{equation}
\label{eq:DiscreteDirichletFormDef}
\mathcal{E}_{\bbZ^d}(h,h) = \frac{1}{2}\sum_{x,y \in \bbZ^d, x \sim y} \frac{1}{2d} \big(h(y) - h(x)\big)^2,
\end{equation}
(which may be infinite for general $h$, but is finite if $h \in C_0(\bbZ^d)$). We note that for $h \in C_0(\bbZ^d)$, one has the discrete Gauss-Green identity (see, e.g.,~\cite[Theorem 1.24]{barlow2017random})
\begin{equation}
\label{eq:DiscreteGaussGreen}
\mathcal{E}_{\bbZ^d}(h,h) = - \sum_{x \in \bbZ^d} h(x) \Delta_{\bbZ^d} h(x).
\end{equation}
We also define the Dirichlet form associated with Brownian motion, which is the continuum counterpart of~\eqref{eq:DiscreteDirichletFormDef}, defined for functions $g \in H^1(\bbR^d)$ (the usual Sobolev space over $\bbR^d$) by 
\begin{equation}
\mathcal{E}_{\bbR^d}(g,g) = \frac{1}{2}\int_{\bbR^d} |\nabla g(x)|^2 \De x.
\end{equation}

The capacity of a set  $\varnothing \neq K \subset \subset \bbZ^d$ can be expressed as (see, e.g.,~\cite[Proposition 7.9]{barlow2017random})
\begin{equation}
\label{eq:CapaAlternativeDef}
\capa(K) = \inf\left\{ \mathcal{E}_{\bbZ^d}(h,h) \, : \, h \in C_0(\bbZ^d), \ h(x) = 1 \text{ for all } x \in K  \right\}.
\end{equation}
We now turn to the definition of the tilted random walk, introduced in~\cite{li2017lower}. To that end, let $R > 0$ be an integer and $U^N = (NB_R) \cap \bbZ^d$. For a function $f : \bbZ^d \rightarrow [0, \infty)$ fulfilling
\begin{equation}
\label{eq:Conditions_f}
\begin{minipage}{0.8\linewidth}
\text{i) $f(x) > 0$ if and only if $x \in U^N$,} \\
\text{ii) $f^2$ defines a probability measure, i.e.~$\sum_{x \in \bbZ^d} f^2(x) = 1$}
\end{minipage}
\end{equation}
(both $R$ and $f$ will be chosen later appropriately), we can then consider the stochastic process 
 \begin{equation}
 \label{eq:MartingaleDef}
 M_t = \frac{f(X_{t \wedge T_N})}{f(x)} \exp\left( \int_0^{t \wedge T_N} v(X_s) \mathrm{d}s \right), \qquad t \geq 0,
 \end{equation}
 where $T_N = T_{U^N}$ and 
 \begin{equation}
 \label{eq:vDef}
 v(x) = - \frac{\Delta_{\bbZ^d}f(x)}{f(x)}, \qquad x \in U^N.
 \end{equation}
For any given $T \geq 0$, we can then define the non-negative measure on $(\widehat{W}_+,\widehat{\cW}_+)$ given by
\begin{equation}
\label{eq:MeasureUpToT}
\widehat{P}_{x,T} = M_T P_x, \qquad x \in U^N
\end{equation}
(i.e.~$M_T$ is the Radon-Nikod{\'y}m density of $\widehat{P}_{x,T}$ with respect to $P_x$). It can be shown using classical methods (see, e.g.,~\cite[Lemma 2.5]{li2017lower} and the references therein) that $\widehat{P}_{x,T}$ is a probability measure and under $\widehat{P}_{x,T}$, the process $(X_t)_{t \geq 0}$ up to time $T$ is a Markov chain 
with generator
\begin{align}
\label{eq:GeneratorVSRW}
    \mathscr{L}_f g(x) &=  \frac{1}{2d} \sum_{y \in U^N \, : \, y \sim x} \frac{f(y)}{f(x)}(g(y) - g(x)),\qquad x\in U^N.
\end{align}
The measure $\pi$ on $U^N$, defined by
 \begin{equation}
 \label{eq:piDef}
\pi(x) = f^2(x), \qquad x\in U^N,
\end{equation}
is a reversible measure on $U^N$ of a Markov chain $(\overline{P}_x)_{x \in U^N}$ with generator~\eqref{eq:GeneratorVSRW}. This Markov chain is called the \textit{confined walk}. The laws of the tilted walks are then defined by
\begin{equation}
\label{eq:TiltedWalk}
\widetilde{P}_{y,N}  = \widehat{P}_{y, S_N}, \qquad y \in U^N, N \geq 1,
\end{equation}
with an increasing sequence of positive real numbers $(S_N)_{N \geq 1}$. The times $S_N$ will later be chosen to be approximately $ \|\varphi(\frac{\cdot}{N})  \|_{\ell^2(\bbZ^d)}^2$, where $\varphi$ is a (near-)minimizer of the variational problem in the right-hand side of~\eqref{eq:IntroMainResultLower} constructed in Proposition~\ref{prop:quasi-minimizer}, and therefore will turn out to be of order $O(N^d)$. The function $f$ appearing above will be chosen as $\varphi(\frac{\cdot}{N}) /\|\varphi(\frac{\cdot}{N})  \|_{\ell^2(\bbZ^d)}$. We refer to~\eqref{eq:varphi_N_Def} for the specific choices. Intuitively, one can view the tilted walk as a continuous-time  random walk on $U^N$ with variable speed, whose discrete skeleton is a random walk on $U^N$ (with the nearest-neighbor structure inherited from $\bbZ^d$) equipped with conductances $\left(\frac{1}{2d}\varphi(\frac{x}{N})\varphi(\frac{y}{N}) \right)_{x,y \in U^N, x \sim y}$ and with a variable jump rate, up to time $S_N$, which is ``released'' after $S_N$ and moves like a simple random walk afterwards (see also Remark~\ref{rem:CommentOnConductances}, as well as~\cite[Remark 2.7]{li2017lower} and the discussion below~\cite[(1.49)]{li2014lowerBound}). Note that for every $y \in U^N$, we have that 
\begin{equation}
\label{eq:ConfinedTiltedCoincide}
\text{up to time $S_N$, $\widetilde{P}_{y,N}$ coincides with $\overline{P}_y$,}
\end{equation}
which follows by considering the finite-time marginals and by observing that both processes have the same generator.
\smallskip

 We now introduce some notation concerning continuous-time random interlacements.  The precise definition of the process is not relevant for our application, but can for instance be found in Section 1 of~\cite{sznitman2012isomorphism} or Section 2 of~\cite{chiarini2020entropic}. We also refer to~\cite{cerny2012random, drewitz2014introduction} for more details on (discrete-time) random interlacements. In the construction, one considers the space of bi-infinite trajectories under time-shift, denoted by $\widehat{W}^\ast$, that is $\widehat{W}^\ast = \widehat{W} / \sim$ where $\widehat{w} \sim \widehat{w}'$ if there is a $k \in \bbZ$ such that $\widehat{w} = \widehat{w}'(\cdot + k)$. Moreover, we denote by $\pi^\ast : \widehat{W} \rightarrow \widehat{W}^\ast$ the canonical projection and endow $\widehat{W}^\ast$ with the push-forward $\sigma$-algebra of $\widehat{\cW}$ under $\pi^\ast$. We then define the continuous-time random interlacements as a Poisson point process, defined on some canonical probability space $(\Omega,\mathcal{A},\mathbb{P})$ (with the corresponding expectation denoted by $\mathbb{E}$) with values on $\widehat{W}^\ast\times \bbR_+$, and intensity measure $\widehat{\nu}(\De \widehat{w}^*)\otimes\De u$, where $\widehat{\nu}$ is a certain $\sigma$-finite measure (see~\cite[(1.7)]{sznitman2012isomorphism}). For a realization $\omega=\sum_{i\geq 0} \delta_{(\widehat{w}^*_i, u_i)}$ of this process and $u\geq 0$, we define the random interlacement set at level $u$ as the random subset of $\bbZ^d$ given by
\begin{equation}
    \cI^u(\omega) = \bigcup_{i: u_i\leq u} \mathrm{Range}(\widehat{w}^*_i),
\end{equation}
where for $\widehat{w}^*\in \widehat{W}^*$, $\mathrm{Range}(\widehat{w}^*)$ denotes the set of points in $\bbZ^d$ visited by the first coordinate sequence associated with an arbitrary $\widehat{w}\in\widehat{W}$ such that $\pi^*(\widehat{w}) = \widehat{w}^*$. Note that for any $\varnothing \neq K \subset\subset \bbZ^d$ and $u \geq 0$, we have (see, e.g.,~\cite[Proposition 1.5]{sznitman2010vacant})
\begin{equation}
\label{eq:InterlacementsDefiningProbab}
\mathbb{P}[\cI^u \cap K = \varnothing] = e^{-u\capa(K)}.
\end{equation}
We then define the occupation time at site $x$ and level $u$ of random interlacements, denoted by $\mathcal{L}_{x}^u(\omega)$, as the total time spent at $x$ by all trajectories $\widehat{w}_i^*$ with label $u_i\leq u$ in the cloud $\omega = \sum_{i\geq 0} \delta_{(\widehat{w}_i^*, u_i)}$. Formally, we write:
\begin{equation}
\label{eq:OccTimeDef}
\begin{split}
& \mathcal{L}_{x}^u(\omega) = \sum_{i \geq 0} \sum_{n\in \bbZ} \zeta_n(\widehat{w}_i) \mathbbm{1}_{\{ Z_n(\widehat{w}_i) = x, u_i \leq u \}}, \text{ for } x\in \bbZ^d, u \geq 0, \\
& \text{for }\omega  = \sum_{i\geq 0} \delta_{(\widehat{w}_i^*, u_i)} \in \Omega, \text{ and } \pi^\ast(\widehat{w}_i) = \widehat{w}_i^\ast \text{ for any } i \geq 0.
\end{split}
\end{equation}
It follows from the definition of $\mathcal{L}^u$ and elementary properties of a Poisson point process that under $\mathbb{P}$, $(\mathcal{L}^{u+h}_x - \mathcal{L}^u_x)_{x \in \mathbb{Z}^d}$ is independent of $(\mathcal{L}^u_x)_{x \in \mathbb{Z}^d}$ and the former has the same law as $\mathcal{L}^h$, for every $u \geq 0$, $h > 0$. We also record that 
\begin{equation}
\label{eq:OccupationTimeExpectation}
\bbE[\mathcal{L}^u_x] = u, \qquad x \in \bbZ^d, u \geq 0.
\end{equation}
In the proof of the main result in Section~\ref{sec:lowerbound}, we will make use of a classical change of probability method using the notion of relative entropy. Let $\widetilde{Q}$ and $Q$ be two probability measures on some measurable space $(\mathcal{O}_0,\mathcal{F}_0)$ such that $\widetilde{Q}$ is absolutely continuous with respect to $Q$. The relative entropy of $\widetilde{Q}$ with respect to $Q$ is then defined as
\begin{equation}
\label{eq:RelEntropy}
\cH(\widetilde{Q}|Q) = E_{\widetilde{Q}}\left[\log \frac{\mathrm{d}\widetilde{Q}}{\mathrm{d}Q} \right] = E_{Q}\left[ \frac{\mathrm{d}\widetilde{Q}}{\mathrm{d}Q}\log \frac{\mathrm{d}\widetilde{Q}}{\mathrm{d}Q} \right] \in [0,\infty],
\end{equation}
 where $\frac{\mathrm{d}\widetilde{Q}}{\mathrm{d}Q}$ is the Radon-Nikod{\'y}m derivative of $\widetilde{Q}$ with respect to $Q$ and $E_{\widetilde{Q}}$ and $E_Q$ stand for the expectations under $\widetilde{Q}$ and $Q$, respectively. Then, for any event $A \in \mathcal{F}_0$ with $\widetilde{Q}[A] > 0$, we have
 \begin{equation}
 \label{eq:EntropyIneq}
 \log Q[A] \geq \log \widetilde{Q}[A] - \frac{1}{\widetilde{Q}[A]} \Big\{\cH(\widetilde{Q}|Q)+\frac{1}{\rm{e}}\Big\}, 
 \end{equation}
 see, e.g.,~\cite[p.~76]{deuschel2001large}. \smallskip
 
 Finally, we will need to utilize a standard device relating the total variation distance of probability measures to couplings (which will be instrumental in the proof of Proposition~\ref{prop:CouplingIndependentStartedfromSigme}). Let $Q_1, Q_2$ be two probability measures on some measurable space $(\widehat{\Omega},2^{\widehat{\Omega}})$ with $\widehat{\Omega}$ finite (and $2^{\widehat{\Omega}}$ denoting the power set of $\widehat{\Omega}$), then
 \begin{equation}
 \label{eq:CouplingTV}
 \begin{split}
 \|Q_1 - Q_2\|_{\mathrm{TV}} & \stackrel{\text{def}}{=} \max_{A \subseteq \widehat{\Omega} } \big|Q_1[A] - Q_2[A]\big| \\ 
 & = \frac{1}{2}\sum_{x \in \widehat{\Omega}} |Q_1(x) - Q_2(x)| \\
 & = \inf\Big\{ Q[X_1 \neq X_2] \, : \, (X_1,X_2) \text{ is a coupling of $Q_1$ and $Q_2$} \Big\},
\end{split}
 \end{equation}
where a coupling consists of a probability measure $Q$ on some measurable space $(\widetilde{\Omega}, \widetilde{\mathcal{G}})$ and random variables $X_1,X_2$ with values in $\widehat{\Omega}$ such that the laws of $X_1$ and $X_2$ under $Q$ are $Q_1$ and $Q_2$, respectively (see~\cite[Proposition 4.7, p.~50]{levin2017markov}).
\section{On the variational problem}
\label{sec:Variationalproblem}

The main aim of this section is to analyze the variational problem on the right-hand side of~\eqref{eq:IntroMainResultLower}, which is stated as~\eqref{eq:variational_problem} below, and to construct a smooth, compactly supported quasi-minimizer for it in Proposition~\ref{prop:quasi-minimizer}. We start by introducing some conditions on the local function $F :  [0,\infty)^{B(0,\mathfrak{r})} \to [0,\infty)$ and by recalling the definition of the function $\vartheta$ from~\eqref{eq:theta}. We will then prove some regularity properties of the latter that we will need in the sequel.
\medskip

Throughout the remainder of this article, the following assumption will be in force.
\begin{assumption}
\label{eq:RegularityCondF}
There exists an integer $\mathfrak{r} \geq 0$ and a measurable function $F :  [0,\infty)^{B(0,\mathfrak{r})} \to [0,\infty)$ such that
\begin{equation*}
\begin{minipage}{0.8\linewidth}
\text{i) $F(0) = 0$, and $F$ is not identically equal to $0$;}

\text{ii) $F$ is non-decreasing in each of its arguments;} 

\text{iii) There exists a constant $c_1 = c_1(F) >0$ such that for all $\ell$, $\ell'\in [0,\infty)^{B(0,\mathfrak{r})}$,}

$$F(\ell + \ell') \leq F(\ell) + c_1\bigg(\IND_{\{\ell'\neq 0\}} + \sum_{|x|_\infty\leq \mathfrak{r}} \ell'_x\bigg).$$
\end{minipage}
\end{equation*}
\end{assumption}
\noindent
We observe that Assumption~\ref{eq:RegularityCondF}, iii), is automatically satisfied when $F$ is bounded. \smallskip

We recall the definition of $\vartheta: [0,\infty)\to[0,\infty)$ from~\eqref{eq:theta},
\begin{equation}\label{eq:def theta}
    \vartheta(u) = \bbE\Big[F\Big((\mathcal{L}^u_{y})_{y \in B(0,\mathfrak{r})}\Big) \Big],
\end{equation}
where $F$ fulfills Assumption~\ref{eq:RegularityCondF}.

\begin{lemma}
\label{lem:PropertiesTheta}
    The function $\vartheta$ defined by~\eqref{eq:def theta} with $F$ satisfying i), ii), iii) in Assumption~\ref{eq:RegularityCondF} has the following properties:
    \begin{itemize}
        \item[i)] $\vartheta(0) = 0$;
        \item[ii)]$\vartheta$ is 
        Lipschitz continuous, and there exists a constant $c_2 = c_2(F,\mathfrak{r})>0$ such that for all $0\leq u\leq u'$ 
        \begin{equation}
            \vartheta(u'-u) e^{-u \capa(B(0,\mathfrak{r}))} \leq \vartheta(u')-\vartheta(u) \leq c_2(u'-u);
        \end{equation}
        \item[iii)] $\vartheta$ is strictly increasing on $[0,\infty)$. 
    \end{itemize}
\end{lemma}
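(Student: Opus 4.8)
The overall strategy is to express $\vartheta(u') - \vartheta(u)$ in a way that exhibits the monotonicity and the two-sided bounds simultaneously. The key tool is the structure of continuous-time random interlacements: the occupation field $(\mathcal{L}^{u'}_y)_{y \in B(0,\mathfrak{r})}$ decomposes as $(\mathcal{L}^u_y)_{y} + (\widetilde{\mathcal{L}}_y)_y$, where $\widetilde{\mathcal{L}}$ is the occupation field of the trajectories with label in $(u,u']$, which (restricted to $B(0,\mathfrak{r})$, or more precisely in terms of the excursions into $B(0,\mathfrak{r})$) is independent of $\mathcal{L}^u$ and has the law of the occupation field of random interlacements at level $u'-u$ — this is the translation invariance and stationarity of the intensity measure $\widehat\nu(\De\widehat w^*)\otimes\De u$ in the $u$-variable.

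For part i), $\vartheta(0) = \bbE[F((\mathcal{L}^0_y)_y)] = \bbE[F(0)] = 0$ by \eqref{eq:RegularityCondF} i), since $\mathcal{I}^0 = \varnothing$ and hence $\mathcal{L}^0 \equiv 0$. For the lower bound in ii), I would write, using monotonicity of $F$ in \eqref{eq:RegularityCondF} ii),
\begin{equation}
\label{eq:theta_diff_lower_plan}
\vartheta(u') - \vartheta(u) = \bbE\big[F\big((\mathcal{L}^u_y + \widetilde{\mathcal{L}}_y)_y\big) - F\big((\mathcal{L}^u_y)_y\big)\big] \geq \bbE\big[\big(F\big((\widetilde{\mathcal{L}}_y)_y\big) - F\big((\mathcal{L}^u_y)_y\big)\big)\IND_{\{\mathcal{I}^u \cap B(0,\mathfrak{r}) = \varnothing\}}\big],
\end{equation}
and on the event $\{\mathcal{I}^u \cap B(0,\mathfrak{r}) = \varnothing\}$ one has $\mathcal{L}^u_y = 0$ for all $y \in B(0,\mathfrak{r})$, so the bracket equals $F((\widetilde{\mathcal{L}}_y)_y)$. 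By independence of $\widetilde{\mathcal{L}}|_{B(0,\mathfrak{r})}$ from the event $\{\mathcal{I}^u \cap B(0,\mathfrak{r}) = \varnothing\}$ (the former depending only on labels $> u$, the latter only on labels $\leq u$) and \eqref{eq:InterlacementsDefiningProbab}, this gives $\vartheta(u'-u)\,e^{-u\,\capa(B(0,\mathfrak r))}$, which in particular is $\geq 0$ so $\vartheta$ is non-decreasing. For the upper bound, I apply \eqref{eq:RegularityCondF} iii) with $\ell = (\mathcal{L}^u_y)_y$ and $\ell' = (\widetilde{\mathcal{L}}_y)_y$:
\begin{equation}
\label{eq:theta_diff_upper_plan}
\vartheta(u') - \vartheta(u) \leq c_1\,\bbE\Big[\IND_{\{\widetilde{\mathcal{L}}|_{B(0,\mathfrak r)} \neq 0\}} + \sum_{|x|_\infty \leq \mathfrak r}\widetilde{\mathcal{L}}_x\Big] \leq c_1\big(\bbP[\mathcal{I}^{u'-u} \cap B(0,\mathfrak r)\neq \varnothing] + |B(0,\mathfrak r)|(u'-u)\big).
\end{equation}
Here $\bbP[\mathcal{I}^{u'-u}\cap B(0,\mathfrak r)\neq\varnothing] = 1 - e^{-(u'-u)\capa(B(0,\mathfrak r))} \leq \capa(B(0,\mathfrak r))(u'-u)$ and $\bbE[\widetilde{\mathcal{L}}_x] = u'-u$ by \eqref{eq:OccupationTimeExpectation}, so with $c_2 = c_1(\capa(B(0,\mathfrak r)) + |B(0,\mathfrak r)|)$ we get the claimed Lipschitz bound; taking $u = 0$ and $u'$ arbitrary shows $\vartheta$ is finite everywhere, and the bound for general $u \le u'$ gives Lipschitz continuity.

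For part iii), strict monotonicity, I note that from the lower bound in ii) it suffices to show $\vartheta(t) > 0$ for every $t > 0$ (then for $u < u'$, $\vartheta(u') - \vartheta(u) \geq \vartheta(u'-u)e^{-u\capa(B(0,\mathfrak r))} > 0$). To see $\vartheta(t) > 0$: since $F$ is not identically $0$ and non-decreasing with $F(0)=0$, there is some $\ell^\circ \in [0,\infty)^{B(0,\mathfrak r)}$ with $F(\ell^\circ) > 0$, and by monotonicity we may take $\ell^\circ_y = a$ for all $y$ with $a>0$ small; then $F \geq (F(\ell^\circ)/1)\cdot$ nothing — rather, $\{F((\mathcal{L}^t_y)_y) > 0\} \supseteq \{\mathcal{L}^t_y \geq a \ \forall y \in B(0,\mathfrak r)\}$, and the latter event has positive probability under $\bbP$ because with positive probability at least one interlacement trajectory with label $\le t$ enters $B(0,\mathfrak r)$ and spends time at least $a$ at each of its (finitely many) sites — this has positive probability since $\capa(B(0,\mathfrak r)) > 0$ and the holding times are exponential hence unbounded. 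Hence $\vartheta(t) = \bbE[F((\mathcal{L}^t_y)_y)] > 0$.

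\textbf{Main obstacle.} The routine parts are the algebra above; the one genuinely delicate point is justifying the decomposition $(\mathcal{L}^{u'}_y)_{y\in B(0,\mathfrak r)} \stackrel{d}{=} (\mathcal{L}^u_y + \widetilde{\mathcal{L}}_y)_y$ with the precise independence structure — namely that the occupation field on the fixed finite set $B(0,\mathfrak r)$ generated by trajectories with labels in $(u,u']$ is genuinely independent of everything generated by labels $\le u$ and has the law of $\mathcal{L}^{u'-u}$ restricted to $B(0,\mathfrak r)$. This is a standard consequence of the Poisson point process structure (restriction to disjoint label-intervals gives independent PPPs, and the intensity $\widehat\nu\otimes\De u$ is a product, invariant under shifts in $u$), together with the fact that the occupation time at a site depends only on the excursions of trajectories through that site; I would cite the construction in \cite{sznitman2012isomorphism} (or \cite{chiarini2020entropic}) for these properties rather than reprove them. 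A secondary minor subtlety is the measurability/finiteness of $\vartheta$, which is handled \emph{en passant} by the $u=0$ case of \eqref{eq:theta_diff_upper_plan}.
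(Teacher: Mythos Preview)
Your proposal is correct and follows essentially the same approach as the paper: the decomposition $\mathcal{L}^{u'} = \mathcal{L}^u + \widetilde{\mathcal{L}}$ with $\widetilde{\mathcal{L}}$ independent and distributed as $\mathcal{L}^{u'-u}$, the use of condition~\eqref{eq:RegularityCondF}~iii) for the Lipschitz upper bound, the restriction to $\{\mathcal{I}^u\cap B(0,\mathfrak r)=\varnothing\}$ for the lower bound, and the positivity argument via $\{\mathcal{L}^t_y\ge a\ \forall y\}$ are exactly what the paper does. One minor slip: in part~iii) you write ``$a>0$ small'', but monotonicity only lets you replace $\ell^\circ$ by the constant vector $(a,\dots,a)$ with $a=\max_y\ell^\circ_y$ (or any larger value); this does not affect the argument since the event $\{\mathcal{L}^t_y\ge a\ \forall y\}$ still has positive probability for any fixed $a>0$.
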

\begin{proof}
    The proof of i) is immediate from the Assumption~\ref{eq:RegularityCondF}, i) for $F$. We now show that ii) holds. Let $u \in [0,\infty)$ and $h>0$. 
    From Assumption~\ref{eq:RegularityCondF}, iii) on $F$ and the fact that $\mathcal{L}^{u+h} = \mathcal{L}^{u} + \widehat{\mathcal{L}}^{h}$ with $\widehat{\mathcal{L}}^{h}$ equal in law to $\mathcal{L}^{h}$ and independent of $\mathcal{L}^u$ (see below~\eqref{eq:OccTimeDef}), we have
    \begin{equation}
        \begin{aligned}
            \vartheta(u+h) & = \bbE\Big[F\Big((\mathcal{L}^u_{y})_{y \in B(0,\mathfrak{r})} + (\widehat{\mathcal{L}}^h_{y})_{y \in B(0,\mathfrak{r})}\Big) \Big]\\
            & \leq \bbE\Big[F\Big((\mathcal{L}^u_{y})_{y \in B(0,\mathfrak{r})}\Big) \Big] + c_1 \Big( \bbP[\cI^h\cap B(0,\mathfrak{r}) \neq \varnothing] +  |B(0,\mathfrak{r})| \bbE[\cL_0^{h}]\Big)\\
            & \stackrel{\eqref{eq:InterlacementsDefiningProbab},\eqref{eq:OccupationTimeExpectation}}{=} \vartheta(u) +  c_1\Big( 1-\exp(-h \capa(B(0,\mathfrak{r})))+ |B(0,\mathfrak{r})|h\Big).
        \end{aligned}
    \end{equation}
    Owing to the inequality $1-e^{-x} \leq x$ for all $x\geq 0$ we thus obtain
    \begin{equation}
        \vartheta(u+h) - \vartheta(u) \leq c_1 (\capa(B(0,\mathfrak{r})) + |B(0,\mathfrak{r})|) h  \stackrel{\text{def}}{=}c_2h.
    \end{equation}
    For the lower bound we observe that for $u \in [0,\infty)$ and $h>0$
    \begin{equation}
        \begin{aligned}
        \vartheta(u+h) &= \bbE\Big[F\Big((\mathcal{L}^{u+h}_{y})_{y \in B(0,\mathfrak{r})}\Big) \Big]\\
        &\geq \vartheta(u) + \bbE\Big[F\Big((\mathcal{L}^{u+h}_{y})_{y \in B(0,\mathfrak{r})}\Big);\, \cL^u_y = 0, \forall y \in B(0,\mathfrak{r}) \Big]\\
        & = \vartheta(u) + \vartheta(h) \bbP[\cI^u\cap B(0,\mathfrak{r}) = \varnothing],
        \end{aligned}
    \end{equation}
and the lower bound follows again upon using~\eqref{eq:InterlacementsDefiningProbab}.    
    
    We now proceed with the proof of iii). Since $F$ is non-decreasing in all of its arguments, and we additionally assume that it is not identically zero, then there must be some $a,b >0$ such that if $\ell_y \geq a$ for all $y\in B(0,\mathfrak{r})$, then $F(\ell) \geq b$. With this in mind, we consider an arbitrary $h>0$ and note that
    \begin{equation}\label{eq:estimate on theta}
        \begin{aligned}
            \vartheta(h) &= \bbE\Big[F\Big((\mathcal{L}^{h}_{y})_{y \in B(0,\mathfrak{r})}\Big) \Big] \geq b \bbP[\cL^h_y \geq a,\forall y\in B(0,\mathfrak{r})] > 0.
        \end{aligned}
    \end{equation}
    Since $h>0$ above is arbitrary,~\eqref{eq:estimate on theta} together with ii) imply that $\vartheta$ is strictly increasing.
\end{proof}

By Lemma~\ref{lem:PropertiesTheta}, we see that $\vartheta$ is strictly increasing and therefore
\begin{equation}
\vartheta_\infty = \lim_{u \rightarrow \infty} \vartheta(u) \in (0,\infty]
\end{equation}
exists. Moreover, if $F$ is bounded, then $\vartheta_\infty < \infty$. In the remainder of this section, we let $\nu\in (0,\vartheta_\infty)$ be fixed and study the following variational problem,
\begin{equation}\label{eq:variational_problem}
        \underset{\phi \in D^{1,2}(\bbR^d)}{\text{minimize}} \quad\frac{1}{2d} \int_{\bbR^d} |\nabla \phi|^2\,\De x,\qquad\text{subject to }  \fint_D \vartheta(\phi^2)\,\De x \geq \nu,
\end{equation}
where $D^{1,2}(\bbR^d)$ is the space of  locally integrable functions $u$ on $\bbR^d$ with finite Dirichlet integral $\int_{\mathbb{R}^d} |\nabla u|^2 \mathrm{d}x < \infty$ and vanishing at infinity in the sense that $|\{ |u| > a \}| < \infty$ for all $a > 0$ (with $|\, \cdot \,|$ denoting the Lebesgue measure), see~\cite[Chapter 8 $\S$2]{lieb2001analysis}. Owing to the fact that $\vartheta$ is Lipschitz continuous, we have that the map $\phi\in L^2(D) \mapsto \fint_D \vartheta(\phi^2)\,\De x \in [0,\infty)$ is continuous. By means of Theorem 8.6 of~\cite{lieb2001analysis} and the lower semi-continuity of the Dirichlet energy it is standard to show that there exists a minimizer $\varphi_\mathrm{min} \in D^{1,2}(\bbR^d)$ to the variational problem in~\eqref{eq:variational_problem}, such that (recall that $D$ is defined above~\eqref{eq:DiscreteBlowup})
\begin{equation}
\label{eq:I_D(nu)Def}
   \cI_D(\nu)\stackrel{\mathrm{def}}{=}   \frac{1}{2d}\int_{\bbR^d} |\nabla \varphi_{\mathrm{min}}|^2\,\De x , \qquad \fint_D \vartheta(\varphi_{\mathrm{min}}^2)\,\De x = \nu.
\end{equation}

\begin{remark}
\label{rem:ChooseMinimizerNonnegative} We note that the operation $\varphi_{\mathrm{min}}\mapsto |\varphi_{\mathrm{min}}|$ only reduces the Dirichlet energy without violating the constraint in~\eqref{eq:variational_problem}. It follows that $\varphi_{\mathrm{min}}$ can be chosen to be non-negative. Furthermore,  we can see that $\varphi_{\min}$ is harmonic in $\bbR^d\setminus D$. In fact, by considering perturbations $\varphi_{\min} + \varepsilon \psi $ with $\psi\in C_0^\infty(\mathbb{R}^d\setminus D$), one obtains $\Delta \varphi_{\min} = 0$ on $\bbR^d\setminus D$ in a weak sense, and a fortiori, by classical elliptic regularity, in a classical sense. The harmonicity together with the fact that $\varphi_{\min} \in D^{1,2}(\mathbb{R}^d)$ and the mean value property of harmonic functions, implies that $\sup_{|x| = R} \varphi_{\min}(x)$ goes to zero as $R$ tends to infinity. In particular, by the maximum principle, and setting $r_D = \sup\{|x|:\, x\in D\}$, one has for all $|x|\geq 2r_D$
\begin{equation}\label{eq:decay}
      \bigg(\frac{2r_D}{|x|}\bigg)^{d-2} \min_{|z| = 2r_D} \varphi_{\min} \leq \varphi_{\min} (x) \leq   \bigg(\frac{2r_D}{|x|}\bigg)^{d-2} \max_{|z| = 2r_D} \varphi_{\min}.
\end{equation}
It follows that when $d \geq 5$, $\varphi_{\min} \in L^2(\bbR^d)$.
\end{remark}

\begin{remark} It is not hard to show that for any $\nu\in [0,\infty)$
\begin{equation}
    \label{eq:smooth_approx}
    \cI_D(\nu) = \inf\bigg\{\frac{1}{2d}\int_{\bbR^d} |\nabla \phi|^2 \,\De x\, : \, \phi\in C_0^\infty(\bbR^d)\,,\fint_D \vartheta(\phi^2)\,\De x > \nu\bigg\},
\end{equation}
with the convention that $\inf \varnothing = \infty$ (see for instance~\cite[Corollary 5.9]{sznitman2019bulk}). 
\end{remark}

\begin{remark} We note that since $\vartheta$ is Lipschitz continuous, it is differentiable almost everywhere and sub-linear. In many interesting situations $\vartheta$ is in fact smooth even if $F$ is discontinuous. As an example consider $F_2(\ell) = \mathbbm{1}_{\{\ell > 0 \} }$, ($\mathfrak{r} =0$), for which $\vartheta(u) = 1 - \exp(-u / g(0,0))$. In the case where the function $\eta$, with $\eta(a) \stackrel{\mathrm{def}}{=} \vartheta(a^2)$ for $a \in \bbR$, fulfills $\eta \in C^2_b(\bbR)$ (i.e.~the subspace of functions in $C^2(\bbR)$ which have bounded derivatives) we can observe as in~\cite[Remark 5.10, 3)]{sznitman2019bulk} that non-negative minimizers $\varphi_{\min}\in D^{1,2}(\bbR^d)$ of~\eqref{eq:variational_problem} satisfy the following semilinear partial differential equation in the weak sense
\begin{equation}
    -\Delta \varphi_{\min} = \lambda \varphi_{\min} \vartheta'(\varphi_{\min}^2) \IND_D,
\end{equation}
for a suitable Lagrange multiplier $\lambda$. In particular, in view of~\eqref{eq:decay} we can even say that a minimizer satisfies
\begin{equation}
    \varphi_{\min} = \lambda \cG \Big[ \varphi_{\min} \vartheta'(\varphi_{\min}^2) \IND_D\Big],
\end{equation}
for a suitable $\lambda>0$, where $\cG = (-\Delta)^{-1}$ is the convolution with the Green function associated with a Brownian motion. 
\end{remark}

\begin{prop}\label{prop:continuityofI} The map $\nu\in [0,\vartheta_\infty)\mapsto \cI_D(\nu) \in [0,\infty)$ is an increasing homeomorphism.    
\end{prop}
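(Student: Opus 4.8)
The plan is to establish the three properties — strict monotonicity, continuity, and surjectivity onto $[0,\infty)$ — of the map $\nu \mapsto \cI_D(\nu)$ separately, and then conclude via the standard fact that a continuous strictly monotone surjection between intervals is a homeomorphism.

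First I would check monotonicity. If $\nu_1 < \nu_2$ then the admissible set for $\nu_2$ (functions $\phi$ with $\fint_D \vartheta(\phi^2) \geq \nu_2$) is contained in that for $\nu_1$, so taking infima gives $\cI_D(\nu_1) \leq \cI_D(\nu_2)$ immediately. For \emph{strict} monotonicity, suppose $\cI_D(\nu_1) = \cI_D(\nu_2)$ with $\nu_1 < \nu_2$; take the minimizer $\varphi_{\mathrm{min}}$ for $\nu_1$ (which exists and is non-negative by the discussion around~\eqref{eq:I_D(nu)Def} and Remark~\ref{rem:ChooseMinimizerNonnegative}, with $\fint_D \vartheta(\varphi_{\mathrm{min}}^2) = \nu_1$). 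I would perturb it by a small multiple: for $t > 1$ close to $1$, the function $t\varphi_{\mathrm{min}}$ has Dirichlet energy $t^2 \cI_D(\nu_1)$, and since $\vartheta$ is strictly increasing (Lemma~\ref{lem:PropertiesTheta}~iii)) with $\vartheta(\varphi_{\mathrm{min}}^2)$ not a.e.\ zero on $D$, the quantity $\fint_D \vartheta(t^2\varphi_{\mathrm{min}}^2)\,\De x$ is strictly increasing in $t$ and continuous (dominated convergence, using the Lipschitz bound on $\vartheta$ and that $\varphi_{\mathrm{min}} \in L^1_{\mathrm{loc}}$ — in fact $\vartheta(\varphi_{\mathrm{min}}^2)$ is bounded on $D$ by $c_2 \|\varphi_{\mathrm{min}}^2\|$ locally, and $\varphi_{\mathrm{min}} \in L^2(D)$ since $D$ is bounded). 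Hence for suitable $t > 1$ we still have $\fint_D \vartheta(t^2 \varphi_{\mathrm{min}}^2) > \nu_1$ but can arrange the value to exceed some $\nu' \in (\nu_1,\nu_2]$ or at least stay admissible for a slightly larger constraint, giving $\cI_D(\nu') \leq t^2 \cI_D(\nu_1)$; letting $t \downarrow 1$ and using continuity (proven next) forces a contradiction with $\cI_D$ being constant on $[\nu_1,\nu_2]$. Alternatively, and more cleanly, I would argue directly: scaling $\varphi_{\mathrm{min}}$ up by $t>1$ strictly increases the constraint functional past $\nu_1$, so $\cI_D$ cannot be locally constant to the right of any $\nu_1 < \vartheta_\infty$; combined with monotonicity this yields strict monotonicity on all of $[0,\vartheta_\infty)$.

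Next, continuity. Right-continuity and the scaling argument above essentially give an upper bound: $\cI_D(\nu') \leq \big(\tfrac{\nu'}{\nu}\big)^{1+\varepsilon}\cI_D(\nu)$-type estimates via rescaling the near-minimizer, showing $\limsup_{\nu' \downarrow \nu}\cI_D(\nu') \leq \cI_D(\nu)$, while monotonicity gives $\geq$. For left-continuity I would use that for any minimizer $\varphi_{\mathrm{min}}$ at level $\nu$, the rescaled functions $t\varphi_{\mathrm{min}}$ with $t \uparrow 1$ are admissible for constraint values $\nu_t := \fint_D \vartheta(t^2\varphi_{\mathrm{min}}^2) \uparrow \nu$ (by continuity of $t \mapsto \nu_t$), so $\cI_D(\nu_t) \leq t^2 \cI_D(\nu) \to \cI_D(\nu)$; since $\cI_D$ is monotone and $\nu_t$ runs through values approaching $\nu$ from below, this forces $\lim_{\mu \uparrow \nu}\cI_D(\mu) = \cI_D(\nu)$. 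Here I use crucially that $\vartheta(0)=0$ and $\vartheta$ strictly increasing so that $t \mapsto \fint_D \vartheta(t^2\varphi_{\mathrm{min}}^2)$ is a genuine continuous bijection from $[0,1]$ onto $[0,\nu]$.

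Finally, surjectivity onto $[0,\infty)$: since $\cI_D(0) = 0$ (take $\phi = 0$) and $\cI_D$ is continuous and strictly increasing, it suffices to show $\cI_D(\nu) \to \infty$ as $\nu \uparrow \vartheta_\infty$. I would argue by contradiction: if $\cI_D(\nu) \leq K$ for all $\nu < \vartheta_\infty$, pick near-minimizers $\phi_n$ with bounded Dirichlet energy and $\fint_D \vartheta(\phi_n^2) \to \vartheta_\infty$; by the Sobolev/Rellich-type compactness available for $D^{1,2}(\bbR^d)$ (and boundedness of $D$) a subsequence converges in $L^2_{\mathrm{loc}}$, hence a.e.\ on $D$, to some $\phi_\infty \in D^{1,2}(\bbR^d)$ with $\fint_D \vartheta(\phi_\infty^2) = \vartheta_\infty$ by Fatou / dominated convergence; but $\vartheta(a) < \vartheta_\infty$ for every finite $a$, so $\vartheta(\phi_\infty^2) = \vartheta_\infty$ a.e.\ on $D$ is impossible unless $\phi_\infty = +\infty$ on a positive-measure set, contradicting $\phi_\infty \in D^{1,2}$. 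When $\vartheta_\infty = \infty$ this is even more direct: a uniform $L^\infty$-type or Sobolev bound on $\phi$ from bounded Dirichlet energy caps $\fint_D \vartheta(\phi^2)$, a contradiction. Conclude: $\cI_D : [0,\vartheta_\infty) \to [0,\infty)$ is a continuous, strictly increasing bijection, hence an increasing homeomorphism.

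The main obstacle I anticipate is making the surjectivity/blow-up step rigorous — specifically, extracting enough compactness in $D^{1,2}(\bbR^d)$ (which is not a space of $L^2$ functions when $d \in \{3,4\}$) to pass the constraint functional to the limit, and handling the case $\vartheta_\infty = \infty$ uniformly. This is where I would lean on the decay estimate~\eqref{eq:decay}, the restriction to the bounded set $D$ in the constraint, and standard results from~\cite[Chapter 8]{lieb2001analysis} on $D^{1,2}$.
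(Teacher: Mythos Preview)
Your overall structure is sound and matches the paper's strategy, but the strict-monotonicity argument as written has a genuine gap. Scaling the minimizer at $\nu_1$ \emph{up} by $t>1$ only yields $\cI_D(\nu_t) \leq t^2 \cI_D(\nu_1)$ for some $\nu_t > \nu_1$; this upper bound is perfectly compatible with $\cI_D(\nu_t) = \cI_D(\nu_1)$, so no contradiction with constancy on $[\nu_1,\nu_2]$ follows, and letting $t \downarrow 1$ does not help. The fix---which is exactly what the paper does---is to reverse direction: take a minimizer $\varphi'$ for the \emph{larger} level $\nu' (=\nu_2)$ and scale \emph{down}. Since $\varphi'\not\equiv 0$ and $\vartheta$ is strictly increasing with $\vartheta(0)=0$, there is $a\in[0,1)$ with $\fint_D \vartheta((a\varphi')^2)\,\De x \geq \nu$, whence $\cI_D(\nu)\leq a^2\cI_D(\nu') < \cI_D(\nu')$. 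You already use precisely this scaling-down idea in your left-continuity step, so the repair is immediate once you notice the direction.

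On left-continuity, your approach is correct and in fact more elementary than the paper's. The paper takes near-minimizers $\varphi_n$ at levels $\nu-\varepsilon_n$, extracts an $L^2_{\mathrm{loc}}$-convergent subsequence via compactness in $D^{1,2}(\bbR^d)$, and then uses lower semicontinuity of the Dirichlet energy together with dominated convergence on the constraint. Your scaling argument (take the minimizer at $\nu$, scale by $t\uparrow 1$, use continuity of $t\mapsto \fint_D\vartheta(t^2\varphi_{\min}^2)$) bypasses the compactness step entirely. The paper's compactness machinery is, however, exactly what is needed for the blow-up step, so it is not wasted.

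On blow-up as $\nu\uparrow\vartheta_\infty$, your argument is essentially the paper's. For the case $\vartheta_\infty=\infty$, the clean way to finish (and what the paper does) is: if the limit were finite one would obtain $\phi\in D^{1,2}(\bbR^d)$ with $\fint_D\vartheta(\phi^2)\,\De x=\infty$; but $\phi\in L^2(D)$ by Sobolev embedding on the bounded set $D$, and the Lipschitz bound $\vartheta(s)\leq c_2 s$ then forces $\fint_D\vartheta(\phi^2)\,\De x<\infty$, a contradiction. This is more direct than appealing to an $L^\infty$-type bound.
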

\begin{proof} It is immediate that the map is non-decreasing and right-continuous. Furthermore it is straightforward that $\cI_D(0) = 0$. We now argue that the map is left-continuous. Let $\nu\in (0,\vartheta_\infty)$ be fixed and consider a sequence $(\varepsilon_n)_{n \geq 1}$ of positive real numbers with $\varepsilon_n \to 0$ and $\varphi_n\geq 0$, $\varphi_n\in C_0^\infty(\bbR^d)$ such that $\fint_D \vartheta(\varphi_n^2)\,\De x > \nu-\varepsilon_n $ with $\frac{1}{2d}\int_{\bbR^d}|\nabla\varphi_n|^2\De x \to \lim_{\varepsilon \downarrow 0} \cI_D(\nu-\varepsilon) \leq \cI_D(\nu) < \infty$. Since $\nabla \varphi_n$ is bounded in $L^2$, we can extract a weakly convergent subsequence, and thus by Theorem 8.6 of~\cite{lieb2001analysis}, we can extract a subsequence of $\varphi_n$ that converges a.e.\ and in $L^2_{\mathrm{loc}}(\bbR^d)$ to a function $\phi\in D^{1,2}(\bbR^d)$. By the lower semi-continuity of the Dirichlet energy we have $\frac{1}{2d}\int_{\bbR^d} |\nabla\phi|^2\,\De x \leq \lim_{\varepsilon \downarrow 0} \cI_D(\nu-\varepsilon)$, and furthermore by the Lebesgue dominated convergence theorem and the Lipschitz continuity of $\vartheta$ we have 
$\fint_D \vartheta(\phi^2)\,\De x \geq \nu$. It follows that $\cI_D(\nu) \leq \lim_{\varepsilon \downarrow 0} \cI_D(\nu-\varepsilon)$ and the left-continuity follows.

We now show that $\cI_D(\nu)$ is strictly increasing. Let $\nu<\nu'$ and $\varphi'$ a minimizer for $\cI_D(\nu')$. Then $\varphi'$ is not the null function so that for some $a\in [0,1)$ we have
\begin{equation}
    \fint_D \vartheta((a\varphi')^2)\,\De x \geq \nu,\qquad \Longrightarrow \qquad
    \cI_D(\nu) \leq \frac{a^2}{2d} \int_{\bbR^d} |\nabla \varphi'|^2\,\De x < \cI_D(\nu').
\end{equation}
We finish the proof by showing that $\lim_{\nu\uparrow \vartheta_\infty} \cI_D(\nu) = \infty$. We start with the case $\vartheta_\infty < \infty$. As in the proof of the left-continuity, if $\lim_{\nu\uparrow\vartheta_{\infty}}\cI_D(\nu)$ would be finite we could find a function $\phi\in D^{1,2}(\bbR^d)$, $\phi\geq 0$ such that $\fint_D\vartheta(\phi^2)\,\De x = \vartheta_\infty$. This is impossible since $\vartheta(\phi^2) < \vartheta_\infty$ a.e.\ in $D$. Finally, if $\vartheta_\infty = \infty$, the same argument as before, but choosing a sequence $\nu_n\uparrow \infty$, would allow us to find $\phi\in D^{1,2}(\bbR^d)$, $\phi\geq 0$ such that $\fint_D\vartheta(\phi^2)\,\De x = \infty$, this is impossible since it would imply, owing to the Lipschitz continuity of $\vartheta$, that $\phi\notin L^2(D)$.
\end{proof} 

Our main purpose in Proposition~\ref{prop:quasi-minimizer} below is to construct a quasi-minimizer supported on a ball of a large enough radius that is smooth. We start with a lemma that allows us to restrict the minimization problem to a ball of a large enough radius. In the following, we denote for an open set $U \subseteq \bbR^d$ by $H^1_0(U)$ the closure of $C^\infty_0(U)$ in the standard $H^1$ (Sobolev) norm; see, e.g.,~\cite[p.~174]{lieb2001analysis}.

\begin{lemma}\label{lem:min_r} Consider for $\nu\in (0,\vartheta_\infty)$ and any $r>r_D \stackrel{\mathrm{def}}{=} \sup \{|x|:\, x\in D\}$ the minimization problem
\begin{equation}
    \label{eq:variational_problem_r}
        \underset{\phi \in H^{1}_0(B_r)}{\text{minimize}} \quad\frac{1}{2d} \int_{B_r} |\nabla \phi|^2\,\De x,\qquad\text{subject to }  \fint_D \vartheta(\phi^2)\,\De x \geq \nu,
\end{equation}
and denote by $\cI_{D,r}(\nu)$ the minimum of the above problem.
Then,
    \begin{equation}
        \cI_{D,r}(\nu)\downarrow \cI_D(\nu),\qquad \text{as $r\uparrow\infty$}.
    \end{equation}
Furthermore, there exists $\varphi\in H_0^1(B_{r})$, with $\varphi\geq0$ and $\fint_D \vartheta(\varphi^2) \,\De x = \nu$, such that $\cI_{D,r}(\nu) = \frac{1}{2d}\int_{\bbR^d}|\nabla \varphi|^2\,\De x$,  and $\varphi$ is harmonic on $B_r\setminus D$.
\end{lemma}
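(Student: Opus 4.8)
The plan is to handle the three assertions of Lemma~\ref{lem:min_r} separately, the substance lying in the reduction of the minimization to the bounded domain $B_r$, where compactness is elementary.

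\emph{Convergence of the minima.} I would first note that $r\mapsto\cI_{D,r}(\nu)$ is non-increasing: for $r'\le r$, extending a competitor for $\cI_{D,r'}(\nu)$ by zero yields an element of $H^1_0(B_r)$ with the same Dirichlet integral and, since $D\subseteq B_{r'}$, the same value of $\fint_D\vartheta(\phi^2)\,\De x$, hence the same constraint. The same extension-by-zero embeds $H^1_0(B_r)$ into $D^{1,2}(\bbR^d)$, so $\cI_{D,r}(\nu)\ge\cI_D(\nu)$ for every $r$, and the monotone limit as $r\uparrow\infty$ exists and is at least $\cI_D(\nu)$. For the reverse bound I would invoke the smooth-approximation identity~\eqref{eq:smooth_approx}: given $\varepsilon>0$, pick $\phi\in C_0^\infty(\bbR^d)$ with $\fint_D\vartheta(\phi^2)\,\De x>\nu$ and $\frac{1}{2d}\int_{\bbR^d}|\nabla\phi|^2\,\De x\le\cI_D(\nu)+\varepsilon$; since $\supp\phi$ is compact it is contained in $B_r$ for all large $r$, so $\phi$ is admissible for~\eqref{eq:variational_problem_r} and $\cI_{D,r}(\nu)\le\cI_D(\nu)+\varepsilon$ for such $r$. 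Letting $r\uparrow\infty$ and then $\varepsilon\downarrow0$ gives the matching bound, hence $\cI_{D,r}(\nu)\downarrow\cI_D(\nu)$.

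\emph{Existence of the minimizer.} Fix $r>r_D$ and take a minimizing sequence $(\phi_n)$ in $H^1_0(B_r)$ for~\eqref{eq:variational_problem_r}; replacing $\phi_n$ by $|\phi_n|$ (still in $H^1_0(B_r)$, no larger Dirichlet integral, unchanged $\vartheta(\phi_n^2)$) I may assume $\phi_n\ge0$. By the Poincaré inequality on the bounded set $B_r$ the sequence is bounded in $H^1_0(B_r)$, so after passing to a subsequence $\phi_n\rightharpoonup\varphi$ weakly in $H^1_0(B_r)$ and, by the Rellich--Kondrachov embedding and a further subsequence, $\phi_n\to\varphi$ in $L^2(B_r)$ and a.e.; in particular $\varphi\ge0$ a.e. Weak lower semicontinuity of the Dirichlet energy gives $\frac{1}{2d}\int_{B_r}|\nabla\varphi|^2\,\De x\le\cI_{D,r}(\nu)$, and the Lipschitz bound of Lemma~\ref{lem:PropertiesTheta} together with Cauchy--Schwarz, $\int_D|\vartheta(\phi_n^2)-\vartheta(\varphi^2)|\,\De x\le c_2\|\phi_n-\varphi\|_{L^2(D)}\|\phi_n+\varphi\|_{L^2(D)}\to0$, lets the constraint pass to the limit, so $\varphi$ is admissible and hence a minimizer. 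To upgrade the constraint to an equality I would argue by contradiction: if $\fint_D\vartheta(\varphi^2)\,\De x>\nu$, then $a\mapsto\fint_D\vartheta(a^2\varphi^2)\,\De x$ is continuous (Lipschitzness of $\vartheta$, $\varphi\in L^2(D)$), non-decreasing, vanishes at $a=0$ and exceeds $\nu$ at $a=1$, so some $a\in(0,1)$ still meets the constraint; then $a\varphi$ is admissible with Dirichlet energy $a^2\cdot 2d\,\cI_{D,r}(\nu)$, strictly below $2d\,\cI_{D,r}(\nu)$ because $\cI_{D,r}(\nu)>0$ (otherwise a minimizing sequence would tend to $0$ in $L^2(B_r)$ by Poincaré, forcing $\fint_D\vartheta(\phi_n^2)\,\De x\to\vartheta(0)=0<\nu$), a contradiction.

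\emph{Harmonicity off $D$, and the main obstacle.} For $\psi\in C_0^\infty(B_r\setminus D)$ and $t\in\bbR$, the function $\varphi+t\psi\in H^1_0(B_r)$ equals $\varphi$ on $D$ and so remains admissible; minimality of $\varphi$ forces $t\mapsto\int_{B_r}|\nabla(\varphi+t\psi)|^2\,\De x$ to be stationary at $t=0$, i.e.\ $\int_{B_r}\nabla\varphi\cdot\nabla\psi\,\De x=0$ for all such $\psi$, which is the weak formulation of $\Delta\varphi=0$ on $B_r\setminus D$; Weyl's lemma then makes $\varphi$ harmonic (in particular smooth) there. I expect the only genuinely delicate points to be the bookkeeping ones: passing from the open constraint ``$>\nu$'' in~\eqref{eq:smooth_approx} to the closed constraint ``$\ge\nu$'' of~\eqref{eq:variational_problem_r}, and verifying $\cI_{D,r}(\nu)>0$ so that the rescaling $\varphi\mapsto a\varphi$ strictly lowers the energy. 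Everything else is the classical direct method, which goes through smoothly here precisely because $B_r$ is bounded, so the Poincaré inequality and the Rellich compact embedding apply directly and one avoids the weaker compactness available only in $D^{1,2}(\bbR^d)$.
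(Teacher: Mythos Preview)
Your proof is correct, and for the existence, saturation, and harmonicity assertions you spell out in detail what the paper simply calls ``a standard first variation argument.''

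The one genuine difference is in how you establish $\lim_{r\to\infty}\cI_{D,r}(\nu)\le\cI_D(\nu)$. You invoke the smooth-approximation identity~\eqref{eq:smooth_approx} to produce a compactly supported $\phi$ that is near-optimal for $\cI_D(\nu)$ and then observe that $\phi$ is admissible for~\eqref{eq:variational_problem_r} once $r$ exceeds the support radius. The paper instead takes the actual minimizer $\varphi_{\min}\in D^{1,2}(\bbR^d)$ of~\eqref{eq:variational_problem}, multiplies by a cutoff $\psi$ supported in $B_r$ and equal to $1$ on $B_{r/2}$ with $\|\nabla\psi\|_\infty\le c/r$, and then controls the cross term $\frac{c}{r^2}\int_{B_r\setminus B_{r/2}}\varphi_{\min}^2\,\De x$ using the harmonic decay~\eqref{eq:decay} of $\varphi_{\min}$ outside $D$. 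Your route is shorter and cleaner, but it treats~\eqref{eq:smooth_approx} as a black box (the paper defers its proof to~\cite[Corollary 5.9]{sznitman2019bulk}); the paper's cutoff argument is more self-contained and makes explicit why the harmonicity of $\varphi_{\min}$ in $\bbR^d\setminus D$ matters for this approximation step. Either approach is fine here. Also, the ``bookkeeping'' worry you flag about passing from the strict constraint in~\eqref{eq:smooth_approx} to the closed one in~\eqref{eq:variational_problem_r} is not an issue: strict inequality trivially implies the non-strict one.
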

\begin{proof} It is a standard first variation argument that the minimizer $\varphi$ of~\eqref{eq:variational_problem_r} is harmonic on $B_r\setminus D$, that it can be chosen to be non-negative, and that it saturates the constraint $\fint_{D} \vartheta(\varphi^2)\,\De x = \nu$, in view of the Lipschitz continuity of $\vartheta$.  Furthermore, it is immediate from the setting of the problem that $\cI_{D,r}(\nu)$ is decreasing in $r>2 r_D$ and that $\cI_{D,r}(\nu) \geq \cI_D(\nu)$ for all $r>2 r_D$. Note that if $\varphi_{\min}$ is the minimizer for $\cI_D(\nu)$, we can consider $\psi \varphi_{\min} $, where $\psi$ is a smooth cutoff with values in $[0,1]$ that is supported on $B_r$, equal to one on $B_{r/2}$, and such that $\|\nabla \psi\|_\infty \stackrel{\mathrm{def}}{=} \sup\{ |\nabla\psi(x)| \, : \, x \in \bbR^d \} \leq c/r$. Note that $\psi\varphi_{\min}\in H_0^1(B_r)$, $\fint_D \vartheta((\psi\varphi_{\min})^2)\,\De x = \nu$ if $r > 2r_D$, and (using that for $\varepsilon > 0$, and real numbers $a,b$, one has $(a+b)^2 \leq (1+\varepsilon)a^2 + (1+\frac{1}{\varepsilon})b^2$)
    \begin{equation}\label{eq:cutoff}
    \begin{split}
        \frac{1}{2d}\int_{\bbR^d}|\nabla (\psi\varphi_{\min})|^2\,\De x & \leq \frac{1+\varepsilon}{2d}\int_{\bbR^d}|\nabla \varphi_{\min}|^2\,\De x + \frac{c\left(1 + \frac{1}{\varepsilon} \right)}{r^2} \int_{B_r\setminus B_{r/2}} \varphi_{\min}^2\,\De x, \\
        \end{split}
    \end{equation} 
    for any $\varepsilon > 0$. By~\eqref{eq:decay}, we find that
\begin{equation}
    \label{eq:L2norm}
    \frac{1}{r^2} \int_{B_r\setminus B_{r/2}} \varphi_{\min}^2\,\De x \leq  \left(\max_{|x| = 2r_D} \varphi^2_{\min}(x)\right) \frac{C}{r^2} \int_{r/2}^r s^{3-d} \,\De s \to 0,\qquad \text{as $r\to\infty$}.
\end{equation}
We deduce from~\eqref{eq:cutoff} and~\eqref{eq:L2norm} and using that $\varepsilon > 0$ is arbitrary that $\lim_{r\to\infty}\cI_{D,r}(\nu) \leq \cI_D(\nu) $ and the proof is complete.
\end{proof}

\begin{prop}\label{prop:quasi-minimizer} Fix $\nu\in (0,\vartheta_\infty)$, recall $r_D = \sup \{|x|:\, x\in D\}$,~and consider a fixed $0 < \delta < 1 \wedge (r_D/2)$. Then, for all large $r > 4 r_D$ there exists $\varphi:\bbR^d\to \bbR$ such that:
    \begin{itemize}
        \item[i)] $\varphi\in C^\infty(\overline{B}_{r})$, $\supp\,\varphi = \overline{B}_{r}$, and $\varphi > 0$ on ${B}_{r}$;
        \item[ii)] $\varphi$ is harmonic on $B_{r}\setminus D^\delta $. In particular for all $s\in [r_D + \delta,r)$ and $x\in B_r\setminus \overline{B}_s$
        \begin{equation}\label{eq:harmonic_estimates}
             \frac{r^{2-d} - |x|^{2-d}}{r^{2-d} - s^{2-d}} \Big(\min_{|z| = s} \varphi(z) \Big) \leq \varphi(x) \leq  \frac{r^{2-d} - |x|^{2-d}}{r^{2-d} - s^{2-d}} \Big(\max_{|z| = s} \varphi(z)  \Big);
        \end{equation}
        \item[iii)] It holds that
        \begin{equation}
            \label{eq:last}
            \nu(1 + \delta)\leq \fint_D \vartheta(\varphi^2)\,\De x \leq \nu(1+2\delta),\quad \cI_D(\nu(1+\delta))\leq  \frac{1}{2d} \int_{\bbR^d} |\nabla    \varphi|^2\,\De x \leq \cI_D(\nu(1+2\delta)).
        \end{equation}
    \end{itemize}   
    
\end{prop}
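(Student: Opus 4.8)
The plan is to start from the minimizer $\varphi = \varphi_{D,r_0}$ of the restricted problem~\eqref{eq:variational_problem_r} on a sufficiently large ball $B_{r_0}$ (with $r_0 > 4r_D$), which Lemma~\ref{lem:min_r} provides: it is non-negative, harmonic on $B_{r_0}\setminus D$, saturates the constraint with value $\nu$, and has Dirichlet energy $\cI_{D,r_0}(\nu)$ that is as close to $\cI_D(\nu)$ as we wish by taking $r_0$ large. The construction then proceeds in two perturbation steps: first inflate the constraint level from $\nu$ to something between $\nu(1+\delta)$ and $\nu(1+2\delta)$ by rescaling (replacing $\varphi$ with $\lambda\varphi$ for a suitable $\lambda>1$, or equivalently by invoking the homeomorphism property of Proposition~\ref{prop:continuityofI} together with Lemma~\ref{lem:min_r} applied at level $\nu + \delta$), and second, smooth the minimizer while preserving harmonicity away from a $\delta$-neighborhood of $D$.

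The key steps, in order, are as follows. \emph{Step 1 (choice of level):} Fix a level $\nu'\in[\nu(1+\delta),\nu(1+2\delta)]$ that also lies in $[\nu+\delta,\nu+2\delta]$ — this is possible since $\nu\delta$ versus $\delta$ only forces $\delta$ small, which we may assume, or one simply works with $\nu' = \nu+\delta$ after noting $\nu+\delta \le \nu(1+2\delta)$ when $\delta\le 1$ and $\nu+\delta\ge\nu(1+\delta)$ trivially; then by Lemma~\ref{lem:min_r} pick $r>4r_D$ large enough that $\cI_{D,r}(\nu') \le \cI_D(\nu'+ \text{(tiny)}) \le \cI_D(\nu+2\delta)$ while still $\cI_{D,r}(\nu')\ge\cI_D(\nu')\ge\cI_D(\nu+\delta)$, and let $\psi\in H^1_0(B_r)$, $\psi\ge 0$, be the corresponding minimizer, harmonic on $B_r\setminus D$, with $\fint_D\vartheta(\psi^2) = \nu'$. \emph{Step 2 (mollification):} Extend $\psi$ by $0$ outside $B_r$ and convolve with a standard mollifier $\rho_\varepsilon$ supported in $B_\varepsilon$ with $\varepsilon<\delta/2$; since $\psi$ is harmonic on the open set $B_r\setminus D$, the mollification $\psi*\rho_\varepsilon$ is \emph{exactly} harmonic on the smaller open set $\{x : \operatorname{dist}(x,D\cup\partial B_r)>\varepsilon\} \supseteq B_{r-\varepsilon}\setminus D^\varepsilon \supseteq B_{r-\delta}\setminus D^\delta$ (harmonicity is preserved by convolution). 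The mollified function is $C^\infty$; by $H^1$-convergence $\psi*\rho_\varepsilon\to\psi$ in $H^1$, so the Dirichlet energy changes by at most a tiny amount, and by continuity of $\vartheta$ and dominated convergence $\fint_D\vartheta((\psi*\rho_\varepsilon)^2)\to\nu'$, so for $\varepsilon$ small enough the constraint stays in $[\nu(1+\delta),\nu(1+2\delta)]$ and the energy stays in $[\cI_D(\nu+\delta),\cI_D(\nu+2\delta)]$ (using that we chose $\nu'$ with a little room to spare on both sides). \emph{Step 3 (support and positivity):} The function so far is supported in $\overline{B_{r-\varepsilon/2}}$ or so, and need not be positive throughout $B_r$ nor have support exactly $\overline{B_r}$. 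Fix this by adding a small harmonic-on-$B_r\setminus D^\delta$ correction: take a fixed smooth $\chi\ge0$ with $\supp\chi=\overline{B_r}$, $\chi>0$ on $B_r$, $\chi$ harmonic on $B_r\setminus D^\delta$ — e.g. a suitable multiple of the solution to $\Delta\chi = 0$ on $B_r\setminus D^\delta$ with boundary data matching a positive smooth bump on $\partial D^\delta$ and $0$ on $\partial B_r$, then cutoff-extended; then replace the current function $u$ by $u + \kappa\chi$ for $\kappa>0$ small. Adding $\kappa\chi$ keeps $C^\infty$ regularity, keeps harmonicity on $B_r\setminus D^\delta$, makes the function strictly positive on $B_r$ with support exactly $\overline{B_r}$, increases the constraint value (so one simply chooses $\nu'$ slightly below $\nu(1+2\delta)$ to absorb this), and changes the Dirichlet energy by $O(\kappa)$, hence stays in the allowed window for $\kappa$ small. \emph{Step 4 (harmonic estimates):} Once $\varphi$ is harmonic on $B_r\setminus D^\delta$ and $D^\delta\subseteq\overline{B_{r_D+\delta}}$, inequality~\eqref{eq:harmonic_estimates} follows from the maximum principle applied to $\varphi$ and to the radial harmonic function $x\mapsto\frac{r^{2-d}-|x|^{2-d}}{r^{2-d}-s^{2-d}}$ on the annulus $B_r\setminus\overline{B_s}$, comparing boundary values on $\{|z|=s\}$ and on $\partial B_r$ (where $\varphi=0$).

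The main obstacle is the \emph{bookkeeping of the constraint level through three successive perturbations} (level inflation, mollification, positive correction), each of which nudges $\fint_D\vartheta(\varphi^2)$ and $\frac{1}{2d}\int|\nabla\varphi|^2$ by small amounts: one must choose the target level $\nu'$ and the parameters $r,\varepsilon,\kappa$ in the right order so that the final function lands inside the \emph{closed} window $[\nu(1+\delta),\nu(1+2\delta)]$ for the constraint and $[\cI_D(\nu+\delta),\cI_D(\nu+2\delta)]$ for the energy — this is where Proposition~\ref{prop:continuityofI} (strict monotonicity and continuity of $\nu\mapsto\cI_D(\nu)$) is essential, since it guarantees the energy window has nonempty interior and that the energy depends continuously on the level, giving genuine slack to absorb the perturbations. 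A secondary technical point is verifying that convolving a function harmonic on an open set with a mollifier yields a function that is again harmonic on the appropriately shrunk open set; this is a standard fact (mean-value property is preserved under averaging, or: $\Delta(\psi*\rho_\varepsilon) = (\Delta\psi)*\rho_\varepsilon = 0$ on the shrunk set since $\Delta\psi = 0$ there as a distribution), and requires only that one shrinks $D^\varepsilon$ to $D^\delta$ with $\varepsilon<\delta$, which is harmless.
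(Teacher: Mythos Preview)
Your plan has a genuine gap in Steps~2--3 concerning harmonicity and support near $\partial B_r$. After extending $\psi$ by zero and mollifying globally, $\psi*\rho_\varepsilon$ is harmonic only on $B_{r-\varepsilon}\setminus D^\varepsilon$, not on all of $B_r\setminus D^\delta$: the zero extension is not harmonic across $\partial B_r$, so the convolution fails to be harmonic on the annulus $B_r\setminus B_{r-\varepsilon}$. Your Step~3 adds $\kappa\chi$ with $\chi$ harmonic on $B_r\setminus D^\delta$, but the sum $u+\kappa\chi$ is still not harmonic there because $u$ is not. Moreover the support of $\psi*\rho_\varepsilon$ lies in $\overline{B_{r+\varepsilon}}$ (the Minkowski sum goes outward), not in $\overline{B_{r-\varepsilon/2}}$ as you write, and adding a function supported in $\overline{B_r}$ does not bring it back. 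So property~(ii) and the support claim in~(i) are not achieved.

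The paper's construction sidesteps both issues by reversing the order and \emph{localizing} the mollification. It first adds to $\psi$ a small multiple of the explicit potential $h_{D,r}(x)=W_x[H_D<T_{B_r}]$, which is harmonic on $B_r\setminus D$, strictly positive on $B_r$, vanishes exactly on $\partial B_r$, and is smooth up to $\overline{B_r}$; thus $\psi+\epsilon h_{D,r}$ already has the right support and positivity. It then mollifies \emph{only on $B_{r_D+\delta}$} and leaves $\psi+\epsilon h_{D,r}$ unchanged outside. The key point you are missing is that since $\psi+\epsilon h_{D,r}$ is harmonic on $B_r\setminus D$, the mean value property yields $(\psi+\epsilon h_{D,r})*\rho_\epsilon=\psi+\epsilon h_{D,r}$ on all of $B_r\setminus D^\epsilon$, so the two pieces agree on a full neighborhood of $\partial B_{r_D+\delta}$ and glue to a $C^\infty(\overline{B_r})$ function that is exactly harmonic on $B_r\setminus D^\delta$ with $\supp\varphi=\overline{B_r}$. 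Your $\chi$ plays the role of $h_{D,r}$, but without the local-mollification-plus-mean-value trick the argument does not close.
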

\begin{proof} In view of Lemma~\ref{lem:min_r}, we can take $r>4 r_D$ large enough such that $\cI_{D,r}(\nu(1+3 \delta/2)) < \cI_{D}(\nu(1+2\delta))$ and we let $\psi$ be a minimizer for $\cI_{D,r}(\nu(1+3\delta/2))$ as in Lemma~\ref{lem:min_r}. Fix some $\epsilon = \epsilon(\delta) \in (0,\delta)$ to be chosen small enough and consider a smooth, radially symmetric probability density $\rho_\epsilon$ supported in $B_\epsilon$ (namely such that $\rho_\epsilon(x)$ depends on $x \in \bbR^d$ only through $|x|$). We define (with $\ast$ denoting the usual convolution operator):
    \begin{equation}
       \varphi_\epsilon =  \begin{cases}
            (\psi+\epsilon h_{D,r}) \ast \rho_\epsilon & \text{on } B_{r_D+\delta},\\
            (\psi+\epsilon h_{D,r}) & \text{otherwise},
        \end{cases}
    \end{equation} 
    where $h_{D,r}(x) = W_x[H_D < T_{B_r}]$, with $W_x$ denoting the standard Wiener measure starting from $x \in \bbR^d$, and $H_D$ and $T_{B_r}$ standing for the entrance time into $D$ and the exit time from $T_{B_r}$ for Brownian motion, respectively. We need to show that $\varphi_\epsilon$ satisfies i), ii) and iii). 
    
    We start with i). The fact that $\supp\,\varphi_\epsilon = \overline{B}_r$ and  $\varphi_\epsilon >0$ on $B_r$ is clear from the definition. Moreover, $\varphi_\epsilon$ is infinitely often differentiable in $B_{r_D + \delta} $ by definition, and, since $\psi + \epsilon h_{D,r}$ is harmonic in $B_r\setminus D$, by the mean value property of harmonic functions, we also have that $\varphi_\epsilon \equiv \psi + \epsilon h_{D,r}$ on $B_r \setminus D^\delta$ and thus $\varphi_\epsilon\in C^\infty(\overline{B}_r)$. 
    
    The harmonicity of $\varphi_\epsilon$ in $B_r\setminus D^\delta$ and the maximum principle readily imply ii).

    We are now left showing iii). Recall that the map $\phi\in L^2(D) \mapsto \fint_D \vartheta(\phi^2)\,\De x \in [0,\infty)$ is continuous. Therefore, using that $\varphi_\epsilon \to \psi$ in $L^2(D)$ as $\epsilon\to0$, there exists $\epsilon$ small enough such that 
    \begin{equation}
        \nu(1+\delta)\leq \fint_D \vartheta(\varphi_\epsilon^2)\,\De x \leq \nu(1+2\delta).
    \end{equation}

    In particular, by construction, $\frac{1}{2d} \int_{\bbR^d} |\nabla \varphi_\epsilon|^2\,\De x \geq \cI_D(\nu(1+\delta))$. We are left noticing that that
    \begin{equation}
        \limsup_{\epsilon\to 0}\frac{1}{2d}\int_{\bbR^d} |\nabla \varphi_\epsilon|^2\,\De x \leq \cI_{D,r}(\nu(1+3\delta/2)) < \cI_D(\nu(1+2\delta)).
    \end{equation}
    Therefore, choosing $\epsilon$ small enough we can guarantee that $\frac{1}{2d} \int_{\bbR^d} |\nabla \varphi_\epsilon|^2\,\De x \leq \cI_D(\nu(1+2\delta))$.
\end{proof}

\section{The tilted walk and estimates}
\label{sec:estimates}

In this section we introduce a near-optimal strategy to realize the event $\cA^\nu_N(F)$ introduced in~\eqref{eq:ExcessTypeEvent}. To that end, we first introduce a ``potential'' $\varphi_N = \varphi(\cdot/N)$ that will govern the behavior of the tilted random walk up to a time $S_N \approx \|\varphi_N\|_{\ell^2(\bbZ^d)}^2$ where we let $\varphi$ be a near-minimizer of the variational problem appearing on the right-hand side of~\eqref{eq:IntroMainResultLower}. More precisely, we fix a choice
\begin{equation}
\label{eq:FixedChoiceR}
\delta \in (0,1 \wedge (r_D/2)) \ \text{and} \ \cR \in (4r_D,\infty) \cap \mathbb{Z}, \text{ fulfilling the conditions of Proposition~\ref{prop:quasi-minimizer}}
\end{equation}
(i.e.~$\cR$ is an integer choice of the radius $r > 4r_D$ appearing in Proposition~\ref{prop:quasi-minimizer}). With this, we make the standing assumption throughout Sections~\ref{sec:estimates},~\ref{sec:coupling},  and~\ref{sec:lowerbound}:
\begin{equation}
\label{eq:FixedPhi}
\text{$\varphi$ is a near-minimizer constructed in Proposition~\ref{prop:quasi-minimizer} with $\delta,\cR$ as in~\eqref{eq:FixedChoiceR}.}
\end{equation} 
All constants appearing throughout this section may implicitly depend on $\delta,\cR$, and $\varphi$. Then we define for integer $N \geq 1$
\begin{equation}
\label{eq:varphi_N_Def}
    \varphi_N(x) \stackrel{\text{def}}{=} \varphi\Big(\frac{x}{N}\Big),\quad \text{and} \quad f(x) \stackrel{\text{def}}{=} \frac{\varphi_N(x)}{\|\varphi_N\|_{\ell^2(\bbZ^d)}},\quad x\in \bbZ^d,
\end{equation}
with $U^N = (NB_{\cR }) \cap \bbZ^d$ as above~\eqref{eq:Conditions_f} (with the choice $R = \cR$). Our goal is now to use the tilted walk in~\eqref{eq:TiltedWalk} with the choice $f$ as in~\eqref{eq:varphi_N_Def}. For later use, we will also consider for a fixed $\eta \in (0,\cR/100)$ the set 
\begin{equation}
(U_\eta)^N = (N B_{\cR - \eta}) \cap \bbZ^d.
\end{equation}
This will be convenient, since 
\begin{equation}
\label{eq:min_phi_strictlypositive}
c_3(\eta) = \min_{z \in  \overline{B}_{\cR - \eta}} \varphi(z) > 0,
\end{equation}
by Proposition~\ref{prop:quasi-minimizer}, i). We also set for a given $\varepsilon \in (0,1)$
\begin{equation}\label{eq:time_horizon}
    S_N = (1 + \varepsilon)\|\varphi_N\|_{\ell^2(\bbZ^d)}^2.
\end{equation}
It is immediate to see that as a consequence of Proposition \ref{prop:quasi-minimizer}, for large enough $N$, the function $f$ in~\eqref{eq:varphi_N_Def} fulfills the conditions~\eqref{eq:Conditions_f}. Indeed,~\eqref{eq:Conditions_f}, i) follows from the fact that $\varphi > 0$ on $B_{\cR}$ and $\varphi\vert_{B_{\cR}^c} = 0$, so that $\varphi_N(x) > 0$ if and only if $|x| < N\cR$, and~\eqref{eq:Conditions_f}, ii) follows immediately from the definition~\eqref{eq:varphi_N_Def}. We also recall the definition of the reversible measure $\pi$ in~\eqref{eq:piDef}, in which we fix $f$ as in~\eqref{eq:varphi_N_Def}. \smallskip

In the remainder of this section, we derive pivotal controls on $\varphi_N$, $f$, $\pi$, as well as on properties associated with the tilted walk under this choice of $f$ (and $S_N$). Roughly speaking, the near-minimizer $\varphi$ plays the role of $\widetilde{h}$ constructed in~\cite[Lemma 2.1]{li2017lower} (with $\varphi_N$, $f$, and $\pi$ replacing $h_N = \widetilde{h}(\cdot/N)$, $f = h_N / \|h_N\|_{\ell^2(\bbZ^d)}$, in (2.15) and (2.38) of the same reference, respectively). In the sequel, we will adapt the arguments in~\cite[Sections 2--4]{li2017lower} to our context (in particular, Propositions~\ref{prop:ClosenessToQSD} and~\ref{prop:AnalogProp4.7} below). Some of these modifications are straightforward, and we explain only significant changes in the proof in order to keep the exposition concise. However, a major technical obstruction comes from the fact that we need to derive controls on the tilted walk in essentially \textit{all} mesoscopic sub-boxes of $U^N$ intersecting $D_N$, whereas in~\cite{li2017lower}, precise controls on hitting distributions were only needed for certain subsets of $U^N$ where the corresponding near-minimizer $\widetilde{h}$ of the variational problem attached to the capacity is constant (and equal to one). In order to deal with this, we also develop some comparisons between the equilibrium measure for a simple random walk and that of (an appropriately defined version of) the ``tilted walk in a mesoscopic box'' in Proposition~\ref{prop:ComparisonEqmeasure} below. Roughly speaking, we will heavily rely on the smoothness and boundedness of the near-minimizer $\varphi$, as well as its harmonicity on $B_\cR \setminus D^\delta$ for $\delta$ as in~\eqref{eq:FixedChoiceR} (which implies good quantitative controls on its decay, see~\eqref{eq:harmonic_estimates}). These allow us on one hand to conclude that the conductances associated with the discrete skeleton of the tilted (or confined) walk can be treated as ``essentially constant'' in mesoscopic boxes of size $M \ll N$ (with a quantitative decay rate on their oscillation, see~\eqref{eq:WeightsDontMatterMuch}). On the other hand, this also allows us to leverage standard heat-kernel bounds to obtain good controls on exit times, see Lemma~\ref{lem:HKLemma}. \smallskip

We start with this program by subsuming in the next lemma several technical properties of $\varphi_N$, $f$, and $S_N$ from Lemmas 2.3, 2.10, 2.11 of~\cite{li2017lower}. 
\begin{lemma}
\label{lem:TechnicalProperties}
Let~$\mathcal{R}$ be as in~\eqref{eq:FixedChoiceR} and consider the fixed near-minimizer $\varphi$ as in~\eqref{eq:FixedPhi}. For $\varphi_N$, $f$, $\pi$ and $S_N$ as in~\eqref{eq:varphi_N_Def},~\eqref{eq:piDef},
and~\eqref{eq:time_horizon}, respectively, we have the following properties for $N$ large enough: 
\begin{itemize}
\item[i)] The function $\varphi_N$ fulfills
\begin{equation}
\begin{split}
\label{eq:Phi_N_TechnicalProp}
& cN^{-2} \leq \varphi_N(x) \leq C  \text{ for all }x \in U^N,  \\
& \varphi_N(x) \leq cN^{-1}\text{ for all }x \in \partial_{\mathrm{int}} U^N,  \\
& \varphi_N(x) \geq cN^{-1}\text{ for all }x \in O^N,\\
& cN^d \leq \| \varphi_N \|_{\ell^2(\mathbb{Z}^d)}^2 \leq C N^d,
\end{split}
\end{equation}
where we define $O^N$ as 
\begin{equation}
\begin{split}
O^N & = \Big( U^N \setminus ( \partial_{\mathrm{int}} U^N \cup B_{N\cR/2} ) \Big) \\
& \qquad \cup \{x \in \partial_{\mathrm{int}} U^N \, : \, |y| = N\cR \text{ for all }y \sim x, y \notin U^N \}
\end{split}
\end{equation}
(see Figure~\ref{fig:O_N} for a graphical representation of the set $O^N$).
\begin{figure}[htbp]
\begin{center}
\includegraphics[scale=.45]{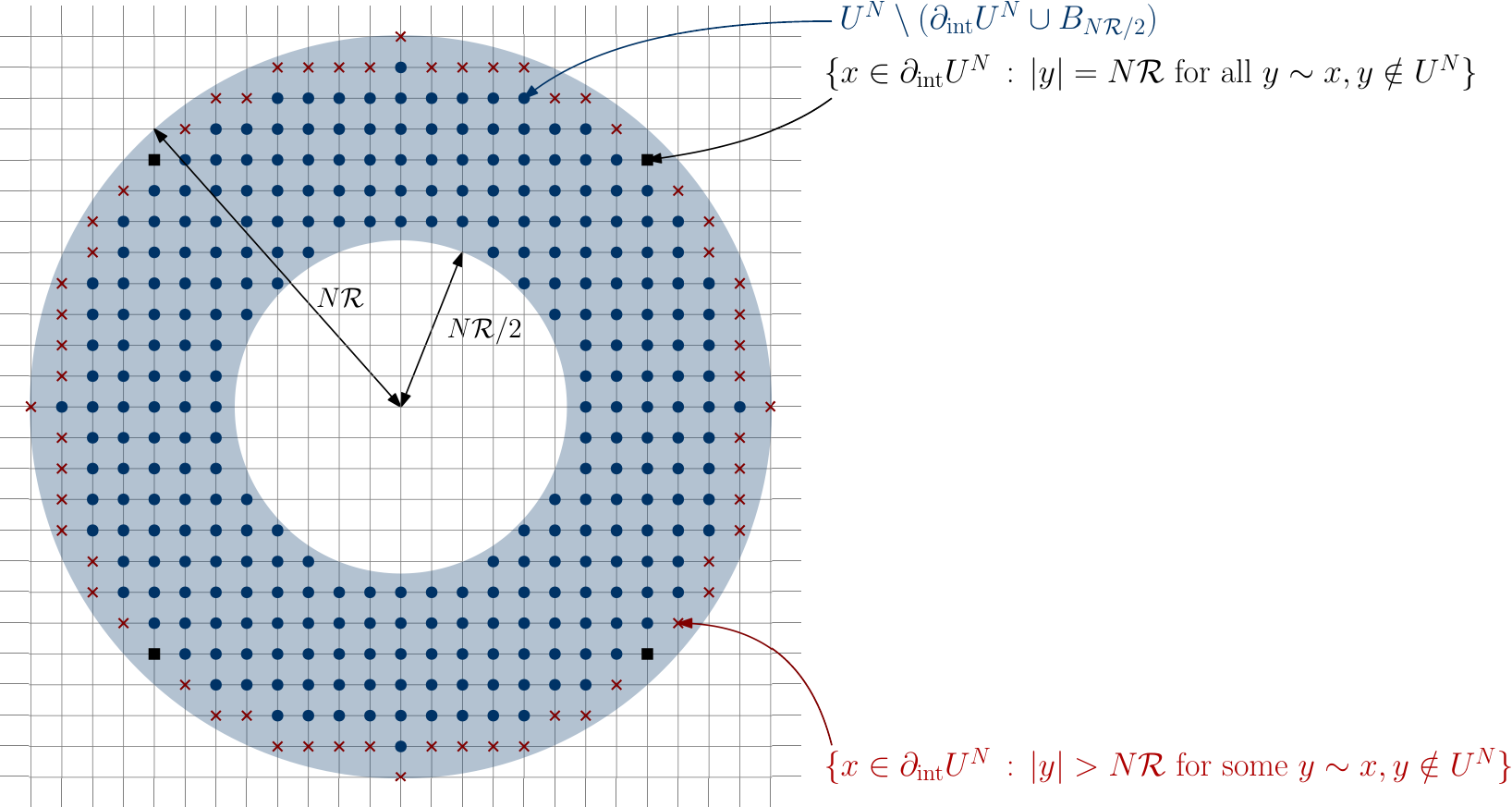}
\end{center}
\caption{An illustration of the sets in the definition of $O^N$: The interior boundary $\partial_{\mathrm{int}}U$ consists of points such that for any nearest neighbor $y$ outside of $U^N$, $|y| = N \mathcal{R}$ holds (in black) and points for which this condition is not valid (in red). In the definition of $O^N$, the former are retained.}
\label{fig:O_N}
\end{figure}
\item[ii)] The measure $\pi$ ( = $f^2$) fulfills
\begin{equation}
cN^{-d-4} \leq \pi(x) \leq CN^{-d} \text{ for all }x \in U^N.
\end{equation}
\item[iii)] Define $v_\varphi$ as the function $v$ in~\eqref{eq:vDef} with $f$ chosen as in~\eqref{eq:varphi_N_Def}, namely
\begin{equation}
\label{eq:v_phiDef}
v_\varphi(x) = - \frac{\Delta_{\mathbb{Z}^d} f(x)}{f(x)} \left( =  - \frac{\Delta_{\mathbb{Z}^d} \varphi_N(x)}{\varphi_N(x)}\right), \qquad x \in U^N.
\end{equation}
Then the function $v_\varphi$ fulfills
\begin{equation}
\begin{split}
\label{eq:vUpperBound}
-cN^2 \leq \ & v_\varphi(x) \leq \frac{c'}{N^2},  \qquad x \in U^N, \text{ and}\\
 & v_\varphi(x) \geq - \frac{c(\eta)}{N^2}, \qquad x \in (U_\eta)^N.
 \end{split}
\end{equation}
\item[iv)] The time $S_N$ fulfills
\begin{equation}
\label{eq:S_N_volume_order}
cN^d \leq S_N \leq CN^d.
\end{equation}
\end{itemize}
\end{lemma}
\begin{proof}
We start with i). The upper bound on $\varphi_N$ in the first line of~\eqref{eq:Phi_N_TechnicalProp} is immediate by choosing $C = \max_{z\in\overline{B}_\cR}\varphi(z)$, which is finite by Proposition~\ref{prop:quasi-minimizer}, i). The lower bound in the first line follows as in the proof of claim 1 of~\cite[Lemma 2.3]{li2017lower}, using the fact that $\cR$ is integer, and employing the lower bound in~\eqref{eq:harmonic_estimates} of Proposition~\ref{prop:quasi-minimizer}, ii). Analogously, the proof of the second and third lines of~\eqref{eq:Phi_N_TechnicalProp} follow similarly as in the proof of claims 2 and 3 of~\cite[Lemma 2.3]{li2017lower}. The last line in~\eqref{eq:Phi_N_TechnicalProp} is again immediate from the definition of $\varphi_N$ and the fact that $\varphi \in C^\infty(\overline{B}_{\cR})$ and  $\varphi > 0$ on $B_\cR$; see Proposition~\ref{prop:quasi-minimizer}, i). \smallskip

Item ii) follows upon using the definition of $f$ in~\eqref{eq:varphi_N_Def} and of $\pi$ in~\eqref{eq:piDef} and combining the first and last line of~\eqref{eq:Phi_N_TechnicalProp}. \smallskip

We now turn to iii). The proof of the first line in~\eqref{eq:vUpperBound} corresponds to that of~\cite[Lemma 2.11]{li2017lower}, and we briefly explain the changes in our set-up. The lower bound in the first line follows directly upon combining the lower and upper bounds of~\eqref{eq:Phi_N_TechnicalProp} and using the fact that $\varphi_N \geq 0$, and we now turn to the upper bound. We will now distinguish the analysis into three regions, namely $I^N = B_{N\cR/2}\cap \bbZ^d$, $O^N$, and $S^N = \partial_{\mathrm{int}}U^N\setminus O^N$. 
\smallskip

We start by estimating $v_\varphi$ for $x\in I^N\cup O^N$ and notice that the function $\varphi$ (which replaces $\widetilde{h}$ in the proof of~\cite[Lemma 2.11]{li2017lower}) is still $C^\infty(\overline{B}_{\cR})$. Thus, one can first obtain (using a standard second order Taylor formula) that
\begin{equation}
    \label{eq:DiscreteLaplacianOnPhi_N}
    \Delta_{\bbZ^d} \varphi_N(x) \geq \frac{1}{N^2}\left(\frac{1}{2d} \Delta \varphi\left(\frac{x}{N}\right) - \frac{c}{N}  \right), \qquad x\in I^N \cup O^N.
\end{equation}
On the one hand if $x\in I^N$ we can use that 
\begin{equation}
\label{eq:varphibiggerthanconst}
\varphi(x) \geq \min_{z \in \overline{B}_{\cR/2}} \varphi(z) = c \, (> 0), \qquad x \in B_{\cR/2},
\end{equation}
since $\varphi$ is smooth and strictly positive on $B_{\cR}$ by Proposition~\ref{prop:quasi-minimizer}, i), to find that
\begin{equation}
 - \frac{\Delta_{\mathbb{Z}^d} \varphi_N(x)}{\varphi_N(x)} \leq \frac{|\tfrac{1}{2d}\Delta \varphi(\tfrac{x}{N})-\tfrac{c}{N}|}{c N^2} \leq \frac{c'}{N^2}.
\end{equation}
On the other hand if $x\in O^N$, we can use that $\varphi_N(x) \geq cN^{-1}$ for all $x\in O^N$ and that $\varphi$ is harmonic in $B_\cR\setminus B_{\cR/2}$, so that $\Delta \varphi(x/N) = 0$, to get
\begin{equation}
    - \frac{\Delta_{\mathbb{Z}^d} \varphi_N(x)}{\varphi_N(x)} \leq \frac{-\tfrac{1}{2d}\Delta \varphi(\tfrac{x}{N})+\tfrac{c}{N}}{\varphi_N(x) N^2} \leq \frac{c'}{N^2}.
\end{equation}
Finally, we briefly discuss the case $x\in S^N$. Also in this case one can follow almost verbatim the proof of~\cite[Lemma 2.11]{li2017lower} with the key observation that also in our situation~\cite[Lemma 6.37, p.~136]{gilbarg2001elliptic} applies to $\varphi$, and  
the outer normal derivative of $\varphi$, fulfills $\frac{\partial \varphi}{\partial n}(z)  < -c$ for $z \in \partial B_\cR$. The existence of this derivative follows from Proposition~\ref{prop:quasi-minimizer}, i), and the claimed upper bound follows from~\eqref{eq:harmonic_estimates} since $\varphi$ is bounded from above and below by two functions with constant negative outer normal derivatives on $\partial B_{\cR}$ (see~\cite[(2.49)]{li2017lower} and the discussion above it for a similar argument).
 
 For the second line of~\eqref{eq:vUpperBound}, it suffices to bound (in the same way as~\eqref{eq:DiscreteLaplacianOnPhi_N})
\begin{equation}
\Delta_{\bbZ^d} \varphi_N(x) \leq \frac{1}{N^2}\left(\frac{1}{2d} \Delta \varphi\left(\frac{x}{N}\right) + \frac{c}{N}  \right), \qquad x\in (U_\eta)^N,
\end{equation}
and by~\eqref{eq:min_phi_strictlypositive}, we see that
\begin{equation}
-\frac{\Delta_{\bbZ^d}\varphi_N(x)}{\varphi_N(x)} \geq -\frac{\frac{1}{2d} \Delta \varphi\left(\frac{x}{N}\right) + \frac{c}{N} }{\varphi\left(\frac{x}{N} \right)N^2} \geq -\frac{c(\eta)}{N^2}.
\end{equation}
  \smallskip

Finally, we see that iv) follows immediately from the definition~\eqref{eq:time_horizon} of $S_N$ and the fourth line of~\eqref{eq:Phi_N_TechnicalProp}.
\end{proof}

We will frequently make use of the regeneration time of the confined walk defined by
\begin{equation}
\label{eq:RegenerationTimeDef}
t_\star = N^2 \log^2 N.
\end{equation}
 In essence, the confined walk on $U^N$ at time $t_\star$ will be distributed according to the reversible distribution $\pi$ up to a super-polynomially decaying error term. More precisely, we have the following.
 \begin{lemma}
 \label{lem:TVtoStationary}
 For $t \geq t_\star$, we have
 \begin{equation}
 \sup_{x,y  \in U^N} |\overline{P}_x[X_t = y] - \pi(y)| \leq e^{-c \log^2 N}.
 \end{equation}
 \end{lemma}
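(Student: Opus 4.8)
The plan is to establish the uniform bound on $|\overline{P}_x[X_t = y] - \pi(y)|$ by combining the spectral gap estimate for the confined walk's generator $\mathscr{L}_f$ with the two-sided bounds on $\pi$ from Lemma~\ref{lem:TechnicalProperties}, ii). First I would recall that the confined walk is a reversible, irreducible Markov chain on the finite state space $U^N$ with respect to the probability measure $\pi$, so its generator $\mathscr{L}_f$ is a negative self-adjoint operator on $\ell^2(U^N,\pi)$ whose eigenvalues can be ordered as $0 = \lambda_0 > -\lambda_1 \geq -\lambda_2 \geq \cdots$, where $\lambda_1 = \lambda_1(N) > 0$ is the spectral gap. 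The standard spectral decomposition then gives
\begin{equation}
\label{eq:SpectralDecompProof}
\big|\overline{P}_x[X_t = y] - \pi(y)\big| \leq \frac{1}{\pi(x)^{1/2}\pi(y)^{1/2}} e^{-\lambda_1 t}
\end{equation}
for all $x,y \in U^N$ and $t \geq 0$ (this is the classical $L^2 \to L^\infty$ bound obtained by writing the transition density as $p_t(x,y)/\pi(y) = \sum_{k} e^{-\lambda_k t} u_k(x) u_k(y)$ in an orthonormal eigenbasis $(u_k)$ of $\ell^2(U^N,\pi)$, subtracting the $k=0$ term, and applying Cauchy-Schwarz together with Parseval). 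By Lemma~\ref{lem:TechnicalProperties}, ii), we have $\pi(x), \pi(y) \geq cN^{-d-4}$, so the prefactor in~\eqref{eq:SpectralDecompProof} is at most $CN^{d+4}$, which is only polynomial in $N$.

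Next I would lower-bound the spectral gap $\lambda_1(N)$. The tilted (confined) walk, up to a bounded-ratio time change, has the same Dirichlet form as a random walk on $U^N$ with conductances $\frac{1}{2d}\varphi_N(x)\varphi_N(y)$ for $x \sim y$, and by~\eqref{eq:Phi_N_TechnicalProp} these conductances and the measure $\pi$ are comparable (up to polynomial-in-$N$ factors) to those of the simple random walk on $U^N = (NB_\cR) \cap \bbZ^d$, which is a box-like set of diameter $O(N)$. Since the simple random walk restricted to such a set has spectral gap of order $N^{-2}$ (by a standard Poincaré/Cheeger argument, or by comparison with the torus), a comparison-of-Dirichlet-forms argument (as in Diaconis-Saloff-Coste) yields $\lambda_1(N) \geq c N^{-\alpha}$ for some fixed power $\alpha$ — crucially, only polynomially small in $N$. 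Alternatively, and perhaps more in the spirit of~\cite{li2017lower}, one can invoke the heat-kernel bounds recalled before Lemma~\ref{lem:HKLemma} directly. With any such polynomial lower bound $\lambda_1 \geq cN^{-\alpha}$ in hand, we get for $t \geq t_\star = N^2 \log^2 N$ that $e^{-\lambda_1 t} \leq e^{-c N^{-\alpha} \cdot N^2 \log^2 N} = e^{-c N^{2-\alpha}\log^2 N}$, and provided the gap is at least of order $N^{-2}$ (which it is, by the box comparison, since larger negative powers would be needed only for pathological geometries), this is $\leq e^{-c\log^2 N}$. Combining with the polynomial prefactor $CN^{d+4}$, which is absorbed into the super-polynomially decaying exponential (as $N^{d+4} e^{-c\log^2 N} \leq e^{-c'\log^2 N}$ for large $N$), completes the bound, after taking the supremum over $x,y \in U^N$.

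The main obstacle is obtaining the spectral gap lower bound $\lambda_1(N) \geq cN^{-2}$ (rather than a worse polynomial power) with the correct dependence, uniformly in $N$: one must verify that introducing the weights $\varphi_N(x)\varphi_N(y)$ does not degrade the gap beyond a polynomial factor, which requires the two-sided control $cN^{-2} \leq \varphi_N \leq C$ on all of $U^N$ from~\eqref{eq:Phi_N_TechnicalProp}. This is precisely the point where the harmonicity of $\varphi$ near $\partial B_\cR$ and the resulting decay estimate~\eqref{eq:harmonic_estimates} enter. This argument parallels the proof of the corresponding statement in~\cite{li2017lower} (their regeneration-time estimate), and the modifications needed here amount to checking that the generic near-minimizer $\varphi$ from Proposition~\ref{prop:quasi-minimizer} satisfies the same quantitative bounds as the specific function $\widetilde h$ used there — which is exactly what Lemma~\ref{lem:TechnicalProperties} provides. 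I would therefore reduce the proof to citing the corresponding argument in~\cite{li2017lower}, noting that the only inputs are the bounds in Lemma~\ref{lem:TechnicalProperties}, i)–ii), and the comparison of the confined-walk Dirichlet form with that of the simple random walk on a box of size $O(N)$.
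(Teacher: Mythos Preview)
Your overall structure is right and matches the paper: reduce to the spectral gap bound $\overline{\lambda} \geq cN^{-2}$, then combine with the polynomial bounds on $\pi$ from Lemma~\ref{lem:TechnicalProperties}, ii) and the spectral decomposition to absorb the prefactor into $e^{-c\log^2 N}$. The paper proceeds in exactly this way, citing \cite[Proposition 2.12]{li2017lower}.

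However, your argument for the spectral gap has a genuine gap. A naive Diaconis--Saloff-Coste comparison of the confined-walk Dirichlet form with the simple random walk on $U^N$ does \emph{not} yield $\overline{\lambda} \geq cN^{-2}$: the conductances $\tfrac{1}{2d}\varphi_N(x)\varphi_N(y)$ and the stationary weights $\pi(x)=f^2(x)$ each vary over $U^N$ by a factor of order $N^4$ (since $\varphi_N$ ranges from $cN^{-2}$ near $\partial_{\mathrm{int}}U^N$ to $C$ in the bulk, see~\eqref{eq:Phi_N_TechnicalProp}), so the comparison constant degrades by a polynomial factor and the resulting bound is only $\overline{\lambda} \geq cN^{-\alpha}$ with $\alpha>2$. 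With $t_\star = N^2\log^2 N$ this gives $\overline{\lambda}\,t_\star \leq cN^{2-\alpha}\log^2 N \to 0$, and the argument collapses. Your parenthetical assertion that ``the box comparison'' still gives order $N^{-2}$ is exactly the point that requires proof and is not justified.

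What the paper does instead is follow the \emph{method of canonical paths} as in \cite[Proposition 2.12]{li2017lower}, which avoids the global ellipticity ratio by routing each canonical path so that it only crosses low-conductance edges in regions where the stationary mass is correspondingly small. The single structural input needed to make this work for the present $\varphi$ is the radial near-monotonicity
\[
|x|\leq |y| \quad \Longrightarrow \quad \pi(x)\geq c\,\pi(y),
\]
which the paper establishes from the harmonicity estimate~\eqref{eq:harmonic_estimates}. This is the missing idea in your proposal; once it is in place, the canonical-paths bound delivers $\overline{\lambda}\geq cN^{-2}$ directly, and the rest of your argument goes through.
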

\begin{proof}
This follows from Proposition~\ref{lem:TechnicalProperties}, ii), upon adapting~\cite[Proposition 2.12]{li2017lower} to our context. Precisely, we need to show that the spectral gap $\overline{\lambda}$ of the confined walk (by which we mean the smallest strictly positive eigenvalue of $-\mathscr{L}_f$ defined in~\eqref{eq:GeneratorVSRW} with the choice of $f$ as in~\eqref{eq:varphi_N_Def}, viewed as a self-adjoint operator on $\ell^2(U^N,\pi)$) can be bounded below by $cN^{-2}$. We can follow the method of canonical paths exactly as in the proof of~\cite[Proposition 2.12]{li2017lower}, where we only need to show that 
\begin{equation}
\label{eq:AlmostMonotonicity}
\text{For }x,y \in U^N, |x| \leq |y| \text{ implies }\pi(x) \geq c\pi(y).
\end{equation}
To establish~\eqref{eq:AlmostMonotonicity}, note that by~\eqref{eq:harmonic_estimates}, one has
\begin{equation}
\varphi(z) \geq c' \varphi(w) \qquad \text{ for } z,w \in \bbR^d \text{ with } |z| \leq |w| \leq \cR. 
\end{equation}
Indeed, we can choose the constant $c'$ as $\frac{\min_{|z| = s} \varphi(z) }{\max_{|z| = s} \varphi(z)}$ for a fixed $s \in [r_D + \delta,\cR)$.
\end{proof}

\subsection{Heat kernel bounds and capacity estimates for a random walk with slowly varying conductances}

We will now derive important comparisons between the simple random walk and a random walk among certain non-constant conductances, which may be thought of as a version of the conductances a confined walk sees within a mesoscopic box, extended to the entire lattice by periodicity. This will allow us to effectively compare the local behavior of the confined walk (on a mesoscopic scale $N^{r_5} \ll N$, with some $r_5 \in (0,1)$ defined in~\eqref{eq:r_jDef} below) with that of a simple random walk. To this end, we introduce some further notation.
We consider
\begin{equation}
\label{eq:zetaAndx_0}
\qquad x_0 \in D^\delta_N \stackrel{\mathrm{def}}{=} (ND^\delta) \cap \bbZ^d,\qquad \text{with $\delta$ as in~\eqref{eq:FixedChoiceR}}.
\end{equation}
We then introduce real numbers $r_j$, $1 \leq j \leq 5$, with
\begin{equation}
\label{eq:r_jDef}
\begin{split}
& r_j \in (0,\tfrac{1}{4}), \text{ for }1 \leq j \leq 5, \qquad r_{j-1} < r_{j}, \text{ for }2
\leq j \leq 5, \\
& r_1 \cdot (d-2) + r_5 < 1, \text{ and }\\ & r_1 < \frac{d-2}{d-1} r_2,
\end{split}
\end{equation}
as well as the concentric boxes given by
\begin{equation}
\label{eq:AjBoxesDef}
A_j = B(x_0, \lfloor N^{r_j} \rfloor), \text{ for }1 \leq j \leq 5, \qquad A_6 = B\Big(x_0, \Big\lfloor \tfrac{\widetilde{\delta}}{100}N \Big\rfloor \Big),
\end{equation}
for some sufficiently small $\widetilde{\delta} > 0$ guaranteeing that $A_6 \subseteq (U_\eta)^N$ for all $N \geq 1$.  The exponents $(r_j)_{1 \leq j \leq 5}$ and the parameter $\widetilde{\delta}$ will be fixed throughout this article, and we suppress dependence of constants on these quantities in the notation. \smallskip

Let $(\mu_{x,y})_{x,y \in \bbZ^d, x \sim y}$ be symmetric weights on the edges on $\mathbb{Z}^d$, $d \geq 3$ (denoted by $\bbE^d$), with 
\begin{equation}
\label{eq:Ellipticity}
\lambda \leq \mu_{x,y} = \mu_{y,x} \leq \Lambda, \qquad \text{for some } 0 < \lambda < \Lambda < \infty.
\end{equation}
For a given measure $\nu : \bbZ^d \rightarrow (\underline{c},\overline{C})$, we let $P^{\mu;\nu}_x$ denote the law of the (variable speed) continuous-time random walk on $(\mathbb{Z}^d, \mathbb{E}^d, \mu)$ with speed measure $\nu$, i.e.~the continuous-time Markov process starting at $x$ with generator 
\begin{equation}
\label{eq:VeryGeneralGenerator}
\mathscr{L}^{\mu;\nu} g(x) = \sum_{z \sim x} \frac{\mu_{x,y}}{\nu_x}  \big(g(z) - g(x)),
\end{equation} 
for functions $g : \mathbb{Z}^d \rightarrow \mathbb{R}$. Under $P_x^{\mu;\nu}$, the Markov chain $(X_t)_{t \geq 0}$ corresponds to a random walk starting in $x$ with jump probabilities given by $\frac{\mu_{y,z}}{M_y}$, where
\begin{equation}
\label{eq:StandardTotalMass}
M_y = \sum_{z : z \sim y}\mu_{y,z},\qquad y \in \bbZ^d,
\end{equation}
which waits at a point $z \in \bbZ^d$ for  an exponential time with mean $\nu_z/M_z$ (see~\cite[Remark 5.7, p.~137--138]{barlow2017random} for more details on this). We also define the corresponding heat kernel by
\begin{equation}
q_t^{\mu;\nu}(x,y) = \frac{P^{\mu;\nu}_x[X_t = y]}{ \nu_y}, \qquad t \geq 0, x,y \in \bbZ^d,
\end{equation}
as well as the Green function by
\begin{equation}
\label{eq:GeneralGreenFunctionDef}
g^{\mu;\nu}(x,y) = E^{\mu;\nu}\left[\int_0^\infty \mathbbm{1}_{\{X_t = y \}} \mathrm{d}t \right] \cdot \frac{1}{\nu_y} = \int_0^\infty q_t^{\mu;\nu}(x,y)  \mathrm{d}t.
\end{equation}
 For $x_0 \in D^\delta_N$, and $\varphi_N$ as in~\eqref{eq:varphi_N_Def}, we consider the $(2\lfloor N^{r_5}\rfloor + 3)$-periodic extension
\begin{equation}
\label{eq:PeriodicExtensionphi}
\varphi_N^{\mathrm{per}}(x) = \varphi_N(x), \qquad x \in B(x_0,\lfloor N^{r_5}\rfloor+1), \text{ extended periodically to }\bbZ^d
\end{equation}
(i.e.~$\varphi_N$ is specified on all points $y \in \bbZ^d$ with $d_\infty(y,A_5) \leq 1$). With this, we can consider the specific choice of weights
\begin{equation}
\label{eq:NonConstantConductances}
\overline{\mu}_{x,y} = \frac{1}{2d} \varphi_N^{\mathrm{per}}(x)\varphi_N^{\mathrm{per}}(y), \text{ for } x\sim y, x,y \in \bbZ^d,
\end{equation}
and the speed measure
\begin{equation}
\label{eq:ChoiceOfSpeedMeasurePi}
\overline{\nu}_x = (\varphi_N^{\mathrm{per}})^2(x), \qquad x \in \mathbb{Z}^d
\end{equation}
(we suppress the additional dependence of $\overline{\mu}_{x,y}$ on $\varphi$, $N$, and $x_0$). Note that for any $x_0$ as in~\eqref{eq:zetaAndx_0}, the conductances~\eqref{eq:NonConstantConductances} fulfill~\eqref{eq:Ellipticity} with constants $\lambda,\Lambda$ not depending on $N$, upon using ~\eqref{eq:min_phi_strictlypositive} and the first line of~\eqref{eq:Phi_N_TechnicalProp}. Moreover, the speed measure $\overline{\nu}$ in~\eqref{eq:ChoiceOfSpeedMeasurePi} is bounded uniformly from above and below by constants that do not depend on $N$.  Importantly, we have that
\begin{equation}
\label{eq:ConfinedWalk_coincides}
\textit{the laws of $(X_{t \wedge T_{A_j}})_{t \geq 0}$, $1\leq j\leq 5$, under $\overline{P}_x$ and under $P^{\overline{\mu};\overline{\nu}}_x$ with $x \in A_5$ coincide,}
\end{equation}
which follows immediately by verifying that the value of the generator $\mathscr{L}^{\overline{\mu};\overline{\nu}}g$ coincides with $\mathscr{L}_fg$ for functions $g : U^N \rightarrow \mathbb{R}$ that vanish outside of $A_5$. 
Since we primarily work with the choice~\eqref{eq:ChoiceOfSpeedMeasurePi}, we use $P^{\overline{\mu}}_x$ (resp.~$q_t^{\overline{\mu}}(x,y)$,~$g^{\overline{\mu}}(x,y)$) as a shorthand notation for $P^{\overline{\mu};\overline{\nu}}_x$ (resp.~$q_t^{\overline{\mu};\overline{\nu}}(x,y)$,~$g^{\overline{\mu};\overline{\nu}}(x,y)$) for any $x,y \in \bbZ^d$, $t \geq 0$.
\begin{remark}
\label{rem:CommentOnConductances}
Let us briefly comment on our choices of the conductances~\eqref{eq:NonConstantConductances}
and the speed measure~\eqref{eq:ChoiceOfSpeedMeasurePi}.
An alternative choice would be to set
\begin{equation}
\begin{split}
\overline{\mu}^{(f)}_{x,y} & = \frac{1}{2d}f^{\mathrm{per}}(x)f^{\mathrm{per}}(y) \text{ for } x\sim y, x,y \in \mathbb{Z}^d, \\
\overline{\nu}_x^{(f)} & =(f^{\mathrm{per}})^2(x), \text{ for } x \in \mathbb{Z}^d,
\end{split}
\end{equation}
with $f(x) = \varphi_N(x) / \| \varphi_N\|_{\ell^2(\bbZ^d)}$ in~\eqref{eq:varphi_N_Def} specified for $x \in B(x_0,2\lfloor N^{r_5} \rfloor +1)$ and $f^{\mathrm{per}}$ its periodic extension to $\bbZ^d$ (similarly as in~\eqref{eq:PeriodicExtensionphi}). Inserting these choices into the definition~\eqref{eq:VeryGeneralGenerator} yields the same generator as we obtain by using~\eqref{eq:NonConstantConductances} and~\eqref{eq:ChoiceOfSpeedMeasurePi}. However, this choice is slightly less convenient, since both $(\overline{\mu}^{(f)}_{x,y})_{x,y}$ and $\overline{\nu}^{(f)}$ have non-trivial scaling as $N$ increases (see Proposition~\ref{lem:TechnicalProperties}).
\end{remark} 
With the choices as in~\eqref{eq:NonConstantConductances} and~\eqref{eq:ChoiceOfSpeedMeasurePi} above, we can introduce several potential-theoretic quantities attached to the laws $(P^{\overline{\mu}}_x)_{x \in \bbZ^d}$, which can be seen as analogues to the quantities introduced for the simple random walk in Section~\ref{sec:preliminaries}, and we refer to~\cite[Chapter 7]{barlow2017random} for more details on this. We introduce for $\varnothing \neq K \subset\subset\bbZ^d$ the tilted equilibrium measure of $K$,
\begin{equation}
\label{eq:TiltedEqMeasureDefinition}
\overline{e}_K(x) = P^{\overline{\mu}}_x[\widetilde{H}_K = \infty]\varphi_N^{\mathrm{per}}(x) \left(\frac{1}{2d} \sum_{y : y \sim x} \varphi_N^{\mathrm{per}}(y) \right)\mathbbm{1}_K(x), \qquad x \in \bbZ^d.
\end{equation}
 Its total mass
\begin{equation}
\overline{\capa}(K) = \sum_{x \in K}\overline{e}_K(x),
\end{equation}
is called the tilted capacity of $K$. We also define the normalized tilted equilibrium measure of $K$ as
\begin{equation}
\label{eq:TiltedNormalizedEqmeasure}
\widetilde{\overline{e}}_K(x)  = \frac{\overline{e}_K(x)}{\overline{\capa}(K)}, \qquad x \in \bbZ^d.
\end{equation}
These definitions are in analogy to~\eqref{eq:eqmeasureDef},~\eqref{eq:capacityDef}, and~\eqref{eq:normalized_eqmeasureDef}, using the conductances~\eqref{eq:NonConstantConductances} and we will derive controls that allow us to compare these ``tilted'' quantities (up to polynomially decaying error terms) to the respective ones for the simple random walk. To facilitate this, we need a result concerning deviations of the time $T_{A_2}$ from times of order $N^{2r_2}$, which brings into play standard Gaussian heat kernel estimates. We use the abbreviation
\begin{equation}
\label{eq:NotTooLargeExit}
\mathcal{E}_\alpha = \left\{ T_{A_2} \leq N^{2\alpha} \right\},\qquad N \geq 1.
\end{equation}
We now establish that the event $\cE_\alpha$ is exponentially unlikely both for the simple random walk as well as for the walk with conductances $\overline{\mu}$ and speed measure $\overline{\nu}$, both started in $x \in A_1$.
\begin{lemma}
\label{lem:HKLemma}
Let $\alpha \in (r_2,1)$. Then, for every $N \geq 1$, 
\begin{equation}
\label{eq:ExitTimeDeviation}
\begin{split}
\sup_{x \in A_1} P^{\overline{\mu}}_{x}[\mathcal{E}_\alpha^c] & \leq C_4 \exp\Big(- C_5 N^{c_6} \Big), \\
\sup_{x \in A_1} P_{x}[\mathcal{E}_\alpha^c] & \leq C_4' \exp\Big(- C_5' N^{c_6'} \Big).
\end{split}
\end{equation}
(with constants depending on $\alpha$).
\end{lemma}
Before proving Lemma~\ref{lem:HKLemma}, we establish that the continuous-time random walk defined by $(P^{\overline{\mu}}_x)_{x \in \bbZ^d}$ fulfills standard Gaussian heat kernel estimates.
\begin{lemma}
\label{lem:NewHK}
For large enough $N$, one has
\begin{equation}
\label{eq:SpeedmeasureLowerBound}
 c \leq \overline{\nu}(x) \leq C, \qquad x \in \bbZ^d,
\end{equation}
as well as
\begin{equation}
\label{eq:HeatKernelForOurWalk}
\frac{c}{t^{d/2}} e^{-c' \frac{|x - y|^2}{t}} \leq q_t^{\overline{\mu}}(x,y) \leq \frac{C}{t^{d/2} } e^{-C' \frac{|x - y|^2}{t}}, \qquad t \geq 1 \vee \widetilde{c}|x-y|, \ x,y \in \mathbb{Z}^d,
\end{equation}
with constants that may depend on $\varphi$ but do not depend on $N$.
\end{lemma}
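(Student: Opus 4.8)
The plan is to recognize the walk governed by $(P^{\overline{\mu}}_x)_{x\in\bbZ^d}$ as a variable-speed continuous-time random walk on $\bbZ^d$ among \emph{uniformly elliptic} conductances with a speed measure comparable to the counting measure, and then to invoke the classical two-sided Gaussian heat kernel bounds for such walks, the only genuine point being that all constants can be taken independent of $N$ (and of the reference point $x_0 \in D^\delta_N$).

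First I would establish \eqref{eq:SpeedmeasureLowerBound}. By the construction \eqref{eq:PeriodicExtensionphi}, the function $\varphi_N^{\mathrm{per}}$ takes only the values $\varphi(z/N)$ for $z \in B(x_0,\lfloor N^{r_5}\rfloor+1)$; after rescaling by $N^{-1}$, these $z/N$ fill a ball of radius $O(N^{r_5-1})$ around $x_0/N \in D^\delta$. Since $\delta < 1 \wedge (r_D/2)$ and $\cR > 4 r_D$, one has $D^\delta \subseteq \overline{B_{3r_D/2}} \subseteq B_{\cR/2}$, and since $\eta < \cR/100$ we get $B_{\cR/2} \subseteq B_{\cR-\eta}$; as $r_5 < \tfrac14$, for $N$ large this whole rescaled ball lies inside $\overline{B_{\cR-\eta}}$, on which $\varphi$ is bounded below by $c_3(\eta) > 0$ (see \eqref{eq:min_phi_strictlypositive}) and above by $\max_{\overline{B_\cR}}\varphi < \infty$ (Proposition~\ref{prop:quasi-minimizer}, i)). Hence $c_3(\eta)^2 \leq \overline{\nu}_x \leq (\max_{\overline{B_\cR}}\varphi)^2$ for every $x \in \bbZ^d$, with bounds not depending on $N$, which is \eqref{eq:SpeedmeasureLowerBound}. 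The identical argument (already recorded below \eqref{eq:ChoiceOfSpeedMeasurePi}) shows that the conductances $\overline{\mu}_{x,y} = \tfrac{1}{2d}\varphi_N^{\mathrm{per}}(x)\varphi_N^{\mathrm{per}}(y)$ satisfy \eqref{eq:Ellipticity} with $\lambda = \tfrac{1}{2d}c_3(\eta)^2$ and $\Lambda = \tfrac{1}{2d}(\max_{\overline{B_\cR}}\varphi)^2$, again independent of $N$.

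Next, for \eqref{eq:HeatKernelForOurWalk} I would apply the standard two-sided Gaussian heat kernel estimates (equivalently, the parabolic Harnack inequality) for continuous-time random walks on $\bbZ^d$ with bounded elliptic conductances and speed measure comparable to the counting measure, going back to the work of Delmotte; see, e.g., \cite{barlow2017random}. Concretely, $\bbZ^d$ satisfies volume doubling, and the weighted graph $(\bbZ^d,\bbE^d,\overline{\mu})$ equipped with the speed measure $\overline{\nu} \asymp 1$ satisfies a Poincaré inequality with constants depending only on $d$ and $\Lambda/\lambda$ (by a routine comparison with the simple random walk); the characterization then yields $q_t^{\overline{\mu}}(x,y) \asymp t^{-d/2}\exp(-c|x-y|^2/t)$ in the Gaussian regime $t \geq 1 \vee \widetilde{c}|x-y|$, with all constants depending only on $d$, the ellipticity ratio $\Lambda/\lambda$, and the comparison constants of $\overline{\nu}$ — hence, by the previous paragraph, uniform in $N$ and in $x_0 \in D^\delta_N$. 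Conservativeness of the walk, needed for these estimates, is immediate here since the total jump rate at any $x$ is $M_x/\overline{\nu}_x \leq 2d\Lambda/c_3(\eta)^2 < \infty$.

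The main — and essentially only — subtlety is precisely this uniformity in $N$: a priori both the conductances and the speed measure depend on $N$ (and on $x_0$), so one must observe that the quantities actually entering Delmotte's bounds are the ellipticity ratio and the speed-measure comparison constants, and that, via \eqref{eq:min_phi_strictlypositive} and the boundedness and strict positivity of the \emph{fixed} function $\varphi$ on the \emph{fixed} compact set $\overline{B_{\cR-\eta}}$, these reduce to $N$-free quantities. Everything else (volume doubling of $\bbZ^d$, the Poincaré inequality, non-explosion) is standard and manifestly insensitive to $N$.
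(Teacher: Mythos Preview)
Your argument is correct and follows essentially the same strategy as the paper: establish $N$-uniform ellipticity of the conductances $\overline{\mu}$ and $N$-uniform bounds on the speed measure $\overline{\nu}$ via \eqref{eq:min_phi_strictlypositive} and the smoothness of $\varphi$, then invoke the standard Gaussian heat kernel estimates for uniformly elliptic walks on $\bbZ^d$. The only minor methodological difference is that the paper first cites the \emph{discrete} heat kernel bounds of \cite[Theorem~6.28]{barlow2017random} for the skeleton with its natural measure $\overline{M}$, checks explicitly that $\overline{M}_x/\overline{\nu}_x = 1 + O(N^{r_5-1})$ so that the jump rates are uniformly bounded above and below, and then passes to continuous time via large-deviation bounds for Poisson processes; you instead appeal directly to the continuous-time (Delmotte-type) characterization through volume doubling and the Poincar\'e inequality. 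Both routes are standard and yield the same $N$-independent constants, so there is no substantive gap.
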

\begin{proof}
The first claim~\eqref{eq:SpeedmeasureLowerBound} is immediate by the definition~\eqref{eq:ChoiceOfSpeedMeasurePi},~\eqref{eq:min_phi_strictlypositive}, and~\eqref{eq:Phi_N_TechnicalProp}. We now turn to the proof of~\eqref{eq:HeatKernelForOurWalk}. By~\cite[Theorem 6.28]{barlow2017random}, the discrete heat kernel associated with conductances satisfying~\eqref{eq:Ellipticity} admits standard Gaussian heat-kernel bounds. Note that the conductances $\overline{\mu}_{x,y}$ are indeed uniformly elliptic in the sense of~\eqref{eq:Ellipticity} due to~\eqref{eq:min_phi_strictlypositive} and~\eqref{eq:Phi_N_TechnicalProp} with constants $\lambda,\Lambda$ not depending on $N$. With the choice~\eqref{eq:NonConstantConductances}, the corresponding measure $M$ coincides with
\begin{equation}
\label{eq:NaturalMMeasureDef}
\overline{M}_x \stackrel{\mathrm{def}}{=} \frac{1}{2d} \varphi_N^{\mathrm{per}}(x)\sum_{y: y \sim x} \varphi_N^{\mathrm{per}}(y), \qquad x \in \bbZ^d,
\end{equation}
where $ \varphi_N^{\mathrm{per}}$ is defined in~\eqref{eq:PeriodicExtensionphi}. Note that for all $x \in \bbZ^d$, using the smoothness of $\varphi$ (see Proposition~\ref{prop:quasi-minimizer}, i)), one has
\begin{equation}
\label{eq:M_pi_are_Close}
\left\vert\frac{\overline{M}_x}{(\varphi_N^{\mathrm{per}})^2(x)} - 1\right\vert = \left\vert \frac{\frac{1}{2d} \sum_{y: y \sim x} \varphi_N^{\mathrm{per}}(y) - \varphi_N^{\mathrm{per}}(x)}{\varphi_N^{\mathrm{per}}(x)} \right\vert \leq C N^{r_5 - 1}
\end{equation}
(the additional factor $N^{r_5}$ comes from the periodization). Now, $(X_t)_{t\geq 0}$ under $P^{\overline{\mu}}_x$  corresponds to a continuous-time random walk with variable jump rates given by $(\overline{\nu}_x/\overline{M}_x)_{x \in \bbZ^d}$ (see also the discussion below~\eqref{eq:VeryGeneralGenerator}), which are bounded above and below by~\eqref{eq:M_pi_are_Close}.
We can conclude that~\eqref{eq:HeatKernelForOurWalk} holds from the discrete heat kernel bounds, by using the fact that the jump rates of $(X_t)_{t \geq 0}$ under $P^{\overline{\mu}}_x$ are uniformly bounded above and below.
\end{proof}
We will now prove Lemma~\ref{lem:HKLemma}.
\begin{proof}[Proof of Lemma~\ref{lem:HKLemma}]
Set $\mathcal{A}(z,K) = B(z,2K) \setminus B(z,K)$ for $z\in \bbZ^d$ and $K \geq 1$. We consider a set-up of a general heat kernel $q^{\mu;\nu}_t(\cdot,\cdot)$ and a speed measure $\nu \geq c$ such that
\begin{equation}
\label{eq:LowerBoundHeatKernelAssumption}
q_t^{\mu;\nu}(x,y) \geq \frac{c}{t^{d/2}}e^{-c'\frac{|x-y|^2}{t}}, \qquad t \geq 1 \vee \widetilde{c}|x-y|, \ x,y \in \mathbb{Z}^d.
\end{equation}
Using the lower bound in~\eqref{eq:LowerBoundHeatKernelAssumption}, we see that for any $x \in A_2$, we have 
\begin{equation}
\label{eq:LowerBoundSummedHK}
\begin{split}
P^{\mu;\nu}_{x}[X_{ N^{2r_2}} \in \mathcal{A}(x_0,20N^{r_2}) ] & \geq |\mathcal{A}(x_0,20N^{r_2})| \cdot \inf_{ y \in \mathcal{A}(x_0,20N^{r_2})} q_{N^{2r_2}}^{\mu;\nu}(x,y) \nu(y) \\
& \geq \overline{c}  \ (> 0).
\end{split}
\end{equation}
Now note that 
\begin{equation}
\{T_{A_2} > N^{2\alpha} \} \subseteq \bigcap_{j = 1}^{ \lfloor N^{2(\alpha - r_2 )}\rfloor} \{X_{jN^{2r_2} } \notin \mathcal{A}(x_0,20N^{r_2}),X_{(j-1)N^{2r_2} } \in A_2 \}. 
\end{equation}
Using the simple Markov property repeatedly at times $jN^{2r_2}$, $j = 1,...,\lfloor N^{2(\alpha - r_2)} \rfloor-1$, we obtain that
\begin{equation}
P^{\mu;\nu}_x[T_{A_2} > N^{2\alpha}] \stackrel{\eqref{eq:LowerBoundSummedHK}}{\leq} (1 - \overline{c})^{\lfloor N^{2(\alpha - r_2)} \rfloor}, \qquad x \in A_2.
\end{equation}
The first line of the claim~\eqref{eq:ExitTimeDeviation} follows from the above by choosing the conductances $\mu$ as in~\eqref{eq:NonConstantConductances}, the speed measure $\nu$ as in~\eqref{eq:ChoiceOfSpeedMeasurePi}, and using Lemma~\ref{lem:NewHK}. To prove the claim in the second line of~\eqref{eq:ExitTimeDeviation}, note that $P_x$ with $x \in A_2$, corresponds to the simple random walk with conductances $\mu_{x,y} = \frac{1}{2d}$ for every $x\sim y$, $x,y \sim \mathbb{Z}^d$, and $\nu_x = M_x = 1$ for $x \in \bbZ^d$, which fulfills the lower bound~\eqref{eq:LowerBoundHeatKernelAssumption} again by~\cite[Theorem 6.28]{barlow2017random}.
\end{proof}
The following estimates will be pivotal in dealing with the aforementioned issue that we need to compare the tilted walk on mesoscopic boxes with the simple random walk.
\begin{prop}
\label{prop:ComparisonEqmeasure}
For large enough $N$, we have 
\begin{equation}
\label{eq:ComparisonEqmeasure}
\sup_{x \in \partial_{\mathrm{int}} A_1} \left\vert \frac{P^{\overline{\mu}}_x[\widetilde{H}_{A_1} = \infty]}{e_{A_1}(x)} -1 \right\vert \leq \frac{c}{N^{c'}},
\end{equation}
as well as 
\begin{equation}
\label{eq:CapaComparison}
| \varphi_N^2(x_0) \capa(A_1) - \overline{\capa}(A_1)| \leq \frac{c}{N^{c'}}.
\end{equation}
\end{prop}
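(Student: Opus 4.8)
The plan is to exploit that, since $\varphi$ is smooth and, by~\eqref{eq:min_phi_strictlypositive}, bounded below near $x_0$ (note $x_0/N\in B_{\cR-\eta}$ as $x_0\in D^\delta_N$), the potential $\varphi_N=\varphi(\cdot/N)$ oscillates by only $O(N^{r_5-1})$ around $\varphi_N(x_0)$ on the mesoscopic ball $A_5$; hence its periodic tiling $\varphi_N^{\mathrm{per}}$ satisfies $|\varphi_N^{\mathrm{per}}-\varphi_N(x_0)|\leq CN^{r_5-1}$ on all of $\bbZ^d$, so that the conductances~\eqref{eq:NonConstantConductances} obey $\overline{\mu}_{x,y}=\tfrac1{2d}\varphi_N^2(x_0)(1+\epsilon_{x,y})$ with $|\epsilon_{x,y}|\leq CN^{r_2-1}$ for edges contained in $A_2$ and $|\epsilon_{x,y}|\leq CN^{r_5-1}$ for all edges (by Lipschitz continuity of $\varphi$); similarly $|\overline{M}_x-\varphi_N^2(x_0)|\leq CN^{r_2-1}$ on $A_2$, $c\leq\overline{M}_x\leq C$ globally, and---crucially---$\big|\tfrac{\overline{\mu}_{x,y}}{\overline{M}_x}-\tfrac1{2d}\big|\leq CN^{-1}$ everywhere, the latter being a discrete-gradient quantity. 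In other words $(P^{\overline{\mu}}_x)$ is a mild perturbation of $\varphi_N^2(x_0)$ times simple random walk. Two facts will be used throughout: integrating the Gaussian heat-kernel bounds of Lemma~\ref{lem:NewHK} (the small-time contribution being negligible) gives $g^{\overline{\mu}}(x,y)\leq C|x-y|^{2-d}$ for all $x\neq y$; and feeding the simple-random-walk equilibrium potential $p=P_\cdot[H_{A_1}<\infty]$, which satisfies $\mathcal{E}_{\bbZ^d}(p,p)=\capa(A_1)$, into the Dirichlet-form description $\overline{\capa}(A_1)=\inf\{\sum_e\overline{\mu}_e(\nabla_e h)^2:h|_{A_1}=1\}$ of the tilted capacity (see~\cite[Chapter~7]{barlow2017random}) gives at once $\overline{\capa}(A_1)\leq C\capa(A_1)\leq CN^{r_1(d-2)}$.

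For~\eqref{eq:CapaComparison} I would compare capacities variationally. The upper bound comes from the test function $p$: one has $\overline{\capa}(A_1)\leq\sum_e\overline{\mu}_e(\nabla_e p)^2=\varphi_N^2(x_0)\capa(A_1)+\tfrac{\varphi_N^2(x_0)}{2d}\sum_e\epsilon_e(\nabla_e p)^2$, and the remainder is estimated by splitting the edge sum into $A_2$ (where $|\epsilon_e|\leq CN^{r_2-1}$ and $\sum_e(\nabla_e p)^2\leq 2d\,\capa(A_1)$) and its complement (where $|\epsilon_e|\leq CN^{r_5-1}$ and the tail estimate $\sum_{e\not\subseteq A_2}(\nabla_e p)^2\leq C\capa(A_1)^2N^{r_2(2-d)}$ holds, a consequence of $|\nabla p(y)|\leq C\capa(A_1)|y-x_0|^{1-d}$). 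The two resulting error exponents $r_1(d-2)+r_2-1$ and $(d-2)(2r_1-r_2)+r_5-1$ are both negative by~\eqref{eq:r_jDef}: the first because $r_2<r_5$ and $r_1(d-2)+r_5<1$; the second because $r_5<1-r_1(d-2)$ forces it to be at most $(d-2)(r_1-r_2)<0$. The matching lower bound is symmetric: with $\overline{p}=P^{\overline{\mu}}_\cdot[H_{A_1}<\infty]$ the tilted equilibrium potential, $\varphi_N^2(x_0)\capa(A_1)\leq\sum_e\tfrac{\varphi_N^2(x_0)}{2d}(\nabla_e\overline{p})^2=\sum_e\tfrac1{1+\epsilon_e}\overline{\mu}_e(\nabla_e\overline{p})^2\leq\overline{\capa}(A_1)+C\sum_e|\epsilon_e|\overline{\mu}_e(\nabla_e\overline{p})^2$, and the remainder is controlled exactly as before, now using the decay $\overline{p}(z)\leq\overline{\capa}(A_1)\sup_{y\in A_1}g^{\overline{\mu}}(z,y)\leq C\capa(A_1)|z-x_0|^{2-d}$ and the corresponding gradient bound (from the heat-kernel estimate, uniform ellipticity, and $\overline{\capa}(A_1)\leq C\capa(A_1)$). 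Combining the two bounds gives $|\varphi_N^2(x_0)\capa(A_1)-\overline{\capa}(A_1)|\leq CN^{-c'}$.

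For~\eqref{eq:ComparisonEqmeasure}, fix $x\in\partial_{\mathrm{int}}A_1$ and decompose the escape probability at the first exit of $A_2$: by the strong Markov property, $P^{\overline{\mu}}_x[\widetilde{H}_{A_1}=\infty]=P^{\overline{\mu}}_x[\widetilde{H}_{A_1}>T_{A_2}]-E^{\overline{\mu}}_x\big[\mathbbm{1}_{\{\widetilde{H}_{A_1}>T_{A_2}\}}P^{\overline{\mu}}_{X_{T_{A_2}}}[H_{A_1}<\infty]\big]$, and likewise $e_{A_1}(x)=P_x[\widetilde{H}_{A_1}>T_{A_2}]-E_x\big[\mathbbm{1}_{\{\widetilde{H}_{A_1}>T_{A_2}\}}P_{X_{T_{A_2}}}[H_{A_1}<\infty]\big]$. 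The ``return'' terms are bounded, via the last-exit decomposition (the analogue of~\eqref{eq:LastExitDecomposition} for $P^{\overline{\mu}}$, see~\cite[Chapter~7]{barlow2017random}), by $\sup_{z\in\partial A_2}P^{\overline{\mu}}_z[H_{A_1}<\infty]\leq\overline{\capa}(A_1)\sup_{z\in\partial A_2,\,y\in A_1}g^{\overline{\mu}}(z,y)\leq CN^{r_1(d-2)}N^{r_2(2-d)}=CN^{(d-2)(r_1-r_2)}$, and analogously for simple random walk. It remains to compare the ``confined'' probabilities, i.e.\ the equilibrium potentials $\overline{h}=P^{\overline{\mu}}_\cdot[H_{A_1}<T_{A_2}]$ and $h=P_\cdot[H_{A_1}<T_{A_2}]$ of $A_1$ relative to $A_2$; since $A_2\subseteq B(x_0,\lfloor N^{r_5}\rfloor)$, inside $A_2$ one has $\varphi_N^{\mathrm{per}}=\varphi_N$ and no periodicity artefact interferes. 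The difference $w=\overline{h}-h$ vanishes on $A_1\cup A_2^c$ and solves $\mathscr{L}^{\overline{\mu}}w=-(\mathscr{L}^{\overline{\mu}}-\Delta_{\bbZ^d})h$ on $A_2\setminus A_1$; since $h$ is $\Delta_{\bbZ^d}$-harmonic there and $|\varphi_N(z)-\varphi_N(y)|\leq CN^{-1}$ for $z\sim y$, the source is $O(N^{-1}\max_{z\sim y}|h(z)-h(y)|)$, while $\max_{z\sim y}|h(z)-h(y)|\leq C\capa(A_1)\max(|y-x_0|,N^{r_1})^{1-d}$ by standard gradient estimates for the equilibrium potential; hence $|w(z)|\leq\sum_y g^{\overline{\mu}}_{A_2\setminus A_1}(z,y)\big|(\mathscr{L}^{\overline{\mu}}-\Delta_{\bbZ^d})h(y)\big|\leq CN^{-1}\capa(A_1)\sum_y|z-y|^{2-d}\max(|y-x_0|,N^{r_1})^{1-d}$, which for $z\in\partial A_1$ is at most $CN^{r_1-1}$, up to a harmless logarithmic factor when $d=3$. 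Putting this together with $\big|\tfrac{\overline{\mu}_{x,z}}{\overline{M}_x}-\tfrac1{2d}\big|\leq CN^{-1}$, one gets $|P^{\overline{\mu}}_x[\widetilde{H}_{A_1}=\infty]-e_{A_1}(x)|\leq C(N^{r_1-1}+N^{(d-2)(r_1-r_2)})$, and dividing by $e_{A_1}(x)\geq cN^{-r_1}$ (from~\eqref{eq:LowerBoundEqMeasure}) yields the claimed relative error $O(N^{-c'})$, the two exponents $2r_1-1$ and $(d-1)r_1-(d-2)r_2$ being negative thanks to $r_1<\tfrac14$ and $r_1<\tfrac{d-2}{d-1}r_2$ in~\eqref{eq:r_jDef}.

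The step I expect to be the real work is the region-by-region bookkeeping of errors: one must track both how large the conductance perturbation is in each region and how much Dirichlet energy (or harmonic-measure mass) sits there, and check that every such product is a negative power of $N$ under the scale separations~\eqref{eq:r_jDef}. The two non-routine inputs that make this possible are the Gaussian upper bound $g^{\overline{\mu}}(x,y)\leq C|x-y|^{2-d}$ deduced from Lemma~\ref{lem:NewHK}, and the $O(N^{-1})$ bound on the source of the Poisson equation for $w$, both of which use the smoothness and strict positivity of $\varphi$ near $x_0$ in an essential way.
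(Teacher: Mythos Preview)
Your argument is correct in outline and reaches the right conclusions, but it differs from the paper's in one substantive place, and one intermediate claim needs a small repair.

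For~\eqref{eq:CapaComparison} both you and the paper use the variational characterization, but the paper's argument is shorter: the single uniform conductance bound $\big|\overline{\mu}_{x,y}-\tfrac{1}{2d}\varphi_N^2(x_0)\big|\leq C\varphi_N^2(x_0)N^{r_5-1}$ valid for \emph{all} edges (this is~\eqref{eq:WeightsDontMatterMuch}) already gives $\big|\overline{\capa}(A_1)-\varphi_N^2(x_0)\capa(A_1)\big|\leq CN^{r_5-1}\capa(A_1)\leq CN^{r_1(d-2)+r_5-1}$, which is a negative power by the second line of~\eqref{eq:r_jDef}. Your edge--splitting into $A_2$ and its complement, and the tail Dirichlet--energy estimate for $p$, are not needed.

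For~\eqref{eq:ComparisonEqmeasure} the routes genuinely diverge. The paper does not solve a Poisson equation for $w=\overline{h}-h$. Instead it writes the Radon--Nikod\'ym density of $\overline{P}_x$ against $P_x$ on $\{X_{\cdot\wedge T_{A_2}}\in\cdot\}$ as the Girsanov martingale $\tfrac{\varphi_N(X_{T_{A_2}})}{\varphi_N(x)}\exp\big(\int_0^{T_{A_2}}v_\varphi(X_s)\,\mathrm{d}s\big)$, conditions on the typical event $\mathcal{E}=\{T_{A_2}\leq N^{2\alpha}\}$ (whose complement is handled by Lemma~\ref{lem:HKLemma}), and bounds the density pointwise using $\big|\varphi_N(X_{T_{A_2}})/\varphi_N(x)-1\big|\leq CN^{r_2-1}$ and $|v_\varphi|\leq CN^{-2}$ on $(U_\eta)^N$. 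This yields $\big|\overline{P}_x[A\,|\,\mathcal{E}]/P_x[A\,|\,\mathcal{E}]-1\big|\leq CN^{r_2-1}$ for any event $A$ measurable with respect to $X_{\cdot\wedge T_{A_2}}$, with no gradient analysis whatsoever. Your Poisson--equation route makes the perturbation structure more explicit but relies on the pointwise gradient bound $\max_{z\sim y}|h(z)-h(y)|\leq C\capa(A_1)\max(|y-x_0|,N^{r_1})^{1-d}$. As stated this fails for $y\in\partial A_1$ near corners or edges of the box, where $1-h(y)=P_y[T_{A_2}<H_{A_1}]$ can be of order one rather than $N^{-r_1}$. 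The fix is easy: treat the boundary layer $\{y:d_\infty(y,A_1)\leq 1\}$ separately with the trivial bound $|\nabla h|\leq 1$; its contribution to $|w(z)|$ is at most $CN^{-1}\sum_{y\in\partial A_1}|z-y|^{2-d}\leq CN^{r_1-1}$, so your final estimate survives. The paper's change--of--measure argument sidesteps this boundary--layer bookkeeping entirely, at the price of invoking the exit--time control of Lemma~\ref{lem:HKLemma}.
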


\begin{proof}
We first prove~\eqref{eq:CapaComparison}.
To that end, note that we can also write
\begin{equation}
\label{eq:CapaAlternativeDefWithWeights}
\overline{\capa}(A_1) = \inf\bigg\{ \frac{1}{2}\sum_{x \sim y}\overline{\mu}_{x,y} \Big(g(x) - g(y) \Big)^2 \, : \, g \in C_0(\bbZ^d), \ g(x) = 1 \text{ for }x \in A_1 \bigg\},
\end{equation}
in analogy to~\eqref{eq:CapaAlternativeDef}; see, e.g.,~\cite[Proposition 7.9]{barlow2017random}. By the definition of $\overline{\mu}_{x,y}$ in~\eqref{eq:NonConstantConductances}, we see that for every $x,y \in \bbZ^d$, $x \sim y$, we have
\begin{equation}
\label{eq:WeightsDontMatterMuch}
\frac{\varphi_N^2(x_0)}{2d}\left(1 - \frac{c}{N^{1 - r_5}} \right) \leq \overline{\mu}_{x,y} \leq \frac{\varphi_N^2(x_0)}{2d}\left(1 + \frac{C}{N^{1 - r_5}} \right),
\end{equation}
using the smoothness of $\varphi$ (see Proposition~\ref{prop:quasi-minimizer}, i)). We also have the a-priori bounds for $N$ large enough,
\begin{equation}
\label{eq:BoxCapacity}
cN^{r_1(d-2)} \leq \overline{\capa}(A_1) \leq C N^{r_1(d-2)}, \qquad c'N^{r_1(d-2)}\leq \capa(A_1) \leq C'N^{r_1(d-2)},
\end{equation}
which follow from~\eqref{eq:capacityControlsBox} and the bounds in~\eqref{eq:WeightsDontMatterMuch}. The claim~\eqref{eq:CapaComparison} then follows by combining these bounds with~\eqref{eq:CapaAlternativeDef},
~\eqref{eq:CapaAlternativeDefWithWeights},
~\eqref{eq:WeightsDontMatterMuch}, and using the second line of~\eqref{eq:r_jDef}. \smallskip
 
Let us now turn to the proof of~\eqref{eq:ComparisonEqmeasure}, which is more involved. As a first step, we let $\alpha \in (r_2,1)$, and derive a comparison between the laws $\overline{P}_x$ and $P_x$ for $x \in A_1$ conditioned on the event $\mathcal{E}_\alpha$, defined in~\eqref{eq:NotTooLargeExit}, which is typical under both measures by Lemma~\ref{lem:HKLemma} (and~\eqref{eq:ConfinedWalk_coincides}). We derive a bound on the quotient between  $\overline{P}_x[ A | \mathcal{E}_\alpha]$ and $P_x[A | \mathcal{E}_\alpha]$ for any $A \subseteq \Gamma(U^N)$ measurable that only depends on $X_{\cdot \wedge T_{A_2}}$. We claim that for large enough $N$ and any such $A$,
\begin{equation}
\label{eq:RNAbsBound}
\sup_{x \in A_1} \left\vert\frac{\overline{P}_x[ A | \mathcal{E}_\alpha ]}{P_x[A | \mathcal{E}_\alpha]} - 1\right\vert \leq \frac{c_7(\alpha)}{N^{1-r_2}}.
\end{equation}
(where we define the quotient on the left-hand side as $1$ if $P_x[A | \mathcal{E}_\alpha] = \overline{P}_x[ A | \mathcal{E}_\alpha ] = 0$). Indeed, we observe that for $x \in A_1$ and measurable $A \subseteq \Gamma(U^N)$ as above, 
\begin{equation}
\begin{split}
\label{eq:RadonNikodymComparisonCalc}
\overline{P}_x[A \cap \mathcal{E}_\alpha] & = \overline{E}_x[\mathbbm{1}_{ A \cap \mathcal{E}_\alpha}] \stackrel{\eqref{eq:MartingaleDef}}{=} E_x\left[ \mathbbm{1}_A \mathbbm{1}_{\mathcal{E}_\alpha} \cdot  \frac{\varphi_N(X_{T_{A_2}})}{\varphi_N(x)} \exp\left(\int_0^{T_{A_2}} v_\varphi(X_s)\mathrm{d}s \right)  \right],
\end{split}
\end{equation}
where $v_\varphi(y) = - \frac{\Delta_{\mathbb{Z}^d} \varphi_N(y)}{\varphi_N(y)}$ (recall~\eqref{eq:vDef}).  On the event $\mathcal{E}_\alpha$, by definition $T_{A_2}  \leq N^{2\alpha}$. Moreover, since $|X_{T_{A_2}} - x| \leq cN^{r_2}$, $P_x$-a.s.~for $x \in A_1$ we obtain using the smoothness of $\varphi$ (see Proposition~\ref{prop:quasi-minimizer}, i)) that, $P_x$-a.s.,
\begin{equation}
\left\vert  \frac{\varphi_N(X_{T_{A_2}})}{\varphi_N(x)} \right\vert \leq 1 + \frac{c}{N^{1-r_2}}, \qquad x \in A_1.
\end{equation}
Using~\eqref{eq:ExitTimeDeviation}, recalling that $\alpha \in (r_2,1)$, and~\eqref{eq:RadonNikodymComparisonCalc}, it follows that the quotient of conditional probabilities can be bounded above as follows \begin{equation}
\label{eq:RNUpperBound}
\frac{\overline{P}_x[A | \mathcal{E}_\alpha]}{P_x[ A | \mathcal{E}_\alpha]}  \leq \left(1 + \frac{c}{N^{1 - r_2}} \right) \exp\left(\frac{\widetilde{c}}{N^{2(1-\alpha)}} \right)\left(1 + C\exp\Big(-C'N^{c_6 \wedge c_6'} \Big) \right), 
\end{equation}
where we also used that $ v_\varphi(y) \leq cN^{-2}$ for every $y \in U^N$, see~\eqref{eq:vUpperBound} of Lemma~\ref{lem:TechnicalProperties}, iii). Similarly, we obtain a lower bound 
\begin{equation}
\label{eq:RNLowerBound}
\frac{\overline{P}_x[A | \mathcal{E}_\alpha]}{P_x[ A | \mathcal{E}_\alpha]}  \geq \left(1 - \frac{c}{N^{1 - r_2}} \right) \exp\left(\frac{\widetilde{c}}{N^{2(1-\alpha)}} \right)\left(1 - C\exp\Big(-C'N^{c_6 \wedge c_6'} \Big) \right), 
\end{equation}
where we used that $v_\varphi(y) \geq -c'N^{-2}$, for every $y \in (U_\eta)^N$, see again \eqref{eq:vUpperBound} of Lemma~\ref{lem:TechnicalProperties}, iii). Combining~\eqref{eq:RNUpperBound} and~\eqref{eq:RNLowerBound}, we find that~\eqref{eq:RNAbsBound} holds. Next, we show that returning to $A_1$ from the boundary $\partial A_2$ is unlikely for both the simple random walk and the random walk with conductances $\overline{\mu}$. \smallskip

To that end we will use that for large enough $N$,
\begin{equation}
\label{eq:HittingOfBoxEstimateI}
\begin{split}
P_x[H_{A_1} < \infty] & \stackrel{\eqref{eq:LastExitDecomposition}}{=} \sum_{y \in A_1} g(x,y) e_{A_1}(y) \\
& \stackrel{\eqref{eq:Greenfunction_asymptotics}}{\leq} \frac{c}{N^{r_2(d-2)}} \capa(A_1) \stackrel{\eqref{eq:BoxCapacity}}{\leq} cN^{(d-2)(r_1-r_2)}, \qquad x \in \partial A_2.
\end{split}
\end{equation}
Moreover, we also have
\begin{equation}
\begin{split}
\label{eq:LastExitDecompisitionTilted}
P^{\overline{\mu}}_{x}[H_{A_1}< \infty] & = P^{\overline{\mu};\overline{M}}_{x}[H_{A_1}< \infty] = \sum_{y \in A_1} g^{\overline{\mu};\overline{M}}(x,y) \overline{e}_{A_1}(y), \qquad x \in \partial A_2,
\end{split}
\end{equation}
(recall the definition of $\overline{M}$ in~\eqref{eq:NaturalMMeasureDef}) in analogy to~\eqref{eq:LastExitDecomposition}; see, e.g.,~\cite[Corollary
7.4]{barlow2017random}, using also in the first equation the fact that the random walk under $P^{\overline{\mu}}_{x}$ is a (random) time-change of the random walk under $P^{\overline{\mu};\overline{M}}_{x}$, see the discussion below~\eqref{eq:StandardTotalMass}. Moreover, the Green function $g^{\overline{\mu};\overline{M}}$ is defined in~\eqref{eq:GeneralGreenFunctionDef}, and we also have an upper bound
\begin{equation}
\label{eq:TiltedGreenFunctionUpperBound}
g^{\overline{\mu};\overline{M}}(x,y) \leq \frac{C}{(|x-y|\vee 1)^{d-2}}, \qquad x,y \in \bbZ^d
\end{equation}
(see, e.g.,~\cite[Theorem 4.26]{barlow2017random}). Thus, by inserting~\eqref{eq:TiltedGreenFunctionUpperBound} in~\eqref{eq:LastExitDecompisitionTilted}
and using~\eqref{eq:BoxCapacity},  one can conclude similarly as for~\eqref{eq:HittingOfBoxEstimateI} that
\begin{equation}
\label{eq:HittingOfBoxEstimateII}
P^{\overline{\mu}}_{x}[H_{A_1}< \infty] \leq cN^{(d-2)(r_1-r_2)}, \qquad  x \in \partial A_2.
\end{equation}
We now have the required preparations to prove~\eqref{eq:ComparisonEqmeasure}. Let $x\in \partial_{\mathrm{int}}A_1$, then
\begin{equation}
\label{eq:MultipleBoundsEqMeasureQuotient}
\begin{split}
\left\vert \frac{P^{\overline{\mu}}_x[\widetilde{H}_{A_1} = \infty]}{e_{A_1}(x)} -1 \right\vert & = \frac{|P^{\overline{\mu}}_x[\widetilde{H}_{A_1} = \infty] - P_x[\widetilde{H}_{A_1} = \infty]|}{P_x[\widetilde{H}_{A_1} = \infty]} \\
& \leq \frac{|P^{\overline{\mu}}_x[\widetilde{H}_{A_1} > T_{A_2} ] - P_x[\widetilde{H}_{A_1} > T_{A_2}]|}{P_x[\widetilde{H}_{A_1} = \infty]} \\
& + \frac{ \sup_{z \in \partial A_2} P^{\overline{\mu}}_z[\widetilde{H}_{A_1} < \infty] + \sup_{z \in \partial A_2} P_z[\widetilde{H}_{A_1}   < \infty]}{P_x[\widetilde{H}_{A_1} = \infty]} \\
  &  \hspace{-0.76cm} \stackrel{\eqref{eq:LowerBoundEqMeasure},\eqref{eq:HittingOfBoxEstimateI},\eqref{eq:HittingOfBoxEstimateII}}{\leq} \frac{|P^{\overline{\mu}}_x[\widetilde{H}_{A_1} > T_{A_2} ] - P_x[\widetilde{H}_{A_1} > T_{A_2}]|}{P_x[\widetilde{H}_{A_1} = \infty]}  + \frac{c N^{(d-2)(r_1-r_2)}}{N^{-r_1}} \\
& \hspace{-0.12cm} \stackrel{\eqref{eq:r_jDef}}{\leq }  \frac{|P^{\overline{\mu}}_x[\widetilde{H}_{A_1} > T_{A_2} ] - P_x[\widetilde{H}_{A_1} > T_{A_2}]|}{P_x[\widetilde{H}_{A_1} = \infty]} + \frac{c}{N^{c'}},
\end{split}
\end{equation}
using also the strong Markov property at time $T_{A_2}$ ($< \infty$, $P_x$-a.s.~and $P^{\overline{\mu}}_x$-a.s.~due to transience) in the first bound. We now turn to the remaining term in the last line of~\eqref{eq:MultipleBoundsEqMeasureQuotient} and note that the events under the probability only depend on $X_{\cdot \wedge T_{A_2}}$. We can therefore for $x \in \partial_{\mathrm{int}}A_1$ (recall~\eqref{eq:ConfinedWalk_coincides}) use
\begin{equation}
\label{eq:MultipleBoundsEqMeasureQuotientII}
\begin{split}
& \frac{|P^{\overline{\mu}}_x[\widetilde{H}_{A_1} > T_{A_2} ] - P_x[\widetilde{H}_{A_1} > T_{A_2}]|}{P_x[\widetilde{H}_{A_1} = \infty]} \\
& \leq \frac{|P^{\overline{\mu}}_x[\widetilde{H}_{A_1} > T_{A_2}|\mathcal{E}_\alpha ] \cdot P^{\overline{\mu}}_x[\mathcal{E}_\alpha] - P_x[\widetilde{H}_{A_1} > T_{A_2} |\mathcal{E}_\alpha ] \cdot P_x[\mathcal{E}_\alpha] |}{P_x[\widetilde{H}_{A_1} = \infty]} + \frac{P^{\overline{\mu}}_x[\mathcal{E}_\alpha^c] + P_x[\mathcal{E}_\alpha^c]}{P_x[\widetilde{H}_{A_1} = \infty]} \\
& \leq \frac{|P^{\overline{\mu}}_x[\widetilde{H}_{A_1} > T_{A_2}|\mathcal{E}_\alpha ] - P_x[\widetilde{H}_{A_1} > T_{A_2} |\mathcal{E}_\alpha ] |}{P_x[\widetilde{H}_{A_1} = \infty]} +2 \frac{P^{\overline{\mu}}_x[\mathcal{E}_\alpha^c] + P_x[\mathcal{E}_\alpha^c]}{P_x[\widetilde{H}_{A_1} = \infty]} \\
& \hspace{-0.4cm} \stackrel{\eqref{eq:LowerBoundEqMeasure},\eqref{eq:ExitTimeDeviation}}{\leq} \frac{|P^{\overline{\mu}}_x[\widetilde{H}_{A_1} > T_{A_2}|\mathcal{E}_\alpha ] - P_x[\widetilde{H}_{A_1} > T_{A_2} |\mathcal{E}_\alpha ] |}{P_x[\widetilde{H}_{A_1} = \infty]} + \frac{Ce^{-cN^{c'}}}{N^{-r_1}} \\
& \hspace{-0.4cm} \stackrel{\eqref{eq:LowerBoundEqMeasure},\eqref{eq:RNAbsBound}}{\leq} \frac{c\cdot  c_7(\alpha)}{N^{1-r_2} \cdot N^{-r_1}} +  C{N^{-r_1}}e^{-cN^{c'}}.
\end{split}
\end{equation}
By~\eqref{eq:r_jDef}, we have $r_1 + r_2 < \frac{1}{2}$, and therefore the claim follows by combining the terms on the right-hand side of~\eqref{eq:MultipleBoundsEqMeasureQuotient} and~\eqref{eq:MultipleBoundsEqMeasureQuotientII}.
\end{proof}

One can combine the two statements in Proposition~\ref{prop:ComparisonEqmeasure} to obtain a control on the normalized equilibrium measures.

\begin{corollary}
For large enough $N$, we have
\begin{equation}
\label{eq:NormalizedEqMeasureClose}
\sup_{x \in \partial_{\mathrm{int}} A_1} \left\vert \frac{\widetilde{\overline{e}}_{A_1}(x)}{\widetilde{e}_{A_1}(x)} -1 \right\vert \leq \frac{C}{N^{c}}.
\end{equation}
\end{corollary}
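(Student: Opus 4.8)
The plan is to derive~\eqref{eq:NormalizedEqMeasureClose} directly from the two estimates in Proposition~\ref{prop:ComparisonEqmeasure}, together with the elementary comparison~\eqref{eq:WeightsDontMatterMuch} of the tilted conductances with the constant $\varphi_N^2(x_0)/(2d)$. The starting point is the factorization, valid for $x\in\partial_{\mathrm{int}}A_1$ by the definitions~\eqref{eq:normalized_eqmeasureDef} and~\eqref{eq:TiltedNormalizedEqmeasure},
\begin{equation*}
\frac{\widetilde{\overline{e}}_{A_1}(x)}{\widetilde{e}_{A_1}(x)} = \frac{\overline{e}_{A_1}(x)}{e_{A_1}(x)}\cdot\frac{\capa(A_1)}{\overline{\capa}(A_1)}.
\end{equation*}
I would estimate the two factors separately, showing that the first equals $\varphi_N^2(x_0)(1+O(N^{-c}))$ and the second equals $\varphi_N^{-2}(x_0)(1+O(N^{-c}))$, so that the two powers of $\varphi_N(x_0)$ cancel.

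For the numerator ratio, recall from~\eqref{eq:NaturalMMeasureDef} that $\overline{M}_x = \frac{1}{2d}\varphi_N^{\mathrm{per}}(x)\sum_{y\sim x}\varphi_N^{\mathrm{per}}(y) = \sum_{y\sim x}\overline{\mu}_{x,y}$, so that by~\eqref{eq:TiltedEqMeasureDefinition} one has $\overline{e}_{A_1}(x) = P^{\overline{\mu}}_x[\widetilde{H}_{A_1}=\infty]\,\overline{M}_x$ on $A_1$, while $e_{A_1}(x) = P_x[\widetilde{H}_{A_1}=\infty]$ there. Summing the $2d$ two-sided bounds in~\eqref{eq:WeightsDontMatterMuch} yields $|\overline{M}_x/\varphi_N^2(x_0) - 1| \leq C N^{-(1-r_5)}$, and~\eqref{eq:ComparisonEqmeasure} controls $P^{\overline{\mu}}_x[\widetilde{H}_{A_1}=\infty]/e_{A_1}(x)$; multiplying these gives $\overline{e}_{A_1}(x)/e_{A_1}(x) = \varphi_N^2(x_0)(1+O(N^{-c}))$ uniformly over $x\in\partial_{\mathrm{int}}A_1$.

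For the capacity ratio the extra ingredient is that $\varphi_N^2(x_0)\capa(A_1)$ is not merely bounded but of polynomial size: since $x_0\in D^\delta_N$ lies in $(U_\eta)^N$ (indeed $x_0\in A_6\subseteq(U_\eta)^N$), one has $\varphi_N(x_0)=\varphi(x_0/N)\in[c_3(\eta),C]$ by~\eqref{eq:min_phi_strictlypositive} and Proposition~\ref{prop:quasi-minimizer}, i), so combining with~\eqref{eq:BoxCapacity} gives $\varphi_N^2(x_0)\capa(A_1)\geq cN^{r_1(d-2)}$. Dividing the additive bound~\eqref{eq:CapaComparison} by this lower bound shows $\overline{\capa}(A_1)=\varphi_N^2(x_0)\capa(A_1)\bigl(1+O(N^{-c'-r_1(d-2)})\bigr)$, i.e.\ $\capa(A_1)/\overline{\capa}(A_1)=\varphi_N^{-2}(x_0)(1+O(N^{-c}))$. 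Plugging the two factor estimates into the factorization above, the $\varphi_N(x_0)$ terms cancel and~\eqref{eq:NormalizedEqMeasureClose} follows for a suitable exponent $c>0$ (the minimum of those produced along the way, e.g.\ involving $1-r_2$, $1-r_5$, $c'$ and $r_1(d-2)$).

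There is no genuine obstacle, since the substantive estimates were established in Proposition~\ref{prop:ComparisonEqmeasure}; the only point needing a moment's care is the passage from the \emph{additive} error in~\eqref{eq:CapaComparison} to a \emph{multiplicative} one, which relies precisely on the polynomial lower bound $\overline{\capa}(A_1)\gtrsim N^{r_1(d-2)}$ and on the uniform positivity of $\varphi$ on $\overline{D^\delta}$ coming from~\eqref{eq:min_phi_strictlypositive}. One should also record, to make sense of the ratios, that $e_{A_1}(x)\gtrsim N^{-r_1}$ on $\partial_{\mathrm{int}}A_1$ (the analogue of~\eqref{eq:LowerBoundEqMeasure} for the box $A_1$), which is in any case implicit in the statement of~\eqref{eq:ComparisonEqmeasure}.
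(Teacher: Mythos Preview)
Your proposal is correct and follows essentially the same approach as the paper: both arguments combine the two estimates of Proposition~\ref{prop:ComparisonEqmeasure} with the comparison $|\overline{M}_x-\varphi_N^2(x_0)|\leq CN^{-(1-r_5)}$ (which is~\eqref{eq:MandVarphi2Close} in the paper, derived from~\eqref{eq:WeightsDontMatterMuch}) and the polynomial lower bound on $\overline{\capa}(A_1)$ from~\eqref{eq:BoxCapacity}. The only difference is organizational: you write the ratio $\widetilde{\overline{e}}_{A_1}(x)/\widetilde{e}_{A_1}(x)$ as a product of two factors and estimate each separately, whereas the paper keeps it as a single expression and applies the triangle inequality twice; the ingredients and the resulting exponents are the same.
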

\begin{proof}
We take $x \in \partial_{\mathrm{int}}A_1$. We recall the definition~\eqref{eq:NaturalMMeasureDef} of $\overline{M}_x$, $x \in \bbZ^d$, and note that by~\eqref{eq:WeightsDontMatterMuch}, we have
\begin{equation}
\label{eq:MandVarphi2Close}
\sup_{x \in \bbZ^d} \left\vert \overline{M}_{x} - \varphi_N^2(x_0) \right\vert \leq \frac{C}{N^{1-r_5}}.
\end{equation}
Thus, for large enough $N$,
\begin{equation}
\begin{split}
\left\vert \frac{\widetilde{\overline{e}}_{A_1}(x)}{\widetilde{e}_{A_1}(x)} -1 \right\vert & \stackrel{\eqref{eq:TiltedEqMeasureDefinition}}{=} \frac{1}{\overline{\capa}(A_1)} \left\vert \frac{\overline{M}_x P^{\overline{\mu}}_x[\widetilde{H}_{A_1} = \infty] }{e_{A_1}(x)} \cdot \capa(A_1)   - \overline{\capa}(A_1) \right\vert \\
& \leq \frac{\overline{M}_x\capa(A_1)}{\overline{\capa}(A_1)} \cdot \left\vert \frac{P_x^{\overline{\mu}}[\widetilde{H}_{A_1} = \infty] }{e_{A_1}(x)} - 1 \right\vert \\
& + \frac{1}{\overline{\capa}(A_1)}\cdot  \left\vert\overline{M}_x \capa(A_1) - \overline{\capa}(A_1) \right\vert \\
& \stackrel{\eqref{eq:BoxCapacity},\eqref{eq:MandVarphi2Close}}{\leq}C \cdot \left\vert \frac{P_x^{\overline{\mu}}[\widetilde{H}_{A_1} = \infty] }{e_{A_1}(x)} - 1 \right\vert +  \frac{C}{N^{1-r_5}} \\
& + \frac{1}{\overline{\capa}(A_1)}\cdot  \left\vert \varphi_N^2(x_0) \capa(A_1) - \overline{\capa}(A_1) \right\vert  \\
& \stackrel{\eqref{eq:ComparisonEqmeasure},\eqref{eq:CapaComparison},\eqref{eq:BoxCapacity}}{\leq } \frac{C}{N^{c}},
\end{split}
\end{equation}
which concludes the proof.
\end{proof}

\subsection{Quasi-stationary distribution of the confined walk}

We now turn to the definition of the quasi-stationary distribution for the confined walk on $U^N$ and state some fundamental estimates relating it to the distribution of the confined walk conditioned on avoiding a certain set for long enough times. To this end, we first need to introduce some further notation.   We define the semigroup of the confined walk $(\overline{P}_x)_{x \in U^N}$ killed upon entering $A_2$ (recall the definitions in~\eqref{eq:zetaAndx_0}--\eqref{eq:AjBoxesDef}) by setting
\begin{equation}
\label{eq:SemigroupDef}
\mathscr{H}^{U^N \setminus A_2}_t g(x) = \overline{E}_x\big[g(X_t) \mathbbm{1}_{\{H_{A_2} > t \}} \big], \qquad t \geq 0, \ g : U^N \rightarrow \mathbb{R},
\end{equation}
and let $\mathscr{L}^{U^N \setminus A_2}$ stand for its generator. For a function $g :U^N \setminus A_2 \rightarrow \bbR$, one has 
\begin{equation}
\mathscr{L}^{U^N \setminus A_2} g(x) = \mathscr{L}_f \widetilde{g}(x), \qquad x \in U^N\setminus A_2,
\end{equation}
where $\widetilde{g}$ denotes the the extension by $0$ in $A_2$ of the function $g$, i.e.
\begin{equation}
\widetilde{g}(x) = g(x)\mathbbm{1}_{ A_2^c}(x), \qquad x\in U^N,
\end{equation}
and $\mathscr{L}_f$ is defined in~\eqref{eq:GeneratorVSRW} (with $f$ as in~\eqref{eq:varphi_N_Def}). We also set $\pi^{U^N \setminus A_2} = \pi\vert_{U^N \setminus A_2}$, the restriction of the measure $\pi$ to $U^N \setminus A_2$. Both $\mathscr{H}^{U^N \setminus A_2}_t$, $t \geq 0$, and $\mathscr{L}^{U^N \setminus A_2}$ are self-adjoint operators on $\ell^2(U^N \setminus A_2,\pi^{U^N \setminus A_2})$, and one has 
\begin{equation}
\mathscr{H}^{U^N \setminus A_2}_t = \exp\big(t \mathscr{L}^{U^N \setminus A_2}\big), \qquad t \geq 0.
\end{equation}
Moreover, since $U^N\setminus A_2$ is connected, by the Perron-Frobenius theorem, we can associate to the smallest eigenvalue $\lambda_1^{U^N \setminus A_2}$ of the positive definite operator $-\mathscr{L}^{U^N \setminus A_2}$ an eigenfunction $f_1 : U^N \setminus A_2 \rightarrow [0,\infty)$ (i.e.~with non-negative values). We then define, with $\boldsymbol{1}$ denoting the function that is constant and equal to one in every point and $\delta_y = \mathbbm{1}_{\{y\}}$,
\begin{equation}
\label{eq:qsd_Def}
\sigma(y) = \frac{\langle f_1,\delta_y  \rangle_{\ell^2(U^N \setminus A_2,\pi^{U^N \setminus A_2})}}{\langle f_1, \boldsymbol{1} \rangle_{\ell^2(U^N \setminus A_2,\pi^{U^N \setminus A_2})}} \left(  = \frac{f_1(y) \pi(y) }{\sum_{z \in U^N \setminus A_2} f_1(z) \pi(z) } \right), \qquad y \in U^N \setminus A_2,
\end{equation}
which we call the quasi-stationary distribution of $(\overline{P}_x)_{x \in U^N \setminus A_2}$ on $U^N \setminus A_2$.

The following is an adaptation of~\cite[Proposition 4.5]{li2017lower} to our framework, and relates the quasi-stationary distribution of the confined walk quantitatively to its distribution at time $t_\star$ (see~\eqref{eq:RegenerationTimeDef}) starting from $x \in U_N \setminus A_2$, conditioned on the event that it has not entered $A_2$ before this time.

\begin{prop}
\label{prop:ClosenessToQSD}
For $N$ large enough, we have that 
\begin{equation}
\label{eq:ConditionalDistributionCloseToqsd}
\sup_{x,y \in U^N \setminus A_2} \big\vert \overline{P}_x[X_{t_\star} = y| H_{A_2} > t_\star] - \sigma(y) \big\vert \leq \exp\big(-c \log^2 N\big).
\end{equation}
\end{prop}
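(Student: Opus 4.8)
The plan is to follow the strategy of~\cite[Proposition 4.5]{li2017lower}, adapted to the present setup where the confined walk has a nonconstant tilting $\varphi_N$ rather than a tilting that is constant on the relevant region. First I would set up the spectral framework: decompose $\ell^2(U^N\setminus A_2,\pi^{U^N\setminus A_2})$ into eigenspaces of $-\mathscr{L}^{U^N\setminus A_2}$, with eigenvalues $0<\lambda_1^{U^N\setminus A_2}\leq \lambda_2^{U^N\setminus A_2}\leq\cdots$ and orthonormal eigenfunctions $(f_k)_{k\geq1}$, where $f_1\geq 0$ is the Perron--Frobenius eigenfunction used to define $\sigma$ in~\eqref{eq:qsd_Def}. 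Expanding the kernel one gets
\begin{equation}
\overline{P}_x[X_{t_\star}=y,\,H_{A_2}>t_\star]=\pi(y)\sum_{k\geq1}e^{-\lambda_k^{U^N\setminus A_2}t_\star}f_k(x)f_k(y),
\end{equation}
and summing over $y\in U^N\setminus A_2$ (i.e.\ pairing with $\boldsymbol 1$) gives the denominator $\overline{P}_x[H_{A_2}>t_\star]$. The point is then to show that the $k=1$ term dominates: after dividing, the conditional law is $\sigma(y)$ plus an error controlled by $\sum_{k\geq2}e^{-(\lambda_k^{U^N\setminus A_2}-\lambda_1^{U^N\setminus A_2})t_\star}|f_k(x)||f_k(y)|$, divided by $\overline{P}_x[H_{A_2}>t_\star]$ and by $\langle f_1,\boldsymbol1\rangle\,f_1(x)^{-1}$ type factors.

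The key quantitative inputs I would establish (or cite from the adapted machinery of~\cite{li2017lower}) are: (a) a \emph{lower bound on the spectral gap} $\lambda_2^{U^N\setminus A_2}-\lambda_1^{U^N\setminus A_2}\geq cN^{-2}$, so that $e^{-(\lambda_2^{U^N\setminus A_2}-\lambda_1^{U^N\setminus A_2})t_\star}\leq e^{-c\log^2 N}$ by the choice $t_\star=N^2\log^2N$ in~\eqref{eq:RegenerationTimeDef}; (b) an \emph{upper bound on} $\lambda_1^{U^N\setminus A_2}$, of order $N^{-2}$ or smaller, so that $e^{-\lambda_1^{U^N\setminus A_2}t_\star}$ is not too small and in particular $\overline{P}_x[H_{A_2}>t_\star]$ is bounded below by something like $e^{-c\log^2N}$ uniformly in $x$ (using also a lower bound on $f_1(x)$); and (c) Harnack-type or heat-kernel pointwise bounds showing $\sup_k\|f_k\|_\infty$ grows at most polynomially in $N$, and $f_1(x)\geq N^{-C}$ for all $x\in U^N\setminus A_2$. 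For (a) one uses the canonical-paths bound exactly as in Lemma~\ref{lem:TVtoStationary}, where the crucial comparison estimate~\eqref{eq:AlmostMonotonicity} ($\pi(x)\geq c\pi(y)$ whenever $|x|\leq|y|$), a consequence of~\eqref{eq:harmonic_estimates}, replaces the analogous fact in~\cite{li2017lower}; one must check that deleting the small box $A_2$ (of polynomial size $N^{r_2}$, $r_2<1/4$) changes the relevant path-congestion constants only by polynomial factors, which is absorbed by the super-polynomial gain from $t_\star$. For (b), the comparison with the full confined walk plus a capacity estimate for $A_2$ (via~\eqref{eq:capacityControlsBox} and~\eqref{eq:LowerBoundEqMeasure}) gives that the walk takes time of order at least $N^2/\mathrm{polylog}$ to hit $A_2$ with decent probability, bounding $\lambda_1^{U^N\setminus A_2}$ from above; here the bounds on $\pi$ and $v_\varphi$ from Lemma~\ref{lem:TechnicalProperties} ii), iii) are what make the argument go through in the nonconstant-$\varphi$ setting.

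Assembling these, the ratio error term is at most $(\text{poly}(N))\cdot e^{-c\log^2N}\cdot(\text{poly}(N)) = e^{-c'\log^2N}$, which is the claimed bound after relabelling constants; the supremum over $x,y\in U^N\setminus A_2$ is uniform because all the polynomial bounds in (b), (c) are uniform in the base point. The \textbf{main obstacle} I anticipate is item (a)--(b) in the presence of the \emph{nonconstant} tilting: in~\cite{li2017lower} the near-minimizer of the capacity variational problem is identically $1$ on the region where hitting estimates are needed, so $\pi$ is essentially uniform there and the spectral-gap and principal-eigenvalue estimates are comparatively clean; here $\varphi$ genuinely varies, and one must use its smoothness, boundedness above and below on $\overline{B_{\cR-\eta}}$ (cf.~\eqref{eq:min_phi_strictlypositive}), and the quantitative decay~\eqref{eq:harmonic_estimates} to control the oscillation of the conductances $\tfrac{1}{2d}\varphi_N(x)\varphi_N(y)$ over the boxes involved, so that the canonical-paths/Harnack arguments still yield the $cN^{-2}$ gap and the polynomial pointwise control of eigenfunctions. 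Verifying that $A_2$ and more generally all the boxes $A_j$ sit in a region where $\varphi_N$ is bounded below by $cN^{-1}$ (so that the confined walk there is comparable to a simple random walk up to bounded conductance ratios) is the technical heart, and is exactly where the standing assumptions~\eqref{eq:FixedChoiceR}--\eqref{eq:FixedPhi} and the placement $A_6\subseteq(U_\eta)^N$ are used.
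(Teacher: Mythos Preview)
Your plan is essentially the paper's approach: spectral expansion of the killed semigroup $\mathscr{H}^{U^N\setminus A_2}_{t_\star}$, domination by the $k=1$ term via a gap $\lambda_2-\lambda_1\geq cN^{-2}$, and polynomial pointwise control of $f_1$ so that the ratio analysis goes through. Two points where the paper's execution differs from your sketch are worth noting.

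First, the lower bound $f_1\geq N^{-C}$ (equivalently $\min_x\sigma(x)\geq N^{-c}$) is not obtained by generic heat-kernel or Harnack bounds on eigenfunctions; instead the paper establishes the uniform hitting estimate $\overline{P}_x[H_{x'}<H_{A_2}]\geq N^{-c}$ for all $x,x'\in U^N\setminus A_2$ via a two-case argument (Case $x'\in A_4\setminus A_2$ uses the elliptic Harnack inequality for the $\overline{\mu}$-walk; Case $x'\in U^N\setminus A_4$ reduces to the previous via \cite[Lemma~4.2]{li2017lower}), and then feeds this into \cite[Lemma~4.1]{li2017lower}.

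Second, the spectral gap $\lambda_2-\lambda_1\geq cN^{-2}$ is taken from \cite[Lemma~4.4]{li2017lower}, whose proof compares with the unkilled spectral gap $\overline{\lambda}\geq cN^{-2}$ and uses the hitting-time bound~\eqref{eq:ImportantHittingTimeEstimate}; it is \emph{not} canonical paths applied directly to the chain on $U^N\setminus A_2$. Your proposed route there needs care: canonical paths for a sub-Markovian operator bounds $\lambda_1$ from below, not $\lambda_2-\lambda_1$, and a polynomial loss in the gap exponent (say $N^{-2-\epsilon}$ instead of $N^{-2}$) would \emph{not} be absorbed by $t_\star=N^2\log^2 N$. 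Finally, your item (b) is unnecessary: the factor $e^{-\lambda_1 t_\star}$ cancels in the conditional probability, so no upper bound on $\lambda_1$ is required.
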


The proof is given in the Appendix~\ref{sec:Appendix}.

We will now show a uniform comparison between the hitting distribution of $A_1$ from the quasi-stationary distribution $\sigma$ on $U^N \setminus A_2$ and $\widetilde{e}_{A_1}$, the normalized equilibrium measure of the simple random walk. The statement is that of~\cite[Proposition 4.7]{li2017lower}, which itself is roughly an adaptation of~\cite[Lemma 3.10]{teixeira2011fragmentation} to the case of the confined walk. However, special care is needed in our case since the confined walk attached to the minimizer $\varphi_N$ does \textit{not} necessarily admit areas in which the conductances are constant.

\begin{prop}
\label{prop:AnalogProp4.7}
For large $N$, we have
\begin{equation}
\sup_{x \in \partial_{\mathrm{int}}A_1} \left\vert \frac{\overline{P}_\sigma[X_{H_{A_1}} = x]}{\widetilde{e}_{A_1}(x)} -1 \right\vert \leq \frac{1}{N^c}.
\end{equation}
\end{prop}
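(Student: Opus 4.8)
The plan is to adapt the excursion-decomposition argument of \cite[Proposition 4.7]{li2017lower} (modelled in turn on \cite[Lemma 3.10]{teixeira2011fragmentation}) to the present setting, using the Radon--Nikod\'ym comparison between the confined walk and the simple random walk developed in the proof of Proposition~\ref{prop:ComparisonEqmeasure} in order to reduce the whole analysis, inside the mesoscopic box $A_5$, to sharp estimates for the \emph{simple} random walk, where the Green-function asymptotics~\eqref{eq:Greenfunction_asymptotics} are available.

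First I would set up the excursion structure of $(X_t)$ under $\overline P_\sigma$. Let $R_1 = H_{A_3}$, let $D_1$ be the first exit time of $A_4$ after $R_1$, and inductively let $R_{k+1}$ be the first entrance time of $A_3$ after $D_k$ and $D_{k+1}$ the first exit time of $A_4$ after $R_{k+1}$. Since $(X_t)$ is an irreducible walk on the finite set $U^N$ it is recurrent, so all these times and $H_{A_1}$ are $\overline P_\sigma$-a.s.\ finite; moreover $A_1\subset A_3\subset A_4$ forces the walk to enter $A_3$ before hitting $A_1$ and prevents it from hitting $A_1$ between an exit of $A_4$ and the subsequent re-entry of $A_3$, so there is a.s.\ a unique $\mathcal N\ge1$ with $R_{\mathcal N}<H_{A_1}<D_{\mathcal N}$. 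Applying the strong Markov property at $R_k$, for every $x\in\partial_{\mathrm{int}}A_1$,
\[
\overline P_\sigma[X_{H_{A_1}}=x]=\sum_{k\ge1}\overline E_\sigma\big[\mathbbm{1}_{\{H_{A_1}>R_k\}}\,h(X_{R_k})\big],\qquad h(z):=\overline P_z\big[X_{H_{A_1}}=x,\ H_{A_1}<T_{A_4}\big],
\]
and likewise $\sum_{k\ge1}\overline E_\sigma[\mathbbm{1}_{\{H_{A_1}>R_k\}}\,g(X_{R_k})]=\overline P_\sigma[H_{A_1}<\infty]=1$ with $g(z):=\overline P_z[H_{A_1}<T_{A_4}]$. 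Note that $X_{R_k}$ always lies in $A_3$ at $\ell^\infty$-distance at least $cN^{r_2}$ from $A_1$ (on $\partial_{\mathrm{int}}A_3$ for $k\ge2$, and for $k=1$ either there or in $A_3\setminus A_2$).

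The core of the proof is to show that, uniformly over $x\in\partial_{\mathrm{int}}A_1$ and over $z\in A_3$ with $d_\infty(z,A_1)\ge cN^{r_2}$,
\[
\big|\,h(z)/g(z)-\widetilde e_{A_1}(x)\,\big|\le \tfrac{c}{N^{c'}}\,\widetilde e_{A_1}(x);
\]
given this, substituting into the two displays above (and using $\sum_x\widetilde e_{A_1}(x)=1$ together with a further application of the strong Markov property) yields $\overline P_\sigma[X_{H_{A_1}}=x]=\widetilde e_{A_1}(x)(1+O(N^{-c'}))$, which is the assertion. To establish the displayed bound I would proceed in four steps. (i) Since $A_4\subset A_5$, the laws of $(X_{t\wedge T_{A_4}})_{t\ge0}$ under $\overline P_z$ and under $P^{\overline\mu}_z$ agree by~\eqref{eq:ConfinedWalk_coincides}, so $h(z)$ and $g(z)$ may be computed under $P^{\overline\mu}_z$. (ii) Running the computation~\eqref{eq:RadonNikodymComparisonCalc}--\eqref{eq:RNAbsBound} verbatim with $A_4$ in place of $A_2$ and with the event $\mathcal E=\{T_{A_4}\le N^{2\alpha}\}$ for a fixed $\alpha\in(r_4,1)$ — whose complement is super-polynomially unlikely under both $P^{\overline\mu}_z$ and $P_z$ by the argument of Lemma~\ref{lem:HKLemma}, and on which $|\varphi_N(X_{T_{A_4}})/\varphi_N(z)-1|\le cN^{r_4-1}$ (smoothness of $\varphi$, and $\varphi_N\ge c>0$ on $A_4$ because $x_0\in D^\delta_N$ and $\varphi$ is bounded below by a positive constant near $D^\delta$) and $\big|\int_0^{T_{A_4}}v_\varphi(X_s)\,\mathrm{d}s\big|\le cN^{2\alpha-2}$ (the bounds in Lemma~\ref{lem:TechnicalProperties}, iii, using $A_4\subseteq(U_\eta)^N$) — gives $\overline P_z[A\,|\,\mathcal E]=P_z[A\,|\,\mathcal E](1+O(N^{-c}))$ for any event $A$ depending only on $X_{\cdot\wedge T_{A_4}}$. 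Combined with the a priori lower bounds $g(z)\ge cN^{-C}$ and $h(z)\ge cN^{-C}$ (from~\eqref{eq:capacityControlsBox},~\eqref{eq:LowerBoundEqMeasure} and~\eqref{eq:Greenfunction_asymptotics}), this upgrades to $h(z)/g(z)=P_z[X_{H_{A_1}}=x\mid H_{A_1}<T_{A_4}](1+O(N^{-c}))$, now for the \emph{simple} random walk. (iii) For the simple random walk, the probability of reaching $A_1$ from $\partial A_4$ is at most $cN^{(d-2)(r_1-r_4)}$ (last-exit decomposition~\eqref{eq:LastExitDecomposition} with~\eqref{eq:Greenfunction_asymptotics},~\eqref{eq:capacityControlsBox}, exactly as in~\eqref{eq:HittingOfBoxEstimateI}), while $P_z[H_{A_1}<\infty]\ge cN^{(d-2)(r_1-r_3)}$, so replacing the event $\{H_{A_1}<T_{A_4}\}$ by $\{H_{A_1}<\infty\}$ costs only a factor $1+O(N^{(d-2)(r_3-r_4)})$. (iv) Finally, for the simple random walk and $z$ at $\ell^\infty$-distance at least $cN^{r_2}$ from $A_1$ — whose diameter is $N^{r_1}\ll N^{r_2}$ — the Green-function asymptotics~\eqref{eq:Greenfunction_asymptotics} yield the classical estimate $P_z[X_{H_{A_1}}=x\mid H_{A_1}<\infty]=\widetilde e_{A_1}(x)(1+O(N^{r_1-r_2}))$, uniformly in $x$ and $z$ (see e.g.~\cite{lawler2013intersections}). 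Chaining (i)--(iv) and checking, via the ordering $r_1<r_2<r_3<r_4$ and the remaining conditions in~\eqref{eq:r_jDef}, that each error is a negative power of $N$, gives the displayed bound.

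The step I expect to be the main obstacle is (iv): the quantitative convergence of the harmonic measure of $A_1$ to its normalized equilibrium measure for the simple random walk, with an \emph{explicit polynomial rate} governed by the ratio $N^{r_1}/N^{r_2}$ of the diameter of $A_1$ to the distance to the starting point. Although this is classical and follows from the Green-function expansion~\eqref{eq:Greenfunction_asymptotics}, carrying it out carefully — and then tracking the interplay of all the exponents $r_1,\dots,r_5$ through steps (i)--(iv) so that the accumulated errors remain polynomially small, which is precisely what the constraints in~\eqref{eq:r_jDef} are tailored for — is the delicate part. A secondary point is to verify that the Radon--Nikod\'ym comparison of step (ii) legitimately transfers from $A_2$ to $A_4$; this causes no real difficulty since $A_4\subseteq A_6\subseteq(U_\eta)^N$ and the conductances $\overline\mu$ and speed measure $\overline\nu$ satisfy the uniform ellipticity~\eqref{eq:Ellipticity} and hence the Gaussian heat-kernel bounds of Lemma~\ref{lem:NewHK} there.
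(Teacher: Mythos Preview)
Your argument is correct but takes a genuinely different route from the paper's. The paper follows \cite[Proposition~4.7 and Appendix]{li2017lower} closely: it first reduces to the \emph{tilted} normalized equilibrium measure $\widetilde{\overline{e}}_{A_1}$ (and only at the very end invokes~\eqref{eq:NormalizedEqMeasureClose} to pass to $\widetilde{e}_{A_1}$), and then establishes~\eqref{eq:ClosenessToTiltedEqmeasure} via the regeneration stopping time $V$ of~\eqref{eq:V_time_def}, the near-identity $\overline P_x[V<\widetilde H_{A_1}]\approx \overline e_{A_1}(x)/\overline M_x$, and a reversibility argument~\eqref{eq:ReversibilityWithAdditionalFactor}--\eqref{eq:AlmostDone} that crucially uses Proposition~\ref{prop:ClosenessToQSD}. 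The main new wrinkle over~\cite{li2017lower} is that $\pi$ is no longer constant on $\partial_{\mathrm{int}}A_1$, which produces the extra factor $\pi(y)/\pi(x)$ in~\eqref{eq:ReversibilityWithAdditionalFactor} and is controlled by smoothness of~$\varphi$.

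Your approach, by contrast, bypasses the qsd machinery and reversibility entirely: you decompose into excursions between $A_3$ and $A_4$, transfer inside $A_4$ to the \emph{simple} random walk via the Radon--Nikod\'ym comparison of Proposition~\ref{prop:ComparisonEqmeasure} (extended from $A_2$ to $A_4$, which is legitimate since $A_4\subseteq (U_\eta)^N$), and then appeal to the classical convergence of simple-random-walk harmonic measure to $\widetilde e_{A_1}$. This is more direct, avoids any appeal to Proposition~\ref{prop:ClosenessToQSD}, and in fact proves the stronger statement that the conclusion holds for \emph{any} starting law on $U^N\setminus A_2$, not just~$\sigma$. The paper's approach, on the other hand, stays within the confined-walk framework and reuses the estimates~\eqref{eq:TiltedEqmeasureControls}--\eqref{eq:TiltedCapacityControls} already developed for other purposes. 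Both are valid; yours is perhaps conceptually cleaner here, at the cost of redoing the Radon--Nikod\'ym and heat-kernel bounds for the larger box $A_4$ and carefully justifying the polynomial lower bound on $h(z)$ needed to turn the conditional comparison into a multiplicative one.
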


\begin{proof}
In view of~\eqref{eq:NormalizedEqMeasureClose}, it suffices to show that 
\begin{equation}
\label{eq:ClosenessToTiltedEqmeasure}
\sup_{x \in \partial_{\mathrm{int}}A_1} \left\vert \frac{\overline{P}_\sigma[X_{H_{A_1}} = x]}{\widetilde{\overline{e}}_{A_1}(x)} -1 \right\vert \leq \frac{1}{N^c}.
\end{equation}
To this end, we first establish that for $N$ large enough, $x \in\partial_{\mathrm{int}}A_1$, and the stopping time
\begin{equation}
\label{eq:V_time_def}
V = \inf\{t \geq t_\star \, : \, X_{[t - t_\star,t]} \cap A_2 = \varnothing \},
\end{equation}
we have the bound
\begin{equation}
\label{eq:V_comparisonNormalizedEqmeasure}
\left\vert \frac{\overline{P}_x[V < \widetilde{H}_{A_1}]}{\sum_{y \in \partial_{\mathrm{int}}A_1 } \overline{P}_y[V < \widetilde{H}_{A_1}] \widetilde{\overline{e}}_{A_1}(x) } - 1  \right\vert \leq \frac{1}{N^c},
\end{equation}
which follows in the same way as~\cite[(4.56)]{li2017lower}. Indeed, one first uses~\eqref{eq:HittingUpperBounds} in place of (3.11) and~\eqref{eq:SomeBoundsEqmeasure} in place of (3.12) of the same reference, giving the bounds
\begin{equation}
\label{eq:TiltedEqmeasureControls}
\left(1 - \frac{1}{N^c}\right) \frac{\overline{e}_{A_1}(y)}{\overline{M}_y} \leq \overline{P}_y[V < \widetilde{H}_{A_1}]  \leq \left(1+ \frac{1}{N^{c'}}\right)\frac{\overline{e}_{A_1}(y)}{\overline{M}_y}, \qquad y \in \partial_{\mathrm{int}}A_1.
\end{equation}
Using the smoothness of $\varphi$ (see Proposition~\ref{prop:quasi-minimizer}, i)) to replace $\overline{M}_x$ by $\overline{M}_y$ uniformly for $x,y \in \partial_{\mathrm{int}}A_1$ up to an error bounded by $N^{-{c''}}$, and then summing over $y \in \partial_{\mathrm{int}}A_1$ yields the comparison, for large enough $N$,
\begin{equation}
\label{eq:TiltedCapacityControls}
\left(1 - \frac{1}{N^c}\right)\overline{M}_x \sum_{y \in \partial_{\mathrm{int}} A_1}  \overline{P}_y[V < \widetilde{H}_{A_1}] \leq \overline{\capa}(A_1) \leq \left(1 + \frac{1}{N^{c'}}\right) \overline{M}_x \sum_{y \in \partial_{\mathrm{int}} A_1} \overline{P}_y[V < \widetilde{H}_{A_1}],
\end{equation}
and the claim~\eqref{eq:V_comparisonNormalizedEqmeasure} follows upon using the definition of the normalized equilibrium measure~\eqref{eq:TiltedNormalizedEqmeasure} and combining~\eqref{eq:TiltedEqmeasureControls}
and~\eqref{eq:TiltedCapacityControls}. \smallskip

To conclude the proof of~\eqref{eq:ClosenessToTiltedEqmeasure}, it now suffices to prove that for every $x \in \partial_{\mathrm{int}}A_1$ and $N$ large enough 
\begin{equation}
\label{eq:Appendixli2017MainClaim}
\left\vert \overline{P}_x[V < \widetilde{H}_{A_1}] - \overline{P}_\sigma[X_{H_{A_1}} =x]\sum_{y \in \partial_{\mathrm{int}} A_1 } \overline{P}_y[V < \widetilde{H}_{A_1}] \right\vert \leq \frac{C}{N^{1-r_1}}.
\end{equation}
We admit~\eqref{eq:Appendixli2017MainClaim} for the time being and explain how the proof proceeds given this. First note that upon combining~\eqref{eq:LowerBoundEqMeasure},
~\eqref{eq:ComparisonEqmeasure}, and~\eqref{eq:TiltedCapacityControls}, we obtain for every $x \in \partial_{\mathrm{int}}A_1$ and $N$ large enough (and using also that $c \leq \overline{M}_x \leq C$ by the definition~\eqref{eq:NaturalMMeasureDef},~\eqref{eq:min_phi_strictlypositive}, and~\eqref{eq:Phi_N_TechnicalProp}),
\begin{equation}
\begin{split}
\sum_{y \in \partial_{\mathrm{int}}A_1 } \overline{P}_y[V < \widetilde{H}_{A_1}] \widetilde{\overline{e}}_{A_1}(x) & \stackrel{\eqref{eq:TiltedCapacityControls}}{\geq} \frac{C \, \overline{e}_{A_1}(x)}{\left(1 + \frac{1}{N^c} \right)}  \stackrel{\eqref{eq:ComparisonEqmeasure}}{\geq} C'\left(1 - \frac{1}{N^{c'}} \right)e_{A_1}(x)  \stackrel{\eqref{eq:LowerBoundEqMeasure}}{\geq} \frac{C''}{N^{r_1}}.
\end{split}
\end{equation}
One can therefore divide, for large enough $N$, the inequality~\eqref{eq:Appendixli2017MainClaim} by $\sum_{y \in \partial_{\mathrm{int}}A_1 } \overline{P}_y[V < \widetilde{H}_{A_1}]\widetilde{\overline{e}}_{A_1}(x) $ and infer that
\begin{equation}
\begin{split}
\left\vert \frac{\overline{P}_x[V < \widetilde{H}_{A_1}]}{\sum_{y \in \partial_{\mathrm{int}}A_1 } \overline{P}_y[V < \widetilde{H}_{A_1}] \widetilde{\overline{e}}_{A_1}(x) } - \frac{\overline{P}_\sigma[X_{H_{A_1}} = x]}{\widetilde{\overline{e}}_{A_1}(x)}  \right\vert & \leq \frac{C}{N^{1-2r_1}}  \stackrel{\eqref{eq:r_jDef}}{\leq} \frac{C}{N^{c'}},
\end{split}
\end{equation}
and combining this with~\eqref{eq:V_comparisonNormalizedEqmeasure} establishes~\eqref{eq:ClosenessToTiltedEqmeasure} and therefore the main statement. \smallskip

To show~\eqref{eq:Appendixli2017MainClaim}, we will argue as in~\cite[Appendix]{li2017lower}, but some care is required due to the fact that $\pi$ is not constant on $\partial_{\mathrm{int}}A_1$. Crucially, we have that
\begin{equation}
\label{eq:ReversibilityWithAdditionalFactor}
\sum_{y \in \partial_{\mathrm{int}}A_1 \setminus \{x\}} \frac{\pi(y)}{\pi(x)} \overline{P}_x[V < \widetilde{H}_{A_1}, X_{ \widetilde{H}_{A_1}} = y] = \sum_{y \in \partial_{\mathrm{int}}A_1 \setminus \{x\}}  \overline{P}_{y}[V < \widetilde{H}_{A_1}, X_{ \widetilde{H}_{A_1}} = x],
\end{equation} 
for all $y \in \partial_{\mathrm{int}}A_1$, replacing~\cite[(A.2)]{li2017lower} (in our context, we have the extra factor $\frac{\pi(y)}{\pi(x)}$ appearing, which was equal to $1$ in~\cite[Appendix]{li2017lower}). However, upon using the smoothness of $\varphi$ again, we see that for large enough $N$,
\begin{equation}
\label{eq:StationaryMeasureFlat}
\sup_{x,y \in \partial_{\mathrm{int}}A_1}\left\vert \frac{\pi(y)}{\pi(x)} - 1 \right\vert \leq \frac{c}{N^{1 - r_1}},
\end{equation}
and therefore
\begin{equation}
\label{eq:DifferenceBetweenWithAndWithoutPi}
\begin{split}
& \left\vert \sum_{y \in \partial_{\mathrm{int}}A_1 \setminus \{x\}} \frac{\pi(y)}{\pi(x)} \overline{P}_x[V < \widetilde{H}_{A_1}, X_{\widetilde{H}_{A_1}} = y] - \sum_{y \in \partial_{\mathrm{int}}A_1 \setminus \{x\}}  \overline{P}_x[V < \widetilde{H}_{A_1}, X_{\widetilde{H}_{A_1}} = y] \right\vert  \stackrel{\eqref{eq:StationaryMeasureFlat}}{\leq} \frac{c}{N^{1 - r_1}}.
\end{split}
\end{equation}
We can then directly follow~\cite[(A.4) -- (A.8)]{li2017lower}, using~\eqref{eq:ConditionalDistributionCloseToqsd}  to obtain for every $x',y' \in \partial_{\mathrm{int}}A_1$ that
\begin{equation}
\label{eq:AlmostIndependenceStatementqsd}
\left\vert \overline{P}_{x'}[V < \widetilde{H}_{A_1},X_{\widetilde{H}_{A_1}} = y']- \overline{P}_{x'}[V < \widetilde{H}_{A_1}] \overline{P}_\sigma[X_{\widetilde{H}_{A_1}} = y'] \right\vert \leq e^{-c\log^2N}.
\end{equation}
We therefore obtain (recall that $|A_1| \leq cN^{r_1d}$) for large enough $N$
\begin{equation}
\label{eq:AlmostDone}
\begin{split}
& \left\vert \overline{P}_x[V < \widetilde{H}_{A_1}]\overline{P}_\sigma[X_{\widetilde{H}_{A_1}} \neq x ] - \sum_{y \in \partial_{\mathrm{int}}A_1 \setminus \{x\}} \overline{P}_y[V < \widetilde{H}_{A_1}]\overline{P}_\sigma[X_{\widetilde{H}_{A_1}} = x ]  \right\vert \\
& \stackrel{\eqref{eq:AlmostIndependenceStatementqsd}}{\leq} \left\vert \sum_{y \in \partial_{\mathrm{int}}A_1 \setminus \{x\}}  \overline{P}_x[V < \widetilde{H}_{A_1}, X_{\widetilde{H}_{A_1}} = y] - \sum_{y \in \partial_{\mathrm{int}}A_1 \setminus \{x\}}  \overline{P}_y[V < \widetilde{H}_{A_1}, X_{\widetilde{H}_{A_1}} = x]  \right\vert \\
& \qquad + e^{-c'\log^2N} \\
& \stackrel{\eqref{eq:ReversibilityWithAdditionalFactor},\eqref{eq:DifferenceBetweenWithAndWithoutPi}}{\leq} \frac{c}{N^{1-r_1}}.
\end{split}
\end{equation}
The claim~\eqref{eq:Appendixli2017MainClaim} then follows by adding and subtracting the term $\overline{P}_x[V < \widetilde{H}_{A_1}]\overline{P}_\sigma[X_{\widetilde{H}_{A_1}} = x ]$ under the absolute value on the right-hand side of~\eqref{eq:AlmostDone}.
\end{proof}

\section{Coupling of the occupation times of excursions}
\label{sec:coupling}

The purpose of this section is to introduce a pivotal series of (local) couplings for the occupation times of the tilted walk with the occupation times of a random number of independent simple random walk excursions in certain mesoscopic boxes. This series of couplings will be instrumental in Section~\ref{sec:lowerbound} to demonstrate that under the tilted measure $\widetilde{P}_{y,N}$, $y \in \bbZ^d$, the probability of the event  $\cA^\nu_N(F)$ tends to one, as $N$ tends to infinity. Similar couplings for the trace of the tilted walk with the traces of random interlacements have been devised in~\cite{li2017lower}, adapting a previous coupling for the trace of the simple random walk on the torus with random interlacements of a similar type in~\cite{teixeira2011fragmentation}. For our purposes, we need to adapt the framework of~\cite{li2017lower} further, in particular by coupling the occupation time field instead of the trace of the tilted walk. \smallskip

Throughout this section, we work again with a fixed near-minimizer $\varphi$ as in~\eqref{eq:FixedPhi} and fixed $\delta,\cR$ as in~\eqref{eq:FixedChoiceR}, and all constants may depend implicitly on these choices. We also recall the definitions of the boxes $A_j$, $1 \leq j \leq 6$ in~\eqref{eq:AjBoxesDef}. To start with, we define successive entrance times into the box $A_1$ centered at $x_0 \in D^\delta_N$ (see~\eqref{eq:zetaAndx_0}), which are separated by intervals in which the tilted walk spends a substantial time outside $A_2$. Specifically, recall the definition of the regeneration time in~\eqref{eq:RegenerationTimeDef} and the stopping time $V$ in~\eqref{eq:V_time_def}. These are now used to define the aforementioned successive entrance times $R_k$ and stopping times $V_k$ as follows:
\begin{equation}
\label{eq:Stopping-times-R-V-def}
\begin{split}
R_1 & = H_{A_1}, \qquad V_1 = R_1 + V \circ \vartheta_{R_1}, \\
R_j & = V_{j-1} + H_{A_1} \circ \vartheta_{V_{j-1}}, \qquad V_j = R_j + V \circ \vartheta_{R_j}, \text{ for }j \geq 2.
\end{split}
\end{equation}
The values of the (tilted) walk within the time intervals $[R_j,V_j)$ will be called ``long excursions''. \smallskip
\begin{figure}[htbp]
\begin{center}
\includegraphics[scale=.7]{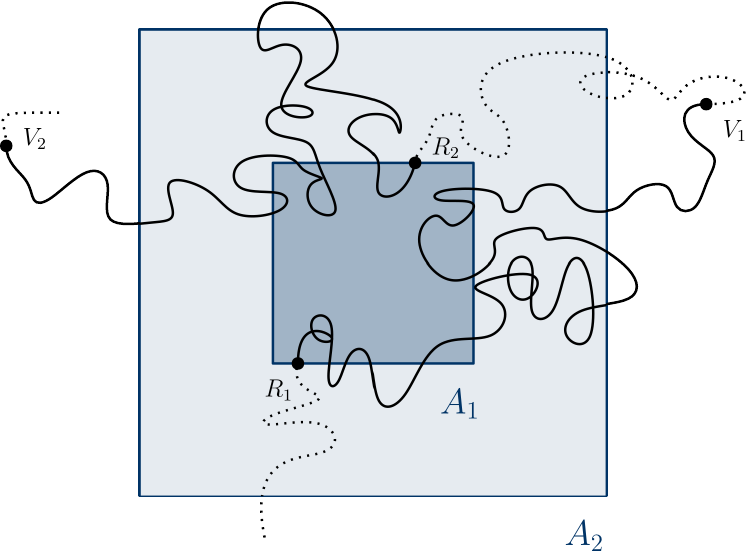}
\end{center}
\caption{An illustration of the first two (long) excursions of the tilted walk in and out of $A_1$.}
\label{fig:excursions}
\end{figure}

We define the quantity
\begin{equation}
\label{eq:JDef}
J = \left\lfloor (1 + \tfrac{\varepsilon}{2})\varphi_N^2(x_0) \capa(A_1) \right\rfloor,
\end{equation}
where $\varepsilon$ is defined in~\eqref{eq:time_horizon} and $\varphi_N$ is defined as in~\eqref{eq:varphi_N_Def}. We remark that by~\eqref{eq:CapaComparison}, the value $\varphi_N^2(x_0)\capa(A_1)$ is close to $\overline{\capa}(A_1)$ for $N$ large. The number $J$ will be used to control the number of (long) excursions to $A_1$ that the tilted walk typically performs before time $S_N$. This is the content of the following statement.
\begin{prop}
\label{prop:TiltedWalkExcursionNumber}
For large enough $N$, and $y \in U^N$, one has
\begin{equation}
\label{eq:EnoughTrajectories}
\overline{P}_{y}[R_J \geq S_N] \leq \exp(-N^c).
\end{equation}
\end{prop}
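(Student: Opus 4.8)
The plan is to show that the tilted walk performs at least $J$ long excursions into $A_1$ within the time horizon $S_N$, with overwhelming probability. The natural strategy is a first-moment/concentration argument on the number of long excursions completed before a given time, combined with a lower bound on the hitting probability of $A_1$ from the quasi-stationary distribution. First I would decompose the time interval $[0,S_N]$ into blocks where the confined walk (which coincides with $\overline{P}_y$ up to time $S_N$, recall~\eqref{eq:ConfinedTiltedCoincide}) is, with good probability, close to its quasi-stationary distribution $\sigma$ on $U^N\setminus A_2$ after a regeneration time $t_\star$ (see~\eqref{eq:RegenerationTimeDef} and Proposition~\ref{prop:ClosenessToQSD}). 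Each long excursion $[R_k,V_k)$ consumes at least a time $t_\star = N^2\log^2 N$ (by the definition of $V$ in~\eqref{eq:V_time_def}), so the event $\{R_J \geq S_N\}$ forces that within a window of length of order $S_N \approx N^d$ one completes fewer than $J \asymp N^{r_1(d-2)}$ excursions; since $r_1 < 1/4 < 1$ and $d \geq 3$, there is ample room, and the real content is to show that each ``attempt'' succeeds in hitting $A_1$ with not-too-small probability.

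The key estimate is a uniform lower bound of the form $\overline{P}_x[H_{A_1} < V] \geq c\,\mathrm{Cap}(A_1)/\|\varphi_N\|_{\ell^2(\bbZ^d)}^2$ (up to negligible corrections), valid for $x$ in the relevant region of $U^N\setminus A_2$. This is exactly where Propositions~\ref{prop:ClosenessToQSD} and~\ref{prop:AnalogProp4.7} and the capacity comparison~\eqref{eq:CapaComparison} enter: after a regeneration time the walk looks like it starts from $\sigma$, and the expected number of returns to $A_1$ is governed by $\overline{\capa}(A_1) \approx \varphi_N^2(x_0)\capa(A_1)$, while the total ``mass'' available is $\pi(U^N\setminus A_2) \approx \|\varphi_N\|_{\ell^2}^2$-normalized. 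Concretely, using reversibility of $\pi$ and the identity relating $\overline{P}_\sigma[H_{A_1}<\infty$-type quantities$]$ to $\overline{\capa}(A_1)$, one gets that in each regeneration window the probability of starting a new long excursion is at least $c\,\overline{\capa}(A_1)/S_N \asymp N^{r_1(d-2)}/N^d$. Then I would bound $R_J$ above by a sum of roughly $J$ geometric-type waiting times, each with success probability $p_N \gtrsim N^{r_1(d-2) - d}$, so that the expected time to complete $J$ excursions is of order $J/p_N \cdot t_\star \ll S_N$; more precisely, since $S_N \asymp N^d$ and we only need $J$ successes, the event $\{R_J \geq S_N\}$ is a large-deviation event for a sum of at most $\lfloor S_N/t_\star\rfloor \asymp N^{d-2}/\log^2 N$ independent (or nearly independent, via the strong Markov property and Proposition~\ref{prop:ClosenessToQSD}) Bernoulli trials, whose mean number of successes vastly exceeds $J$. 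A standard Chernoff/binomial tail bound then yields the stretched-exponential bound $\exp(-N^c)$.

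The main obstacle is the near-independence across regeneration windows: the walk is not literally renewed at the times $V_k$, so one cannot directly invoke i.i.d.\ structure. I would handle this as in~\cite[Section 4]{li2017lower}: use the strong Markov property at $V_k$, note that $X_{V_k} \in U^N\setminus A_2$ and (by construction of $V$) the walk has not seen $A_2$ for a time $t_\star$, then invoke Proposition~\ref{prop:ClosenessToQSD} to replace the conditional law of $X_{V_k}$ by $\sigma$ up to an error $\exp(-c\log^2 N)$; summing at most polynomially many such errors (since there are at most $O(N^{d})$ time steps, hence $O(N^{d-2})$ windows) keeps the total error negligible against $\exp(-N^c)$. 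A secondary, more routine point is to verify that the number of long excursions completed by time $S_N$ is at least the number of successful Bernoulli trials minus a negligible correction, i.e.\ that excursions do not last too long — but a long excursion is bounded by $T_{A_2}$ plus $t_\star$ after the last visit to $A_2$, and excessively long sojourns away from $A_2$ are controlled by transience estimates (as in~\eqref{eq:HittingOfBoxEstimateI}) and heat-kernel bounds (Lemma~\ref{lem:HKLemma}), so this too contributes only a stretched-exponentially small error. Assembling these pieces gives~\eqref{eq:EnoughTrajectories}.
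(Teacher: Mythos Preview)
Your proposal has a genuine gap. You plan to decouple the successive excursions by invoking Proposition~\ref{prop:ClosenessToQSD} at each time $V_k$, incurring an error of order $\exp(-c\log^2 N)$ per step, and then assert that summing polynomially many such errors is ``negligible against $\exp(-N^c)$''. This is backwards: $\exp(-c\log^2 N)$ decays only super-polynomially (i.e.\ like $N^{-c\log N}$), whereas $\exp(-N^c)$ is stretched-exponentially small, so $\exp(-c\log^2 N) \gg \exp(-N^c)$ for large $N$. A coupling-based scheme of the type you describe can therefore deliver at best a bound of order $\exp(-c\log^2 N)$, not the claimed $\exp(-N^c)$. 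Your Bernoulli-trial heuristic also leaves unclear how to obtain a uniform (in the starting point) lower bound on the per-window success probability at the precision needed to exploit the $(1+\varepsilon)$ vs.\ $(1+\varepsilon/2)$ margin between $S_N$ and $J$.

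The paper's proof bypasses these issues by avoiding any coupling. It writes $R_J$ as a sum of hitting times $H_{A_1}\circ\vartheta_{V_{j-1}}$ and excursion durations $V\circ\vartheta_{R_j}$, applies the exponential Chebyshev inequality, and uses the strong Markov property to obtain an \emph{exact} product structure for the Laplace transforms. The individual factors are then bounded via Khas'minskii's inequality in terms of $t_N \stackrel{\mathrm{def}}{=} \sup_{y}\overline{E}_y[H_{A_1}]$. The key quantitative input, which your outline does not identify, comes from Lemma~\ref{lem:Morehittingtimebounds}: the bound $\sup_y\overline{E}_y[H_{A_1}] \leq (1+o(1))\overline{E}_\pi[H_{A_1}]$ together with $1/\overline{E}_\pi[H_{A_1}] \approx (1+\varepsilon)\varphi_N^2(x_0)\capa(A_1)/S_N$ yields $S_N/t_N \gtrsim (1-\varepsilon/50)(1+\varepsilon)\varphi_N^2(x_0)\capa(A_1)$, which exceeds $J$ by a fixed multiplicative factor. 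Choosing the Chebyshev parameter $\vartheta$ small then gives an exponent of order $-cJ \asymp -N^{r_1(d-2)}$, hence the desired $\exp(-N^c)$. Proposition~\ref{prop:ClosenessToQSD} plays no role in this argument.
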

\begin{proof} The proof is similar to~\cite[Appendix]{li2017lower} (see also the proof of~\cite[Lemma 4.3]{teixeira2011fragmentation}), and could be skipped upon first reading. Observe that $\overline{P}_y$-a.s.\ 
\begin{equation}
    \begin{aligned}
        \{R_J \geq S_N\} &\subseteq  \Big\{ H_{A_1} + H_{A_1}\circ \vartheta_{V_1} + \ldots + H_{A_1}\circ \vartheta_{V_{J-1}} \geq \bigg(1-\frac{\varepsilon}{100}\bigg) S_N\Big\}\\
        &\cup \Big\{V\circ \vartheta_{R_1} + \ldots + V\circ \vartheta_{R_{J-1}} \geq \frac{\varepsilon}{100} S_N\Big\}.    
    \end{aligned}
\end{equation}
 We want to give now an upper bound on the respective probabilities. We start defining the quantity
 \begin{equation}
    t_N = \sup_{y\in U_N} \overline{E}_y[H_{A_1}],
 \end{equation}   
i.e.~the maximum of the expected time to enter $A_1$ starting from any $y\in U_N$. 
By means of a repeated use of the strong Markov property and the exponential Chebyshev inequality (at times $V_1,\ldots,V_{J-1}$ and $R_1,\ldots,R_{J-1}$) we have for any $\vartheta>0$
 \begin{equation}
 \label{eq:TermsToBoundKhasminskii}
    \begin{aligned}
        \overline{P}_y[R_J \geq S_N] &\leq \overline{P}_y\Big[H_{A_1} + H_{A_1}\circ \vartheta_{V_1} + \ldots + H_{A_1}\circ \vartheta_{V_{J-1}} \geq \bigg(1-\frac{\varepsilon}{100}\bigg) S_N\Big]\\&\qquad+ \overline{P}_y\Big[V\circ \vartheta_{R_1} + \ldots + V\circ \vartheta_{R_{J-1}} \geq \frac{\varepsilon}{100} S_N\Big]\\
        &\leq \exp\Big(-\vartheta\Big( 1- \frac{\varepsilon}{100}\Big) \frac{S_N}{t_N}\Big) \Big(\sup_{z\in U_N} \overline{E}_z\Big[\exp\Big(\vartheta \frac{H_{A_1}}{t_N}\Big)\Big]\Big)^J\\
        &\qquad+\exp\Big(-  \frac{\varepsilon}{100} \frac{S_N}{t_N}\Big) \Big(\sup_{z\in {A_1}} \overline{E}_z\Big[\exp\Big( \frac{V}{t_N}\Big)\Big]\Big)^J.
    \end{aligned}
 \end{equation}
 The first term will be handled by means of Khas'minskii's inequality (see, e.g.,~\cite[Lemma 2.1, p.8]{sznitman1998brownian}), which implies that for any $B \subseteq U^N$ 
 \begin{equation}
 \label{eq:Khasminskii}
 \sup_{z \in U^N} \overline{E}_z\big[H_{B}^n \big] \leq n! \sup_{z \in U^N} \overline{E}_z[H_{B}]^n,  \qquad n \geq 1.
 \end{equation}
 This implies that for $\vartheta \in (0,\frac{1}{2})$, one has
 \begin{equation}
 \label{eq:UniformKhasminski-A}
 \sup_{z\in U_N} \overline{E}_z\Big[\exp\Big(\vartheta \frac{H_{A_1}}{t_N}\Big)\Big] \leq \sum_{j =0 }^\infty \frac{\vartheta^j}{j!t_N^j}\sup_{z \in U^N} \overline{E}_z\big[H_{A_1}^j \big]  \stackrel{\eqref{eq:Khasminskii}}{\leq} \sum_{j = 0}^\infty \vartheta^j = \frac{1}{1-\vartheta}.
 \end{equation}
 We now bound the second term on the right-hand side  of~\eqref{eq:TermsToBoundKhasminskii}. Proceeding as in~\cite[(A.16)--(A.17)]{li2017lower} or~\cite[(4.17)]{teixeira2011fragmentation} and the display above, we have for $N$ large enough
 \begin{equation}
 \label{eq:KhasminskiiLemmaMainStep}
 \begin{split}
 \sup_{z\in {A_2}} \overline{E}_z\Big[\exp\Big( \frac{V}{t_N}\Big)\Big] & \leq \sup_{z \in A_2} \overline{E}_z\Big[e^{\frac{T_{A_3} + t_\star}{t_N}} \Big]\left(1+ \sup_{z \in U^N \setminus A_3} \overline{P}_z[H_{A_2} \leq t_\star] \sup_{z \in A_2}\overline{E}_z\Big[e^{\frac{V}{t_N}}\Big]  \right) \\
 & \stackrel{\eqref{eq:HittingUpperBounds}}{ \leq} \sup_{z \in A_2} \overline{E}_z\Big[e^{\frac{T_{A_3} + t_\star}{t_N}} \Big]\left( 1 + \frac{1}{N^c}\sup_{z \in A_2}\overline{E}_z\Big[e^{\frac{V}{t_N}}\Big] \right).
 \end{split}
 \end{equation}
 We then have, for $N$ large enough,
 \begin{equation}
 \frac{1}{t_N} \leq \frac{1}{\overline{E}_\pi[H_{A_1}]} \stackrel{\eqref{eq:ImportantHittingTimeEstimate}}{\leq} \frac{1}{N^{d-(d-2)r_1}}.
 \end{equation}
 Also, for every $x \in A_3$,
 \begin{equation}
 \overline{E}_x[T_{A_3}]  \stackrel{\eqref{eq:ConfinedWalk_coincides}}{=} E^{\overline{\mu}}_x[T_{A_3}] \leq cN^{2r_3},
 \end{equation}
 which follows from~\cite[Lemmas 5.20, 5.22]{barlow2017random}, since we have the Gaussian heat kernel bounds from Lemma~\ref{lem:NewHK}. Therefore, we find for $N$ large enough,
 \begin{equation}
 \frac{\sup_{x \in A_3} \overline{E}_x[T_{A_3}]}{t_N} \leq \frac{c}{N^{d-2r_3-(d-2)r_1}} \leq \frac{1}{N^{c'}},
 \end{equation}
and another application of Khas'minskii's inequality~\eqref{eq:Khasminskii} gives
\begin{equation}
\sup_{x \in A_2} \overline{E}_x\left[\exp\left(\frac{T_{A_3}}{t_N} \right) \right] \leq \frac{1}{1 - N^{-c'}}.
\end{equation}
Proceeding as in~\cite[(A.21)--(A.23)]{li2017lower} or~\cite[(4.19)]{teixeira2011fragmentation} then gives (since $\frac{t_\star}{t_N} \leq cN^{-c'}$) that, for $N$ large enough,
\begin{equation}
 \label{eq:UniformKhasminski-B}
\sup_{x \in A_2} \overline{E}_x\left[\exp\left(\frac{V}{t_N}\right) \right]  \leq \exp\left(\frac{1}{N^c}\right).
\end{equation}
Inserting~\eqref{eq:UniformKhasminski-A} and~\eqref{eq:UniformKhasminski-B} into~\eqref{eq:TermsToBoundKhasminskii} and bounding $(1-\vartheta)^{-1} \leq e^{\vartheta + 2\vartheta^2}$ for $\vartheta \in (0,\frac{1}{2})$, we find that
\begin{equation}
\label{eq:A25equiv}
\begin{split}
\overline{P}_{y}[R_J \geq S_N] & \leq \exp\left(-\vartheta\left(1-\frac{\varepsilon}{100}\right)\frac{S_N}{t_N} + (\vartheta + 2\vartheta^2)J \right) + \exp\left(-\frac{\varepsilon}{100} \frac{S_N}{t_N} + \frac{J}{N^c} \right)
\end{split}
\end{equation}
By using~\eqref{eq:fA1Bound} of Lemma~\ref{lem:Morehittingtimebounds}, we see that for $N$ large enough,
\begin{equation}
\frac{\overline{E}_x[H_{A_1}]}{\overline{E}_\pi[H_{A_1}]} \stackrel{\eqref{eq:fA1Def}}{=} 1 - f_{A_1}(x) \leq 1 + \frac{1}{N^c} \leq \frac{1}{1-\varepsilon/100}, \qquad x \in U^N.
\end{equation}
It follows that, for large enough $N$,
\begin{equation}
\frac{S_N}{t_N} \geq \left(1-\frac{\varepsilon}{100}\right) \frac{S_N}{\overline{E}_\pi[H_{A_1}]}  \stackrel{\eqref{eq:InverseHittingEstimates}}{\geq} \left(1-\frac{\varepsilon}{50}\right)(1+\varepsilon)\varphi_N^2(x_0)\capa(A_1).
\end{equation}
One can then choose $\vartheta$  sufficiently small to ensure that for large enough $N$
\begin{equation}
\label{eq:A28equiv}
-\vartheta\left(1-\frac{\varepsilon}{100} \right) \frac{S_N}{t_N} + (\vartheta + 2\vartheta^2)J \leq -N^c
\end{equation}
(recalling the definition of $J$ in~\eqref{eq:JDef}). It also holds that, for large enough $N$
\begin{equation}
\label{eq:A29equiv}
-\frac{\varepsilon}{100} \frac{S_N}{t_N} + \frac{J}{N^c} \leq -N^{c'},
\end{equation}
and inserting~\eqref{eq:A28equiv} and~\eqref{eq:A29equiv} into~\eqref{eq:A25equiv} shows the claim.
\end{proof}

We now start with the construction of the first coupling. Roughly speaking, we will demonstrate that the field of occupation times corresponding to $J-1$ long excursions (counting from the second to the $J$-th long excursion) of the tilted walk in $A_2$ dominates (with overwhelming probability) a field of occupation times created by $J-1$ \textit{independent} long excursions, each one started from the quasi-stationary distribution $\sigma$. We let
\begin{equation}
L^j_x = \int_{R_j}^{V_j} \mathbbm{1}_{\{ X_t = x\}}\mathrm{d}t, \ j \geq 1, x \in A_2,
\end{equation}
stand for the occupation times obtained by considering the excursions $X_{[R_j,V_j)} \cap A_2$ of the trajectory of a random walk $(X_t)_{t \geq 0}$ in $A_2$. The statement of the following result is similar to~\cite[Proposition 5.2]{li2017lower}, itself based on an argument in~\cite[Lemma 4.2]{teixeira2011fragmentation}, however for our purposes a coupling of the local times is necessary, whereas in~\cite{li2017lower}, a coupling of the traces was sufficient.

\begin{prop}
\label{prop:CouplingIndependentStartedfromSigme}
For large $N$, and $y \in U^N$, there exists a probability space $(\mathcal{O}_1,\mathcal{F}_1,Q_1)$ on which we can define two random fields $(K_x)_{x \in A_2}$ and $(\widetilde{K}_x)_{x \in A_2}$ with values in $[0,\infty)^{A_2}$, such that $(K_x)_{x \in A_2}$ has the same law as $\left(\sum_{j = 2}^J L^j_x \right)_{x \in A_2}$ under $\overline{P}_y$, and $(\widetilde{K}_x)_{x \in A_2}$ has the same law as $\left(\sum_{j = 2}^J \widetilde{L}^j_x \right)_{x \in A_2}$, where for $j \geq 2$, the fields $(\widetilde{L}^j_x)_{x \in A_2}$ are distributed as i.i.d.~copies of
\begin{equation}
\left(\int_{R_1}^{V_1} \mathbbm{1}_{\{ Y_t = x\}}\mathrm{d}t \right)_{x \in A_2},
\end{equation}
with $Y$ having the same law as $X$ under $\overline{P}_\sigma$, such that
\begin{equation}
\label{eq:FirstCoupling}
Q_1\left[(K_x)_{x \in A_2} \neq (\widetilde{K}_x)_{x \in A_2} \right] \leq \exp\left(-c \log^2 N \right).
\end{equation}
\end{prop}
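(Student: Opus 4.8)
The plan is to build the coupling excursion by excursion, using the strong Markov property to reduce matters to a single step, and then invoke Proposition~\ref{prop:AnalogProp4.7} together with the total-variation/coupling dictionary~\eqref{eq:CouplingTV}. First I would observe that under $\overline{P}_y$, by the strong Markov property at the times $R_j$, the excursion occupation-time field $(L^j_x)_{x \in A_2}$ conditionally on the past depends only on the entrance point $X_{R_j} \in \partial_{\mathrm{int}} A_1$; more precisely, conditionally on $\{X_{R_j} = x\}$ and on $\cF_{R_j}$, the field $(L^j_x)_{x \in A_2}$ has the law of $\bigl(\int_{0}^{V} \IND_{\{X_t = x\}}\,\De t\bigr)_{x \in A_2}$ under $\overline{P}_x$ (note $V \geq t_\star$ here so this is a genuine ``long excursion''). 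The target independent fields $(\widetilde{L}^j_x)_{x \in A_2}$ are by definition i.i.d.\ copies of this same functional but with the starting point drawn from $\sigma$, i.e.\ the law of $(L^j_x)_{x\in A_2}$ started from $\overline{P}_\sigma$. So the only discrepancy between the two fields, at each step $j\ge 2$, is the law of the starting point: the actual entrance law $\overline{P}_y[X_{R_j} = \cdot]$ (which, by the strong Markov property applied at $V_{j-1}$ and the fact that the walk has spent time at least $t_\star$ outside $A_2$ by then, equals $\overline{P}_{\sigma'}[X_{H_{A_1}} = \cdot\,]$ for the relevant conditional law $\sigma'$ of $X_{V_{j-1}}$) versus $\overline{P}_\sigma[X_{H_{A_1}} = \cdot\,]$.

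Next I would quantify this discrepancy. By Proposition~\ref{prop:AnalogProp4.7}, uniformly over $x \in \partial_{\mathrm{int}} A_1$,
\begin{equation*}
\left|\frac{\overline{P}_\sigma[X_{H_{A_1}} = x]}{\widetilde{e}_{A_1}(x)} - 1\right| \leq \frac{1}{N^c},
\end{equation*}
and a corresponding bound holds with $\overline{P}_\sigma$ replaced by $\overline{P}_{\sigma'}$, because the same proof applies with $\sigma'$ in place of $\sigma$ provided $\sigma'$ is the conditional law at time $V_{j-1}$ — here one uses that after time $t_\star$ outside $A_2$ the distribution of $X_{V_{j-1}}$ is within $e^{-c\log^2 N}$ in total variation of the quasi-stationary distribution by Proposition~\ref{prop:ClosenessToQSD}, so Proposition~\ref{prop:AnalogProp4.7} transfers up to an additive error of that order. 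Hence, by the triangle inequality, the total variation distance between the law of the entrance point at step $j$ and the reference entrance law $\overline{P}_\sigma[X_{H_{A_1}} = \cdot\,]$ is at most $C N^{-c} \cdot (\text{something summable in }\partial_{\mathrm{int}}A_1)$, which using $|\partial_{\mathrm{int}}A_1| \le c N^{r_1(d-1)}$ and the lower bound $\widetilde{e}_{A_1}(x) \ge c/(N^{r_1}\capa(A_1))$ one checks is $\le e^{-c\log^2 N}$ after choosing the exponents in~\eqref{eq:r_jDef} appropriately — or, more honestly, one should route this through Proposition~\ref{prop:ClosenessToQSD}'s exponentially small bound to get the stated $e^{-c\log^2 N}$ rather than a mere polynomial one, since a polynomial error summed over $J$ excursions will not suffice. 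I expect this bookkeeping — getting a $e^{-c\log^2 N}$ rather than $N^{-c}$ bound on the single-step discrepancy, by carefully tracking which errors are genuinely super-polynomial (those from Lemma~\ref{lem:TVtoStationary} and Proposition~\ref{prop:ClosenessToQSD}) versus merely polynomial — to be the main obstacle.

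Finally I would assemble the chain of couplings: inductively construct on a common probability space $(\mathcal{O}_1,\mathcal{F}_1,Q_1)$, for $j=2,\ldots,J$, a coupling of the entrance point $X_{R_j}$ with an independent point drawn from $\overline{P}_\sigma[X_{H_{A_1}} = \cdot\,]$ that agrees with probability at least $1 - e^{-c\log^2 N}$ (possible by~\eqref{eq:CouplingTV}, as $\partial_{\mathrm{int}}A_1$ is finite), and on the event of agreement let the subsequent excursion be literally the same, so that $L^j_x = \widetilde{L}^j_x$ for all $x \in A_2$; on the (rare) disagreement event, run the two excursions independently. Setting $K_x = \sum_{j=2}^J L^j_x$ and $\widetilde{K}_x = \sum_{j=2}^J \widetilde{L}^j_x$, the fields $(K_x)$ and $(\widetilde{K}_x)$ have the required marginal laws, and by a union bound over $j = 2,\ldots,J$ with $J \le C N^{r_1(d-2)}$ (see~\eqref{eq:JDef} and~\eqref{eq:BoxCapacity}),
\begin{equation*}
Q_1\bigl[(K_x)_{x\in A_2} \neq (\widetilde{K}_x)_{x\in A_2}\bigr] \leq (J-1)\, e^{-c\log^2 N} \leq \exp\bigl(-c'\log^2 N\bigr),
\end{equation*}
which is~\eqref{eq:FirstCoupling}. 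The one subtlety to be careful about in writing this cleanly is that the coupling must be done sequentially and measurably with respect to the filtration generated by the already-coupled excursions (so that the independence structure of the $\widetilde{L}^j$ is genuinely i.i.d.), which is a standard measurable-selection argument but should be stated explicitly.
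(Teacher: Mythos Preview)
Your overall strategy---sequential coupling of excursions with a union bound over $J-1$ steps---is the same as the paper's, and your final paragraph is essentially the paper's argument. However, your middle paragraph takes an unnecessary and confusing detour through Proposition~\ref{prop:AnalogProp4.7}. That proposition compares the entrance law $\overline{P}_\sigma[X_{H_{A_1}} = \cdot]$ to the normalized equilibrium measure $\widetilde{e}_{A_1}$, which is not needed here at all (it is used later, in Proposition~\ref{prop:Radon-Nikodym}, not in this one). The only input required is Proposition~\ref{prop:ClosenessToQSD}: given the walk's position $z \in \partial A_2$ immediately after its last visit to $A_2$ during excursion $j-1$, the conditional law of $X_{V_{j-1}}$ is $\overline{P}_z[X_{t_\star} = \cdot \mid H_{A_2} > t_\star]$, which by Proposition~\ref{prop:ClosenessToQSD} lies within $e^{-c\log^2 N}$ of $\sigma$ in total variation, uniformly in $z$. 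Since total variation contracts under Markov kernels, the conditional law of $X_{R_j}$ is automatically within the same bound of $\overline{P}_\sigma[X_{H_{A_1}} = \cdot]$---no reference to $\widetilde{e}_{A_1}$, no polynomial errors, and the ``bookkeeping obstacle'' you anticipate simply does not arise.

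The paper's implementation is slightly cleaner than your sketch in that it couples one level earlier, at the restart points rather than the entrance points. Concretely, it tracks the last exit point $x_j^+ = Z_{M_j+1} \in \partial A_2$ from each excursion, and at each step couples a point $x_{j+1}$ with law $\overline{P}_{x_j^+}[X_{t_\star} = \cdot \mid H_{A_2} > t_\star]$ (the true restart law) against a point $\widetilde{x}_{j+1} \sim \sigma$, via the optimal total-variation coupling $p_{x_j^+}$ furnished by~\eqref{eq:CouplingTV}. On success it then samples the entire next excursion---the occupation-time field $(L^1_x)_{x \in A_2}$ together with the next exit point $x_{j+1}^+$---from $\overline{P}_{x_{j+1}}$, which has the correct marginal for both sides simultaneously. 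This places the coupling exactly where Proposition~\ref{prop:ClosenessToQSD} applies, and because all state spaces involved are finite the construction is explicit; the measurable-selection issue you flag does not arise.
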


\begin{proof}
Let $z \in U^N \setminus A_2$. By~\eqref{eq:CouplingTV} and Proposition~\ref{prop:ClosenessToQSD},
there exists a coupling $\mathcal{Q}_z$ of two random variables $\Gamma$ and $\Sigma$ such that under $\mathcal{Q}_z$, $\Gamma$ and $\Sigma$ have the laws of $X_{t_\ast}$ under $\overline{P}_z[\,  \cdot\, | H_{A_2} > t_{\ast}]$ and $\sigma$ on $U^N \setminus A_2$, respectively, and 
\begin{equation}
\label{eq:TypicalCase}
\max_{z \in U^N \setminus A_2} \mathcal{Q}_z[\Gamma \neq \Sigma] \leq \exp\left(-c \log^2 N\right).
\end{equation}
We then introduce the index $M$ of the last step of a path in $A_2$ before time $V$, which is given by
\begin{equation}
M = \sup\bigg\{ n \geq 0 \, : \, \sum_{\ell = 0}^{n-1} \zeta_\ell \leq V \text{ and }Z_n \in A_2 \bigg\}
\end{equation}
(see above~\eqref{eq:DefOfCTSRWalk} for the definitions of $\zeta_\ell$ and $Z_n$). For the long excursions of the path, we define the subsequent indices of the corresponding ``last steps'' of the $j$-th long excursion in $A_2$ as 
\begin{equation}
M_j = M \circ \vartheta_{R_j} + n_j, \qquad \text{where }\sum_{\ell = 0}^{n_j -1} \zeta_\ell = R_j, \ j \geq 1.
\end{equation}
We choose $x_1^+ \in \partial A_2$ with the law $\overline{P}_y[Z_{M_1 +1} = \cdot]$. For $j \geq 1$, once $x_j^+$ is chosen, we choose points $x_{j+1}$ and $\widetilde{x}_{j+1}$ in $U^N \setminus A_2$ according to the joint law of $(\Gamma,\Sigma)$ under $\mathcal{Q}_{x_j^+}$, i.e.~according to $\mathcal{Q}_{x_j^+}[\Gamma = \cdot, \Sigma = \cdot ]$. 
The proof now differs from that of~\cite[Proposition 5.2]{li2017lower}. If $x_{j+1}$ and $\widetilde{x}_{j+1}$ coincide (which is typical due to~\eqref{eq:TypicalCase}), we choose $\mathcal{J}_{j+1} = \widetilde{\mathcal{J}}_{j+1} \in [0,\infty)^{A_2}$ as well as $x_{j+1}^+ = \widetilde{x}_{j+1}^+ \in \partial A_2$ according to $\overline{P}_{x_{j+1}}[(L^1_x)_{x \in A_2}  \in \cdot, Z_{M_1+1} = \cdot]$ (this is a probability measure on the space $[0,\infty)^{A_2} \times \partial A_2$, equipped with the $\sigma$-algebra $\mathcal{B}([0,\infty))^{\otimes {A_2}} \otimes 2^{\partial A_2}$). Otherwise, namely if  $x_{j+1}$ and $\widetilde{x}_{j+1}$ differ (which is the atypical case, and can be interpreted as the coupling failing at step $j+1$), we choose $(\mathcal{J}_{j+1},x_{j+1}^+)$ and $(\widetilde{\mathcal{J}}_{j+1},\widetilde{x}_{j+1}^+)$ independently from each other, and according to the laws $\overline{P}_{x_{i+1}}[(L^1_x)_{x \in A_2}  \in \cdot, Z_{M_1+1} = \cdot]$ and $\overline{P}_{\widetilde{x}_{i+1}}[(L^1_x)_{x \in A_2}  \in \cdot, Z_{M_1+1} = \cdot]$, respectively. Formally, we can write $Q_1$ as in the proof of~\cite[Lemma 4.2]{teixeira2011fragmentation} (setting $\boldsymbol{x} = (x_2,x_2^+,\ldots,x_J,x_J^+)$ and $\widetilde{\boldsymbol{x}} = (\widetilde{x}_2,\widetilde{x}_2^+,\ldots,\widetilde{x}_J,\widetilde{x}_J^+) \in ((U^N\setminus A_2) \times \partial A_2)^{J-1}$), defining further two $((U^N\setminus A_2) \times \partial A_2)^{J-1}$-valued random variables $\mathcal{X}$ and $\widetilde{\mathcal{X}}$ as
\begin{equation}
\begin{split}
& Q_1\left[ \mathcal{J}_j \in U_j,\widetilde{\mathcal{J}}_j \in \widetilde{U}_j, \text{for } 2 \leq j \leq J, \mathcal{X} = \boldsymbol{x},\widetilde{\mathcal{X}} = \widetilde{\boldsymbol{x}} \right] \\
& = \sum_{x_1^+ \in \partial A_2} \overline{P}_y[Z_{M_1+1} = x_1^+] \prod_{j = 1}^{J-1} \bigg( \mathcal{Q}_{x_{j}^+}[\Gamma = x_{j+1},\Sigma  = \widetilde{x}_{j+1}] \\
& \qquad \times \Big(\mathbbm{1}_{\{x_{j+1} = \widetilde{x}_{j+1} \}} \overline{P}_{x_{j+1}}\left[(L_x^1)_{x \in A_2} \in U_{j+1}, Z_{M_1+1} = x_{j+1}^+ \right] \mathbbm{1}_{\{x_{j+1}^+ = \widetilde{x}_{j+1}^+, \mathcal{J}_{j+1} = \widetilde{\mathcal{J}}_{j+1} \}} \\
& \qquad + \mathbbm{1}_{\{x_{j+1} \neq \widetilde{x}_{j+1} \}} \overline{P}_{x_{j+1}}\left[(L_x^1)_{x \in A_2} \in U_{j+1}, Z_{M_1+1} = x_{j+1}^+ \right] \\
& \qquad \times \overline{P}_{\widetilde{x}_{j+1}}\left[(L_x^1)_{x \in A_2} \in \widetilde{U}_{j+1}, Z_{M_1+1} = \widetilde{x}_{j+1}^+ \right] \Big) \bigg),
\end{split}
\end{equation}
for any $U_2 \times\widetilde{U}_2\times\ldots\times U_J\times\widetilde{U}_J \in (\mathcal{B}([0,\infty))^{\otimes A_2})^{\otimes 2(J-1)}$. The fields $(K_x)_{x \in A_2}$ and $(\widetilde{K}_x)_{x \in A_2}$ are then defined as 
\begin{equation}
K_x = \sum_{j = 2}^J (\mathcal{J}_j)_x, \qquad \widetilde{K}_x = \sum_{j = 2}^J (\widetilde{\mathcal{J}}_j)_x, \qquad x \in A_2.
\end{equation}
One can then check inductively (similarly as in the proof of~\cite[Lemma 4.2]{teixeira2011fragmentation}) that $Q_1$ defines a coupling as required such that $(K_x)_{x \in A_2}$ and $(\widetilde{K}_x)_{x \in A_2}$ have the required laws, and that the probability that the coupling does not succeed can be bounded above as
\begin{equation}
\begin{split}
Q_1\left[(K_x)_{x \in A_2} \neq (\widetilde{K}_x)_{x \in A_2} \right] & \leq (J-1) \max_{x \in U^N \setminus A_2}\mathcal{Q}_x[\Gamma \neq \Sigma] \\
& \hspace{-0.85cm} \stackrel{\eqref{eq:capacityControlsBox},\eqref{eq:Phi_N_TechnicalProp},\eqref{eq:JDef}}{\leq} cN^{d-2} e^{-c\log^2N} \leq e^{-c'\log^2N},
\end{split}
\end{equation}
giving us the claim.
\end{proof}
As a next crucial step, to continue with the series of couplings, we aim at replacing the trajectories of (independent) excursions of tilted walks started from $\sigma$ by (independent) excursions of \textit{simple} random walk trajectories started from $\widetilde{e}_{A_1}$. It will be convenient to only consider the part of any given excursion before the first exit time of $A_2$ subsequent to the entrance into $A_1$. To this end, we define 
\begin{equation}
\label{eq:W-definition}
W = H_{A_1} + T_{A_2} \circ \vartheta_{H_{A_1}},
\end{equation}
namely the first exit time of the walk from the box $A_2$ after entering $A_1$, as well as
\begin{equation}
\label{eq:kappa_1Def}
\kappa_1 = \text{ the law of the stopped process $X_{(H_{A_1} + \, \cdot \,) \wedge W}$ under $\overline{P}_\sigma$}
\end{equation}
on the space $\Gamma(U^N)$, that is the aforementioned law of an ``(short) excursion of the tilted walk'' started from $\sigma$ (and recorded \textit{after} the first entrance time into $A_1$), and 
\begin{equation}
\label{eq:kappa_2Def}
\kappa_2 = \text{ the law of the stopped process $X_{\, \cdot \, \wedge T_{A_2}}$ under $P_{\widetilde{e}_{A_1}}$},
\end{equation}
on the space $\Gamma(U^N)$, which is the law of a ``(short) excursion of the simple random walk'' started from $\widetilde{e}_{A_1}$ (recall~\eqref{eq:normalized_eqmeasureDef}). Instead of directly comparing $\kappa_1$ and $\kappa_2$, we will devise below a domination in terms a modified version of $\kappa_2$ that assigns zero mass to excursions leaving $A_2$ only after an excessive time. For $\alpha \in (r_2,1)$, we define the measures on $\Gamma(U^N)$ 
\begin{equation}
\label{eq:DifferentKappaDef}
\begin{split}
\widetilde{\kappa}_j & = \kappa_j[ \, \cdot \, |  \mathcal{E}_\alpha], \qquad j \in \{1,2\}, \\
\overline{\kappa}_2 & = \kappa_2[\, \cdot \, \cap \mathcal{E}_\alpha]
\end{split}
\end{equation}
(recall the definition of the event $\mathcal{E}_\alpha$ in~\eqref{eq:NotTooLargeExit}). Note that $\overline{\kappa}_2$ is not a probability measure. The next proposition shows that $\kappa_1$ almost dominates $\overline{\kappa}_2$, and will make crucial use of Lemma~\ref{lem:HKLemma}.
\begin{prop}
\label{prop:Radon-Nikodym}
For large enough $N$, one has for any measurable $S \subseteq \Gamma(U^N)$ that
\begin{equation}
 \kappa_1[S] \geq \left( 1 - \frac{1}{N^c}  \right) \overline{\kappa}_2[S].
\end{equation}
\end{prop}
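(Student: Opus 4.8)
The plan is to prove the asserted domination of measures by testing it against an arbitrary measurable set $\mathcal{C} \subseteq \Gamma(U^N)$, i.e.\ to verify $\kappa_1[\mathcal{C}] \geq (1 - N^{-c})\,\overline{\kappa}_2[\mathcal{C}]$ for all large $N$; this amounts to bounding from below the Radon--Nikodym-type ratio of $\kappa_1$ against $\overline{\kappa}_2$. First I would decompose both sides according to the entrance point into $A_1$. Since $\sigma$ is supported on $U^N \setminus A_2$ and the confined walk is a recurrent irreducible chain on the finite connected set $U^N$, under $\overline{P}_\sigma$ one has $H_{A_1} < \infty$ a.s.\ with $X_{H_{A_1}} \in \partial_{\mathrm{int}}A_1$; applying the strong Markov property at $H_{A_1}$ and noting that, conditionally on $\{X_{H_{A_1}} = x\}$, the stopped process $X_{(H_{A_1}+\,\cdot\,)\wedge W}$ (with $W = H_{A_1} + T_{A_2}\circ\vartheta_{H_{A_1}}$, cf.\ \eqref{eq:kappa_1Def}) has the law of $X_{\,\cdot\,\wedge T_{A_2}}$ under $\overline{P}_x$, one gets $\kappa_1[\mathcal{C}] = \sum_{x\in\partial_{\mathrm{int}}A_1} \overline{P}_\sigma[X_{H_{A_1}} = x]\,\overline{P}_x[X_{\,\cdot\,\wedge T_{A_2}}\in\mathcal{C}]$. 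On the other hand, since $\widetilde{e}_{A_1}$ is supported on $\partial_{\mathrm{int}}A_1$, the definitions \eqref{eq:kappa_2Def}--\eqref{eq:DifferentKappaDef} give $\overline{\kappa}_2[\mathcal{C}] = \sum_{x\in\partial_{\mathrm{int}}A_1}\widetilde{e}_{A_1}(x)\,P_x[\{X_{\,\cdot\,\wedge T_{A_2}}\in\mathcal{C}\}\cap\mathcal{E}]$. I would record here that $\mathcal{E} = \{T_{A_2}\leq N^{2\alpha}\}$ is a function of the stopped trajectory $X_{\,\cdot\,\wedge T_{A_2}}$, so that both $A := \{X_{\,\cdot\,\wedge T_{A_2}}\in\mathcal{C}\}$ and $A\cap\mathcal{E}$ lie in the class of events to which \eqref{eq:RNAbsBound} and \eqref{eq:ExitTimeDeviation} apply.

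The next step is a pointwise comparison, over $x\in A_1$, of the killed excursion laws: I would show $\overline{P}_x[A\cap\mathcal{E}] \geq (1 - N^{-c})\,P_x[A\cap\mathcal{E}]$ for $N$ large. When $P_x[A\cap\mathcal{E}] = 0$ this is trivial; otherwise I write $\overline{P}_x[A\cap\mathcal{E}] = \overline{P}_x[A\mid\mathcal{E}]\,\overline{P}_x[\mathcal{E}]$, invoke the multiplicative estimate \eqref{eq:RNAbsBound} to get $\overline{P}_x[A\mid\mathcal{E}] \geq (1 - c_7(\alpha)N^{-(1-r_2)})\,P_x[A\mid\mathcal{E}]$, and use \eqref{eq:ConfinedWalk_coincides} together with Lemma~\ref{lem:HKLemma} to obtain $\overline{P}_x[\mathcal{E}] = P^{\overline{\mu}}_x[\mathcal{E}] \geq 1 - C_4\exp(-C_5 N^{c_6})$, whence $\overline{P}_x[\mathcal{E}]/P_x[\mathcal{E}] \geq \overline{P}_x[\mathcal{E}]$ since $P_x[\mathcal{E}]\leq 1$. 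Multiplying these inequalities yields $\overline{P}_x[A\cap\mathcal{E}] \geq (1 - c_7(\alpha)N^{-(1-r_2)})(1 - C_4\exp(-C_5 N^{c_6}))\,P_x[A\cap\mathcal{E}]$.

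Finally I would assemble the pieces. Bounding $\overline{P}_x[X_{\,\cdot\,\wedge T_{A_2}}\in\mathcal{C}] \geq \overline{P}_x[A\cap\mathcal{E}]$ and substituting the pointwise comparison into the formula for $\kappa_1[\mathcal{C}]$ gives $\kappa_1[\mathcal{C}] \geq (1 - N^{-c})\sum_{x\in\partial_{\mathrm{int}}A_1}\overline{P}_\sigma[X_{H_{A_1}} = x]\,P_x[\{X_{\,\cdot\,\wedge T_{A_2}}\in\mathcal{C}\}\cap\mathcal{E}]$. Then Proposition~\ref{prop:AnalogProp4.7}, which gives $\overline{P}_\sigma[X_{H_{A_1}}=x] \geq (1-N^{-c})\widetilde{e}_{A_1}(x)$ on $\partial_{\mathrm{int}}A_1$, lets me replace the weights $\overline{P}_\sigma[X_{H_{A_1}}=x]$ by $\widetilde{e}_{A_1}(x)$ at the cost of a further factor $(1-N^{-c})$, reconstituting $\overline{\kappa}_2[\mathcal{C}]$; collecting the three polynomially small losses (the dominant being $N^{-(1-r_2)}$, with $r_2<\tfrac{1}{4}$) into a single $N^{-c}$ for $N$ large completes the argument.

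I do not expect a genuine obstacle here, since the estimate rests directly on Propositions~\ref{prop:ClosenessToQSD},~\ref{prop:AnalogProp4.7},~\ref{prop:ComparisonEqmeasure} and Lemma~\ref{lem:HKLemma} established above; the part requiring the most care is the bookkeeping around the truncation event $\mathcal{E}$. One genuinely needs the truncated (sub-probability) measure $\overline{\kappa}_2$ rather than $\kappa_2$: the comparison \eqref{eq:RNAbsBound} between the tilted and simple walks, and the heat-kernel control on $T_{A_2}$, are only available on $\mathcal{E}$, and indeed $\kappa_1$ cannot be expected to dominate excursions that linger in $A_2$ for an excessive time. Keeping careful track of which estimate is conditional on $\mathcal{E}$ and which holds unconditionally — so that $\overline{P}_x[A\cap\mathcal{E}]$ is correctly reassembled as $\overline{P}_x[A\mid\mathcal{E}]\,\overline{P}_x[\mathcal{E}]$ — and checking the measurability of $\mathcal{E}$ with respect to the stopped path, is the main point of rigor.
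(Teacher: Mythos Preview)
Your proposal is correct and follows essentially the same approach as the paper's proof: both rely on Proposition~\ref{prop:AnalogProp4.7} to compare the entrance distribution $\overline{P}_\sigma[X_{H_{A_1}}=\cdot]$ with $\widetilde{e}_{A_1}$, on the estimate~\eqref{eq:RNAbsBound} to compare the conditional laws $\overline{P}_x[\,\cdot\mid\mathcal{E}]$ and $P_x[\,\cdot\mid\mathcal{E}]$, and on Lemma~\ref{lem:HKLemma} together with~\eqref{eq:ConfinedWalk_coincides} to control $\overline{P}_x[\mathcal{E}^c]$. The only organizational difference is that the paper packages the comparison via an auxiliary measure $\gamma$ (the law of $X_{\,\cdot\,\wedge T_{A_2}}$ under $\overline{P}_{\widetilde{e}_{A_1}}$) and chains $\kappa_1\to\gamma\to\widetilde{\gamma}\to\widetilde{\kappa}_2\to\overline{\kappa}_2$, whereas you do the same chain pointwise over entrance points $x\in\partial_{\mathrm{int}}A_1$; the substance is identical.
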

\begin{proof}
We consider the following auxiliary  measures on the space of trajectories $\Gamma(U^N)$
\begin{equation}
\begin{split}
\gamma & = \text{ the law of the stopped process $X_{\, \cdot \, \wedge T_{A_2}}$ under $\overline{P}_{\widetilde{e}_{A_1}}$}, \\
\widetilde{\gamma} & = \gamma[ \, \cdot \, |  \mathcal{E}_\alpha].
\end{split}
\end{equation}
Importantly, the trajectories under $\gamma$ have the same starting distribution $\widetilde{e}_{A_1}$ as under $\kappa_2$ but a different generator corresponding to the confined walk.
 By Lemma~\ref{lem:HKLemma}, we have for any measurable $S \subseteq \Gamma(U^N)$ depending on $X_{\cdot \wedge T_{A_2}}$,
\begin{equation}
\begin{split}
\label{eq:GammaBoundedBelow}
\gamma[S] & \geq \widetilde{\gamma}[S] \cdot \gamma[\mathcal{E}_\alpha] = \widetilde{\gamma}[S] \left(1 - \sum_{x \in \partial_{\mathrm{int}}A_1} \widetilde{e}_{A_1}(x)\overline{P}_x[\mathcal{E}_\alpha^c] \right)  \stackrel{\eqref{eq:ExitTimeDeviation}}{\geq} \widetilde{\gamma}[S]\left(1-C_4 \exp\Big(-C_5 N^{c_6} \Big) \right).
\end{split}
\end{equation}
We will now compare the measures $\widetilde{\gamma}$ and $\widetilde{\kappa}_2$. To that end, note that
\begin{equation}
\label{eq:BoundGammaByKappa_2}
\begin{split}
\widetilde{\gamma}[S] & = \sum_{x \in A_1 } \widetilde{e}_{A_1}(x) \overline{P}_x[S | \mathcal{E}_\alpha] \stackrel{\eqref{eq:RNAbsBound}}{\geq} \sum_{x \in A_1}  \widetilde{e}_{A_1}(x)  \left(1 - \frac{c_7(\alpha)}{N^{1 - r_2}} \right) P_x[S | \mathcal{E}_\alpha]\\
& = \left(1 - \frac{c_7(\alpha)}{N^{1 - r_2}} \right)\widetilde{\kappa}_2[S].
\end{split} 
\end{equation}
We can directly compare the measures $\gamma$ and $\kappa_1$, as we now explain. Indeed, by the strong Markov property applied at time $H_{A_1}$, we have that
\begin{equation}
\begin{split}
\frac{\mathrm{d} \kappa_1}{\mathrm{d}\gamma}  = \Phi(X_0),\qquad\text{where }\Phi(x) & = \frac{\overline{P}_\sigma[X_{H_{A_1}} = x]}{\widetilde{e}_{A_1}(x)} \mathbbm{1}_{\partial_{\mathrm{int}}A_1}(x).
\end{split} 
\end{equation}
By Proposition~\ref{prop:AnalogProp4.7}, we find that, for large $N$,
\begin{equation}
\frac{\mathrm{d} (\kappa_1 - \gamma)}{\mathrm{d}\gamma} = \Phi(X_0) - 1 \geq -\frac{1}{N^c}.
\end{equation}
We therefore have that for $S \subseteq \Gamma(U^N)$ measurable depending on $X_{\cdot \wedge T_{A_2}}$, that for large $N$,
\begin{equation}
\begin{split}
\kappa_1[S] & \geq \left(1 - \frac{1}{N^c}\right) \gamma[S]  \stackrel{\eqref{eq:GammaBoundedBelow}}{\geq}\left(1 - \frac{1}{N^c}\right) \left(1-C_4 \exp\Big(-C_5 N^{c_6} \Big) \right)\widetilde{\gamma}[S] \\
& \stackrel{\eqref{eq:BoundGammaByKappa_2}}{\geq} \left(1 - \frac{1}{N^{c'}}\right) \widetilde{\kappa}_2[S],
\end{split}
\end{equation}
for some appropriate $c' > 0$. Finally, note that
$$
\widetilde{\kappa}_2[S] = \frac{\kappa_2[S \cap \mathcal{E}_\alpha]}{\kappa_2[\mathcal{E}_\alpha]} \stackrel{\kappa_2[\mathcal{E}_\alpha] \leq 1}{\geq} \kappa_2[S \cap \mathcal{E}_\alpha] =  \overline{{\kappa}}_2[S],
$$
and the claim follows.
\end{proof}
We will now use the results of Propositions~\ref{prop:CouplingIndependentStartedfromSigme}
and~\ref{prop:Radon-Nikodym} to produce  further couplings to ultimately (in Theorem~\ref{thm:CrucialCoupling}) compare the occupation-time field in $A_2$ of the confined walk with the one produced by a Poisson point process of simple random walk trajectories started from $\widetilde{e}_{A_1}$. To that end, consider an auxiliary probability space $(\widetilde{\mathcal{O}},\widetilde{\mathcal{F}},\widetilde{Q})$, on which two independent Poisson point processes $\eta_1, \eta_2$ with values in $\Gamma(U^N)$ are defined such that 
\begin{equation}
\label{eq:IntensityMeasuresOfPPP}
\begin{split}
&\text{the intensity measure of $\eta_1$ is given by $(1 + \tfrac{\varepsilon}{3}) \varphi_N^2(x_0)\capa(A_1)\kappa_1$}, \\
&\text{the intensity measure of $\eta_2$ is given by $(1 + \tfrac{\varepsilon}{4}) \varphi_N^2(x_0)\capa(A_1)\kappa_2$}
\end{split}
\end{equation}
  (see~\eqref{eq:kappa_1Def} and~\eqref{eq:kappa_2Def} for the definitions of $\kappa_1$ and $\kappa_2$, respectively). We define the random fields $(L^{\eta_j}_x)_{x  \in A_2}$, $j \in \{1,2\}$, as
\begin{equation}
\label{eq:LetaDef}
L^{\eta_j}_x = \text{ the total time spent in $x$ by trajectories in $\eta_j$}, \qquad j \in \{1,2\}.
\end{equation}
In the next proposition, we provide a Poissonization of ``shortened versions'' of the $J-1$ independent trajectories obtained from the previous Proposition~\ref{prop:CouplingIndependentStartedfromSigme}.
\begin{prop}
\label{prop:PoissonizationCoupling}
For large $N$, there exists a probability space $(\mathcal{O}_2,\mathcal{F}_2,Q_2)$ on which we can define two random fields $(K'_x)_{x \in A_2}$ and $(K''_x)_{x \in A_2}$, with values in $[0,\infty)^{A_2}$, such that $(K_x')_{x \in A_2}$ has the same law as $\left(\sum_{j = 2}^J \overline{L}^j_x \right)_{x \in A_2}$, where for $j \geq 2$, the fields $(\overline{L}^j_x)_{x \in A_2}$ are distributed as i.i.d.~copies of
\begin{equation}
\left(\int_{H_{A_1}}^{W} \mathbbm{1}_{\{ Y_t = x\}}\mathrm{d}t \right)_{x \in A_2},
\end{equation}
with $Y$ having the same law as $X$ under $\overline{P}_\sigma$, and $(K_x'')_{x \in A_2}$ has the same law as $(L^{\eta_1}_x)_{x \in A_2}$ in~\eqref{eq:LetaDef} under $\widetilde{Q}$, and
\begin{equation}
\label{eq:SecondCoupling}
Q_2\Big[(K'_x)_{x \in A_2} \geq (K''_x)_{x \in A_2} \Big] \geq 1 - e^{-N^c}.
\end{equation}
\end{prop}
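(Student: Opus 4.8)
The plan is to use that the number $J-1$ of independent excursions appearing on the left‑hand side is, by the very definition \eqref{eq:JDef}, governed by the constant $1+\tfrac{\varepsilon}{2}$, which is strictly larger than the constant $1+\tfrac{\varepsilon}{3}$ governing the intensity of $\eta_1$ in \eqref{eq:IntensityMeasuresOfPPP}; this slack makes the proposition a soft consequence of Poisson concentration together with the standard superposition description of a Poisson point process. Set $\lambda_N \ldef (1+\tfrac{\varepsilon}{3})\varphi_N^2(x_0)\capa(A_1)$, so that, since $\kappa_1$ in \eqref{eq:kappa_1Def} is a probability measure (the confined walk on the finite connected set $U^N$ is irreducible, hence $W<\infty$ $\overline{P}_\sigma$‑a.s.), the point process $\eta_1$ has intensity $\lambda_N\kappa_1$ and can be realized as $\{W^1,\dots,W^{N'}\}$ with $N'\sim\mathrm{Pois}(\lambda_N)$ and, independently of $N'$, the $(W^i)_{i\ge1}$ i.i.d.\ with law $\kappa_1$. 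First I would record the bookkeeping identity that the total occupation time in $A_2$ of a single $\kappa_1$‑trajectory has the law of $(\overline{L}^2_x)_{x\in A_2}$ from the statement: a $\kappa_1$‑trajectory agrees with $(X_{H_{A_1}+s})_{s\ge0}$ up to time $W-H_{A_1}$ and is then frozen at $X_W\notin A_2$, so for $x\in A_2$ its occupation at $x$ equals $\int_{H_{A_1}}^{W}\IND_{\{X_t=x\}}\,\mathrm{d}t$. Consequently, conditionally on $\{N'=n\}$, the field $(L^{\eta_1}_x)_{x\in A_2}$ of \eqref{eq:LetaDef} is distributed as $\big(\sum_{j=2}^{n+1}\overline{L}^j_x\big)_{x\in A_2}$ with the $\overline{L}^j$ i.i.d.\ copies as in the statement.

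Given this, I would build $(\cO_2,\cF_2,Q_2)$ by taking a single i.i.d.\ sequence $\big((\overline{L}^j_x)_{x\in A_2}\big)_{j\ge2}$ together with an independent $N'\sim\mathrm{Pois}(\lambda_N)$, and by setting
\[
K'_x=\sum_{j=2}^{J}\overline{L}^j_x,\qquad K''_x=\sum_{j=2}^{N'+1}\overline{L}^j_x,\qquad x\in A_2.
\]
By the previous paragraph $(K'_x)_{x\in A_2}$ has the law claimed for it, and $(K''_x)_{x\in A_2}$ has the law of $(L^{\eta_1}_x)_{x\in A_2}$ under $\widetilde{Q}$. Since each $\overline{L}^j_x\ge0$, on the event $\{N'\le J-1\}$ we have $K''_x\le K'_x$ for every $x\in A_2$, and therefore
\[
Q_2\big[(K'_x)_{x\in A_2}\ge(K''_x)_{x\in A_2}\big]\ge 1-P\big[\mathrm{Pois}(\lambda_N)\ge J\big].
\]

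It then only remains to bound this Poisson tail. Because $\varphi\in C^\infty(\overline{B_\cR})$ is strictly positive on $B_\cR\supseteq D^\delta$ (Proposition~\ref{prop:quasi-minimizer}, i)), one has $\varphi_N(x_0)=\varphi(x_0/N)\ge\min_{D^\delta}\varphi>0$ uniformly over $x_0\in D^\delta_N$, and combined with \eqref{eq:BoxCapacity} this yields $cN^{r_1(d-2)}\le\varphi_N^2(x_0)\capa(A_1)\le CN^{r_1(d-2)}$, in particular $\varphi_N^2(x_0)\capa(A_1)\to\infty$. By \eqref{eq:JDef}, $J\ge(1+\tfrac{\varepsilon}{2})\varphi_N^2(x_0)\capa(A_1)-1$, so for $N$ large $J/\lambda_N\ge\rho$ for some $\rho=\rho(\varepsilon)>1$. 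The standard Chernoff bound $P[\mathrm{Pois}(\lambda)\ge k]\le\exp\!\big(-\lambda(\tfrac{k}{\lambda}\log\tfrac{k}{\lambda}-\tfrac{k}{\lambda}+1)\big)$ for $k\ge\lambda$ then gives $P[\mathrm{Pois}(\lambda_N)\ge J]\le\exp(-c(\varepsilon)\lambda_N)\le e^{-N^{c}}$ for a suitable $c>0$ and $N$ large, which is \eqref{eq:SecondCoupling}. I expect no genuine analytic obstacle here; the only points requiring mild care are the identification in the first paragraph of the occupation fields of the frozen $\kappa_1$‑excursions with the $\overline{L}^j$, and the elementary fact that a Poisson process with finite intensity measure is the superposition of a $\mathrm{Pois}(\lambda_N)$ number of i.i.d.\ points — the substance of the statement being entirely in the choice of constants $1+\tfrac{\varepsilon}{2}$ versus $1+\tfrac{\varepsilon}{3}$ in \eqref{eq:JDef} and \eqref{eq:IntensityMeasuresOfPPP}.
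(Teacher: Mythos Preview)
Your proof is correct and follows essentially the same approach as the paper: you couple both fields through a single i.i.d.\ sequence of excursion occupation times, using a Poisson$(\lambda_N)$ number of them for $K''$ and the deterministic $J-1$ for $K'$, and then invoke Poisson concentration to compare $J-1$ with the Poisson count. The paper's argument is identical but more terse, merely writing ``by a standard estimate on concentration of Poisson random variables'' where you spell out the Chernoff bound and the uniform lower bound on $\varphi_N(x_0)$.
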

\begin{proof}
Let $\xi \sim Poisson((1+\frac{\varepsilon}{3})\varphi_N^2(x_0) \capa(A_1))$ and let $(\widehat{X}^{(j)} )_{j \in \mathbb{N}}$ be a sequence of i.i.d.~confined walks with the same law as $X$ under $\overline{P}_\sigma$, independent of $\xi$. We set 
\begin{equation}
\widehat{L}^j_x = \text{ the total time spent in $x$ by }\widehat{X}_{[H_{A_1},W)}^{(j)}, j \geq 2.
\end{equation}
We then define
\begin{equation}
K'_x = \sum_{j = 2}^{J} \widehat{L}^j_x, \qquad K''_x = \sum_{j = 2}^{\xi + 1} \widehat{L}^j_x, \qquad x \in A_2,
\end{equation}
which clearly have the respective required laws. By a standard estimate on concentration of Poisson random variables, we see that
\begin{equation}
Q_2\Big[(K'_x)_{x \in A_2} \geq (K''_x)_{x \in A_2} \Big] \geq Q_2[J \geq \xi + 1] \geq 1 - e^{-N^c},
\end{equation}
and the claim follows.
\end{proof}

In the following theorem, we will develop the pivotal last coupling needed, which compares the occupation time fields $(L^{\eta_1}_x)_{x \in A_2}$ and $(L^{\eta_2}_x)_{x \in A_2}$, where we make use of Proposition~\ref{prop:Radon-Nikodym}.

\begin{theorem}
\label{thm:CrucialCoupling}
For large $N$, there exists a probability space $(\mathcal{O}_3,\mathcal{F}_3,Q_3)$ on which we can define two random fields $(M_x)_{x \in A_2}$ and $(\widetilde{M}_x)_{x \in A_2}$ with values in $[0,\infty)^{A_2}$ such that $(M_x)_{x \in A_2}$ has the same law as the field $(L^{\eta_1}_x)_{x \in A_2}$ under $\widetilde{Q}$ and $(\widetilde{M}_x)_{x \in A_2}$ has the same law as $(L^{\eta_2}_x)_{x \in A_2}$ under $\widetilde{Q}$, such that
\begin{equation}
\label{eq:ThirdCoupling}
Q_3\Big[(M_x)_{x \in A_2} \geq (\widetilde{M}_x)_{x \in A_2} \Big] \geq 1 - e^{-N^c}.
\end{equation}
\end{theorem}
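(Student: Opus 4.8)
The plan is to deduce Theorem~\ref{thm:CrucialCoupling} from Proposition~\ref{prop:Radon-Nikodym} together with two elementary facts about Poisson point processes: superposition (the sum of independent Poisson processes is Poisson with the summed intensity) and the monotone coupling (if $\mu \geq \nu$ as $\sigma$-finite measures, a Poisson process of intensity $\mu$ can be realized as the superposition of an independent pair of Poisson processes with intensities $\nu$ and $\mu-\nu$, so in particular it contains, as a sub-point-measure, a Poisson process of intensity $\nu$). Throughout, occupation times are additive in the trajectory cloud, so a domination of clouds as point measures forces the corresponding domination of occupation-time fields.

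First I would isolate the part of $\eta_2$ coming from ``slow'' excursions. Writing $\kappa_2 = \overline{\kappa}_2 + \kappa_2[\,\cdot\,\cap\mathcal{E}^c]$ (recall \eqref{eq:DifferentKappaDef}), superposition lets me realize $\eta_2$ as $\eta_2^{\mathcal{E}} + \eta_2^{\mathcal{E}^c}$, where $\eta_2^{\mathcal{E}}, \eta_2^{\mathcal{E}^c}$ are independent Poisson processes with intensities $(1+\tfrac{\varepsilon}{4})\varphi_N^2(x_0)\capa(A_1)\overline{\kappa}_2$ and $(1+\tfrac{\varepsilon}{4})\varphi_N^2(x_0)\capa(A_1)\kappa_2[\,\cdot\,\cap\mathcal{E}^c]$ respectively, so that $L^{\eta_2}_x = L^{\eta_2^{\mathcal{E}}}_x + L^{\eta_2^{\mathcal{E}^c}}_x$. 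The total mass of the second intensity is $(1+\tfrac{\varepsilon}{4})\varphi_N^2(x_0)\capa(A_1)\,\kappa_2[\mathcal{E}^c]$; by \eqref{eq:Phi_N_TechnicalProp} and \eqref{eq:capacityControlsBox} it is at most $CN^{r_1(d-2)}$ times $\kappa_2[\mathcal{E}^c] = P_{\widetilde{e}_{A_1}}[\mathcal{E}^c] \leq C_4'\exp(-C_5' N^{c_6'})$ (Lemma~\ref{lem:HKLemma}), hence at most $e^{-N^{c}}$ for large $N$. Consequently $\eta_2^{\mathcal{E}^c}=\varnothing$ — and thus $L^{\eta_2}_x = L^{\eta_2^{\mathcal{E}}}_x$ for all $x$ — with probability at least $1 - e^{-N^{c}}$.

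Next I would compare $\eta_1$ with $\eta_2^{\mathcal{E}}$. By Proposition~\ref{prop:Radon-Nikodym}, $\kappa_1 \geq (1 - N^{-c})\overline{\kappa}_2$, so for $N$ large enough that $(1+\tfrac{\varepsilon}{3})(1 - N^{-c}) \geq 1 + \tfrac{\varepsilon}{4}$ one has, as measures on $\Gamma(U^N)$,
\[
(1+\tfrac{\varepsilon}{3})\varphi_N^2(x_0)\capa(A_1)\kappa_1 \;\geq\; (1+\tfrac{\varepsilon}{4})\varphi_N^2(x_0)\capa(A_1)\overline{\kappa}_2 .
\]
By the monotone coupling for Poisson processes, there is a probability space carrying $\widehat{\eta}_1$ with the law of $\eta_1$ under $\widetilde{Q}$ and $\widehat{\eta}_2^{\mathcal{E}}$ with the law of $\eta_2^{\mathcal{E}}$, such that $\widehat{\eta}_2^{\mathcal{E}} \leq \widehat{\eta}_1$ as point measures, whence $L^{\widehat{\eta}_2^{\mathcal{E}}}_x \leq L^{\widehat{\eta}_1}_x$ for every $x \in A_2$.

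Finally I would assemble $(\mathcal{O}_3, \mathcal{F}_3, Q_3)$ to carry jointly the pair $(\widehat{\eta}_1, \widehat{\eta}_2^{\mathcal{E}})$ from the previous step together with an independent Poisson process $\widehat{\eta}_2^{\mathcal{E}^c}$ of intensity $(1+\tfrac{\varepsilon}{4})\varphi_N^2(x_0)\capa(A_1)\kappa_2[\,\cdot\,\cap\mathcal{E}^c]$, and set $M_x = L^{\widehat{\eta}_1}_x$ and $\widetilde{M}_x = L^{\widehat{\eta}_2^{\mathcal{E}}}_x + L^{\widehat{\eta}_2^{\mathcal{E}^c}}_x$. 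By superposition $\widehat{\eta}_2^{\mathcal{E}} + \widehat{\eta}_2^{\mathcal{E}^c}$ is Poisson with intensity $(1+\tfrac{\varepsilon}{4})\varphi_N^2(x_0)\capa(A_1)\kappa_2$, so $(M_x)$ and $(\widetilde{M}_x)$ have the required marginal laws; on $\{\widehat{\eta}_2^{\mathcal{E}^c} = \varnothing\}$ we get $\widetilde{M}_x = L^{\widehat{\eta}_2^{\mathcal{E}}}_x \leq L^{\widehat{\eta}_1}_x = M_x$ for all $x \in A_2$, and since $Q_3[\widehat{\eta}_2^{\mathcal{E}^c} = \varnothing] \geq 1 - e^{-N^c}$ by the first step, \eqref{eq:ThirdCoupling} follows. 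The only mildly delicate point is the bookkeeping of the ``slow'' excursions in $\mathcal{E}^c$, which Proposition~\ref{prop:Radon-Nikodym} does not control (it only dominates the restriction $\overline{\kappa}_2$); these are discarded at a super-polynomially small cost via Lemma~\ref{lem:HKLemma}, and everything else is the standard superposition/thinning calculus for Poisson point processes.
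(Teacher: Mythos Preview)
Your proof is correct and follows essentially the same approach as the paper. The paper constructs three independent Poisson processes $\theta,\alpha,\beta$ with intensities $(1+\tfrac{\varepsilon}{4})\varphi_N^2(x_0)\capa(A_1)\overline{\kappa}_2$, $\varphi_N^2(x_0)\capa(A_1)\big((1+\tfrac{\varepsilon}{3})\kappa_1-(1+\tfrac{\varepsilon}{4})\overline{\kappa}_2\big)$, and $(1+\tfrac{\varepsilon}{4})\varphi_N^2(x_0)\capa(A_1)(\kappa_2-\overline{\kappa}_2)$, then sets $M_x=L^\theta_x+L^\alpha_x$ and $\widetilde{M}_x=L^\theta_x+L^\beta_x$; this is exactly your monotone coupling spelled out explicitly (your $\widehat{\eta}_2^{\mathcal{E}}$, $\widehat{\eta}_1-\widehat{\eta}_2^{\mathcal{E}}$, $\widehat{\eta}_2^{\mathcal{E}^c}$ are the paper's $\theta,\alpha,\beta$), and both arguments conclude by noting that $\beta$ (your $\widehat{\eta}_2^{\mathcal{E}^c}$) is empty with probability at least $1-e^{-N^c}$ via Lemma~\ref{lem:HKLemma}.
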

\begin{proof}
We consider for $N$ large enough a probability space $(\mathcal{O}_3,\mathcal{F}_3,Q_3)$ on which we can define three independent Poisson point processes $\theta$, $\beta_1$, and $\beta_2$ on $\Gamma(U^N)$ such that
\begin{equation}
\begin{split}
&\text{the intensity measure of $\theta$ is given by $(1 + \tfrac{\varepsilon}{4}) \varphi_N^2(x_0)\capa(A_1)\overline{\kappa}_2$}, \\
&\text{the intensity measure of $\beta_1$ is given by $ \varphi_N^2(x_0)\capa(A_1)((1 + \tfrac{\varepsilon}{3})\kappa_1 -  (1 + \tfrac{\varepsilon}{4})\overline{\kappa}_2)$}, \\
&\text{the intensity measure of $\beta_2$ is given by $(1 + \tfrac{\varepsilon}{4}) \varphi_N^2(x_0)\capa(A_1)(\kappa_2 - \overline{\kappa}_2)$}.
\end{split}
\end{equation}
Here, for $N$ large enough, Proposition~\ref{prop:Radon-Nikodym} guarantees that the signed measure $(1 + \tfrac{\varepsilon}{3})\kappa_1 -  (1 + \tfrac{\varepsilon}{4})\overline{\kappa}_2$ and hence also the intensity measure of $\beta_1$ is non-negative (the measure $\kappa_2 - \overline{\kappa}_2$ is non-negative by definition, see~\eqref{eq:DifferentKappaDef}). Denoting for $x \in A_2$,
\begin{equation}
\begin{split}
L^{\theta}_x & = \text{ the total time spent in $x$ by trajectories in $\theta$}, \\
L^{\beta_1}_x & = \text{ the total time spent in $x$ by trajectories in $\beta_1$}, \\
L^{\beta_2}_x & = \text{ the total time spent in $x$ by trajectories in $\beta_2$},
\end{split}
\end{equation}
we observe that the field $(M_x)_{x \in A_2}$ with $M_x \stackrel{\mathrm{def}}{=}L_{x}^\theta + L_x^{\beta_1}$ under $Q_3$ has the same law as $(L^{\eta_1}_x)_{x \in A_2}$ under $\widetilde{Q}$, whereas the field $(\widetilde{M}_x)_{x \in A_2}$ with $\widetilde{M}_x \stackrel{\mathrm{def}}{=} L_{x}^\theta + L_x^{\beta_2}$ under $Q_3$ has the same law as $(L^{\eta_2}_x)_{x \in A_2}$ under $\widetilde{Q}$, as required. \smallskip

Furthermore, note that we have (analogously to the calculation in~\eqref{eq:GammaBoundedBelow}), since $\alpha \in (r_2,1)$, that
\begin{equation}
\label{eq:ConvergenceTo0Ec}
 P_{\widetilde{e}_{A_1}}[\mathcal{E}_\alpha^c] \stackrel{\eqref{eq:ExitTimeDeviation}}{\leq}   C_4' \exp\Big(- C_5' N^{c_6'} \Big).
\end{equation}
Therefore, we obtain
\begin{equation}
\begin{split}
Q_3\Big[(M_x)_{x \in A_2} \geq (\widetilde{M}_x)_{x \in A_2} \Big]
&  \geq  Q_3[L_x^{\beta_2} = 0 \text{ for }x\in A_1] \geq Q_3[\beta_2(\Gamma(U^N)) = 0] \\
& = e^{- ( 1 + \varepsilon/4)\varphi_N^2(x_0)\capa(A_1)(\kappa_2[\Gamma(U^N)] - \overline{\kappa}_2[\Gamma(U^N)])} \\
& = e^{- ( 1 + \varepsilon/4)\varphi_N^2(x_0)\capa(A_1)P_{\widetilde{e}_{A_1}}[\mathcal{E}_\alpha^c] } \geq 1 - e^{-N^c}, 
\end{split}
\end{equation}
using also~\eqref{eq:capacityControlsBox}, the fact that $\varphi_N(x_0)$ is bounded above, see~\eqref{eq:Phi_N_TechnicalProp}, and $e^{-x} \geq 1-x$ for $x \geq 0$.
\end{proof}

\begin{remark}
\label{rem:LocalCoupling}
The results in this section can be interpreted as follows: Given a function $\varphi$ fulfilling the properties i) and ii) in Proposition~\ref{prop:quasi-minimizer}, in any mesoscopic box $A_1 = B(x_0,M)$, where $x_0 \in D^\delta_N$ and $M = \lfloor N^{r_1} \rfloor$ (with $r_1$ as in~\eqref{eq:r_jDef}) the occupation time field of the tilted random walk dominates (up to an error term of size at most $\exp(-c\log^2 N)$) the occupation times generated by a Poisson point process of trajectories with intensity measure roughly $\varphi^2(x_0/N)\capa(A_1)\widetilde{e}_{B(x_0,M)}$, and which are stopped at the first exit time of $A_2$. One can show (see Proposition~\ref{prop:concentration} below, which adapts~\cite[Proposition 2.2, Lemma 2.3]{sznitman2019bulk}, see also~\cite[Proposition 5.4, Section 9]{belius2013gumbel}) that these dominate the occupation times of continuous-time random interlacements at a locally constant level, roughly equal to $\varphi^2(x_0/N)$. Importantly, this coupling is uniform over all mesoscopic boxes centered at such $x_0$. 
\end{remark}

\section{Lower bound}
\label{sec:lowerbound}

In this section, we state and prove in Theorem~\ref{thm:MainThm} below our main result concerning the asymptotic lower bound on the probability of the event $\mathcal{A}^\nu_N(F)$, see~\eqref{eq:ExcessTypeEvent}. As pivotal steps, we will utilize the coupling results in Section~\ref{sec:coupling}, as well as the calculation of the relative entropy in Proposition~\ref{prop:RelEntropy}. We also need a (Poisson) concentration result (see Proposition~\ref{prop:concentration}), showing that for a carefully chosen Poisson point process of simple random walk trajectories on a mesoscopic scale, a downward deviation from its expectation is unlikely. Finally, in Subsection~\ref{subsec:Applications}, we give some applications of our main result.
\subsection{Proof of the lower bound}
\label{subsec:Maintheorem}
We recall the definition of the function $\vartheta(\cdot)$ from~\eqref{eq:def theta} as well as that of the energy $\cI_D(\cdot)$ of the minimizer from~\eqref{eq:I_D(nu)Def}, and now state the main result of the present article.

\begin{theorem}
\label{thm:MainThm}
Suppose that the local function $F$ fulfills Assumption~\ref{eq:RegularityCondF} and fix a set $D$ as above~\eqref{eq:DiscreteBlowup}. Then, for every $y \in \bbZ^d$ and every $\nu \in (0,\vartheta_\infty)$, where $\vartheta_\infty = \lim_{u \rightarrow \infty} \vartheta(u)$, one has
\begin{equation}
\label{eq:MainTheoremStatementSec6}
\begin{split}
\liminf_{N \rightarrow \infty} & \frac{1}{N^{d-2}} \log P_y[\cA^\nu_N(F)] \geq   -\cI_D(\nu) \\
&\left( = - \inf\bigg\{\frac{1}{2d}\int_{\bbR^d} |\nabla \phi|^2\,\De x\, : \,\phi\in C_0^\infty(\bbR^d)\,,\fint_D \vartheta(\phi^2)\,\De x > \nu\bigg\}\right).
\end{split}
\end{equation}
\end{theorem}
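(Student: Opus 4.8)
The plan is to show that $\cA_N(F)$ is a typical event under the tilted measure $\widetilde P_{y,N}$ attached to the quasi-minimizer $\varphi$ of Proposition~\ref{prop:quasi-minimizer}, and to turn this into a lower bound on $P_y[\cA_N(F)]$ through the change-of-measure inequality~\eqref{eq:EntropyIneq}. Fix $\nu\in(0,\vartheta_\infty)$, a point $y\in\bbZ^d$, and parameters $\varepsilon\in(0,1)$ and $\delta\in\big(0,1\wedge(r_D/2)\big)$, and let $\varphi$, $\cR$, $f$, $S_N$ and $\widetilde P_{y,N}$ be as in Section~\ref{sec:estimates}. Applying~\eqref{eq:EntropyIneq} with $Q=P_y$, $\widetilde Q=\widetilde P_{y,N}$ and $A=\cA_N(F)$, and dividing by $N^{d-2}$, one gets
\[
\frac{\log P_y[\cA_N(F)]}{N^{d-2}}\ \geq\ \frac{\log\widetilde P_{y,N}[\cA_N(F)]}{N^{d-2}}\ -\ \frac{1}{\widetilde P_{y,N}[\cA_N(F)]}\Big(\frac{\cH(\widetilde P_{y,N}|P_y)}{N^{d-2}}+\frac{1}{\mathrm{e}\,N^{d-2}}\Big).
\]
By Proposition~\ref{prop:RelEntropy} together with Proposition~\ref{prop:quasi-minimizer}, iii), one has $\limsup_{N}N^{-(d-2)}\cH(\widetilde P_{y,N}|P_y)\leq(1+\varepsilon)\,\cI_D(\nu+2\delta)$; hence, provided that
\begin{equation*}
\widetilde P_{y,N}[\cA_N(F)]\ \xrightarrow[N\to\infty]{}\ 1,
\tag{$\star$}
\end{equation*}
the displayed inequality yields $\liminf_{N}N^{-(d-2)}\log P_y[\cA_N(F)]\geq-(1+\varepsilon)\,\cI_D(\nu+2\delta)$. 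Letting $\varepsilon\downarrow0$ and then $\delta\downarrow0$, and using the continuity of $\nu\mapsto\cI_D(\nu)$ from Proposition~\ref{prop:continuityofI}, establishes~\eqref{eq:MainTheoremStatementSec6} (the bracketed reformulation being~\eqref{eq:smooth_approx}). It thus remains to prove $(\star)$, which is the crux of the argument.

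Since $t\mapsto\int_0^t\IND\{X_s=x\}\,\mathrm{d}s$ is nondecreasing and $F$ is nondecreasing in each argument, $\cA_N(F)$ contains the event that $\sum_{x\in D_N}F\big((L^{S_N}_{x+\cdot})_{\cdot\in B(0,\mathfrak r)}\big)>\nu|D_N|$, where $L^{S_N}_x=\int_0^{S_N}\IND\{X_t=x\}\,\mathrm{d}t$; and by~\eqref{eq:ConfinedTiltedCoincide} this quantity has the same law under $\widetilde P_{y,N}$ as under the confined walk $\overline P_y$, so I would work with $\overline P_y$ from now on. I would cover $D_N$ by pairwise disjoint mesoscopic sub-boxes $A_1'(x_0)\subseteq A_1(x_0)=B(x_0,\lfloor N^{r_1}\rfloor)$, with $x_0$ running over a grid of spacing $\asymp N^{r_1}$ inside $D_N$ (so there are at most $CN^{d}$ of them, and $\bigcup_{x_0}A_1'(x_0)$ exhausts all but a vanishing fraction of $D_N$). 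For each $x_0$, Proposition~\ref{prop:TiltedWalkExcursionNumber} ensures that, except on an event of probability $\exp(-N^c)$, the confined walk completes before time $S_N$ its first $J-1$ long excursions through $A_1(x_0)$, where $J=\lfloor(1+\tfrac\varepsilon2)\varphi_N^2(x_0)\capa(A_1)\rfloor$; and $(L^{S_N}_x)_{x\in A_1'(x_0)}$ then dominates the occupation field accumulated during those excursions. Chaining the couplings of Proposition~\ref{prop:CouplingIndependentStartedfromSigme} (replace the excursion field by that of $J-1$ i.i.d.\ excursions from $\sigma$), Proposition~\ref{prop:PoissonizationCoupling} (Poissonize), Theorem~\ref{thm:CrucialCoupling} (pass to a Poisson point process of simple random walk excursions started from $\widetilde e_{A_1}$), and finally the comparison summarized in Remark~\ref{rem:LocalCoupling}, i.e.\ Proposition~\ref{prop:concentration}, one obtains that, off an event of probability at most $\exp(-c\log^2N)$, $(L^{S_N}_x)_{x\in A_1'(x_0)}$ stochastically dominates the occupation field of continuous-time random interlacements at a level $u_{x_0}$ which, up to a uniformly negligible relative error, equals $\varphi^2(x_0/N)$; and Proposition~\ref{prop:concentration} moreover furnishes the concentration bound $\mathbb P\big[\sum_{x\in A_1'(x_0)}F((\cL^{u_{x_0}}_{x+\cdot})_\cdot)<(1-\delta')|A_1'(x_0)|\,\vartheta(u_{x_0})\big]\leq\exp(-|A_1'(x_0)|^{c})$ for a fixed small $\delta'>0$.

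Putting these two ingredients together, for each $x_0$ we have $\overline P_y\big[\sum_{x\in A_1'(x_0)}F((L^{S_N}_{x+\cdot})_\cdot)\geq(1-\delta')|A_1'(x_0)|\,\vartheta(u_{x_0})\big]\geq 1-\exp(-c\log^2N)$, and a union bound over the at most $CN^d$ boxes shows this holds simultaneously for all $x_0$ with probability tending to $1$. On that event, by a Riemann-sum approximation (valid since $\varphi$ is smooth, bounded below on $D$, and $\vartheta$ is Lipschitz),
\[
\sum_{x\in D_N}F\big((L^{S_N}_{x+\cdot})_\cdot\big)\ \geq\ (1-\delta')\sum_{x_0}|A_1'(x_0)|\,\vartheta(u_{x_0})\ \geq\ (1-\delta')(1-o(1))\,N^d\!\int_D\vartheta(\varphi^2)\,\mathrm{d}x,
\]
which, by Proposition~\ref{prop:quasi-minimizer}, iii), and $|D_N|=(1+o(1))N^d|D|$, is at least $(1-\delta')(1-o(1))(1+\delta)\,\nu|D_N|>\nu|D_N|$ for $N$ large, once $\delta'$ is chosen small relative to $\delta$. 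This proves $(\star)$ and hence the theorem. The main obstacle is precisely this last part: running the chain of couplings uniformly over the polynomially many mesoscopic boxes tiling $D_N$, making sure the random-interlacement level $u_{x_0}$ is close enough to $\varphi^2(x_0/N)$ that the $\delta$-slack built into the constraint $\fint_D\vartheta(\varphi^2)\,\mathrm{d}x\geq\nu(1+\delta)$ survives, and feeding this into the Poisson concentration estimate so that the many local mesoscopic deviations assemble into the required macroscopic deviation in $D_N$.
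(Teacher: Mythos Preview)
Your proposal is correct and follows essentially the same approach as the paper: apply the entropy inequality~\eqref{eq:EntropyIneq}, use Proposition~\ref{prop:RelEntropy} and Proposition~\ref{prop:quasi-minimizer}, iii) to control the entropy, reduce to showing $\widetilde P_{y,N}[\cA_N(F)]\to1$, and establish the latter by tiling $D_N$ with mesoscopic boxes, running the chain of couplings (Propositions~\ref{prop:TiltedWalkExcursionNumber}, \ref{prop:CouplingIndependentStartedfromSigme}, \ref{prop:PoissonizationCoupling}, Theorem~\ref{thm:CrucialCoupling}) in each box, and applying Proposition~\ref{prop:concentration}. The only cosmetic differences are that the paper performs the Riemann-sum/union-bound reduction \emph{before} the local couplings (yielding the inclusion $\{|D_N|^{-1}\sum_{x\in D_N}F(\cdots)\le\nu\}\subseteq\bigcup_B\{|B|^{-1}\sum_{x\in B}F(\cdots)\le(1+\delta)^{-1/2}\vartheta(\varphi_N(x_B)^2)\}$) rather than after, and that Proposition~\ref{prop:concentration} is stated directly for the Poisson point process of simple-random-walk excursions rather than for interlacements---but your invocation of it is correct.
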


\begin{remark}
\label{eq:MatchingUpperBound}
If in addition to Assumption~\ref{eq:RegularityCondF}, the function $F$ is bounded (which implies in particular that $\vartheta_\infty < \infty$), one has a matching asymptotic upper bound to~\eqref{eq:MainTheoremStatementSec6} from~\cite[Corollary 5.11]{sznitman2019bulk}, which is in fact obtained by a similar result for random interlacements in the ``singular limit $u \rightarrow 0$''. Thus, we obtain that for bounded $F$ fulfilling Assumption~\ref{eq:RegularityCondF}, for every $y \in \bbZ^d$ and every $\nu \in (0,\vartheta_\infty)$, one has
\begin{equation}
\label{eq:CoincidingBounds}
\begin{split}
\lim_{N \rightarrow \infty} & \frac{1}{N^{d-2}} \log P_y[\cA^\nu_N(F)] =  -\cI_D(\nu) \\
&\left( = - \inf\bigg\{\frac{1}{2d}\int_{\bbR^d} |\nabla \phi|^2\,\De x\, : \,\phi\in C_0^\infty(\bbR^d)\,,\fint_D \vartheta(\phi^2)\,\De x > \nu\bigg\}\right).
\end{split}
\end{equation}
\end{remark}

We will prove Theorem~\ref*{thm:MainThm} by a change of probability method, this involves computing the relative entropy of the tilted walk $\widetilde{P}_{y,N}$ (recall~\eqref{eq:TiltedWalk}, with respect to the simple random walk $P_y$. In the next proposition we provide an upper bound on this relative entropy.
\begin{prop} 
\label{prop:RelEntropy} One has that for all $y \in \bbZ^d$ that (with $\varepsilon > 0$ as chosen above~\eqref{eq:time_horizon})
    \begin{equation}
        \limsup_{N\to \infty} \frac{1}{N^{d-2}} \cH(\widetilde{P}_{y,N}|P_y) \leq (1 + \varepsilon) \frac{1}{d} \mathcal{E}_{\bbR^d}(\varphi,\varphi)
    \end{equation}
    (note that the right-hand side is well-defined since $\varphi \in C^\infty_0(\bbR^d)$ by Proposition~\ref{prop:quasi-minimizer}, i)).
\end{prop}
\begin{proof}
Note that for $N$ large enough, we can guarantee that $y \in U^N ( = (NB_\cR) \cap \bbZ^d$), which we assume throughout the remainder of the proof. By the definition~\eqref{eq:RelEntropy}, we obtain for any $y \in U^N$
\begin{equation}
\label{eq:RelEntropyCalc}
\begin{split}
\cH(\widetilde{P}_{y,N}|P_y) & \stackrel{\eqref{eq:MeasureUpToT},\eqref{eq:TiltedWalk}}{=} \widetilde{E}_{y,N}\left[\log  M_{S_N}\right] \\
& \stackrel{\eqref{eq:MartingaleDef},\eqref{eq:ConfinedTiltedCoincide}}{=}  \overline{E}_y\left[\int_0^{t_\star} v_\varphi(X_s) \mathrm{d}s   \right] + \overline{E}_y\left[\int_{t_\star}^{S_N} v_\varphi(X_s) \right] \\
& \qquad \qquad  +  \overline{E}_y\left[ \log f(X_{S_N}) - \log f(X_{0}) \right],
\end{split}
\end{equation}
where $\widetilde{E}_{y,N}$ and $\overline{E}_y$ denote the expectations under $\widetilde{P}_{y,N}$ and $\overline{P}_y$, respectively, and we recall that $v_\varphi$ and $f$ are defined in~\eqref{eq:v_phiDef} and~\eqref{eq:varphi_N_Def}. We bound the three summands in~\eqref{eq:RelEntropyCalc} separately. Importantly, we stress that the main contribution comes from the second summand and will crucially use that $S_N = (1 + \varepsilon) \| \varphi_N\|_{\ell^2(\bbZ^d)}^2$. By~\eqref{eq:vUpperBound}, we find that for the first summand, we have
\begin{equation}
\label{eq:TermIBound}
\overline{E}_y\left[\int_0^{t_\star} v_\varphi(X_s) \mathrm{d}s   \right] \leq t_\star \sup_{x \in U^N} v_\varphi(x) \leq c\log^2 N,
\end{equation}
and the third summand can be bounded by
\begin{equation}
\label{eq:TermIIIBound}
\begin{split}
\overline{E}_y\left[ \log f(X_{S_N}) \right] - \overline{E}_y\left[ \log f(X_{0}) \right] & \leq \log \max_{z \in U^N} f(z) - \log \min_{z \in U^N} f(z) \\
& \leq c \log N.
\end{split}
\end{equation}
We now replace the second summand in~\eqref{eq:RelEntropyCalc} by term bringing into play the stationary measure $\pi$. To this end, we use Lemma~\ref{lem:TVtoStationary} to write
\begin{equation}
\label{eq:TermIIBound}
\begin{split}
& \bigg\vert \overline{E}_y\bigg[\int_{t_\star}^{S_N} v_\varphi(X_s) \mathrm{d}s - (S_N - t_\star) \sum_{z \in U^N} v_\varphi(z)\pi(z) \bigg] \bigg\vert \\
& \leq |U^N| S_N \cdot \sup_{t \in [t_\star,S_N], x,z \in U^N} \bigg| \overline{P}_x[X_t = z] -  \pi(z)\bigg| \cdot \max_{z \in U^N} |v_\varphi(z)| \\
& \leq CN^{2d-2}e^{-c'\log^2N},
\end{split}
\end{equation}
using also~\eqref{eq:vUpperBound} and the fact that $S_N \leq CN^d$, see~\eqref{eq:S_N_volume_order}. 
by Lemma~\ref{lem:TechnicalProperties}, ii). We now calculate the contribution coming from the stationary measure. To that end, note that due to~\eqref{eq:time_horizon}, we have
\begin{equation}
\label{eq:MainContributionBoundEntropy}
\begin{split}
\sum_{z \in U^N} v_\varphi(z)\pi(z) & \stackrel{\eqref{eq:piDef},\eqref{eq:v_phiDef}}{=} -\frac{1 + \varepsilon}{S_N} \sum_{z \in \bbZ^d} \varphi_N(z) \Delta_{\bbZ^d} \varphi_N(z) \\
& \stackrel{\eqref{eq:DiscreteGaussGreen}}{=} \frac{1 + \varepsilon}{S_N} \mathcal{E}_{\bbZ^d}(\varphi_N,\varphi_N).
\end{split} 
\end{equation}
Upon combining~\eqref{eq:TermIBound},~\eqref{eq:TermIIIBound} and~\eqref{eq:TermIIBound}, we therefore see that
\begin{equation}
\label{eq:RelativeEntropyAlmostDone}
\begin{split}
\cH(\widetilde{P}_{y,N}|P_y) & \leq c\log^2N + e^{-c'\log^2N} + \sum_{z \in U^N} v_\varphi(z) \pi(z) + c\log N \\
& \stackrel{\eqref{eq:MainContributionBoundEntropy}}{\leq} (1 + \varepsilon) \mathcal{E}_{\bbZ^d}(\varphi_N,\varphi_N) + C \log^2 N,
\end{split}
\end{equation} 
for large enough $N$. By a standard Riemann sum argument (recall that $\varphi$ is smooth and has compact support, see Proposition~\ref{prop:quasi-minimizer}, i)), we find that
\begin{equation}
\begin{split}
\limsup_{N \rightarrow \infty} \frac{1}{N^{d-2}} \mathcal{E}_{\bbZ^d}(\varphi_N,\varphi_N) \leq \frac{1}{d} \mathcal{E}_{\bbR^d}(\varphi,\varphi).
\end{split}
\end{equation}
The claim then immediately follows from~\eqref{eq:RelativeEntropyAlmostDone}.
\end{proof}

Before starting the proof of Theorem~\ref{thm:MainThm}, we state the following proposition which is a simpler case of~\cite[Proposition 2.2]{sznitman2019bulk}. We give the proof in the Appendix for the reader's convenience.
\begin{prop}\label{prop:concentration}
Let $B = B(z,N)$ and $U = B(z,N^\gamma)$, for $z \in \bbZ^d$ and $\gamma>1$. Let $\eta$ be a Poisson point process defined on some probability space $(\overline{\mathcal{O}},\overline{\mathcal{F}},\overline{Q})$ with values in $\Gamma(\bbZ^d)$ and with intensity measure given by $a (1 + \Delta) \capa(B)\kappa$, where $a,\Delta>0$, and $\kappa$ denotes the law of the stopped process $X_{\cdot \wedge T_U}$ under $P_{\widetilde{e}_B}$. Assume that $F$ satisfies Assumption \ref{eq:RegularityCondF}. Then for all $N$ large enough, 
    \begin{equation}
    \begin{split}
        \overline{Q}\Big[\frac{1}{|B|}\sum_{x\in B} & F((L^\eta_{x+z})_{z\in B(0,\mathfrak{r})})  \leq \vartheta(a) \Big]\leq C\exp(- N^{c} (a\wedge 1/a)), \text{ where} \\
L^{\eta}_x & = \textnormal{ the total time spent in $x$ by trajectories in $\eta$}.
        \end{split}
    \end{equation}
\end{prop}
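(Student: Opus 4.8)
\textbf{Proof proposal for Proposition~\ref{prop:concentration}.}

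The plan is to reduce the statement to a concentration estimate for a sum of independent contributions indexed by the (Poisson-distributed) number of trajectories in $\eta$, and then to identify the mean with $\vartheta(a)$ up to a negligible error. First I would decompose the Poisson point process: write $\eta = \sum_{i=1}^{\mathcal{N}} \delta_{w_i}$ where $\mathcal{N} \sim \mathrm{Poisson}(a(1+\Delta)\capa(B))$ and, conditionally on $\mathcal{N}$, the trajectories $w_1, w_2, \dots$ are i.i.d.\ with law $\kappa$. For a fixed large deviation parameter it is cleaner to first condition on $\mathcal{N} \geq a\capa(B)$ (say), which holds with probability at least $1 - C\exp(-N^c(a\wedge 1))$ by standard Poisson tail bounds (using $\capa(B) \asymp N^{d-2}$ from~\eqref{eq:capacityControlsBox}), and then to use monotonicity: since $F$ is non-decreasing in each argument by~\eqref{eq:RegularityCondF}, ii), adding more trajectories only increases $L^\eta_x$ pointwise and hence increases each $F((L^\eta_{x+y})_{y\in B(0,\mathfrak{r})})$. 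Thus it suffices to lower-bound $\frac{1}{|B|}\sum_{x\in B} F((L^{\eta'}_{x+y})_{y})$ where $\eta'$ consists of exactly $\lceil a\capa(B)\rceil$ i.i.d.\ excursions with law $\kappa$.

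The core step is then a bounded-differences / Poisson-concentration argument. I would view $G(\eta) = \frac{1}{|B|}\sum_{x \in B} F((L^{\eta}_{x+y})_{y \in B(0,\mathfrak{r})})$ as a functional of the (finite, with overwhelming probability) collection of excursions, and estimate the effect of adding or removing a single excursion $w$: by~\eqref{eq:RegularityCondF}, iii), for each $x$ one has
\begin{equation}
F\big((L^{\eta+\delta_w}_{x+y})_{y}\big) - F\big((L^{\eta}_{x+y})_{y}\big) \leq c_1\Big(\IND_{\{w \text{ spends positive time in } B(x,\mathfrak{r})\}} + \sum_{|z-x|_\infty \leq \mathfrak{r}} \ell^w_z\Big),
\end{equation}
where $\ell^w_z$ is the time $w$ spends at $z$. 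Summing over $x \in B$ shows a single excursion changes $|B|\,G$ by at most $c_1(|B(0,\mathfrak{r})|+1)\cdot(\text{total length of }w\text{ in }U)$, and since $w$ is stopped at $T_U$ with $U = B(z,N^\gamma)$, the total time it spends is controlled (its expectation is $O(N^{2\gamma})$ and it has good exponential tails via heat-kernel/hitting-time bounds as in Lemma~\ref{lem:HKLemma}). With these per-excursion increment bounds in hand, a standard Azuma–Hoeffding or Poissonian concentration inequality (e.g.\ the exponential version for sums of i.i.d.\ bounded-in-expectation terms, or a martingale difference bound after truncating excessively long excursions on an event of probability $\leq C\exp(-N^c)$) yields
\begin{equation}
\overline{Q}\big[\,|G(\eta') - \overline{E}[G(\eta')]| > t\,\big] \leq C\exp\big(-cN^{d-2} t^2\big)
\end{equation}
for $t$ in a suitable range, which is more than enough for the claimed bound once $t$ is taken to be a small fixed fraction of $\vartheta(a)$.

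The remaining ingredient — and I expect this to be the main obstacle — is identifying the expectation: I must show $\overline{E}\big[\frac{1}{|B|}\sum_{x\in B}F((L^{\eta'}_{x+y})_{y})\big] \geq \vartheta(a)(1 - o(1))$, or rather that the full Poissonized field $(L^\eta_x)_{x \in B}$ with intensity $a(1+\Delta)\capa(B)\kappa$ has occupation times that stochastically dominate those of continuous-time random interlacements at level $a$, locally near the bulk of $B$. This is exactly where one invokes that $\kappa$ is built from excursions started from the \emph{normalized equilibrium measure} $\widetilde{e}_B$ and stopped at $T_U$: for $x$ in the interior of $B$ far from $\partial B$, the field generated by $a(1+\Delta)\capa(B)$ such excursions looks, in a window $B(x,\mathfrak{r})$, like the interlacement field at level $a$ (the factor $1+\Delta$ and the slight difference between $\kappa$ and the true interlacement excursion measure only help, by monotonicity of $F$ and of $\vartheta$). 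Concretely I would follow the argument of~\cite[Proposition 2.2, Lemma 2.3]{sznitman2019bulk} and~\cite[Proposition 5.4, Section 9]{belius2013gumbel}: couple $\eta$ (restricted near $x$) from above by a Poisson cloud whose restriction to $B(x,\mathfrak{r})$ has exactly the interlacement law at level $a$, using the fact that the entrance measure of such excursions into a small box is close to the corresponding equilibrium-measure-based entrance law, with boundary effects confined to an $o(|B|)$ fraction of points $x \in B$. Averaging $\bbE[F((\mathcal{L}^a_{y})_{y\in B(0,\mathfrak{r})})] = \vartheta(a)$ over the $(1-o(1))|B|$ "good" interior points $x$ then gives the required lower bound on the mean, and combining with the concentration estimate above and the Poisson-tail bound on $\mathcal{N}$ completes the proof. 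The delicate points are (i) making the coupling with genuine random interlacements precise and uniform in $x$, handling the fact that $\kappa$-excursions are stopped at $T_U$ rather than running forever, and (ii) keeping all error terms of the form $\exp(-N^c(a\wedge 1/a))$, in particular tracking the small-$a$ regime where $\vartheta(a) \asymp a$ by Lemma~\ref{lem:PropertiesTheta} and the relative fluctuations are of order $1/\sqrt{a N^{d-2}}$.
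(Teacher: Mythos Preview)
Your overall architecture is the same as the paper's: reduce via Poisson concentration and monotonicity of $F$ to a fixed number of i.i.d.\ $\kappa$-excursions, truncate long excursions, apply Azuma--Hoeffding to a martingale in the excursion index, and identify the mean through a comparison with the interlacement field. There are, however, two points where your implementation would not close.

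\textbf{The margin is missing.} You reduce to $\eta'$ consisting of exactly $\lceil a\,\capa(B)\rceil$ excursions and then aim for $\overline{E}[G(\eta')] \geq \vartheta(a)(1-o(1))$. But this is not enough: if the mean of $G(\eta')$ is asymptotically $\vartheta(a)$, the event $\{G(\eta') \leq \vartheta(a)\}$ is a deviation \emph{to} the mean, not below it, and no concentration inequality will make its probability small. The fix---and this is exactly what the paper does---is to keep the extra room provided by $\Delta$: reduce instead to $m = \lfloor a(1+\tfrac{\Delta}{2})\capa(B)\rfloor$ excursions (the Poisson tail bound $\overline{Q}[\mathcal{N} < m] \leq 2\exp(-c\,a\Delta^2\capa(B))$ still applies), so that the limiting mean is $\vartheta(a(1+\tfrac{\Delta}{2}))$, which is \emph{strictly} larger than $\vartheta(a)$ by Lemma~\ref{lem:PropertiesTheta}~iii). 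The gap $\vartheta(a(1+\tfrac{\Delta}{2})) - \vartheta(a)$ is the margin that the concentration step needs, and without it the argument fails.

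\textbf{The increment bound is too coarse.} You bound the change in $|B|\,G$ from a single excursion $w$ by the total time $w$ spends in $U = B(z,N^\gamma)$. That time is of order $N^{2\gamma}$ in expectation, and since the proposition must hold for all $\gamma > 1$ (and is applied in Section~\ref{sec:lowerbound} with $\gamma = r_2/r_1 > \tfrac{d-1}{d-2}$), the resulting Azuma--Hoeffding exponent does not go to infinity. The correct observation is that summing the per-site bounds from~\eqref{eq:RegularityCondF}~iii) over $x \in B$ gives a bound in terms of the \emph{range and occupation time of $w$ in $B^{\mathfrak{r}}$}, not in $U$; these are typically $O(N^{2})$. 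The paper accordingly truncates each excursion when either its range or its time \emph{in $B^{\mathfrak{r}}$} exceeds $N^{2+\frac{1}{4}}$, which yields bounded martingale increments of order $N^{2+\frac{1}{4}}$ and an Azuma--Hoeffding bound of the form $\exp(-c\,N^{1/4}/a)$, independent of $\gamma$.
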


\begin{proof}[Proof of Theorem~\ref{thm:MainThm}] 
Let us fix $\delta > 0$, $\cR > 0$ as in~\eqref{eq:FixedChoiceR} and consider $\varphi\in C_0^\infty(\bbR^d)$ as constructed in Proposition~\ref{prop:quasi-minimizer}. We recall the tilted measure $\widetilde{P}_{y,N}$ for $N$ large enough (such that $y \in U^N$), defined in~\eqref{eq:TiltedWalk} where we choose $f$ as in~\eqref{eq:varphi_N_Def}. An application of the entropy lower bound~\eqref{eq:EntropyIneq}, yields
\begin{equation}
\label{eq:Penultimate_LowerBound}
   \log P_y[\cA^\nu_N(F)] \geq \log \widetilde{P}_{y,N}[\cA^\nu_N(F)] -  \frac{1}{\widetilde{P}_{y,N}[\cA^\nu_N(F)]} \Big\{\cH(\widetilde{P}_{y,N}|P_y)+\frac{1}{\rm{e}}\Big\}.
\end{equation}
We now claim that
\begin{equation}
\label{eq:TiltedProbTo1}
    \lim_{N\to \infty} \widetilde{P}_{y,N}[\cA^\nu_N(F)] = 1.
\end{equation}
We admit~\eqref{eq:TiltedProbTo1} for the time being and explain how the proof of~\eqref{eq:MainTheoremStatementSec6} is then concluded. By Proposition~\ref{prop:RelEntropy}, we have that 
\begin{equation}
\label{eq:RelEntropyBoundSec6}
\limsup_{N \rightarrow \infty} \frac{1}{N^{d-2}}\cH(\widetilde{P}_{y,N}|P_y) \leq (1+\varepsilon) \frac{1}{d} \cE_{\bbR^d}(\varphi,\varphi) \stackrel{\text{Proposition~\ref{prop:quasi-minimizer} iii)},~\eqref{eq:last}}{ \leq} (1+\varepsilon)\cI_D(\nu(1 + 2\delta)).
\end{equation}
Upon dividing~\eqref{eq:Penultimate_LowerBound} by $N^{d-2}$ and   applying~\eqref{eq:RelEntropyBoundSec6} and~\eqref{eq:TiltedProbTo1}, we readily find that
\begin{equation}
   \liminf_{N\to\infty} \frac{1}{N^{d-2}} \log P_y[\cA^\nu_N(F)] \geq -(1+\varepsilon)\cI_D(\nu(1 + 2\delta)).
\end{equation}
Theorem \ref{thm:MainThm} now follows by letting $\delta \downarrow 0$, $\varepsilon \downarrow 0$, and by using the continuity of the rate function $\cI_D(\cdot)$ established in Proposition~\ref{prop:continuityofI}. \smallskip

We are left with proving~\eqref{eq:TiltedProbTo1}. To that end, we introduce an intermediate scale $M = \lfloor N^{r_1}\rfloor$ with $r_1\in (0,\tfrac{1}{4})$ as in~\eqref{eq:r_jDef} and consider $M$-boxes 
\begin{equation}
B_x \stackrel{\mathrm{def}}{=} B(x,M) (= x + [-M,M]^d\cap \bbZ^d), \qquad \text{with centers }x \in (2M+1)\bbZ^d.
\end{equation} 
For an $M$-box $B$, we denote by $x_B$ the unique point in $(2M+1) \bbZ^d$ such that $B = B_{x_B}$. Importantly, by taking $N$ large enough, a box $B$ intersecting $D_N$ can be identified with the box $A_1$ with center $x_0 = x_{B}$ (see~\eqref{eq:AjBoxesDef}), where we can assume that $x_B \in D^\delta_N$ as well as $A_6 \subseteq D^\delta_N$ (see~\eqref{eq:zetaAndx_0}). Since $\varphi$ is smooth and compactly supported (see Proposition~\ref{prop:quasi-minimizer}), in view of the Lipschitz continuity of $\vartheta$ (see Lemma~\ref{lem:PropertiesTheta}),~\eqref{eq:last}, of the fact that $r_1 < 1$, and recalling the notation $\varphi_N(\cdot) = \varphi(\cdot/N)$, it follows from a Riemann sum approximation argument that for any $N$ large enough 
\begin{equation}
    \nu (1 + \delta) \leq \fint_D \vartheta(\varphi^2)\,\De x \leq  (1+\delta)^{1/2}\sum_{B: B\subseteq D_N} \frac{|B|}{|D_N|} \vartheta(\varphi_N(x_B)^2),
\end{equation}
where the sum is over all boxes $M$-boxes $B$ contained in $D_N$. Therefore, for all $N$ sufficiently large
\begin{equation}
    \begin{aligned}
        \Big\{\frac{1}{|D_N|}&\sum_{x\in D_N} F\Big((L_{x+z})_{z \in B(0,\mathfrak{r})}\Big) \leq \nu\Big\}\\
        &\subseteq \Big\{\frac{1}{|D_N|}\sum_{x\in D_N} F\Big((L_{x+z})_{z \in B(0,\mathfrak{r})}\Big) \leq (1+\delta)^{-1}\fint_D \vartheta(\varphi^2)\,\De x \Big\}\\
        &\subseteq  \Big\{\sum_{x\in D_N} F\Big((L_{x+z})_{z \in B(0,\mathfrak{r})}\Big) \leq (1+\delta)^{-1/2}\sum_{B: B\subseteq D_N} |B|\vartheta(\varphi_N(x_B)^2) \Big\}\\
        &\subseteq\bigcup_{B: B\subseteq D_N} \Big\{\frac{1}{|B|}\sum_{x\in B} F\Big((L_{x+z})_{z \in B(0,\mathfrak{r})}\Big) \leq \frac{\vartheta(\varphi_N(x_B)^2)}{(1+\delta)^{1/2}} \Big\}.
    \end{aligned}
\end{equation}
As a consequence, with a union bound, we obtain the following estimate for $N$ large enough
\begin{equation}\label{eq:unionbound}
    \begin{aligned}
        \widetilde{P}_{y,N} &[\cA^\nu_N(F)^c]  \\ &\leq C N^{d(1-r_1)} \sup_{B: B\subseteq D_N} \widetilde{P}_{y,N}\bigg[\frac{1}{|B|}\sum_{x\in B} F\Big((L_{x+z})_{z \in B(0,\mathfrak{r})}\Big) \leq \frac{\vartheta(\varphi_N(x_B)^2)}{(1+\delta)^{1/2}}  \bigg].
    \end{aligned}
\end{equation}
We will now estimate the probabilities on the right hand side of~\eqref{eq:unionbound} uniformly on the $M$-boxes $B$ such that $B\subseteq D_N$. To that end we fix an arbitrary such box $A_1 = B$ centered at $x_0 = x_B$. We then choose $\eta \in (0, \cR/100)$ and $\widetilde{\delta}$ sufficiently small to guarantee that for $N$ large enough, $\overline{D^\delta} \subseteq B_{\cR - \eta}$ and $A_6 \subseteq D^\delta_N$ hold, and all results of Section~\ref{sec:coupling} can be applied with constants that are uniform in $x_B$. Moreover, we will repeatedly use that for $N$ large enough (since $A_2 = B(x_0,\lfloor N^{r_2} \rfloor)$ with $r_2 > r_1$, see~\eqref{eq:r_jDef} and $\mathfrak{r}$ is constant),
\begin{equation}
x \in A_1, z \in B(0,\mathfrak{r}) \qquad \text{implies} \qquad x + z \in A_2.
\end{equation}
 We now see that (recalling again that $B = A_1$), for large enough $N$,
\begin{equation}
\label{eq:JustLookingUntilS_N}
\begin{split}
\widetilde{P}_{y,N}\bigg[\frac{1}{|A_1|}\sum_{x\in A_1} & F\Big((L_{x+z})_{z \in B(0,\mathfrak{r})}\Big) \leq \frac{\vartheta(\varphi_N(x_0)^2)}{(1+\delta)^{1/2}}  \bigg] \\
&   \stackrel{\eqref{eq:ConfinedTiltedCoincide}}{\leq} \overline{P}_y \bigg[\frac{1}{|A_1|}\sum_{x\in A_1} F\Big( \Big(\int_{R_2}^{S_N} \mathbbm{1}_{\{X_t = x + z \}} \mathrm{d}t  \Big)_{z \in B(0,\mathfrak{r})}\Big) \leq \frac{\vartheta(\varphi_N(x_0)^2)}{(1+\delta)^{1/2}}  \bigg],
\end{split}
\end{equation}
where we used that $F$ is increasing. By writing $\overline{P_2^J } = \bigotimes_{j = 2}^J \overline{P}_\sigma$ the law of $J-1$ independent copies of $\overline{P}_\sigma$, we now see that (using again that $F$ is increasing)
\begin{equation}
\label{eq:ApplicationFirstCoupling}
\begin{split}
 \overline{P}_y \bigg[\frac{1}{|A_1|} & \sum_{x\in A_1} F\Big( \Big(\int_{R_2}^{S_N} \mathbbm{1}_{\{X_t = x + z \}} \mathrm{d}t  \Big)_{z \in B(0,\mathfrak{r})}\Big) \leq \frac{\vartheta(\varphi_N(x_0)^2)}{(1+\delta)^{1/2}}  \bigg] \\
 & \stackrel{\eqref{eq:EnoughTrajectories},\eqref{eq:FirstCoupling}}{\leq}  \overline{P_2^J }\bigg[\frac{1}{|A_1|}  \sum_{x\in A_1} F\Big( \Big( \sum_{j = 2}^J \widetilde{L}^j_{x+z}  \Big)_{z \in B(0,\mathfrak{r})}\Big) \leq \frac{\vartheta(\varphi_N(x_0)^2)}{(1+\delta)^{1/2}}  \bigg] + e^{-c\log^2 N},
\end{split}
\end{equation}
where the independent fields $(\widetilde{L}^j_x)_{x \in A_2}$ are defined in Proposition~\ref{prop:CouplingIndependentStartedfromSigme}.
Finally, we see upon using Proposition~\ref{prop:PoissonizationCoupling} and Theorem~\ref{thm:CrucialCoupling} (and the fact that $F$ is increasing) that
\begin{equation}
\label{eq:Application2-3Coupling}
\begin{split}
\overline{P_2^J }\bigg[\frac{1}{|A_1|}  & \sum_{x\in A_1} F\Big( \Big( \sum_{j = 2}^J \widetilde{L}^j_{x+z}  \Big)_{z \in B(0,\mathfrak{r})}\Big) \leq \frac{\vartheta(\varphi_N(x_0)^2)}{(1+\delta)^{1/2}}  \bigg] \\
& \stackrel{ V_1 \geq W }{\leq} {Q}'\bigg[\frac{1}{|A_1|}  \sum_{x\in A_1} F\Big(\Big(  \sum_{j = 2}^J \overline{L}^j_{x+z}  \Big)_{z \in B(0,\mathfrak{r})}\Big) \leq \frac{\vartheta(\varphi_N(x_0)^2)}{(1+\delta)^{1/2}}  \bigg] + e^{-N^c},\\
& \stackrel{\eqref{eq:SecondCoupling},\eqref{eq:ThirdCoupling}}{\leq} \widetilde{Q}\bigg[\frac{1}{|A_1|}  \sum_{x\in A_1} F\Big(\Big( L^{\eta_2}_{x+z} \Big)_{z \in B(0,\mathfrak{r})}\Big) \leq \frac{\vartheta(\varphi_N(x_0)^2)}{(1+\delta)^{1/2}}  \bigg] + e^{-N^c},
\end{split}
\end{equation}
where the i.i.d.~fields $(\overline{L}^j_x)_{x \in A_2}$ are defined in Proposition~\ref{prop:PoissonizationCoupling} (on some auxiliary probability space $(\mathcal{O}', \mathcal{F}',Q')$), $(L^{\eta_2}_x)_{x \in A_2}$ is defined above~\eqref{eq:IntensityMeasuresOfPPP}, and $V_1$ and $W$ are defined in~\eqref{eq:Stopping-times-R-V-def} and~\eqref{eq:W-definition}, respectively. By Proposition~\ref{prop:concentration}  (with $z = x_0$, $B = A_1$, $U = A_2$, $\gamma = r_2/r_1 > 1$) the probability in the last display is bounded above by $e^{-N^{c'}}$. Combining this with~\eqref{eq:JustLookingUntilS_N},~\eqref{eq:ApplicationFirstCoupling}, and~\eqref{eq:Application2-3Coupling}, insertion into~\eqref{eq:unionbound} shows that
\begin{equation}
\widetilde{P}_{y,N}[A_N(F)^c] \leq C N^{d(1-r_1)} (e^{-N^{c}} + e^{-c'\log^2 N}),
\end{equation}
which implies~\eqref{eq:TiltedProbTo1} and therefore finishes the proof.
\end{proof}
\subsection{Applications of Theorem~\ref{thm:MainThm}}
\label{subsec:Applications}

In this short subsection, we give some applications of our main result. \smallskip

As one pertinent example that our result applies to, one can consider the case $F = F_2$ in~\eqref{eq:F_examples_introduction}, namely the (bounded) function $F_2(\ell) = \mathbbm{1}_{\{\ell > 0 \}}$ (and $\mathfrak{r} = 0$). With this choice,~\eqref{eq:CoincidingBounds} yields that for every $y \in \bbZ^d$ and $\nu \in (0,1)$,
\begin{equation}
\label{eq:CoveringResultSec6}
\begin{split}
\lim_{N \rightarrow \infty} & \frac{1}{N^{d-2}} \log P_y\big[|D_N \cap \{X_t \,: \, t \geq 0 \}| > \nu |D_N| \big] \\
& = - \min\bigg\{\frac{1}{2d}\int_{\bbR^d} |\nabla \phi|^2\,\De x\, : \,\phi\in D^{1,2}(\bbR^d)\,,\fint_D (1 - e^{-\phi^2/g(0,0)})\,\De x = \nu\bigg\},
\end{split}
\end{equation} 
and the minimizer $\varphi_{\mathrm{min}}$ in the second line of~\eqref{eq:CoveringResultSec6} can be chosen to be non-negative, see Remark~\ref{rem:ChooseMinimizerNonnegative}.
Remarkably, the strategy used to obtain the lower bound in~\eqref{eq:CoveringResultSec6} involving the tilted walk suggests that in order to cover a macroscopic fraction of a box $D = [-1,1]^d$, the random walk tends to create on mesoscopic scales $M \ll N$ an occupation-time profile roughly coinciding with that of random interlacements at a locally constant level (given by $\varphi_{\mathrm{min}}^2(\cdot/N)$), and therefore the occupation-time profile is potentially positive also (far) away from $D_N$. 
\begin{remark}
\label{rem:SwissCheese} 
Largely motivated by the study of downwards moderate deviations for the volume of the Wiener sausage in~\cite{van2001moderate} (see also its adaptation to the case of the random walk in~\cite{phetpradap2011intersections}), similar questions concerning the deviant behavior of the range of random walks as studied in the present article have appeared, see~\cite{asselah2017moderate,asselah2020extracting,
asselah2020nature,asselah2021two,erhard2023uniqueness}.
It is plausible that the lower bounds derived in this work might give insight into the ``folding behavior'' exhibited by the (discrete-time) simple random walk up to a finite time horizon; see, e.g.,~Theorem 1.4 and~Proposition 5.1 in~\cite{asselah2020extracting}. In a similar direction, it is also natural to study fine properties of the minimizer on the right-hand side of~\eqref{eq:CoveringResultSec6}, in particular its uniqueness (we refer to~\cite{erhard2023uniqueness} for a uniqueness property concerning the aforementioned moderate deviations for the volume of the range of the simple random walk up to a finite time horizon).  In our context, insight into the uniqueness roughly would address whether there are different ``near-optimal'' strategies to enforce the covering event under the probability on the left-hand side of~\eqref{eq:CoveringResultSec6}. \end{remark}
We now explain a different application of our main result relating to the question of the excessive presence of points in a box that are disconnected from the boundary of an enclosing box, a problem that was addressed for random interlacements in~\cite{sznitman2019bulk,sznitman2021excess,sznitman2023cost}. To that end, define for $u \geq 0$ the function
\begin{equation}\label{eq:theta-0}
\vartheta_0(u) = \bbP[0 \stackrel{\cV^u}{\centernot \longleftrightarrow} \infty ],
\end{equation}
where the event under the probability refers to the absence of an arbitrarily long nearest-neighbor paths of pairwise distinct vertices starting in $0$ that remain in the vacant set $\cV^{u} \stackrel{\mathrm{def}}{=} \bbZ^d \setminus \cI^u$ of random interlacements. The function $\vartheta_0$ is known to be non-decreasing, left-continuous, and identically equal to $1$ on $(u_\ast,\infty)$, where $u_\ast$ is the critical level for the phase transition of $\cV^u$, and in fact continuous away from $u_\ast$ (with a possible but not expected jump at $u_\ast$), see~\cite{teixeira2009uniqueness}. We also denote by $\overline{\vartheta}_0$ the right-continuous modification of $\vartheta_0$, and let for $N \geq 1$ (with $S(y,r) = \{x  \in \bbZ^d \, : \, |x - y|_\infty = r \}$ for $r \geq 0$),
\begin{equation}
\begin{split}
\mathcal{C}_N = \text{ the connected component in $S(0,N)$ in }S(0,N) \cup (\bbZ^d \setminus \{X_t \, : \, t  \geq 0 \}  )
\end{split}
\end{equation}
(note that $S(0,N) \subseteq \mathcal{C}_N$ by definition). As we now explain, the results obtained in the present work imply that for $\nu \in (0,1)$, 
\begin{equation}
\label{eq:LowerBoundNumberDiscPoints}
\begin{split}
\liminf_{N \rightarrow \infty} &\frac{1}{N^{d-2}} \log \bbP[|B(0,N) \setminus \mathcal{C}_N| \geq \nu |B(0,N)| ] \geq \\
&  - \inf\bigg\{\frac{1}{2d}\int_{\bbR^d} |\nabla \phi|^2\,\De x\, : \,\phi\in C_0^\infty(\bbR^d)\,,\fint_D \overline{\vartheta}_0(\phi^2)\,\De x > \nu\bigg\}. 
\end{split}
\end{equation}
Indeed, one can for $\mathfrak{r} \geq 1$ consider the case $F = F_3$ in~\eqref{eq:F_examples_introduction}, for which $\vartheta_{\mathfrak{r}}(u) = \bbP[0 \stackrel{\cV^u}{\centernot \longleftrightarrow} S(0,{\mathfrak{r}}) ]$ (with hopefully obvious notation) depends analytically on $u$ (see, e.g.,~\cite[(2.8)]{sznitman2019bulk}), and denote
\begin{equation}
\cD^{\mathfrak{r}} = \{x \in \bbZ^d \, : \, \text{ every path between $x$ and $S(x,\mathfrak{r})$ visits $\{X_t \, : \, t \geq 0 \}$\}},
\end{equation}
i.e.~the sites disconnected by the range of the simple random walk within sup-distance $\mathfrak{r}$. With these definitions, one can then use~\eqref{eq:MainTheoremStatementSec6}  (with the choice $F = F_3$ as mentioned previously) to find that for every $\nu \in (0,1)$,
\begin{equation}
\begin{split}
\liminf_{N \rightarrow \infty} & \frac{1}{N^{d-2}} \log P_y[|\cD^{\mathfrak{r}} \cap D_N| > \nu |D_N| ] \geq - K_{0,\mathfrak{r}}(\nu) \\
& \stackrel{\mathrm{def}}{=} - \inf\bigg\{\frac{1}{2d}\int_{\bbR^d} |\nabla \phi|^2\,\De x\, : \,\phi\in C_0^\infty(\bbR^d)\,,\fint_D \vartheta_{\mathfrak{r}}(\phi^2)\,\De x > \nu\bigg\}.
\end{split}
\end{equation}
One can then repeat the steps in~\cite[Remark 6.6, 2)]{sznitman2019bulk} in combination with~\cite[Theorem 2]{sznitman2021c} to obtain~\eqref{eq:LowerBoundNumberDiscPoints}.
The lower bound so obtained matches the upper bound in~\cite[Corollary 5.3]{sznitman2023cost} upon combination with the recently obtained sharpness of the phase transition for the vacant set of random interlacements, in the series of works~\cite{duminil2023characterization,duminil2023finite,duminil2023phase}, 
(see in particular~\cite[(1.21)]{duminil2023phase}), see also~\cite[Remark 5.4]{sznitman2023cost}. \medskip

\textbf{Acknowledgements.} Parts of this project were carried out within the framework of an \textit{Oberwolfach Research Fellowship} in October 2022 (OWRF, proposal number R2212). The authors would like to thank the institute for its support and hospitality. The authors thank two anonymous reviewers for their careful reading of the manuscript and helpful suggestions. While this work was written,~A.~C.~was associated to INdAM (Istituto Nazionale di Alta Matematica ``Francesco Severi'') and GNAMPA.

\appendix

\begin{appendix}
\section{Proofs of Propositions~\ref{prop:ClosenessToQSD} and~\ref{prop:concentration}}\label{sec:Appendix} 
In this appendix, we provide proofs for technical results used throughout the article. Notably, we provide the main steps of the proof of Proposition~\ref{prop:ClosenessToQSD}, as well as some technical lemmas used both in its proof as well as in the proof of Propositions~\ref{prop:ComparisonEqmeasure} and~\ref{prop:TiltedWalkExcursionNumber}. We also prove Proposition~\ref{prop:concentration} that was used in Section~\ref{sec:lowerbound}. \smallskip

We consider a fixed near-minimizer $\varphi$ as in~\eqref{eq:FixedPhi} and fixed $\delta,\cR$ as in~\eqref{eq:FixedChoiceR}, and all constants may depend implicitly on these choices. Moreover, we also use the definitions of the boxes $A_j$, $1 \leq j \leq 6$ as in~\eqref{eq:AjBoxesDef}. We first collect some hitting time estimates for the confined walk $(\overline{P}_x)_{x \in U^N}$, where throughout the Appendix, we let $f$ be given as in~\eqref{eq:varphi_N_Def} and let $P^{\overline{\mu}}_x$, $x \in \bbZ^d$ stand for the measure governing the random walk with conductances given by $\overline{\mu}$ in~\eqref{eq:NonConstantConductances} and speed measure~\eqref{eq:ChoiceOfSpeedMeasurePi}.
For the following lemma, we also recall that $t_\star = N^2\log^2N$, as defined in~\eqref{eq:RegenerationTimeDef}.
\begin{lemma}
For large $N$, one has
\begin{equation}
\label{eq:HittingUpperBounds}
\begin{split}
\overline{P}_x[H_{A_1} < t_\star] \leq \frac{1}{N^c}, \qquad x \in U^N \setminus A_2, \\
\overline{P}_x[H_{A_2} < t_\star] \leq \frac{1}{N^{c'}}, \qquad x \in U^N \setminus A_3.
\end{split}
\end{equation}
Moreover, uniformly for $x \in \partial_{\mathrm{int}}A_1$, we have
\begin{equation}
\label{eq:SomeBoundsEqmeasure}
\begin{split}
\frac{\overline{e}_{A_1}(x)}{\overline{M}_x}  & \leq P^{\overline{\mu}}_x[T_{A_3} < \widetilde{H}_{A_1}] \leq P^{\overline{\mu}}_x[T_{A_2} < \widetilde{H}_{A_1}] \leq \frac{\overline{e}_{A_1}(x)}{\overline{M}_x}\left(1 + \frac{1}{N^{c''}} \right).
\end{split}
\end{equation}
\end{lemma}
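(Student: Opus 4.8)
The plan is to establish the three displays in the Lemma by combining the heat-kernel controls from Lemma~\ref{lem:NewHK} (and the standard ones for the simple random walk), the last-exit decomposition, and the a-priori capacity estimates for the boxes $A_j$. Throughout, recall that by~\eqref{eq:ConfinedWalk_coincides} the law of the confined walk stopped at the exit of $A_j$ ($1 \leq j \leq 5$) agrees with that of $P^{\overline{\mu}}_x$, so all statements may be phrased in terms of $P^{\overline{\mu}}_x$ and its Green function $g^{\overline{\mu};\overline{M}}$, which obeys the upper bound~\eqref{eq:TiltedGreenFunctionUpperBound}.

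First I would prove~\eqref{eq:HittingUpperBounds}. For $x \in U^N \setminus A_2$, write $\overline{P}_x[H_{A_1} < t_\star] = P^{\overline{\mu}}_x[H_{A_1} < t_\star]$ (valid since, up to the exit of $A_2$, the two laws coincide, and $\{H_{A_1} < t_\star\}$ only concerns the walk before it hits $A_1 \subseteq A_2$). Bounding crudely $P^{\overline{\mu}}_x[H_{A_1} < t_\star] \leq P^{\overline{\mu}}_x[H_{A_1} < \infty]$ and applying the last-exit decomposition~\eqref{eq:LastExitDecompisitionTilted} together with~\eqref{eq:TiltedGreenFunctionUpperBound} and the capacity bound~\eqref{eq:BoxCapacity} gives, for $x \in U^N \setminus A_2$ (so $|x - y| \geq cN^{r_2}$ for $y \in A_1$),
\[
P^{\overline{\mu}}_x[H_{A_1} < \infty] \leq \frac{C}{N^{r_2(d-2)}} \overline{\capa}(A_1) \leq C N^{(d-2)(r_1 - r_2)} \leq \frac{1}{N^c},
\]
using $r_1 < r_2$ from~\eqref{eq:r_jDef}. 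The same argument with $A_1$ replaced by $A_2$, $A_2$ by $A_3$, $r_1$ by $r_2$, $r_2$ by $r_3$ yields the second line of~\eqref{eq:HittingUpperBounds}. (One could alternatively get a sharper bound that also uses $t_\star = N^2\log^2 N$ via the heat kernel, but the last-exit bound already suffices for the polynomial decay claimed.)

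Next I would prove~\eqref{eq:SomeBoundsEqmeasure}. The middle inequality $P^{\overline{\mu}}_x[T_{A_3} < \widetilde{H}_{A_1}] \leq P^{\overline{\mu}}_x[T_{A_2} < \widetilde{H}_{A_1}]$ is immediate from $A_2 \subseteq A_3$ (exiting the smaller box happens no later). For the outer bounds, recall the probabilistic description of the tilted equilibrium measure: $\overline{e}_{A_1}(x) = \overline{M}_x\, P^{\overline{\mu}}_x[\widetilde{H}_{A_1} = \infty]\,\mathbbm{1}_{A_1}(x)$ by~\eqref{eq:TiltedEqMeasureDefinition} and~\eqref{eq:NaturalMMeasureDef}, so $\overline{e}_{A_1}(x)/\overline{M}_x = P^{\overline{\mu}}_x[\widetilde{H}_{A_1} = \infty]$ for $x \in \partial_{\mathrm{int}}A_1$. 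The lower bound in~\eqref{eq:SomeBoundsEqmeasure} then follows from $\{\widetilde{H}_{A_1} = \infty\} \subseteq \{T_{A_3} < \widetilde{H}_{A_1}\}$. For the upper bound, decompose at the first exit of $A_2$: by the strong Markov property,
\[
P^{\overline{\mu}}_x[T_{A_2} < \widetilde{H}_{A_1}] \leq P^{\overline{\mu}}_x[\widetilde{H}_{A_1} = \infty] + P^{\overline{\mu}}_x[T_{A_2} < \widetilde{H}_{A_1},\ \widetilde{H}_{A_1} < \infty],
\]
and on the second event the walk exits $A_2$ (reaching $\partial A_2$, at sup-distance $\geq cN^{r_2}$ from $A_1$) and then returns to $A_1$; by the strong Markov property at $T_{A_2}$ this is at most $\sup_{z \in \partial A_2} P^{\overline{\mu}}_z[H_{A_1} < \infty] \leq CN^{(d-2)(r_1-r_2)}$ as in~\eqref{eq:HittingOfBoxEstimateII}. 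Since $\overline{e}_{A_1}(x)/\overline{M}_x = P^{\overline{\mu}}_x[\widetilde{H}_{A_1} = \infty] \geq c/N^{r_1}$ by the analogue of~\eqref{eq:LowerBoundEqMeasure} for $\overline{\mu}$ (which follows from~\eqref{eq:ComparisonEqmeasure} and~\eqref{eq:LowerBoundEqMeasure}, or from $c \leq \overline{M}_x \leq C$ together with~\eqref{eq:BoxCapacity}), the error term is $CN^{(d-2)(r_1-r_2)} \leq \frac{1}{N^{c''}}\cdot \frac{c}{N^{r_1}} \leq \frac{1}{N^{c''}}\cdot \frac{\overline{e}_{A_1}(x)}{\overline{M}_x}$ using $r_1 + (d-2)(r_2 - r_1) > r_1$, i.e.\ the third line of~\eqref{eq:r_jDef}. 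Collecting the two contributions gives the upper bound.

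The main obstacle is essentially bookkeeping rather than conceptual: one must make sure all the walk-comparison steps only concern the trajectory up to the exit of a box where $\overline{P}_x$ and $P^{\overline{\mu}}_x$ genuinely agree, and one must track which of the exponents in~\eqref{eq:r_jDef} is being invoked at each estimate (here $r_1 < r_2 < r_3$ for~\eqref{eq:HittingUpperBounds} and the first-return bound, and $r_1 < \frac{d-2}{d-1}r_2$ together with $r_1 < r_2$ to absorb the return probability into $\overline{e}_{A_1}(x)/\overline{M}_x$). All heat-kernel input is supplied by Lemma~\ref{lem:NewHK} and~\cite[Theorem 4.26, Theorem 6.28]{barlow2017random}, so no new analytic estimate is required.
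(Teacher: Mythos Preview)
Your treatment of~\eqref{eq:SomeBoundsEqmeasure} is essentially the paper's: the trivial inclusions give the first two inequalities, and the last one follows by splitting off $\{T_{A_2}<\widetilde H_{A_1}<\infty\}$ and controlling it via the return probability from $\partial A_2$. The paper writes the error multiplicatively (bounding the return event by $N^{-c}\,P^{\overline\mu}_x[\widetilde H_{A_1}=\infty]$ in one step), while you compare it additively to the lower bound $\overline e_{A_1}(x)/\overline M_x\ge cN^{-r_1}$; both work and use the constraint $r_1<\tfrac{d-2}{d-1}r_2$ from~\eqref{eq:r_jDef}.

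Your argument for~\eqref{eq:HittingUpperBounds}, however, has a real gap. You write $\overline P_x[H_{A_1}<t_\star]=P^{\overline\mu}_x[H_{A_1}<t_\star]$ for $x\in U^N\setminus A_2$ and then bound by $P^{\overline\mu}_x[H_{A_1}<\infty]$. But~\eqref{eq:ConfinedWalk_coincides} only identifies $\overline P_x$ and $P^{\overline\mu}_x$ for the process \emph{stopped at $T_{A_5}$} and for $x\in A_5$; here $x$ may lie anywhere in $U^N\setminus A_2$, and even for $x\in A_5\setminus A_2$ the walk will typically exit $A_5$ long before $t_\star$, after which the two laws no longer agree. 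More to the point, the confined walk $(\overline P_x)_{x\in U^N}$ is positive recurrent on the finite set $U^N$, so $\overline P_x[H_{A_1}<\infty]=1$; a bound that ignores the cutoff $t_\star$ cannot possibly yield $\le N^{-c}$. Your parenthetical remark that ``the last-exit bound already suffices'' without using $t_\star$ is therefore exactly where the argument breaks.

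The paper (following~\cite[(3.13)--(3.25)]{li2017lower}) instead works with the macroscopic box $A_6$ of side $\sim N$ and a \emph{different} periodic extension $\widehat\mu$ built from $A_6$, so that $\overline P$ and $P^{\widehat\mu}$ agree up to $T_{A_6}$. Two estimates are then needed: $\sup_{x\in\partial A_2}\widehat P_x[H_{A_1}<T_{A_6}]\le N^{-c}$ (a transient-type estimate, proved via the last-exit decomposition you had in mind) and $\sup_{x\in\partial A_2}\widehat P_x[T_{A_6}\le N^2/\log N]\le N^{-c}$ (a heat-kernel/exit-time estimate). Since $t_\star/(N^2/\log N)$ is only polylogarithmic, an iteration over successive exits from $A_6$ turns these into~\eqref{eq:HittingUpperBounds}. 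The time restriction and the macroscopic box are both essential; your one-line Green-function bound skips this mechanism.
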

\begin{proof}
The statement of the lemma is the same as that of~\cite[Lemma 3.3]{li2017lower}, but with $\overline{e}_{A_1}/\overline{M}$ in place of $e_{A_1}$ for~\eqref{eq:SomeBoundsEqmeasure}. We first establish~\eqref{eq:HittingUpperBounds}. We aim at following the steps of the proof in~\cite[(3.13)--(3.25)]{li2017lower}. To this end, we will consider a slight modification of $P^{\overline{\mu}}_x$ which is tailor-made to allow to study the confined walk ``up to time $T_{A_6}$''. We let
\begin{equation}
\varphi_N^{A_6}(x) = \varphi_N(x) \qquad x \in B(x_0,\lfloor \tfrac{\widetilde{\delta}}{100}N \rfloor+1), \text{ extended periodically to }\bbZ^d,
\end{equation}
and set
\begin{equation}
\label{eq:NonConstantConductancesA6}
\widehat{\mu}_{x,y} = \frac{1}{2d} \varphi_N^{A_6}(x)\varphi_N^{A_6}(y), \text{ for } x\sim y, x,y \in \mathbb{Z}^d.
\end{equation}
We also define the speed measure
\begin{equation}
\label{eq:ChoiceOfSpeedMeasureA6}
\widehat{\nu}_x = (\varphi_N^{A_6})^2(x), \qquad x \in \mathbb{Z}^d.
\end{equation}
With this, one can consider the random walk $P^{\widehat{\mu};\widehat{\nu}}_x$ using the generator $\mathscr{L}^{\widehat{\mu};\widehat{\nu}}$, see~\eqref{eq:VeryGeneralGenerator} and below it. We abbreviate $\widehat{P}_x =P^{\widehat{\mu};\widehat{\nu}}_x$ for $x \in \bbZ^d$. To start with, note that the conductances~\eqref{eq:NonConstantConductancesA6} fulfill, for large enough $N$,
\begin{equation}
c(\widetilde{\delta}) \varphi_N^2(x_0) \leq \widehat{\mu}_{x,y} \leq C(\widetilde{\delta})\varphi_N^2(x_0),
\end{equation}
and the speed measure is bounded from above and below, $\widehat{\nu} \in [\underline{c}(\widetilde{\delta}),\overline{C}(\widetilde{\delta})]$. It follows that we have the Gaussian heat kernel bounds
\begin{equation}
\label{eq:ModifiedWalkGaussianHKbounds}
\frac{c}{t^{d/2}} e^{- \frac{c'|x-y|^2}{t}} \leq  q^{\widehat{\mu}}_t(x,y) \leq \frac{C}{t^{d/2}} e^{- \frac{C'|x-y|^2}{t}}, \qquad t \geq 1 \vee \widetilde{c}|x-y|, \ x,y \in \mathbb{Z}^d
\end{equation}
(with constants depending on $\widetilde{\delta}$ and $\varphi$), which follows in the same way as Lemma~\ref{lem:NewHK}.  \smallskip

As in~\cite[(3.13)--(3.25)]{li2017lower}, to show~\eqref{eq:HittingUpperBounds} we only need to establish that, for large enough $N$, 
\begin{equation}
\label{eq:(3.16eq)}
\sup_{x \in \partial A_2} \widehat{P}_x[H_{A_1} < T_{A_6}] \leq \frac{1}{N^c},
\end{equation}
as well as, for large enough $N$,
\begin{equation}
\label{eq:(3.19eq)}
\sup_{x \in \partial A_2} \widehat{P}_x[T_{A_6} \leq N^2/\log N] \leq \frac{1}{N^c}.
\end{equation}
We first show~\eqref{eq:(3.16eq)}, which can be seen by decomposing 
\begin{equation}
\label{eq:UpperBoundHittingTiltedProof}
\widehat{P}_x[H_{A_1} < T_{A_6}] = \widehat{P}_x[H_{A_1} < \infty] - \widehat{E}_x\left[ \widehat{P}_{X_{T_{A_6}}}[H_{A_1} < \infty]\right],
\end{equation}
where we used the strong Markov property at time $T_{A_6}$ ($<\infty$, $\widehat{P}_x$-a.s.). Since $d_\infty(A_1,\partial A_6) \geq cN$ and $d_\infty(x,A_1) > cN^{r_2}$ for large enough $N$, we can apply a modification of~\eqref{eq:LastExitDecompisitionTilted} and the arguments below it to show the required polynomial decay. We now argue for~\eqref{eq:(3.19eq)}.  Let $x \in \partial A_2$, then by~\cite[Lemma 5.22, Lemma 5.21 and proof]{barlow2017random}, we have in view of the Gaussian heat kernel bounds~\eqref{eq:ModifiedWalkGaussianHKbounds}
\begin{equation}
\begin{split}
\widehat{P}_x[T_{A_6} \leq N^2/\log N] & \leq C \exp\bigg(-c\bigg(1-\frac{\widetilde{\delta}}{100} \bigg)^2 \frac{N^2}{(N^2 / \log N)}\bigg) \leq \frac{1}{N^c},
\end{split}
\end{equation}
for $N$ large enough, which yields the claim. 
 \smallskip

We now turn to the proof of~\eqref{eq:SomeBoundsEqmeasure}, where the only non-trivial part of the statement is the last inequality. To this end, we adapt the arguments below~\cite[(3.25)]{teixeira2011fragmentation} to the framework of the random walk among conductances $\overline{\mu}$. We first recall~\eqref{eq:HittingOfBoxEstimateII}, so for large enough $N$ also $ P^{\overline{\mu}}_y[H_{A_1} = \infty] \geq c (>0)$ when $y \in \partial A_2$. We then see that
\begin{equation}
\begin{split}
P^{\overline{\mu}}_x[T_{A_2} <\widetilde{H}_{A_1}, \widetilde{H}_{A_1} < \infty] & \stackrel{\eqref{eq:HittingOfBoxEstimateII}}{\leq}  N^{-c} P^{\overline{\mu}}_x[T_{A_2} < \widetilde{H}_{A_1}] \\
& \leq N^{-c} P^{\overline{\mu}}_x[T_{A_2} < \widetilde{H}_{A_1}] \inf_{y \in \partial A_2} \frac{ P^{\overline{\mu}}_{y}[H_{A_1}  =\infty] }{c} \\
& \leq N^{-c} \frac{P^{\overline{\mu}}_{x}[\widetilde{H}_{A_1} = \infty]}{c} = \frac{1}{cN^c}  \frac{\overline{e}_{A_1}(x)}{\overline{M}_x}, \qquad x \in \partial_{\mathrm{int}}A_1.
\end{split}
\end{equation}
Therefore, we easily conclude that for $N$ large enough,
\begin{equation}
\begin{split}
P^{\overline{\mu}}_x[T_{A_2} <\widetilde{H}_{A_1}] & = P^{\overline{\mu}}_x[ \widetilde{H}_{A_1} = \infty] + P^{\overline{\mu}}_x[T_{A_2} <\widetilde{H}_{A_1} < \infty]  \\
& \leq \left(1 + \frac{1}{N^{c''}}\right)\frac{\overline{e}_{A_1}(x)}{\overline{M}_x}, \qquad  x \in \partial_{\mathrm{int}}A_1,
\end{split}
\end{equation}
giving us the required bounds.
\end{proof}

We state some further hitting time estimates that involve the stationary distribution $\pi$ (see~\eqref{eq:piDef}) of the confined walk on $U^N$ (recall that $f$ and $\varphi_N$ is chosen as in~\eqref{eq:varphi_N_Def}).

\begin{lemma}
\label{lem:Morehittingtimebounds}
For large enough $N$, we have the following estimates:
\begin{equation}
\begin{split}
\label{eq:InverseHittingEstimates}
\left(1 - \frac{1}{N^{c}} \right)\frac{(1+\varepsilon) \capa(A_1)}{S_N} \varphi_N^2(x_0) & \leq \frac{1}{\overline{E}_\pi[H_{A_1}]} \\
& \leq \left(1 + \frac{1}{N^{c'}} \right) \frac{(1+\varepsilon)\capa(A_1)}{S_N} \varphi_N^2(x_0),
\end{split}
\end{equation}
and
\begin{equation}
\begin{split}
\label{eq:ImportantHittingTimeEstimate}
\frac{1}{\overline{E}_\pi[H_{A_j}]} & \leq  \frac{C}{N^{d - (d-2)r_j}}, \qquad j \in \{1,2\}.
\end{split}
\end{equation}
Moreover, if we let $f_{A_1}$ stand for the function
\begin{equation}
\label{eq:fA1Def}
f_{A_1}(x) = 1 - \frac{\overline{E}_x[H_{A_1}]}{\overline{E}_\pi[H_{A_1}]}, \qquad x \in U^N,
\end{equation}
then we have for $N$ large enough that
\begin{equation}
\label{eq:fA1Bound}
f_{A_1}(x) \geq -\frac{1}{N^c},\qquad x \in U^N.
\end{equation}
\end{lemma}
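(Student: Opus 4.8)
\textbf{Proof proposal for Lemma~\ref{lem:Morehittingtimebounds}.} All three estimates will be consequences of the single sharp relation $\overline{E}_\pi[H_{A_1}] = (1 + O(N^{-c}))\,\|\varphi_N\|_{\ell^2(\bbZ^d)}^2/\overline{\capa}(A_1)$, together with the capacity comparison~\eqref{eq:CapaComparison} (which turns $\overline{\capa}(A_1)$ into $\varphi_N^2(x_0)\capa(A_1)$ up to a factor $1+O(N^{-c})$) and the identity $S_N = (1+\varepsilon)\|\varphi_N\|_{\ell^2(\bbZ^d)}^2$ from~\eqref{eq:time_horizon}. Indeed, this gives~\eqref{eq:InverseHittingEstimates} at once, and, combined with $\capa(A_j)\le CN^{r_j(d-2)}$ from~\eqref{eq:capacityControlsBox}, $\varphi_N\le C$ from~\eqref{eq:Phi_N_TechnicalProp} and $\|\varphi_N\|_{\ell^2(\bbZ^d)}^2\ge cN^d$, yields $\overline{E}_\pi[H_{A_j}]^{-1}\le CN^{r_j(d-2)-d}$ and hence~\eqref{eq:ImportantHittingTimeEstimate} (for $j=2$ only the lower bound on $\overline{E}_\pi[H_{A_2}]$ described below is needed, which rests on Proposition~\ref{prop:ClosenessToQSD} as stated). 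Finally~\eqref{eq:fA1Bound} will follow from the companion bound $\sup_{x\in U^N}\overline{E}_x[H_{A_1}]\le(1+O(N^{-c}))\,\overline{E}_\pi[H_{A_1}]$, obtained at the end.

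First I would relate $\overline{E}_\pi[H_{A_1}]$ to the principal Dirichlet eigenvalue $\lambda_1 = \lambda_1^{U^N\setminus A_1}$ of $-\mathscr{L}^{U^N\setminus A_1}$. Diagonalising the killed semigroup on $\ell^2(U^N\setminus A_1,\pi)$ with orthonormal eigenfunctions $\psi_i$ and eigenvalues $0<\lambda_1\le\lambda_2\le\cdots$ gives $\overline{E}_\pi[H_{A_1}] = \sum_i \langle\psi_i,\boldsymbol 1\rangle_{\ell^2(\pi)}^2/\lambda_i$. The method of canonical paths, exactly as in the proof of Lemma~\ref{lem:TVtoStationary} (restricting to $U^N\setminus A_1$ only improves constants), gives $\lambda_2\ge cN^{-2}$, whereas the test function of the next step gives $\lambda_1\le CN^{r_1(d-2)-d}\ll N^{-2}$, so $\sum_{i\ge2}\langle\psi_i,\boldsymbol1\rangle_{\ell^2(\pi)}^2/\lambda_i\le1/\lambda_2 = o(1/\lambda_1)$. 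There remains to check $\langle\psi_1,\boldsymbol1\rangle_{\ell^2(\pi)}^2 = 1+o(1)$: writing $Z = \langle\psi_1,\boldsymbol1\rangle_{\ell^2(\pi)}$, one has $Z\le1$ by Cauchy--Schwarz and $\pi(A_1)=o(1)$; for the reverse inequality, using $\psi_1(x)=e^{\lambda_1 t_\star}\overline{P}_x[H_{A_1}>t_\star]\,\overline{E}_x[\psi_1(X_{t_\star})\mid H_{A_1}>t_\star]$ with $\lambda_1 t_\star = o(1)$, the bound $\overline{P}_x[H_{A_1}<t_\star]\le N^{-c}$ for $x\in U^N\setminus A_2$ from~\eqref{eq:HittingUpperBounds}, and the analogue of Proposition~\ref{prop:ClosenessToQSD} for killing at $A_1$ (with an identical proof), one gets $\overline{E}_x[\psi_1(X_{t_\star})\mid H_{A_1}>t_\star] = (1+o(1))\sum_y\sigma_1(y)\psi_1(y) = (1+o(1))/Z$ (with $\sigma_1$ the quasi-stationary distribution for $A_1$ and $\|\psi_1\|_{\ell^2(\pi)}=1$), so $\psi_1\ge(1-o(1))/Z$ on $U^N\setminus A_2$; integrating against $\pi$ and using $\pi(A_2)=o(1)$ yields $Z\ge(1-o(1))/Z$, hence $Z^2=1+o(1)$. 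Thus $\overline{E}_\pi[H_{A_1}] = (1+o(1))/\lambda_1$.

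It then suffices to show $\lambda_1 = (1+O(N^{-c}))\,\overline{\capa}(A_1)/\|\varphi_N\|_{\ell^2(\bbZ^d)}^2$. The upper bound comes from the Rayleigh quotient with the test function $g = 1-h$, where $h(x) = P^{\overline{\mu}}_x[H_{A_1}<\infty]$ is the tilted equilibrium potential of $A_1$ (so $g=0$ on $A_1$): inside $A_5$ the conductances $\overline{\mu}$ coincide with $\tfrac1{2d}\varphi_N(x)\varphi_N(y)$, and the polynomial decay $h(x)\le C(|x-x_0|\vee1)^{-(d-2)}$ (as in~\eqref{eq:HittingOfBoxEstimateI}--\eqref{eq:TiltedGreenFunctionUpperBound}) makes the contribution of edges outside $A_5$ negligible, so $\mathcal{E}_f^{U^N}(g,g)=(1+o(1))\,\overline{\capa}(A_1)/\|\varphi_N\|_{\ell^2(\bbZ^d)}^2$, while $\|g\|_{\ell^2(\pi)}^2 \ge 1-2\sum_x\pi(x)h(x)=1-o(1)$. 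For the lower bound on $\lambda_1$ I would take the minimiser $\psi_1$, which by the previous step vanishes on $A_1$ and is $\ge1-o(1)$ on $U^N\setminus A_2$; bounding $\mathcal{E}_f^{U^N}(\psi_1,\psi_1)$ below by keeping only edges with both endpoints in $A_5$ and replacing $\tfrac1{2d}\varphi_N(x)\varphi_N(y)$ by $\tfrac1{2d}\varphi_N^2(x_0)$ up to a factor $1-CN^{r_5-1}$ (smoothness of $\varphi$, Proposition~\ref{prop:quasi-minimizer}, i)), then truncating $\psi_1$ to $[0,1]$ and rescaling, one is left with at least the condenser capacity between $A_1$ and the mesoscopic shell $A_5\setminus A_2$, which equals $\capa(A_1)(1+o(1))$ because $P_x[T_{A_2}<\widetilde{H}_{A_1}] = e_{A_1}(x)(1+O(N^{r_1(d-1)-r_2(d-2)}))$ for $x\in\partial_{\mathrm{int}}A_1$, the error being $o(1)$ precisely by the condition $r_1<\tfrac{d-2}{d-1}r_2$ in~\eqref{eq:r_jDef} (using~\eqref{eq:LowerBoundEqMeasure} and an estimate as in~\eqref{eq:HittingOfBoxEstimateI}); one more application of~\eqref{eq:CapaComparison} then turns $\varphi_N^2(x_0)\capa(A_1)$ into $\overline{\capa}(A_1)(1+O(N^{-c}))$, closing the identity. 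For~\eqref{eq:fA1Bound}, iterate the strong Markov property at time $t_\star$: with $T^\ast=\sup_{x\in U^N}\overline{E}_x[H_{A_1}]$, the inequality $\overline{E}_x[H_{A_1}]\le 2t_\star + \overline{E}_{\sigma_1}[H_{A_1}] + e^{-c\log^2N}|U^N|\,T^\ast$ (again using~\eqref{eq:HittingUpperBounds} and the $A_1$-analogue of Proposition~\ref{prop:ClosenessToQSD}), together with the exact identity $\overline{E}_{\sigma_1}[H_{A_1}]=1/\lambda_1$ for the quasi-stationary distribution and $1/\lambda_1\gg t_\star$, gives $T^\ast\le(1+o(1))/\lambda_1=(1+o(1))\,\overline{E}_\pi[H_{A_1}]$, i.e.~$f_{A_1}(x)\ge-o(1)\ge-N^{-c}$.

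The delicate point is the matching lower bound $\lambda_1\ge(1-O(N^{-c}))\,\overline{\capa}(A_1)/\|\varphi_N\|_{\ell^2(\bbZ^d)}^2$: one must establish that the Dirichlet principal eigenfunction genuinely resembles the complement of the tilted equilibrium potential of $A_1$, and control the condenser capacity between $A_1$ and the mesoscopic shell — this is exactly where the precise inequalities among the exponents $r_j$ in~\eqref{eq:r_jDef} (notably $r_1<\tfrac{d-2}{d-1}r_2$ and $r_1(d-2)+r_5<1$) are used, and where, unlike in~\cite{li2017lower} (where the conductances are locally constant), one has to keep track of the $O(N^{r_5-1})$ oscillation of $\varphi_N$ on the mesoscopic scale when comparing the tilted Dirichlet form with the simple-random-walk one.
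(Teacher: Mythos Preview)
Your approach is genuinely different from the paper's. The paper follows~\cite[Propositions~3.5, 3.7 and Lemma~3.6]{li2017lower}: it works directly with the Dirichlet form of the confined walk evaluated at the function $g_{A_1,A_2}(x)=\overline{P}_x[H_{A_1}<T_{A_2}]$, shows that $\overline{\cE}(g_{A_1,A_2},g_{A_1,A_2})$ equals $(1+\varepsilon)\varphi_N^2(x_0)\capa(A_1)/S_N$ up to $1+O(N^{-c})$ via~\eqref{eq:SomeBoundsEqmeasure} and~\eqref{eq:CapaComparison}, and then passes to $1/\overline{E}_\pi[H_{A_1}]$ by the arguments in~\cite{li2017lower}. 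No spectral decomposition or quasi-stationary input is used. The internal order is: upper bound in~\eqref{eq:InverseHittingEstimates} and~\eqref{eq:ImportantHittingTimeEstimate} first, then~\eqref{eq:fA1Bound}, then the lower bound in~\eqref{eq:InverseHittingEstimates}.

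Your eigenvalue route is plausible but, as written, is \emph{circular}. You invoke Proposition~\ref{prop:ClosenessToQSD} (and its $A_1$-analogue) at several places, yet in this paper the proof of Proposition~\ref{prop:ClosenessToQSD} (in the Appendix) uses~\eqref{eq:ImportantHittingTimeEstimate}---the very estimate you are proving---through~\cite[Lemma~4.4]{li2017lower} to obtain $\lambda_2-\lambda_1\ge cN^{-2}$. The loop can be broken, but you must do so explicitly: Courant--Fischer domain monotonicity gives $\lambda_2^{U^N\setminus A_j}\ge \overline{\lambda}\ge cN^{-2}$ directly from the spectral-gap bound in the proof of Lemma~\ref{lem:TVtoStationary}, and your Rayleigh upper bound yields $\lambda_1\le CN^{r_j(d-2)-d}\ll N^{-2}$. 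Your sentence about ``canonical paths on $U^N\setminus A_1$'' does not achieve this (on the killed chain, canonical paths control $\lambda_1$, not $\lambda_2$). You would also have to check that the $A_1$-analogue of Proposition~\ref{prop:ClosenessToQSD} genuinely extends to starting points in $A_2\setminus A_1$, which are arbitrarily close to $A_1$; the paper's proof of~\eqref{eq:(4.24)analogue} is written for $U^N\setminus A_2$ and does not cover this. The paper's direct route avoids the entire bootstrap.
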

\begin{proof}
The arguments are exactly as in the proofs of~\cite[Propositions 3.5 and 3.7, Lemma 3.6]{li2017lower}, and we explain the necessary changes to adapt the proofs to our context. \smallskip

We use the Dirichlet form of the confined walk given by 
\begin{equation}
\overline{\cE}(g,g) = \frac{1}{2}\sum_{x,y \in U^N, x \sim y} \frac{f(x)f(y)}{2d} \Big(g(x) - g(y) \Big)^2, \qquad g : U^N \rightarrow \bbR.
\end{equation}
Inserting the function 
\begin{equation}
g_{A_1,A_2}(x) = \overline{P}_x[H_{A_1} < T_{A_2}] = P^{\overline{\mu}}_x[H_{A_1} < T_{A_2}], \qquad x \in U^N,
\end{equation}
which is $(-\mathscr{L}^{\overline{\mu}})$-harmonic in $A_2 \setminus A_1$, equal to $1$ on $A_1$ and equal to $0$ outside of $A_2$, a straightforward adaptation of the proof of~\cite[Proposition 3.4]{li2017lower} yields
\begin{equation}
\begin{split}
\left(1 - \frac{1}{N^{c'}} \right) \frac{(1+\varepsilon)\varphi_N^2(x_0)}{S_N} \sum_{x \in \partial_{\mathrm{int}}A_1} & P^{\overline{\mu}}_x[T_{A_2} < \widetilde{H}_{A_1}]  \leq \overline{\cE}(g_{A_1,A_2},g_{A_1,A_2})  \\
& \leq \left(1 + \frac{1}{N^{c'}} \right) \frac{(1+\varepsilon)\varphi_N^2(x_0)}{S_N} \sum_{x \in \partial_{\mathrm{int}}A_1} P^{\overline{\mu}}_x[T_{A_2} < \widetilde{H}_{A_1}],
\end{split}
\end{equation}
where we used that by~\eqref{eq:time_horizon}, $S_N =(1+\varepsilon) \|\varphi_N\|_{\ell^2(\bbZ^d)}^2$ and the smoothness of $\varphi$ (see Proposition~\ref{prop:quasi-minimizer} i)). We then use~\eqref{eq:SomeBoundsEqmeasure},~\eqref{eq:MandVarphi2Close},
and the bounds~\eqref{eq:CapaComparison} to obtain
\begin{equation}
\label{eq:TiltedDirichletArgument}
\left(1 - \frac{1}{N^{c}} \right) \frac{(1+\varepsilon)\capa(A_1)}{S_N}\varphi_N^2(x_0) \leq \overline{\cE}(g_{A_1,A_2},g_{A_1,A_2}) \leq \left(1 + \frac{1}{N^{c'}} \right) \frac{(1+\varepsilon)\capa(A_1)}{S_N}\varphi_N^2(x_0).  
\end{equation}
The second inequality of~\eqref{eq:InverseHittingEstimates} and~\eqref{eq:ImportantHittingTimeEstimate} then follow exactly as in the proof of~\cite[Proposition 3.5]{li2017lower} (where we use~\eqref{eq:S_N_volume_order} in place of~\cite[(2.20) 5.]{li2017lower}). \smallskip

The proof of~\eqref{eq:fA1Bound} follows line by line as that of the same statement in~\cite[Lemma 3.6]{li2017lower}, using the second inequality of~\eqref{eq:InverseHittingEstimates} in place of~\cite[(3.32)]{li2017lower}, the relaxation estimate~\eqref{lem:TVtoStationary} in place of~\cite[(2.64)]{li2017lower}, and the estimate $\frac{1}{2d}\sqrt{\pi(x)\pi(y)} \in (cN^{-(4+d)},1]$, which follows from Lemma~\ref{lem:TechnicalProperties}, ii) and the argument in~\cite[(2.59)]{li2017lower}. \smallskip

Finally, the first inequality of~\eqref{eq:InverseHittingEstimates} is a straightforward adaptation of the proof of~\cite[Proposition 3.7]{li2017lower}, using~\eqref{eq:TiltedDirichletArgument},~\eqref{eq:fA1Bound} in place of~\cite[(3.36)]{li2017lower}, as well as~\eqref{eq:HittingUpperBounds} (in place of~\cite[(3.10)]{li2017lower}) and~\eqref{eq:ImportantHittingTimeEstimate} in place of~\cite[(3.33)]{li2017lower}.
\end{proof}

We now turn to the proof of Proposition~\ref{prop:ClosenessToQSD}, which follows that of~\cite[Proposition 4.5]{li2017lower} (see also~\cite[Lemma 3.9]{teixeira2011fragmentation}).

\begin{proof}[Proof of Proposition~\ref{prop:ClosenessToQSD}]
We need some controls on spectral quantities attached to the generator~$\mathscr{L}^{U^N \setminus A_2}$ and the semigroup $(\mathscr{H}^{U^N \setminus A_2}_t)_{t \geq 0}$, see~\eqref{eq:SemigroupDef} and below, which we simply denote by $\mathscr{L}$ and the semigroup $(\mathscr{H}_t)_{t \geq 0}$ in this section. The eigenvalues of $-\mathscr{L}$ are denoted by
\begin{equation}
0 \leq \lambda_i \leq \lambda_{i+1}, \qquad 1 \leq i \leq |U^N \setminus A_2|,
\end{equation}
and we let $(f_i)_{1 \leq i \leq |U^N \setminus A_2|}$ stand for an $\langle \cdot,\cdot\rangle_{\ell^2(U^N\setminus A_2, \pi^{U^N\setminus A_2})}$-orthonormal basis of eigenfunctions associated with $(\lambda_i)_{1 \leq i \leq |U^N \setminus A_2|}$ (so $\lambda_1 =  \lambda_1^{U^N\setminus A_2}$ in the notation above~\eqref{eq:qsd_Def}). We also recall the definition of the quasi-stationary distribution $\sigma$ from~\eqref{eq:qsd_Def}. In what follows, we need to establish the control for large enough $N$, 
\begin{equation}
\label{eq:MinimumOfSigmaBound}
\min_{x \in U^N \setminus A_2} \sigma(x) \geq \frac{1}{N^c,}
\end{equation}
as well as the polynomial bounds
\begin{equation}
\label{eq:PolynomialControlsEigenfunction}
\frac{1}{N^{c'}} \leq \min_{x \in U^N \setminus A_2} f_1(x) \leq \max_{x \in U^N \setminus A_2} f_1(x) \leq N^{c''}.
\end{equation}
These controls are analogues of~\cite[(4.20)--(4.21)]{li2017lower}, and we only point out the relevant changes in our set-up due to the choice of a different tilting function $\varphi_N$. It suffices to show that for all $x,x' \in U^N \setminus A_2$, for large enough $N$
\begin{equation}
\label{eq:(4.24)analogue}
\overline{P}_x[H_{x'} < H_{A_2}] \geq\frac{1}{N^c}.
\end{equation}
The claim will then follow by using the fact that
\begin{equation}
\sigma(y) \geq \frac{1}{N^c} \overline{P}_y[H_x < H_{A_2}] \sigma(x), \qquad x,y \in U^N \setminus A_2,
\end{equation}
which is exactly~\cite[Lemma 4.1]{li2017lower} (and its proof remains unchanged, since we can apply Lemma~\ref{lem:TechnicalProperties}, ii) in the same way). As in~\cite{li2017lower}, we decompose the proof of~\eqref{eq:(4.24)analogue} into two cases: \smallskip

\textit{Case I: $x' \in A_4 \setminus A_2$.} One can first see that by using~\eqref{eq:LastExitDecompisitionTilted}, for large enough $N$,
\begin{equation}
\overline{P}_x[H_{\partial A_5} < H_{A_2}] \geq \frac{1}{N^c}.
\end{equation}
One can then introduce
\begin{equation}
l(x) = \overline{P}_x[H_{x'} < H_{A_2}],
\end{equation}
and use the strong Markov property at $H_{\partial A_5}$ to show that 
\begin{equation}
l(x) \geq \frac{1}{N^c}\min_{y \in \partial A_5} l(y), \qquad x \in U^N \setminus A_2.
\end{equation}
Next, we proceed as in~\cite[(4.27)--(4.28)]{li2017lower}, but noting that $l$ is $(-\mathscr{L})$-harmonic in $U^N \setminus (A_2 \cup \{x'\})$ and by employing the Harnack inequality on the weighted graph $(\mathbb{Z}^d,\mathbb{E}^d,\overline{\mu})$ with uniformly elliptic weights given in~\eqref{eq:NonConstantConductances} (see, e.g.,~\cite[Théorème 1]{delmotte1997inegalite}), which implies that
\begin{equation}
\min_{z \in \partial A_5} l(z) \geq c' \max_{z \in \partial A_5}l(z), 
\end{equation}
since $\partial A_5 \subseteq B(x_0,2N^{r_5}) \setminus B(x_0,\frac{1}{2}
N^{r_5}) \subseteq U^N\setminus A_2$ for $N$ large enough. Therefore the proof is reduced to showing that 
\begin{equation}
\label{eq:LowerBoundHittingSpecificPoint}
\overline{P}_{y'}[X_{T_{B}} = x'] \geq CN^{-c},
\end{equation}
for $B = B(y',|y'-x'|_\infty-1)$ and $y' \in \partial A_5$ is a point of least distance in $\ell^\infty$-norm to $x'$, sharing $(d-1)$ common coordinates with $x'$ (so $x' \in \partial B$) (for definiteness, we choose the lexicographically smallest such point $y'$). The validity of~\eqref{eq:LowerBoundHittingSpecificPoint} can then be shown by a similar argument as for the case of constant conductances (see, e.g.,~\cite[Lemma 6.3.7, p.~158--159]{lawler2010random}). This concludes the proof for Case I. \smallskip

\textit{Case II: $x' \in U_N  \setminus A_4$.} The proof carries over without any changes, as long as 
\begin{equation}
\max_{y \in \partial A_3} \overline{P}_y[H_x < H_{A_2}] \geq \frac{1}{N^c}
\end{equation}
holds for large enough $N$, which itself can be proved as in~\cite[Lemma 4.2]{li2017lower}. \smallskip

Finally, we point out that the proof of~\eqref{eq:PolynomialControlsEigenfunction} is a repetition of~\cite[(4.34)--(4.38)]{li2017lower}, using $|U^N\setminus A_2| \leq cN^d$, as well as Lemma~\ref{lem:TechnicalProperties} ii) and~\eqref{eq:MinimumOfSigmaBound} in place of (2.38)4.~and (4.20) of~\cite{li2017lower}. \smallskip

With these preparations at hand, we now prove~\eqref{eq:ConditionalDistributionCloseToqsd}.
To this end, we write 
\begin{equation}
\label{eq:QuotientWithSemigroups}
\overline{P}_x[X_{t_\star} = y | H_{A_2} > t_{\ast}] = \frac{(\mathscr{H}_{t_\star} \delta_y)(x)}{(\mathscr{H}_{t_\star} \boldsymbol{1})(x)}, \qquad x,y \in U^N \setminus A_2.
\end{equation}
see also~\cite[(4.47)]{li2017lower} (or~\cite[(A.8)]{teixeira2011fragmentation} for a similar argument). Upon decomposing $\delta_y$ into its components with respect to the orthonormal basis $(f_i)_{1 \leq i \leq |U^N \setminus A_2|}$, we see that
\begin{equation}
\label{eq:SemigroupAppliedToDelta}
(\mathscr{H}_{t_\star} \delta_y)(x) = e^{-\lambda_1 t_\star} \left(f_1(x)f_1(y)\pi^{U^N\setminus A_2}(y) + \sum_{i = 2}^{|U^N \setminus A_2|} e^{(\lambda_1 - \lambda_i)t_{\ast}}f_i(x)f_i(y) \pi^{U^N\setminus A_2}(y)  \right).
\end{equation}
On one hand, we have that for large enough $N$
\begin{equation}
\label{eq:f1f2piLargeenough}
f_1(x)f_1(y)\pi^{U^N\setminus A_2}(y) \stackrel{\text{Lemma~\ref{lem:TechnicalProperties}, ii)},\eqref{eq:PolynomialControlsEigenfunction}}{\geq} \frac{1}{N^c}.
\end{equation}
We now explain that the terms in the sum~\eqref{eq:SemigroupAppliedToDelta} are negligible. This will follow immediately once we show that for large $N$,
\begin{equation}
\label{eq:EVdifferenceLargeEnough}
\lambda_2 - \lambda_1 \geq cN^{-2}.
\end{equation}
Indeed, admitting~\eqref{eq:EVdifferenceLargeEnough} we can follow the proof of as~\cite[(4.51)--(4.54)]{li2017lower} to see that for large enough $N$
\begin{equation}
\begin{split}
\left\vert\sum_{i = 2}^{|U^N \setminus A_2|} e^{(\lambda_1 - \lambda_2)t_{\ast}} f_i(x)f_i(y)\pi^{U^N\setminus A_2}(y) \right\vert & \leq cN^d e^{-c\log^2N} \sup_{2 \leq i \leq |U^N\setminus A_2|} | f_i(x)f_i(y)\pi^{U^N\setminus A_2}(y)|  \\
& \stackrel{\text{Lemma~\ref{lem:TechnicalProperties}, ii),} \eqref{eq:PolynomialControlsEigenfunction}}{\leq} c'N^{d +2c''}e^{-c\log^2N} \leq e^{-\tilde{c}\log^2N},
\end{split}
\end{equation}
which in turn implies that for large enough $N$,
\begin{equation}
\left\vert \frac{(\mathscr{H}_{t_\star}\delta_y)(x) }{e^{-\lambda_1 t_\star} f_1(x)f_1(y) \pi^{U^N\setminus A_2}(y)} - 1 \right\vert \leq e^{-c\log^2N}, \qquad x,y \in U^N \setminus A_2,
\end{equation}
where we used~\eqref{eq:f1f2piLargeenough}. By the same arguments one shows that
\begin{equation}
\left\vert \frac{(\mathscr{H}_{t_\star}\boldsymbol{1})(x) }{e^{-\lambda_1 t_\star} f_1(x)\langle f_1, \boldsymbol{1}\rangle_{\ell^2(U^N\setminus A_2, \pi^{U^N\setminus A_2})}} -1  \right\vert \leq e^{-c\log^2N}, \qquad x \in U^N \setminus A_2.
\end{equation}
Using~\eqref{eq:QuotientWithSemigroups}, we obtain
\begin{equation}
\begin{split}
|\overline{P}_x[X_{t_\star} = y | H_{A_2} > t_{\ast}] - \sigma(y)| & = \left\vert \frac{(\mathscr{H}_{t_\star} \delta_y)(x)}{(\mathscr{H}_{t_\star} \boldsymbol{1})(x)} - \sigma(y) \right\vert \\
& \leq e^{-c\log^2 N}\sigma(y) \leq e^{-c\log^2 N}, \qquad x,y \in U^N \setminus A_2.
\end{split}
\end{equation}
It remains to show~\eqref{eq:EVdifferenceLargeEnough}. However, this is exactly the statement of~\cite[Lemma 4.4]{li2017lower}, and its proof carries over to our framework without any changes (using~\eqref{eq:ImportantHittingTimeEstimate} in the process).
\end{proof}

\begin{proof}[Proof of Proposition~\ref{prop:concentration}] First of all, by Poisson concentration, we have,
\begin{equation}
\label{eq:PoissonConcentrationDelta}
    \overline{Q}[\eta(\Gamma(\bbZ^d)) < a(1+\Delta/2)\capa(B)]\leq 2 \exp(-c a\Delta^2\capa(B)).
\end{equation}
Let us set $m \stackrel{\mathrm{def}}{=} \lfloor a(1+\Delta/2)\capa(B)] \rfloor$, and an i.i.d.~sequence of independent excursions  $(Z^\ell)_{\ell \geq 1} \in \Gamma(\bbZ^d)$ sampled according to $\kappa$. Denote the joint law by $\cQ$. We denote for $n = 0,\ldots,m$ and all $x\in B^\frr$, where $B^\frr =\{x+z:x\in B,\,|z|_\infty\leq \frr\}$ is the discrete $\frr$-neighborhood of $B$,
\begin{equation}
    \begin{aligned}
        L^n_x& \stackrel{\mathrm{def}}{=} \sum_{\ell=1}^n L_x(Z^\ell), \qquad L_x(Z^\ell) \stackrel{\mathrm{def}}{=} \int_0^{T_U(Z^\ell)} \IND_{\{Z^\ell_t = x\}}\,\De t,\\
        F_{n} & \stackrel{\mathrm{def}}{=}\sum_{x\in B} F((L^n_{x+z})_{z\in B(0,\mathfrak{r})}),
    \end{aligned}
\end{equation}
where it is understood that the empty sum is equal to zero. We also introduce a truncation of the excursions, and for that we define for $\ell\geq 1$  the stopping times
\begin{equation}
    \sigma_\ell = \inf\Big\{ s\geq 0 :\, |Z^\ell_{[0,s]}\cap B^\frr| \geq N^{2+\frac{1}{4}}\text{ or }\int_0^{s \wedge T_U(Z^\ell)} \IND_{\{Z^\ell_t \in B^\frr\}}\,\De t \geq N^{2+\frac{1}{4}}\Big\},
\end{equation}
(where $Z_{[0,s]}^\ell = \{ Z_t^\ell \, : \, t \in [0,s] \}$ is the range of $Z^\ell$) and the truncated quantities
\begin{equation}
    \begin{aligned}
        \widetilde{L}^n_x&\stackrel{\mathrm{def}}{=} \sum_{\ell=1}^n \widetilde{L}_x(Z^\ell), \qquad \widetilde{L}_x(Z^\ell) \stackrel{\mathrm{def}}{=} \int_0^{T_U(Z^\ell)\wedge \sigma_\ell} \IND_{\{Z^\ell_t = x\}}\,\De t,\\
        \widetilde{F}_{n} &\stackrel{\mathrm{def}}{=} \sum_{x\in B} F((\widetilde{L}^n_{x+y})_{y\in B(0,\mathfrak{r})}).
    \end{aligned}
\end{equation}
The same martingale argument as in~\cite[(2.49)--(2.53)]{sznitman2019bulk} using the Azuma-Hoeffding inequality for bounded martingale differences shows that 
\begin{equation}
\label{eq:AzumaHoeffding}
    \cQ\Big[|\widetilde{F}_{m} - E_\cQ[\widetilde{F}_m]| \geq N^{d - \frac{1}{10}} \Big] \leq 2 \exp\Big( -\frac{c(F)}{a(1 + \Delta/2)} N^{\frac{1}{4}} \Big), \qquad N \geq 1.
\end{equation}

To conclude the proof, we establish the following controls on the expectation of $\widetilde{F}_m$ and $F_m$ under $\cQ$.
\begin{lemma} 
For any $\beta > 0$, we have for $N$ large enough,
\begin{align}
\label{eq:TwoExpectationsClose}
   0 & \leq E_{\cQ}[F_{\lfloor \beta \capa(B) \rfloor}] -  E_{\cQ}[\widetilde{F}_{\lfloor \beta \capa(B) \rfloor}] \leq c\beta e^{-c'N^{\frac{1}{4}}}, \ \text{and}\\
   \label{eq:ExpectationCloseToTheta}
   \lim_{N \rightarrow \infty} \frac{1}{|B|} &  E_{\cQ}[F_{\lfloor \beta \capa(B) \rfloor}] = \vartheta(\beta)
\end{align}
(recall the definition of $\vartheta$ from~\eqref{eq:def theta}).
\end{lemma}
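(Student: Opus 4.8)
The plan is to establish the two displays \eqref{eq:TwoExpectationsClose} and \eqref{eq:ExpectationCloseToTheta} in turn, both resting on (a) a single–excursion estimate showing that the truncation at $\sigma_\ell$ is super‑polynomially rare, and (b) the classical fact that the occupation field of many independent killed excursions issued from $\widetilde{e}_B$, seen through a fixed‑size window lying deep inside $B$, converges to that of random interlacements at the corresponding level.

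\emph{Step 1: the single–excursion truncation estimate.} For a single excursion $Z^1$ of law $\kappa$ I will show $\cQ[\sigma_1 < T_U(Z^1)] \leq C\exp(-cN^{1/4})$, where $\sigma_1$ is the stopping time from the definition of $\sigma_\ell$ with $\ell = 1$. For the occupation‑time trigger, by \eqref{eq:Greenfunction_asymptotics} and since $B^{\mathfrak r}$ is contained in a ball of radius $cN$, one has $E_{x'}\big[\int_0^{T_U}\IND_{\{X_t\in B^{\mathfrak r}\}}\,\De t\big] = \sum_{w\in B^{\mathfrak r}} g_U(x',w) \leq \sum_{w\in B^{\mathfrak r}} c(|x'-w|\vee 1)^{-(d-2)} \leq cN^2$ for every $x'$, so a standard Khas'minskii–type exponential moment bound (as behind \eqref{eq:Khasminskii}) applied to this additive functional gives $P_{x'}\big[\int_0^{T_U}\IND_{\{X_t\in B^{\mathfrak r}\}}\,\De t \geq N^{2+1/4}\big] \leq C e^{-cN^{1/4}}$. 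For the range trigger, note that visiting $k$ distinct sites of $B^{\mathfrak r}$ before $T_U$ forces the occupation time of $B^{\mathfrak r}$ to exceed the sum of the $k$ i.i.d.\ $\mathrm{Exp}(1)$ first–sojourn times at those sites; hence $P_{x'}\big[|X_{[0,T_U]}\cap B^{\mathfrak r}| \geq N^{2+1/4}\big] \leq P_{x'}\big[\int_0^{T_U}\IND_{B^{\mathfrak r}} \geq \tfrac12 N^{2+1/4}\big] + P\big[\textstyle\sum_{i\le N^{2+1/4}}\mathrm{Exp}(1)_i < \tfrac12 N^{2+1/4}\big] \leq C e^{-cN^{1/4}}$. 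Integrating against $\widetilde{e}_B$ yields the claim, and moreover $E_\cQ\big[(\int_0^{T_U}\IND_{B^{\mathfrak r}})^2\big] \leq cN^4$ from the same exponential bound.

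\emph{Step 2: proof of \eqref{eq:TwoExpectationsClose}.} The lower bound $F_m \geq \widetilde{F}_m$ is immediate since $\widetilde{L}^n \leq L^n$ pointwise and $F$ is non‑decreasing. For the upper bound, apply \eqref{eq:RegularityCondF}, iii), site by site: $F_m - \widetilde{F}_m \leq c_1 \sum_{x\in B}\big(\IND_{\{(L^m_{x+y})_y \neq (\widetilde{L}^m_{x+y})_y\}} + \sum_{|y|_\infty\leq \mathfrak r}(L^m_{x+y}-\widetilde{L}^m_{x+y})\big)$. A discrepancy at $x$ requires some excursion $Z^\ell$ with $\sigma_\ell < T_U(Z^\ell)$ whose post‑$\sigma_\ell$ part meets $x+B(0,\mathfrak r)$, so the first sum is at most $cN^{d}\sum_{\ell\le m}\IND_{\{\sigma_\ell<T_U(Z^\ell)\}}$, while the second equals $(2\mathfrak r+1)^d\sum_{\ell\le m}\int_{\sigma_\ell\wedge T_U}^{T_U}\IND_{\{Z^\ell_t\in B^{\mathfrak r}\}}\,\De t$. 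Taking $\cQ$–expectations, bounding the first via Step 1 and the second by Cauchy–Schwarz with $E_\cQ[(\int_0^{T_U}\IND_{B^{\mathfrak r}})^2]^{1/2}\cQ[\sigma_1<T_U]^{1/2} \leq cN^2 e^{-cN^{1/4}}$, and using $m = \lfloor\beta\capa(B)\rfloor \leq c\beta N^{d-2}$ by \eqref{eq:capacityControlsBox}, both contributions are at most $c\beta N^{c''}e^{-cN^{1/4}} \leq c\beta e^{-c'N^{1/4}}$ for $N$ large.

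\emph{Step 3: proof of \eqref{eq:ExpectationCloseToTheta}.} Write $\tfrac1{|B|}E_\cQ[F_m] = \tfrac1{|B|}\sum_{x\in B} E_\cQ\big[F((L^m_{x+y})_{y\in B(0,\mathfrak r)})\big]$. Uniformly in $w\in B^{\mathfrak r}$ one has $E_\cQ[L^m_w] = m\sum_{x'}\widetilde{e}_B(x')g_U(x',w) \leq m\,\capa(B)^{-1}\sum_{x'}e_B(x')g(x',w)\,\capa(B)^{-1}$... more precisely, by symmetry of $g$ and \eqref{eq:LastExitDecomposition}, $\sum_{x'}e_B(x')g(x',w) = P_w[H_B<\infty] = 1$ for $w\in B$, so $E_\cQ[L^m_w] \leq m/\capa(B) \leq \beta$, and a similar computation gives $E_\cQ[(L^m_w)^2] \leq C$ uniformly. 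Hence by \eqref{eq:RegularityCondF}, iii), with $\ell = 0$, $E_\cQ[F((L^m_{x+y})_y)]$ is bounded uniformly by $c_1(1+(2\mathfrak r+1)^d\beta)$. Splitting $B = B_{\mathrm{in}}\cup(B\setminus B_{\mathrm{in}})$ with $B_{\mathrm{in}} = B(z,N-\lfloor\sqrt N\rfloor)$, the boundary layer contributes $O(|B\setminus B_{\mathrm{in}}|/|B|) = O(N^{-1/2}) \to 0$. For $x\in B_{\mathrm{in}}$ and the fixed box $K = x+B(0,\mathfrak r)$, one checks that the number of the $m$ excursions entering $K$ is close (in total variation) to a Poisson variable with parameter $m\,P_{\widetilde{e}_B}[H_K<T_U]$, that $P_{\widetilde{e}_B}[H_K<T_U] = \capa(K)\,\capa(B)^{-1}(1+o(1))$ uniformly in $x\in B_{\mathrm{in}}$ (using again $\sum_{x'}e_B(x')g(x',w)=1$ for $w\in K\subseteq B$, that the $U$–killing changes $g_U$ versus $g$ by $O(N^{-\gamma(d-2)})$, and that $e_K^U \approx e_K$), that the entrance distribution to $K$ from $\widetilde{e}_B$ conditioned on hitting $K$ is within $O(N^{-1/2})$ of $\widetilde{e}_K$, and that truncating at $T_U$ rather than running the excursions forever alters the occupation in $K$ by $O(N^{-\gamma(d-2)})$ in expectation. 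Since $\cI^\beta$ (and the field $\cL^\beta$) restricted to $K$ is generated by exactly $\mathrm{Poisson}(\beta\capa(K))$ i.i.d.\ walks started from $\widetilde{e}_K$, these comparisons produce a coupling of $(L^m_w)_{w\in K}$ with $(\cL^\beta_w)_{w\in K}$ agreeing with probability $1-O(N^{-c})$; together with the uniform second–moment bound and \eqref{eq:RegularityCondF}, iii), this gives $E_\cQ[F((L^m_{x+y})_y)] \to E[F((\cL^\beta_y)_{y\in B(0,\mathfrak r)})] = \vartheta(\beta)$ uniformly over $x\in B_{\mathrm{in}}$, and \eqref{eq:ExpectationCloseToTheta} follows.

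\emph{Main obstacle.} The delicate point is the local coupling in Step 3: one must assemble the Poisson approximation for the number of window‑hitting excursions, the convergence of their entrance law to the normalized equilibrium measure, the removal of the $T_U$–truncation, and the interlacement description of $\cL^\beta|_K$, all quantitatively enough — with uniformly bounded second moments — to pass to the expectation of the merely sublinear $F$; here one leans on the standard estimates of \cite[Section 2]{sznitman2019bulk} and \cite[Section 9]{belius2013gumbel}. By contrast, the truncation estimate of Step 1 and the bookkeeping of Step 2 are routine.
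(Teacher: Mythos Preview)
Your proposal is correct and follows essentially the same route as the paper, which simply invokes \cite[Lemma 2.3]{sznitman2019bulk}: the truncation estimate via Khas'minskii and the sublinearity bound \eqref{eq:RegularityCondF}, iii), for \eqref{eq:TwoExpectationsClose}, and the local coupling of the excursion occupation field in a fixed window $K=x+B(0,\mathfrak r)$ with $\cL^\beta|_K$ for \eqref{eq:ExpectationCloseToTheta}. Your Step~3 is somewhat more explicit than the paper's one-line reference, and the ingredients you list (Binomial--Poisson approximation for the number of excursions hitting $K$, $P_{\widetilde e_B}[H_K<T_U]\sim \capa(K)/\capa(B)$ via \eqref{eq:LastExitDecomposition}, closeness of the entrance law to $\widetilde e_K$, and negligibility of the $T_U$-truncation with error $O(N^{-\gamma(d-2)})$) are exactly those underlying \cite[(2.63)--(2.68)]{sznitman2019bulk}; the paper's only addition is to record that the relevant error parameter there is bounded by $c(1+\mathfrak r)^{d-2}/N^{\gamma(d-2)}$.
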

\begin{proof}
This follows from a straightforward adaptation of the proof of~\cite[Lemma 2.3]{sznitman2019bulk} to our purposes, as we now briefly explain. Indeed,~\eqref{eq:TwoExpectationsClose} follows from~\cite[(2.56)--(2.61)]{sznitman2019bulk}, which actually yields an upper bound of the form
\begin{equation}
 E_{\cQ}[F_{\lfloor \beta \capa(B) \rfloor}] -  E_{\cQ}[\widetilde{F}_{\lfloor \beta \capa(B) \rfloor}] \leq C\beta \capa(B) (N + \mathfrak{r})^2 e^{- \frac{cN^{2 + \frac{1}{4}}}{(N+\mathfrak{r})^2}} + C'\beta(N+\mathfrak{r})^d e^{- \frac{c'N^{2 + \frac{1}{4}}}{(N+\mathfrak{r})^2}}.
\end{equation}
The claim~\eqref{eq:ExpectationCloseToTheta} follows in the same way as the argument in~\cite[(2.63)--(2.68)]{sznitman2019bulk}, noting that the quantity $a$ in (2.67) of the same reference can in fact be bounded above by $c(1 + \mathfrak{r})^{d-2}/ N^{\gamma(d-2)}$.
\end{proof}
We now explain how the proof of Proposition~\ref{prop:concentration} can be concluded. Indeed, we observe that by~\eqref{eq:PoissonConcentrationDelta}, it follows that
\begin{equation}
    \overline{Q} \Big[\frac{1}{|B|}\sum_{x\in B} F((L^\eta_{x+y})_{y\in B(0,\mathfrak{r})}) \leq \vartheta(a) \Big]\leq  \cQ\Big[F_m \leq \vartheta(a)|B| \Big] + 2 \exp(- a c\Delta^2\capa(B))
\end{equation}
(by decomposing into the events that the number of trajectories is either smaller than or larger or equal to $a(1 + \Delta/2)\capa(B)$). The proof then follows by combining~\eqref{eq:AzumaHoeffding},~\eqref{eq:TwoExpectationsClose}, and~\eqref{eq:ExpectationCloseToTheta}.
\end{proof}
\end{appendix}

\bibliographystyle{plain}
\bibliography{literature}

\end{document}